\def\loc{\operatorname{loc}}
\definecolor{citation}{rgb}{0.11,0.67,0.84}
\definecolor{formula}{rgb}{0.1,0.2,0.6}
\definecolor{url}{rgb}{0.11,0.67,0.84}
\newcommand{\reqnomode}{\tagsleft@false}
\def\GG{\textnormal{\texttt{gl}}_{\theta, \delta}}
\def\GGp{\textnormal{\texttt{gl}}_{\theta, \delta}^{+}}
\def\dx{\,{\rm d}x}
\def\TT{\mathcal{T}}
\def\dtau{\,{\rm d}\nu}
\def\dy{\,{\rm d}y}
\def\snail{\textnormal{\texttt{snail}}}
\def\rhs{\textnormal{\texttt{rhs}}_{\theta}}
\def\rhsp{\textnormal{\texttt{rhs}}_{\theta}^{+}}
\def \d{\,{\rm d}}
\def \diver{\,{\rm div}}
\def\dist{\,{\rm dist}}
\def \cacc{\textnormal{\texttt{ccp}}_{\delta}}
\def \caccs{\textnormal{\texttt{ccp}}_{\delta,*}}
\def \ecc{\textnormal{\texttt{exs}}_{\delta}}
\def \av{\textnormal{\texttt{av}}}
\newcommand\ccc{\mathfrak{c}}
\DeclareRobustCommand*{\bfseries}{%
  \not@math@alphabet\bfseries\mathbf
  \fontseries\bfdefault\selectfont
  \boldmath
}
\DeclareMathOperator*{\osc}{osc}
\newlength{\defbaselineskip}
\newcommand{\setlinespacing}[1]
           {\setlength{\baselineskip}{#1 \defbaselineskip}}
\newcommand{\mint}{\mathop{\int\hskip -1,05em -\, \!\!\!}\nolimits}
\newtheorem{theorem}{Theorem}
\newtheorem{remark}{Remark}[section]
\newtheorem{lemma}{Lemma}[section]
\newtheorem{proposition}{Proposition}[section]
\numberwithin{equation}{section}
\newcommand{\kk}{\kappa}
\def\en{\mathbb N}
\def\er{\mathbb R}
\newcommand{\ti}[1]{\tilde{#1}}
\newcommand\eps\varepsilon
\newcommand{\um}{u_{\textnormal{\texttt{m}}}}
\newcommand{\ttdd}{\textnormal{\texttt{d}}}
\newcommand{\umm}{\ti{u}_{\textnormal{\texttt{m}}}}
\newcommand{\gmm}{\ti{g}_{\textnormal{\texttt{m}}}}
\newcommand{\wmm}{\ti{w}_{\textnormal{\texttt{m}}}}
\def\eqn#1$$#2$${\begin{equation}\label#1#2\end{equation}}
\newcommand{\Kst}{\ti{K}_{\texttt{s}}}
\newcommand{\data}{\textnormal{\texttt{data}}}
\newcommand{\datah}{\textnormal{\texttt{data}}_{\textnormal{h}}}
\newcommand{\datar}{\textnormal{\texttt{data}}_{\gamma}}
\newcommand{\datab}{\textnormal{\texttt{data}}_{\textnormal{b}}}
\newcommand{\be}{\begin{equation}}
\newcommand{\ee}{\end{equation}}
\newcommand{\sK}{K_{\texttt{s}}}
\newcommand{\rr}{\varrho}
\newcommand{\snr}[1]{\lvert #1\rvert}
\newcommand{\nr}[1]{\lVert #1 \rVert}
\newcommand{\N}{\mathbb{N}}
\def\name[#1, #2]{#1 #2}
\newcommand{\rif}[1]{(\ref{#1})}
\newcommand{\trif}[1] {\textnormal{\rif{#1}}}
\newcommand{\stackleq}[1]{\stackrel{\eqref{#1}}{ \leq}}
\title[Gradient regularity in mixed local and nonlocal problems]{Gradient regularity in mixed local \\and nonlocal problems}
\author[De Filippis]{Cristiana De Filippis}   \address{Cristiana De Filippis\\Dipartimento SMFI, Universit\`a di Parma, Viale delle Scienze 53/a, Campus, 43124 Parma, Italy} \email{\url{cristiana.defilippis@unipr.it}}
\author[Mingione]{Giuseppe Mingione}  \address{Giuseppe Mingione\\Dipartimento SMFI, Universit\`a di Parma, Viale delle Scienze 53/a, Campus, 43124 Parma, Italy} \email{\url{giuseppe.mingione@unipr.it}}
\begin{document}

\subjclass[2010]{49N60, 35J60\vspace{1mm}} 

\keywords{Regularity, mixed local and nonlocal operators, $p$-Laplacian\vspace{1mm}}

\thanks{{\it Acknowledgements.}\ This work is supported by 
the University of Parma via the project ``Regularity, Nonlinear Potential Theory and related topics".
\vspace{1mm}}

\maketitle

\begin{abstract}
Minimizers of functionals of the type
$$w\mapsto \int_{\Omega}[\snr{Dw}^{p}-fw]\dx+\int_{\mathbb{R}^{n}}\int_{\mathbb{R}^{n}}\frac{\snr{w(x)-w(y)}^{\gamma}}{\snr{x-y}^{n+s\gamma}}\dx\dy$$
with $p, \gamma>1>s >0$ and $p> s\gamma$, are locally $C^{1, \alpha}$-regular in $\Omega$ and globally H\"older continuous. 
\end{abstract}
\vspace{3mm}
\setcounter{tocdepth}{1}
\allowdisplaybreaks
\setlinespacing{1.08}
\section{Introduction}\label{si}
Mixed local and nonlocal problems are a subject of recent, emerging interest and intensive investigation. Essentially, the main object in question is an elliptic operator that combines two different orders of differentiation, the simplest model case being $-\Delta + (-\Delta)^{s}$, for $s \in (0,1)$. Here, the simultaneous presence of a leading local operator, and a lower order fractional one, constitutes the essence of the matter. In this special case, from a variational viewpoint, one is considering energies of the type
$$
w\mapsto \int_{\Omega}\snr{Dw}^{2}\dx+\int_{\mathbb{R}^{n}}\int_{\mathbb{R}^{n}}\frac{\snr{w(x)-w(y)}^{2}}{\snr{x-y}^{n+2s}}\dx\dy\,, \qquad 0 < s < 1\,.
$$
Here, as in all the rest of the paper, $\Omega \subset\er^n$ denotes at a bounded, Lipschitz regular domain and $n\geq 2$. 
First results in this direction have been obtained in \cite{chen, chen1, chen2, foo}, via probabilistic methods. More recently, in a series of interesting papers, Biagi, Dipierro, Valdinoci, and Vecchi \cite{BDVV, BDVV2, BDVV3, BDVV4} have started a systematic investigation of problems involving mixed operators, proving a number of results concerning regularity and qualitative behaviour for solutions, maximum principles, and related variational principles. Up to now, the literature is mainly devoted to the study of linear operators. As for nonlinear cases, for instance those arising from functionals as 
\eqn{model}
$$
w\mapsto \int_{\Omega}[\snr{Dw}^{p}-fw]\dx+\int_{\mathbb{R}^{n}}\int_{\mathbb{R}^{n}}\frac{\snr{w(x)-w(y)}^{\gamma}}{\snr{x-y}^{n+s\gamma}}\dx\dy\,,
$$
the study of regularity of solutions has been confined to $L^\infty_{\loc}(\Omega)$ and $C^{0, \alpha}_{\loc}(\Omega)$ estimates (for small $\alpha$), that is, the classical De Giorgi-Nash-Moser theory. In this paper, our aim is to propose a different approach, aimed at proving maximal regularity of solutions to variational mixed problems in nonlinear, possibly degenerate cases as in \rif{model}. Specifically, we shall prove the local H\"older continuity of the gradient of minimizers. A sample of our results is indeed
\begin{theorem}\label{t1} Let $u \in W^{1,p}_0(\Omega)\cap W^{s,\gamma}(\er^n)$ be a minimizer of \eqref{model}, with $p, \gamma>1>s >0$ and $p> s\gamma$, and such that $u\equiv 0$ on $\er^n\setminus \Omega$. If $\partial \Omega \in C^{1, \alpha_b}$ for some $\alpha_b \in (0,1)$, and $f \in L^d(\Omega)$ for some $d>n$, then $Du$ is locally H\"older continuous in $\Omega$ and $u \in C^{0, \alpha}(\er^n)$ for every $\alpha <1$. 
\end{theorem}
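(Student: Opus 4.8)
The overall plan is to treat the minimizer $u$ as a solution of a perturbed $p$-Laplacian system in which the nonlocal term acts as a bounded right-hand side, and then invoke the classical $C^{1,\alpha}$ theory for the $p$-Laplacian with measurable-in-$x$ data. The first step is to derive the Euler–Lagrange equation for \eqref{model}: for every $\varphi \in C_c^\infty(\Omega)$ one has, formally,
$$
\int_\Omega \snr{Du}^{p-2}Du\cdot D\varphi \dx + \gamma\int_{\er^n}\int_{\er^n}\frac{\snr{u(x)-u(y)}^{\gamma-2}(u(x)-u(y))(\varphi(x)-\varphi(y))}{\snr{x-y}^{n+s\gamma}}\dxy = \int_\Omega f\varphi\dx\,.
$$
The delicate point here is that the minimizer is only in $W^{1,p}_0(\Omega)\cap W^{s,\gamma}(\er^n)$, so one must justify differentiating the nonlocal functional; this is standard once one checks, via the condition $p>s\gamma$ and Sobolev embedding, that the nonlocal term is finite and its variation is well-defined along admissible perturbations. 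The key observation is then that, because $u\equiv 0$ outside $\Omega$ and $u$ is globally bounded (which one obtains first, or cites from the De Giorgi–Nash–Moser theory already available for \eqref{model}), the nonlocal contribution tested against $\varphi$ supported in a fixed interior ball $B\Subset\Omega$ can be rewritten, using that $\varphi(y)=0$ for $y$ outside $B$, as
$$
\int_B \varphi(x)\left(2\gamma\int_{\er^n}\frac{\snr{u(x)-u(y)}^{\gamma-2}(u(x)-u(y))}{\snr{x-y}^{n+s\gamma}}\dy\right)\dx =: \int_B h(x)\varphi(x)\dx\,,
$$
so that $u$ solves $-\dv(\snr{Du}^{p-2}Du) = f - h$ in $B$ in the weak sense.

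**The main work: controlling the nonlocal coefficient $h$.**

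The heart of the proof is showing that $h \in L^q_{\loc}(\Omega)$ for some $q>n$, which is what the $C^{1,\alpha}$ theory for the $p$-Laplacian with right-hand side requires. I expect this to be the main obstacle, and it has to be bootstrapped from the regularity of $u$ itself. The strategy is an iteration: start from $u\in L^\infty\cap C^{0,\beta_0}_{\loc}$ for some small $\beta_0$ (from the already-available De Giorgi theory), and show that $u\in C^{0,\beta}_{\loc}$ with $\beta > s$ improves the integrability of $h$. Indeed, splitting the defining integral for $h(x)$ into the region $\snr{x-y}\leq 1$ and its complement, the far part is controlled by $\nr{u}_{L^\infty}$ plus a tail term, while the near part is estimated by
$$
\int_{\snr{x-y}\le 1}\frac{\snr{u(x)-u(y)}^{\gamma-1}}{\snr{x-y}^{n+s\gamma}}\dy \lesssim [u]_{C^{0,\beta}(B')}^{\gamma-1}\int_{\snr{z}\le 1}\frac{\dz}{\snr{z}^{n+s\gamma-\beta(\gamma-1)}}\,,
$$
which is finite precisely when $\beta(\gamma-1) > s\gamma - n + n = s\gamma$... more carefully, when $\beta > s\gamma/(\gamma-1)$; and since $\gamma>1$ this is an admissible Hölder exponent below $1$ only if $s\gamma/(\gamma-1)<1$, i.e. $s<1-s/\gamma$ — which, after a moment's thought, follows from $s<1$ together with the structural hypotheses, or else must be circumvented. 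The cleaner route, which I would actually pursue, is: once $Du$ is known to be, say, $C^{0,\alpha}_{\loc}$ (the eventual conclusion), the map $x\mapsto \snr{u(x)-u(y)}$ is Lipschitz in $x$ on compact sets, the near-singularity integrand behaves like $\snr{x-y}^{-(n+s\gamma-\gamma)} = \snr{x-y}^{-(n-\gamma(1-s))}$ which is locally integrable since $\gamma(1-s)>0$, and moreover $h$ inherits a Hölder modulus from $u$. So the argument is genuinely a fixed-point/bootstrap: assume $u\in C^{1,\alpha'}_{\loc}$, deduce $h\in C^{0,\min\{\alpha',\ldots\}}_{\loc}\subset L^q_{\loc}$, feed into $p$-Laplacian Schauder/$C^{1,\alpha}$ estimates to recover $u\in C^{1,\alpha}_{\loc}$, and close the loop by a continuity/approximation argument. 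To make this rigorous one regularizes: replace the problem by a nondegenerate penalized version (as the notation $\fne{\cdot}$ and $\fnp{\cdot}$ in the macros suggests the authors do), prove uniform a priori estimates, and pass to the limit.

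**Boundary regularity and global Hölder continuity.**

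For the global statement $u\in C^{0,\alpha}(\er^n)$ for all $\alpha<1$: since $u\equiv 0$ on $\er^n\setminus\Omega$, it suffices to prove $u\in C^{0,\alpha}(\bar\Omega)$. Interior Hölder continuity of $Du$ already gives much more inside; near $\partial\Omega\in C^{1,\alpha_b}$ one uses a barrier argument. The natural barriers are built from the distance function $\dist(\cdot,\partial\Omega)$, flattening the boundary locally via the $C^{1,\alpha_b}$ chart, and comparing $u$ with solutions of the pure $p$-Laplacian equation with the same (bounded) right-hand side $f-h$; the boundary behaviour of $p$-harmonic-type functions with $L^d$, $d>n$, data in $C^{1,\alpha_b}$ domains yields $C^{0,\alpha}$ up to the boundary for every $\alpha<1$ (indeed, one typically gets $C^{1,\alpha'}$ up to the boundary, but $C^{0,\alpha}$ for all $\alpha<1$ is what is claimed and is the robust statement). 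The one subtlety is again that $h$ must be controlled up to $\partial\Omega$: but the tail/near-singularity split for $h$ works globally using only $u\in L^\infty(\er^n)$ (known) and the interior Hölder bounds, since the singular kernel is integrated against the difference $\snr{u(x)-u(y)}$ which is globally bounded, so $h\in L^\infty(\Omega)\subset L^d(\Omega)$ with a bound depending only on $\nr{u}_{L^\infty}$ and $[u]_{C^{0,\beta}}$ on a neighbourhood. Combining the interior $C^{1,\alpha}$ estimate with the boundary barrier estimate and a standard covering/patching argument completes the proof.
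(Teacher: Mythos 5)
Your plan—absorb the nonlocal term into a right-hand side $h(x)$ in $L^q_{\loc}$ with $q>n$, then invoke classical $C^{1,\alpha}$ theory for $-\Delta_p u = f-h$—does not work under the hypotheses of the theorem, and the gap is not cosmetic. The split
$$
\int_{\er^n}\int_{\er^n}\frac{\snr{u(x)-u(y)}^{\gamma-2}(u(x)-u(y))(\varphi(x)-\varphi(y))}{\snr{x-y}^{n+s\gamma}}\dxy = 2\int_{B} h(x)\varphi(x)\dx
$$
requires each of the two summands $\int\int(\cdots)\varphi(x)\dxy$ and $\int\int(\cdots)\varphi(y)\dxy$ to converge absolutely. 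Near the diagonal the magnitude of the integrand of $h$ is $\snr{u(x)-u(y)}^{\gamma-1}\snr{x-y}^{-n-s\gamma}$, and even if $u$ is Lipschitz this is $\lesssim \snr{x-y}^{\,\gamma-1-n-s\gamma}$, which is integrable in $y$ near $x$ precisely when $\gamma-1>s\gamma$, i.e. $\gamma(1-s)>1$. Your exponent $-(n+s\gamma-\gamma)$ drops a $+1$: it corresponds to $\snr{u(x)-u(y)}^{\gamma}$, not $\snr{u(x)-u(y)}^{\gamma-1}$, and the correct integrability threshold is $\gamma(1-s)>1$, not $\gamma(1-s)>0$. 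The theorem's hypotheses $p,\gamma>1>s>0$ and $p>s\gamma$ do \emph{not} imply $\gamma(1-s)>1$ — e.g.\ $\gamma=2$, $s=3/4$, $p=2$ is admissible, but $\gamma(1-s)=1/2<1$ — so $h$ is simply not a well-defined function, even a posteriori, and the remark that $h\in L^{\infty}$ near the boundary ``using only $u\in L^{\infty}(\er^n)$'' is false for the same reason: $\int_{|x-y|<1}|x-y|^{-n-s\gamma}\dy$ diverges regardless of the boundedness of $u$. One could try a principal-value interpretation exploiting the odd symmetry of the kernel, but for $\gamma\neq 2$ the map $t\mapsto |t|^{\gamma-2}t$ is not linear and the cancellation does not come for free; nothing in the proposal addresses this.

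The paper avoids the issue entirely: it never tries to make the nonlocal contribution into a pointwise function. Instead it works with the bilinear form throughout, proving (i) a Caccioppoli inequality controlling both $\int (|Du|^2+\mu^2)^{p/2}$ and the Gagliardo seminorm $[u]_{s,\gamma}$ on interior balls, (ii) a comparison estimate $\mint_{B_{\varrho/4}}|u-h|^p\lesssim \varrho^{\theta\sigma}[\GG]^p$ against the $p$-harmonic replacement $h$, where the nonlocal error is estimated by testing the Euler--Lagrange equation with $u-h$ and using the interpolation of Lemma \ref{bmfinal} together with $p>s\gamma$, and (iii) a Campanato-type decay for a composite excess $\GG$ that tracks the $L^p$ oscillation, a snail term, and the datum. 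The exponent restriction $p>s\gamma$ (encoded in \eqref{centralass2} and exploited through $\vartheta$) is exactly what lets the nonlocal error be absorbed into the local one at every scale, without ever needing the pointwise existence of $(-\Delta)^s_\gamma u$. Gradient regularity then comes from feeding the resulting $C^{0,\beta}$ estimate for every $\beta<1$ back into the same comparison scheme, and the boundary estimate uses flattening plus a boundary Caccioppoli inequality, not barriers for $h$.

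So the difference is not merely stylistic: the paper's integral/comparison scheme is designed precisely so that the problematic quantity you call $h$ never needs to exist, and that is what makes the proof go through for the full range $p>s\gamma$. To rescue your approach you would have to restrict to $\gamma(1-s)>1$ (a genuinely smaller parameter range) or replace the pointwise $h$ by a principal-value operator and prove a priori estimates for it, which is essentially a different, and in the range $\gamma<2$ a quite delicate, argument.
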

Moreover, considering the more familiar case of the sum of two $p$-Laplaceans, we have  
\begin{theorem}\label{ccc6}
Let $u\in W^{1,p}_{\loc}(\Omega)\cap W^{s, p}(\er^n)$ be a solution to 
$-\Delta_p u+(-\Delta_p )^su =f$ in $\Omega$, with $f \in L^d_{\loc}(\Omega)$ for some $d>n$. Then $Du$ is locally H\"older continuous in $\Omega$. 
\end{theorem}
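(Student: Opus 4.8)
The equation $-\Delta_p u+(-\Delta_p)^s u=f$ is the Euler--Lagrange equation of the functional in \eqref{model} corresponding to the choice $\gamma=p$; for this choice the structural restriction $p>s\gamma$ becomes $s<1$ and is therefore automatically verified. Since the local integrand $w\mapsto \tfrac1p\snr{Dw}^p-fw$ and the Gagliardo integrand $w\mapsto \snr{w(x)-w(y)}^p$ are both convex, a weak solution $u\in W^{1,p}_{\loc}(\Omega)\cap W^{s,p}(\er^n)$ is automatically a \emph{local minimizer}: for every ball $B\Subset\Omega$, testing the equation against $w-u\in W^{1,p}_0(B)\cap W^{s,p}(\er^n)$ (recall $W^{1,p}_0(B)\hookrightarrow W^{s,p}(\er^n)$ since $s<1$) and using convexity gives $\mathcal{F}_B(w)\geq\mathcal{F}_B(u)$ whenever $w$ coincides with $u$ outside $B$, where
\[
\mathcal{F}_B(w):=\int_{B}\Big[\tfrac1p\snr{Dw}^p-fw\Big]\dx+\tfrac{1}{2p}\iint\frac{\snr{w(x)-w(y)}^p}{\snr{x-y}^{n+sp}}\dxy.
\]
Consequently Theorem \ref{ccc6} reduces to the \emph{interior} gradient regularity of local minimizers of \eqref{model}, which is precisely what the bulk of the proof of Theorem \ref{t1} establishes: the interior estimates use neither the regularity of $\partial\Omega$ nor the global minimality, but only local minimality together with the finiteness of the nonlocal tail $\tail(u;x_0,\varrho)$ at every scale. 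That finiteness holds here because $u\in W^{s,p}(\er^n)\subset L^p(\er^n)$, so $\int_{\er^n\setminus B_\varrho(x_0)}\snr{u}^{p-1}\snr{x-x_0}^{-n-sp}\dx<\infty$ by H\"older's inequality; the hypothesis $f\in L^d_{\loc}(\Omega)$, $d>n$, plays the same role as in Theorem \ref{t1}.

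\textbf{On the interior scheme.} The interior estimates proceed by comparison. Fix $B_\varrho=B_\varrho(x_0)\Subset\Omega$ and let $v$ solve $\Delta_p v=0$ in $B_\varrho$ with $v=u$ on $\partial B_\varrho$. Testing the two problems against $u-v$ and invoking the monotonicity inequalities for the $p$-Laplacian, written through the map $z\mapsto V(z):=\snr{z}^{(p-2)/2}z$ so that $\snr{V(z_1)-V(z_2)}^2\approx(\snr{z_1}^2+\snr{z_2}^2)^{(p-2)/2}\snr{z_1-z_2}^2$, yields a comparison estimate of the form
\[
\mint_{B_\varrho}\snr{V(Du)-V(Dv)}^2\dx\ \lesssim\ \big(\text{Morrey term from }f\big)+\big(\text{nonlocal term}\big),
\]
where the first summand is controlled via $f\in L^d$, $d>n$, and the second scales like a strictly positive power of $\varrho$ times a power of $\tail(u;x_0,\varrho)$, plus a local excess of $Du$. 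Inserting the classical excess decay for $p$-harmonic functions, $\mint_{B_\sigma}\snr{Dv-(Dv)_{B_\sigma}}^p\lesssim(\sigma/\varrho)^{p\beta}\mint_{B_\varrho}\snr{Dv-(Dv)_{B_\varrho}}^p$ with $\beta=\beta(n,p)\in(0,1)$, combining via the triangle inequality, and iterating over dyadic scales (a Campanato-type iteration) produces $Du\in C^{0,\alpha}_{\loc}(\Omega)$ for some $\alpha=\alpha(n,p,s,d)\in(0,1)$.

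\textbf{Main obstacle.} The crux is the nonlocal term, both in the comparison step and along the iteration: one must propagate a uniform bound on the tails $\tail(u;x_0,\sigma)$ across all scales and check that the power of $\sigma$ gained from the nonlocal contribution is strictly subcritical with respect to the $p$-harmonic excess-decay exponent $\beta$, so that it is reabsorbed in the iteration. This is exactly where the gap $p>sp$, i.e. $s<1$, enters decisively. In practice this forces one to first upgrade $u$ to $L^\infty_{\loc}(\Omega)\cap C^{0,\alpha_0}_{\loc}(\Omega)$ for a small $\alpha_0$ --- available from the De Giorgi--Nash--Moser theory already developed for mixed operators --- which renders all the tail quantities locally finite and quantitatively controlled; and, since $p\neq2$ is permitted, the degeneracy or singularity of the $p$-Laplacian requires the systematic use of the $V$-function monotonicity inequalities in place of the Hilbert-space identities that would be available in the model case $-\Delta+(-\Delta)^s$.
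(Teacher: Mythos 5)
Your global plan --- read the equation as an Euler--Lagrange equation of \eqref{model} with $\gamma=p$, then invoke the interior comparison-with-$p$-harmonic scheme --- has the right shape, and the reduction to local minimality via convexity is technically correct. But it diverges from the paper in a structural way: the paper's Theorem~\ref{t6} is formulated precisely so that this reduction is \emph{not} needed. All of the interior estimates (the Caccioppoli inequality of Lemma~\ref{cacclem}, the comparison Lemmas~\ref{har} and~\ref{harfinal}, and the Campanato iteration of Section~\ref{intit}) are run directly on the weak formulation \eqref{el2}/\eqref{elA}; the only place where minimality appears (Proposition~\ref{boundprop}) is explicitly noted to extend to critical points via \eqref{usa}. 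Going through the variational characterization is a detour, and it would not extend to the non-variational vector fields that Theorem~\ref{t6} also covers.

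The serious gap is in your ``main obstacle'' paragraph. You propose to first establish $L^\infty_{\loc}\cap C^{0,\alpha_0}_{\loc}$ ``for a small $\alpha_0$'', citing the mixed-operator De Giorgi--Nash--Moser theory, and then feed this into the gradient-level comparison. This does not close. In the comparison Lemma~\ref{harfinal}, the nonlocal contributions to the excess are estimated in \eqref{prendila2}--\eqref{prendila3} by powers of $\varrho$ of the form $\varrho^{(\beta-s)(\gamma-1)-\lambda}$ and $\varrho^{\beta-s}$, where $\beta$ is the \emph{prior} H\"older exponent of $u$; these are useless unless $\beta>s$. Since $s\in(0,1)$ is arbitrary in Theorem~\ref{ccc6}, the prior regularity must be $C^{0,\alpha}_{\loc}$ for \emph{every} $\alpha<1$ (the paper even takes $\beta=(1+s)/2$), not for some fixed small $\alpha_0$. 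This ``almost-Lipschitz'' intermediate estimate is precisely what the existing De Giorgi--Nash--Moser literature for mixed operators does \emph{not} give, and proving it is the principal new content: the Campanato-type iteration of Section~\ref{intit}, built on the coupled excess $\GG$ of \eqref{ggdef} which combines the local average $\av_p$ with the Campanato-style $\snail_\delta$ of \eqref{snaildef}. Your proposal treats this step as off-the-shelf, which it is not. (A smaller inaccuracy: for $\gamma=p$ no a priori $L^\infty_{\loc}$ bound is needed at all --- see Remark~\ref{localerem} and the first bullet of Theorem~\ref{t6}.)
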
 
Our approach is flexible and allows us to consider general functionals of the type 
\eqn{fun.1}
$$
\mathcal{F}(w):=\int_{\Omega}\left[F(Dw)-fw\right]\dx+\int_{\mathbb{R}^{n}}\int_{\mathbb{R}^{n}}\Phi(w(x)-w(y))K(x,y)\dx\dy
$$
modelled on the one in \rif{model}, i.e. $F(Dw)\approx |Dw|^p$ in the $C^2$-sense, $\Phi(t)\approx t^{\gamma}$ in the $C^1$-sense and $K(x,y) \approx |x-y|^{-n-s\gamma}$. Note that, although we specialize to the variational setting, the regularity estimates we are presenting here actually work for general mixed equations almost verbatim, as our analysis is essentially based on the use of the Euler-Lagrange equation of functionals as in \rif{fun.1}; for this, see Section \ref{estensione}. For the correct notion of minimality, and the related functional setting, as well as for results in full generality, see Section \ref{exunisec}. Theorem \ref{t1} achieves the maximal regularity of minima, namely, the local H\"older continuity of the gradient of minimizers in $\Omega$. This is the best possible result already in the purely local case given by the $p$-Laplacean equation $-\Delta_pu=0$, which is covered by Uraltseva-Uhlenbeck theory and related counterexamples \cite{le1, manth1, manth2, uh, ur}. In addition, the case $p\not= \gamma$ is here considered for the first time, thereby allowing a full mixing between local and nonlocal terms. In this respect, the central assumption is \eqn{centralass}
$$
p>s\gamma\,,
$$
that says, roughly speaking, that the fractional $W^{s, \gamma}$-capacity generated by the nonlocal term in \rif{model} can be controlled by the $W^{1, p}$-capacity (the standard $p$-capacity) generated by $w \mapsto \int  |Dw|^p\dx$. This is exactly the point ensuring that the nonlocal term in \rif{model} has less regularizing effects that the local one, as it happens in the basic case $-\Delta + (-\Delta)^{s}$, when $p=\gamma=2$, and also in the nonlinear models of the type $-\Delta_p+  (-\Delta)^{s}_p$, where the fractional $p$-Laplacean operator appears \cite{dss, fsz, fz, gaki,  kkl, kkp, kms1, kms2}. We also note that, as far as we known, allowing the condition $p\not= \gamma$ is a new, non-trivial feature already when $p=2$ and that even the basic De Giorgi-Nash-Moser theory is not available when $p\not =\gamma$. As a matter of fact, all our estimates simplify in the case $p=\gamma$. 

We have reported Theorem \ref{t1} for the sake of exposition but it is actually a very special case of more general results, i.e., Theorems \ref{t2}-\ref{t4}, whose statements are necessarily more involved due to their greater generality. Before stating the precise assumptions and the results in full generality, we spend a few words about the techniques we are going to use, and on some relevant connections. Up to now, the methods proposed in the literature to deal with mixed operators are, in a sense, direct. More precisely, both the local terms and the nonlocal ones stemming from the equations interact simultaneously via energy methods. These techniques ultimately rely on those used in the nonlocal case \cite{brasco1, brasco2, brasco3, DKP, DKP2, kms1, kms2, kkl, kkp} for purely nonlocal operators. This approach does not allow to prove regularity of solutions beyond that allowed by nonlocal operators techniques, which is not the best one can hope for, as, in mixed operators, the leading regularizing term is the local one. In this paper we reverse the approach, relying more on the methods, and, especially, on the estimates available in  regularity theory of local operators. In a sense, we separate the local and nonlocal part combining energy estimates of Caccioppoli type with a perturbative like approach. The crucial point is to fit the terms stemming from the nonlocal term in the iteration procedures that would naturally come up from considering the local part only. For this we have to consider a complex scheme of quantities, interacting with each other, and controlling simultaneously both the oscillations of the solution on small balls, and those averaging the oscillations over their complement (such quantities are detailed in Section \ref{lalista}). This first leads to H\"older regularity of solutions with every exponent (Theorem \ref{t2}) and then to the same kind of estimates globally (Theorem \ref{t3}); combining these  ingredients with a priori regularity estimates from the classical local theory, leads to Theorem \ref{t4}. We mention that, due to the assumption $p\not=\gamma$, functionals as in \rif{model}-\rif{fun.1}  connect to a large family of problems featuring anisotropic operators and integrands with so-called nonstandard growth conditions \cite{CM, ciccio, dm, dq, ELM, koch, liebe}, and to some other classes of anisotropic nonlocal problems \cite{byun, byun2, chaker0, chaker, depa, Lin16}. We mention that a further connection has been established in \cite{dm3}, where a class of mixed functionals has been used to approximate local functionals with $(p,q)$-growth in order to prove higher integrability of minimizers. Further approximations via mixed operators occur in the interesting paper \cite{sttz}. 
\vspace{-1.3mm}

 \subsection{Assumptions and results}\label{exunisec}
When considering the functional $\mathcal{F}$ in \rif{fun.1}, the integrand $F\colon \mathbb{R}^{n}\to \mathbb{R}$ is assumed to be $C^{2}(\mathbb{R}^{n}\setminus \{0\})\cap C^{1}(\mathbb{R}^{n})$-regular and to satisfy the following standard $p$-growth and coercivity assumptions (see \cite{laur, manth1, manth2})
\begin{eqnarray}\label{assf}
\begin{cases}
\ \Lambda^{-1}(\snr{z}^{2}+\mu^{2})^{p/2}\le F(z)\le \Lambda(\snr{z}^{2}+\mu^{2})^{p/2}\\
\ \snr{\partial_{z} F(z)}+(\snr{z}^{2}+\mu^{2})^{1/2}\snr{\partial_{zz}F(z)}\le \Lambda(\snr{z}^{2}+\mu^{2})^{(p-1)/2}\\
\ \Lambda^{-1}(\snr{z}^{2}+\mu^{2})^{(p-2)/2}\snr{\xi}^{2}\leq \partial_{zz}F(z)\xi\cdot \xi 
\end{cases}
\end{eqnarray}
for all $z\in \mathbb{R}^{n}\setminus \{0\}$, $\xi\in \mathbb{R}^{n}$, where $\mu\in [0,1]$ and $\Lambda\geq 1$ are fixed constants. The function $\Phi\colon \mathbb{R}\to \mathbb{R}$ is assumed to satisfy
\begin{eqnarray}\label{assph}
\begin{cases}
\ \Phi(\cdot)\in C^{1}(\mathbb{R})\,, \quad  t\mapsto \Phi(t) \ \textnormal{is convex}\\
\ \Lambda^{-1}\snr{t}^{\gamma}\le \Phi(t)\le \Lambda\snr{t}^{\gamma}\,, \quad  \Lambda^{-1}\snr{t}^{\gamma}\le \Phi'(t)t\le \Lambda \snr{t}^{\gamma}
\end{cases}
\end{eqnarray}
for all $t\in \mathbb{R}$. The kernel $K\colon \er^n\times \er^n \to \er$ satisfies
\begin{eqnarray}\label{assk}
\frac{1}{\Lambda\snr{x-y}^{n+s\gamma}}\le K(x,y)\le \frac{\Lambda}{\snr{x-y}^{n+s\gamma}}
\end{eqnarray}
for all $x,y\in \er^n, x\not =y$. As already mentioned, unless otherwise stated, $p,s,\gamma$ are such that $p,\gamma >1 >s >0$, with $p> s\gamma$. We shall consider a boundary datum $g\in W^{1,p}(\Omega)\cap W^{s,\gamma}(\mathbb{R}^{n})$. 
In order to get global continuity of minimizers, we consider the following requirements on the boundary $\partial \Omega$:
\eqn{g3}
$$
\begin{cases}
\partial\Omega \in C^{1, \alpha_b},\ \  \alpha_b \in (0,1)\\
g\in W^{1,q}(\Omega)\cap W^{a,\chi}(\mathbb{R}^{n})\\
q>p,\ \ a>s,\ \  \chi>\gamma, \ \  \kappa:=\min \{1-n/q, a-n/\chi\}>0\,.
\end{cases}
$$
In particular, this implies that $q, a\chi>n$. Interior H\"older estimates, both for minima and their gradients, need less, and essentially no boundary assumption; for this, we shall replace \rif{g3} by the weaker
\eqn{g33}
$$
 g\in L^{\infty}(\mathbb{R}^{n})
$$
that in fact will only be needed in when $\gamma >p$. Note that $W^{a,\chi}(\mathbb{R}^{n})\subset L^\infty(\er^n)$ holds provided $a-n/\chi>0$. Conditions \eqref{assf}-\eqref{g33} lead to consider the following natural functional setting:
$$
\begin{cases}
\ \mathbb{X}_{g}(\Omega):=\left\{w\in g + W^{1,p}_0(\Omega)\cap W^{s,\gamma}(\mathbb{R}^{n})\colon w\equiv g \ \mbox{in} \ \ \mathbb{R}^{n}\setminus \Omega\right\}\\
\ \mathbb{X}_{0}(\Omega):=\left\{w\in W^{1,p}_0(\Omega)\cap W^{s,\gamma}(\mathbb{R}^{n})\colon w\equiv 0 \ \mbox{in} \ \mathbb{R}^{n}\setminus \Omega\right\}\,.
\end{cases}
$$
Note that some ambiguity arises in the definition of $\mathbb{X}_{g}$; in fact, this is actually meant as the subspace of functions $w\in W^{s, \gamma}(\er^n)$ whose restriction on $\Omega$ belongs to $g + W^{1,p}_0(\Omega)$. Compare for instance with the discussion made in \cite{BDVV, BDVV4}, where related functional settings are considered.  
Under assumptions \eqref{assf}-\eqref{assk} and \eqref{g33}, there exists a unique solution $u\in \mathbb{X}_{g}(\Omega)$ to 
\eqn{fun}
$$
\mathbb{X}_{g}(\Omega)\ni u\mapsto  \min_{w \in \mathbb{X}_{g}(\Omega)} \mathcal{F}(w)\,.
$$
Moreover
\eqn{el}
$$\int_{\Omega}\left[\partial_{z} F(Du)\cdot D\varphi -f\varphi\right]\dx
+\int_{\mathbb{R}^{n}}\int_{\mathbb{R}^{n}}\Phi'(u(x)-u(y))(\varphi(x)-\varphi(y)) K(x,y)\dx\dy=0 
$$
holds for every $\varphi\in \mathbb{X}_{0}(\Omega)$. The proof of these facts is quite standard, and relies on the application of Direct Methods of the Calculus of Variations. The details can be found for instance in \cite[Sections 3.3-3.5]{dm3}, where actually a more delicate case of mixed operators is considered. As for the derivation of the Euler-Lagrange equation, this is standard once \rif{assf}-\rif{assk} are assumed, and, for the nonlocal part, proceeds as in \cite{DKP, dm3}. Assumptions \rif{assf}-\rif{g33} come along with two different lists of parameters (the data of the problem) that we shall use to simplify the dependence on the various constants. These are
\eqn{idati}
$$
\begin{cases}
\, \datab:=(n,p,s,\gamma,\Lambda,\|f\|_{L^n(\Omega)},\nr{g}_{W^{1,p}(\Omega)},\nr{g}_{W^{s,\gamma}(\er^n)}, \|g\|_{L^{\infty}(\er^n)}, \Omega)\\
\, \datah:=(n,p,s,\gamma,\Lambda,\|f\|_{L^n(\Omega)},\nr{u}_{W^{1,p}(\Omega)},\nr{u}_{W^{s,\gamma}(\er^n)}) \  \ \mbox{if $\gamma \leq p$}\\
\, \datah:=\datab \ \  \mbox{if $\gamma >p$}\\
\, \data:=(n,p,s,\gamma,\Lambda,\|f\|_{L^d(\Omega)},\nr{g}_{W^{1,q}(\Omega)},\nr{g}_{W^{s,\gamma}(\er^n)},\nr{g}_{W^{a,\chi}(\er^n)},\Omega)\,.
\end{cases}
$$
For the sake of brevity we shall sometimes indicate a dependence of a constant $c$ on one of the lists in \rif{idati}, also when it will actually occur on a subset of the parameters involved. For example, a constant $c$ depending only on $n,p,s,\gamma$ might be still indicated as $c \equiv c (\datah)$. \begin{theorem}[Almost local Lipschitz continuity]\label{t2}
Under assumptions \eqref{assf}-\eqref{assk} and \eqref{g33}, with $f\in L^n(\Omega)$, let $u\in \mathbb{X}_{g}(\Omega)$ be as in \eqref{fun}. Then $u\in C^{0,\alpha}_{\loc}(\Omega)$ for every $\alpha\in (0,1)$ and, for every open subset $\Omega_0\Subset \Omega$,   
$
[u]_{0, \alpha;\Omega_0} \leq c 
$
holds with $c\equiv c(\datah,\alpha, \dist(\Omega_0, \partial \Omega))$. Assumption \eqref{g33} can be dropped when $\gamma \leq p$. 
\end{theorem}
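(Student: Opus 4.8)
The plan is to reverse the usual ``nonlocal-first'' strategy and reduce the statement to the classical local theory of $p$-growth functionals, treating the nonlocal term as a genuine perturbation at small scales; this is licit exactly because the gap $p>s\gamma$ makes the fractional $W^{s,\gamma}$-energy subcritical with respect to the $W^{1,p}$-energy, hence of strictly lower order in $r$ on a ball $B_{r}$. \emph{Step~1 (Caccioppoli inequality and local boundedness).} Test \eqref{el} with $\varphi=\eta^{p}(u-k)_{\pm}$, $\eta$ a cut-off in $B_{r}=B_{r}(x_{0})\Subset\Omega$ and $k\in\er$. Besides the usual local terms and the term with $f$, the double integral contributes, once $\er^{n}\times\er^{n}$ is split into the diagonal block $B_{r}\times B_{r}$ and its complement: a favourably signed piece, by convexity of $\Phi$ and \eqref{assph}, up to an error $\lesssim[(u-k)_{\pm}]_{W^{s,\gamma}(B_{r})}^{\gamma}$ -- which is absorbed because such a fractional seminorm of a $W^{1,p}_{0}(B_{r})$-function carries a positive power $r^{\delta}$, $\delta=\delta(n,p,s,\gamma)>0$ (see Step~3); and an off-diagonal tail term $\lesssim\snr{B_{r}}[\tail(u;x_{0},r)]^{\gamma-1}$, finite since $u\in W^{s,\gamma}(\er^{n})$ and controlled, in a $\datah$-quantified way, by $\nr{u}_{W^{s,\gamma}(\er^n)}$, or, when $\gamma>p$, by $\nr{g}_{L^{\infty}}$ via \eqref{g33}. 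Feeding the resulting Caccioppoli inequality into the standard level-set iteration gives $u\in L^{\infty}_{\loc}(\Omega)$ with local bound depending on $\datah$; when $\gamma\le p$ the tail is already dominated by $\nr{u}_{W^{s,\gamma}(\er^n)}$ and \eqref{g33} can be dropped.

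\emph{Step~2 (freezing and comparison).} Fix $B_{r}=B_{r}(x_{0})\Subset\Omega_{0}$ and let $v\in u+W^{1,p}_{0}(B_{r})$ solve $\int_{B_{r}}\partial_{z}F(Dv)\cdot D\varphi\dx=0$ for all $\varphi\in W^{1,p}_{0}(B_{r})$, extended by $u$ outside. By the classical $C^{1,\beta_{0}}$-theory for frozen $p$-growth functionals under \eqref{assf} \cite{manth1,manth2,uh,ur}, together with the maximum principle and a Caccioppoli inequality for $v$, one has $\nr{Dv}_{L^{\infty}(B_{r/2})}\le c\,(\osc_{B_{r}}u)/r+c\mu$ and gradient-excess decay with any exponent $<1$; in particular $\osc_{B_{\rho}}v\le c(\rho/r)(\osc_{B_{r}}u+\mu r)$ for $\rho\le r/2$. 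Testing the difference of \eqref{el} and the $v$-equation with $\varphi=u-v\in W^{1,p}_{0}(B_{r})\cap\mathbb{X}_{0}(\Omega)$ and using the strong monotonicity in \eqref{assf} yields
$$
\int_{B_{r}}\snr{V_{\mu}(Du)-V_{\mu}(Dv)}^{2}\dx\ \le\ c\int_{B_{r}}\snr{f}\snr{u-v}\dx+c\,\snr{\mathcal{I}},\qquad V_{\mu}(z):=(\snr{z}^{2}+\mu^{2})^{(p-2)/4}z,
$$
with $\mathcal{I}:=\int_{\er^{n}}\int_{\er^{n}}\Phi'(u(x)-u(y))\bigl((u-v)(x)-(u-v)(y)\bigr)K(x,y)\dxy$. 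The $f$-term is handled by H\"older and the Sobolev--Poincar\'e inequality on $B_{r}$; crucially $\nr{f}_{L^{n}(B_{r})}\to0$ as $r\to0$ but with no rate, which is exactly what yields ``almost'' and not full Lipschitz continuity (Theorem~\ref{t4} gains a rate from $f\in L^{d}$, $d>n$).

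\emph{Step~3 (the nonlocal term and iteration).} Since $u\equiv v$ outside $B_{r}$, split $\mathcal{I}$ into $B_{r}\times B_{r}$ and its complement. On the diagonal block, $\snr{\Phi'(u(x)-u(y))}\lesssim\snr{u(x)-u(y)}^{\gamma-1}$ and H\"older bound it by $[u]_{W^{s,\gamma}(B_{r})}^{\gamma-1}[u-v]_{W^{s,\gamma}(B_{r})}$, and $[u-v]_{W^{s,\gamma}(B_{r})}$ is controlled by $r^{\delta}$ ($\delta>0$) times $\nr{D(u-v)}_{L^{p}(B_{r})}$ when $\gamma\le p$ (a routine Sobolev embedding for $W^{1,p}_{0}(B_{r})$, with e.g.\ $\delta=1-s+n(1/\gamma-1/p)$), whereas when $\gamma>p$ one first uses the local boundedness from Step~1 to write $\snr{(u-v)(x)-(u-v)(y)}^{\gamma}\le c\,\nr{u-v}_{L^{\infty}(B_{r})}^{\gamma-p}\snr{(u-v)(x)-(u-v)(y)}^{p}$, whence $[u-v]_{W^{s,\gamma}(B_{r})}^{\gamma}\le c\,\nr{u-v}_{L^{\infty}(B_{r})}^{\gamma-p}[u-v]^{p}_{W^{s\gamma/p,\,p}(B_{r})}$, and here $p>s\gamma$ is precisely the condition $s\gamma/p<1$ that makes $W^{1,p}_{0}(B_{r})\hookrightarrow W^{s\gamma/p,p}(B_{r})$ available, with gain $r^{1-s\gamma/p}$. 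The complementary block, where the $(u-v)$-factor forces the corresponding variable into $B_{r}$, is bounded by $\snr{B_{r}}[\tail(u;x_{0},r)]^{\gamma-1}\nr{u-v}_{L^{\infty}(B_{r})}$ plus lower-order pieces, again using boundedness and $u\in W^{s,\gamma}(\er^n)$. Combining these bounds, converting $V_{\mu}$-distances into gradient distances, and absorbing via the energy bounds of Step~1, one obtains $\snr{B_{r}}^{-1}\int_{B_{r}}\snr{D(u-v)}^{p}\dx\le c\,r^{\kappa_{0}}$ for some $\kappa_{0}=\kappa_{0}(\datah)>0$; then, using local boundedness estimates for $u-v$ (which solves a degenerate equation with right-hand side $f$ plus a nonlocal contribution), one gets $\nr{u-v}_{L^{\infty}(B_{r/2})}\le c\,r^{\kappa_{1}}$ for some $\kappa_{1}>0$. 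Hence, for $\rho\le r/2$,
$$
\osc_{B_{\rho}}u\ \le\ \osc_{B_{\rho}}v+2\nr{u-v}_{L^{\infty}(B_{\rho})}\ \le\ c(\rho/r)(\osc_{B_{r}}u+r)+c\,r^{\kappa_{1}},
$$
and a standard iteration lemma upgrades this to $\osc_{B_{\rho}}u\le c(\rho/r)^{\alpha}(\osc_{B_{r}}u+r)$ for every $\alpha\in(0,1)$, with $c=c(\datah,\alpha)$; together with Step~1 this gives $u\in C^{0,\alpha}_{\loc}(\Omega)$ for all $\alpha<1$, with $[u]_{0,\alpha;\Omega_{0}}$ estimated in terms of $\datah$, $\alpha$ and $\dist(\Omega_{0},\partial\Omega)$.

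The main obstacle is the quantitative control of $\mathcal{I}$ (and of the corresponding nonlocal term in the Caccioppoli inequality): one must extract a genuine positive power $r^{\delta}$ of the radius from \emph{both} the diagonal and the tail contributions. The diagonal part rests squarely on $p>s\gamma$, which renders the embedding into the relevant fractional space subcritical with a scaling surplus; the tail part additionally needs the a priori local boundedness of $u$ -- hence $\nr{g}_{L^{\infty}}$ when $\gamma>p$ -- and, most delicately, a \emph{simultaneous} coupled iteration of the oscillation $\osc_{B_{\rho}}u$ with the nonlocal tail- and ``snail''-type quantities, because the relevant tails grow like a negative power of $r$ as $r\to0$ and can only be tamed if controlled jointly with the oscillation. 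This coupled scheme -- the ``complex scheme of interacting quantities'' announced in the introduction and set up in Section~\ref{lalista} -- is where the real work lies; once it is in place, the reduction to the local theory via the freezing argument above is essentially standard.
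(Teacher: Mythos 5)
Your overall strategy (perturbing the classical local $p$-growth theory with the nonlocal part treated as a lower-order perturbation at scale $r$, exploiting $p>s\gamma$) is the same as the paper's, and your Steps~1--2 are essentially consistent with the paper's Lemma~\ref{cacclem} and Lemma~\ref{har} (the paper establishes global boundedness in Proposition~\ref{boundprop} from the boundary datum rather than a local De Giorgi iteration, and only invokes it when $\gamma>p$, but this is a harmless routing difference). The genuine gap is in your Step~3, in the conversion of the comparison estimate to an \emph{absolute} decay rate and the final iteration on oscillations.

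You reach $\snr{B_{r}}^{-1}\int_{B_{r}}\snr{D(u-v)}^{p}\dx\le c\,r^{\kappa_{0}}$ and then $\nr{u-v}_{L^{\infty}(B_{r/2})}\le c\,r^{\kappa_{1}}$ with $\kappa_{0},\kappa_{1}=\kappa_{i}(\datah)>0$ fixed. Plugging this into
$\osc_{B_{\rho}}u\le c(\rho/r)(\osc_{B_{r}}u+r)+c\,r^{\kappa_{1}}$ and applying the standard iteration lemma (such as Lemma~\ref{l5bis}) yields $\osc_{B_{\rho}}u\lesssim\rho^{\min\{\kappa_{1},\textnormal{\texttt{b}}\}}$: the additive error $r^{\kappa_{1}}$ with fixed $\kappa_{1}<1$ caps the achievable H\"older exponent at $\kappa_{1}$, so this argument gives $C^{0,\kappa_{1}}_{\loc}$ for some $\kappa_{1}$, not $C^{0,\alpha}_{\loc}$ for every $\alpha<1$. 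The paper avoids this by never replacing the comparison error with an absolute power of $r$: Lemma~\ref{har} gives $\mint_{B_{\rr/4}}\snr{u-h}^{p}\dx\le c\rr^{\theta\sigma}[\GG(u,B_{\rr})]^{p}$, i.e.\ an error \emph{proportional to the current excess} $\GG$ (which bundles $\av_{p}$, the snail $\snail_{\delta}$, and $\rhs$), with only a small multiplicative factor $\rr^{\theta\sigma}$ available for reabsorption. Iterating the coupled quantity $\GG$ via \eqref{primadecay}--\eqref{analoga} and the fractional sharp maximal operator in Section~\ref{intit} is what produces every $\alpha<1$. You explicitly acknowledge that this ``simultaneous coupled iteration'' of the oscillation and the snail is ``where the real work lies,'' but the displayed inequality you actually derive does not encode that coupling; it reduces the nonlocal contribution to a fixed decaying constant, which is structurally incapable of yielding the claimed arbitrary exponent. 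To close the gap you must keep the comparison error in the form $\rr^{\theta\sigma}\GG(\rr)$, propagate the decay of $\snail_{\delta}$ alongside $\av_{p}$ (the estimate \eqref{scasnail} is what makes the snail decay track the decay of the averages), and only at the very end pass to pointwise H\"older continuity via the Campanato--Meyers characterization.
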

\begin{theorem}[Global H\"older continuity]\label{t3}
Under assumptions \eqref{assf}-\eqref{g3} with $f\in L^n(\Omega)$, let $u\in \mathbb{X}_{g}(\Omega)$ be as in \eqref{fun}. Then $u\in C^{0,\alpha}(\er^n)$ for every $\alpha < \kappa$ and $[u]_{0, \alpha;\er^n}\leq c (\data)$. In particular, if in addition $g \in W^{1, \infty} (\er^n)$, then $u\in C^{0,\alpha}(\er^n)$ for every $\alpha <1$. 
\end{theorem}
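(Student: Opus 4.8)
\medskip
\noindent\textbf{Proof plan.} The plan is to combine the interior estimate of Theorem \ref{t2} — which already gives $u\in C^{0,\alpha}_{\loc}(\Omega)$ for every $\alpha<1$, with bounds on every $\Omega_0\Subset\Omega$ — with a boundary oscillation decay, and to glue the two by a standard covering argument. Thus it suffices to prove: for each $x_0\in\partial\Omega$, each $\alpha<\kappa$ and all small $\varrho$,
\[
\osc_{B_\varrho(x_0)\cap\Omega} u\le c\,\varrho^{\alpha}\,,\qquad c\equiv c(\data)\,.
\]
The value $\kappa$ is dictated by the boundary datum. By \eqref{g3} and Morrey-type embeddings ($W^{1,q}(\Omega)\hookrightarrow C^{0,1-n/q}(\bar\Omega)$ on the Lipschitz domain $\Omega$, and $W^{a,\chi}(\er^n)\hookrightarrow C^{0,a-n/\chi}(\er^n)\cap L^\infty(\er^n)$), $g$ is globally bounded and Hölder continuous; when one compares $u$ with $g$ near $\partial\Omega$, the local $p$-energy sees $g\in W^{1,q}$ with $q>p$ and returns the Morrey exponent $1-n/q$, while the nonlocal part sees $g\in W^{a,\chi}$ with $a>s$, $\chi>\gamma$ and returns $a-n/\chi$; since the two mechanisms are coupled inside one iteration, the attainable rate is their minimum, i.e. $\kappa$.

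\medskip
\noindent\textbf{The mechanism.} As a preliminary I would record that $u\in L^\infty(\er^n)$: outside $\Omega$, $u=g\in L^\infty(\er^n)$, while inside a routine sub/supersolution test of \eqref{el} with $(u-k)_{\pm}$, $k$ beyond $\nr{g}_{L^\infty(\er^n)}$, bounds $u$ on $\Omega$; this also makes the nonlocal tail $\tail(u;x_0,\varrho)$ finite and controllable in terms of $\data$. Next I would set up a boundary Caccioppoli inequality by testing \eqref{el} with $\varphi=(u-k)_{\pm}\eta^{p}$, where $\eta$ is a cutoff supported in $B_{2\varrho}(x_0)$ and $k$ is chosen so that $k\ge\sup_{B_{2\varrho}(x_0)\setminus\Omega}g$ for the upper truncation (resp.\ $k\le\inf$ for the lower one); then $(u-k)_{\pm}$ vanishes on $B_{2\varrho}(x_0)\setminus\Omega$, so $\varphi\in\mathbb{X}_{0}(\Omega)$ is admissible — in particular one need not flatten $\partial\Omega$. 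The local term is handled via \eqref{assf} exactly as in the purely $p$-Laplacian theory; the $f$-term is absorbed using $f\in L^n(\Omega)$ with Hölder's and Sobolev's inequalities on the level set, producing a contribution small on small balls; the nonlocal term splits, as in \cite{DKP}, into a near-diagonal part with both points in $B_{2\varrho}(x_0)$ — which combines with the local term thanks to the convexity sign $\Phi'(t)t\ge0$ — and a long-range part, bounded by $c\,\varrho^{n-s\gamma}\nr{u}^{\gamma-1}_{L^\infty(\er^n)}\int_{B_\varrho(x_0)}(u-k)_{\pm}\dx$.

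\medskip
\noindent\textbf{Main obstacle and conclusion.} The crux — the step I expect to be hardest — is to fit the nonlocal contributions into the De Giorgi iteration dictated by the local $p$-energy. This is exactly where $p>s\gamma$ enters: since $n-s\gamma>n-p$, the long-range term carries a strictly higher power of $\varrho$ than the leading local term of order $\varrho^{n-p}$, so it is subordinate and does not obstruct the iteration. Concretely I would run the coupled scheme of quantities of Section \ref{lalista} in its boundary form, controlling at once the oscillation of $u$ on the sets $B_\varrho(x_0)\cap\Omega$ and the complement-averaged tail quantities; since $\partial\Omega\in C^{1,\alpha_b}$ — already since it is Lipschitz — the set $B_\varrho(x_0)\setminus\Omega$ has density bounded below, and there $u=g$ is $C^{0,\kappa}$, so taking $k$ within $c\,\varrho^{\kappa}$ of $g(x_0)$ supplies the measure-theoretic smallness needed to start the iteration. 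Iterating gives the displayed boundary decay for every $\alpha<\kappa$; combining it with Theorem \ref{t2} yields $u\in C^{0,\alpha}(\er^n)$ for all $\alpha<\kappa$ with $[u]_{0,\alpha;\er^n}\le c(\data)$. Finally, if $g\in W^{1,\infty}(\er^n)$ then $g$ is Lipschitz, so $1-n/q$ and $a-n/\chi$, hence $\kappa$, may be taken arbitrarily close to $1$, and the conclusion holds for every $\alpha<1$.
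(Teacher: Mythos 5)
Your proposal has a genuine gap in the iteration step. The truncation-based De Giorgi scheme you set up — testing \eqref{el} with $\varphi=(u-k)_\pm\eta^p$ and exploiting the measure density of $B_\varrho(x_0)\setminus\Omega$ — produces boundary H\"older continuity only with some fixed small exponent $\alpha_0$ depending on the structural data, not with arbitrary $\alpha<\kappa$. This is the same limitation already present for the local equation $-\Delta_p u=f$: De Giorgi yields a small $\alpha_0>0$, and to reach the sharp exponent dictated by the boundary datum one must switch to a comparison/perturbative argument against a reference solution whose regularity tracks that of $g$. Your sentence ``Iterating gives the displayed boundary decay for every $\alpha<\kappa$'' conflates the two methods: the Campanato-type quantities of Section \ref{lalista} (which you invoke) are driven by a comparison lemma, not by truncation estimates, and you never introduce a reference problem. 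Relatedly, the remark that one may avoid flattening and that Lipschitz regularity of $\partial\Omega$ ``already'' suffices is off: Lipschitz gives measure density and a De Giorgi exponent, but the paper uses $\partial\Omega\in C^{1,\alpha_b}$ essentially, through the boundary comparison estimate \eqref{bb.9}, where the $C^{0,\alpha_b}$-modulus of the flattened coefficients produces the crucial factor $\varrho^{\alpha_b}$.

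The paper's actual route (Section \ref{prova3}) is: flatten the boundary by a global $C^1$-diffeomorphism that is $C^{1,\alpha_b}$ near $x_0$; establish a boundary Caccioppoli inequality for the excess $\tilde u-\tilde g$ and $\tilde u-(\tilde u)_B$, not for truncations (Lemma \ref{caccb}); compare $\tilde u$ with the frozen-coefficient $p$-harmonic function $\tilde h$ solving \eqref{pddb} on the half-ball (Lemma \ref{bplemma}); prove the sharp decay estimate \eqref{breg} for $D\tilde h$, which inherits the $W^{1,q}$-regularity of $\tilde g$; and iterate the coupled boundary quantities $\GGp(\varrho)$ to get the decay at every rate below $\kappa=\min\{1-n/q,\,a-n/\chi\}$. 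Your preliminary observations — global boundedness, splitting the nonlocal term into near/far parts, the role of $p>s\gamma$ in subordinating the nonlocal contribution — are all correct ingredients and do appear in the paper, but absent a comparison lemma and the $C^{1,\alpha_b}$-flattening the scheme cannot reach every $\alpha<\kappa$.
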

\begin{theorem}[Gradient local H\"older continuity]\label{t4}
Under assumptions \eqref{assf}-\eqref{assk} and \eqref{g33}, with $f\in L^d(\Omega)$ for some $d>n$, let $u\in \mathbb{X}_{g}(\Omega)$ be as in \eqref{fun}. Then there exists $\alpha \equiv \alpha (n,p,s,\gamma, \Lambda,d) \in (0,1)$, such that $Du\in C^{0,\alpha}_{\loc}(\Omega;\er^n)$ and, for every open subset $\Omega_0\Subset \Omega$,  
$
[Du]_{0, \alpha;\Omega_0} \leq c 
$
holds with $c\equiv c(\datah,\dist(\Omega_0, \partial \Omega))$. Assumption \eqref{g33} can be dropped when $\gamma \leq p$. 
\end{theorem}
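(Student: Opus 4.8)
\textbf{Proof strategy for Theorem \ref{t4}.} The plan is to deduce the gradient H\"older continuity from the two preceding H\"older results (Theorems \ref{t2} and \ref{t3}) by a perturbation argument that treats the nonlocal term as a lower-order, frozen forcing. First I would fix a ball $\brx\Subset\Omega$ and recast the Euler--Lagrange equation \eqref{el}, localized on $\brx$, as the local equation
\[
\int_{\brx}\partial_z F(Du)\cdot D\varphi\dx=\int_{\brx}\left[f+\widetilde{f}\right]\varphi\dx\,,\qquad \varphi\in W^{1,p}_0(\brx)\,,
\]
where $\widetilde{f}(x):=-\int_{\er^n}\Phi'(u(x)-u(y))K(x,y)\dy$ is the nonlocal contribution, which makes sense pointwise once $u$ is bounded (Theorem \ref{t2}). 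The key preliminary estimate is a bound on $\widetilde{f}$ in a Lebesgue space better than $L^n$. Splitting the $y$-integral into $\{|x-y|<1\}$ and $\{|x-y|\geq1\}$, using $|\Phi'(t)|\lesssim|t|^{\gamma-1}$, the kernel bound \eqref{assk}, and crucially the global H\"older continuity $u\in C^{0,\beta}(\er^n)$ for every $\beta<1$ from Theorem \ref{t3} (or Theorem \ref{t2} plus boundedness when one stays interior), one controls $|u(x)-u(y)|\lesssim|x-y|^\beta$ on the near part and $|u(x)-u(y)|\lesssim\|u\|_{L^\infty}$ on the far part; choosing $\beta$ close enough to $1$ so that $(\gamma-1)\beta-s\gamma>-n/d'$ forces the near integral to converge with a locally bounded (indeed H\"older continuous in $x$) integrand. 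This uses exactly $p>s\gamma$ through the fact that the available H\"older exponent range reaches past $s\gamma/(\gamma-1)$ when $\gamma>1$; the case $\gamma\le1$ being excluded, one always gets $\widetilde{f}\in L^\infty_{\loc}(\Omega)$, hence $f+\widetilde{f}\in L^d_{\loc}(\Omega)$.

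Once the right-hand side of the localized equation lies in $L^d_{\loc}$ with $d>n$, I would invoke the classical interior regularity theory for the $p$-Laplacian-type equation with $p$-growth integrand $F$ satisfying \eqref{assf}: by the De Giorgi--Nash--Moser and Uraltseva--Uhlenbeck theory (and its nonhomogeneous refinements, e.g. via fractional-differentiability/excess-decay arguments of the Duzaar--Mingione type, or the results quoted in the references \cite{manth1, manth2}), solutions to $\operatorname{div}\partial_zF(Du)=h$ with $h\in L^d$, $d>n$, have $Du\in C^{0,\alpha}_{\loc}$ for some $\alpha\equiv\alpha(n,p,\Lambda,d)\in(0,1)$, with the quantitative local estimate depending on $\|Du\|_{L^p}$, $\|h\|_{L^d}$ and the radius. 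Feeding in $h=f+\widetilde{f}$ and the a priori bounds $\|Du\|_{L^p(\brx)}\le\nr{u}_{W^{1,p}(\Omega)}$ together with the $L^\infty_{\loc}$ bound on $\widetilde{f}$ from the first step (whose constant depends on $\datah$ via $\nr{u}_{W^{s,\gamma}}$, $\nr{u}_{W^{1,p}}$, $\|f\|_{L^d}$, and $\operatorname{dist}(\Omega_0,\partial\Omega)$), yields $Du\in C^{0,\alpha}_{\loc}(\Omega;\er^n)$ and the asserted estimate $[Du]_{0,\alpha;\Omega_0}\le c(\datah,\operatorname{dist}(\Omega_0,\partial\Omega))$; the dependence of $\alpha$ only on $n,p,s,\gamma,\Lambda,d$ is because $\alpha$ from the local theory depends on $n,p,\Lambda$ and on the integrability gap $d-n$ (equivalently $1-n/d$), while $s,\gamma$ enter only in guaranteeing $\widetilde{f}$ has the right integrability. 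The remark that \eqref{g33} can be dropped when $\gamma\le p$ is inherited verbatim from Theorems \ref{t2}--\ref{t3}, since in that regime the interior H\"older bounds needed for step one require no boundedness of $g$.

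\textbf{Main obstacle.} The delicate point is the first step: showing that the frozen nonlocal forcing $\widetilde{f}$ is not merely in $L^n$ but in $L^d$ for some $d>n$ \emph{locally and uniformly up to the distance from the boundary}, with a constant controlled by $\datah$. This is where the interplay $p>s\gamma$ is essential and where one must be careful that the H\"older exponent $\beta<1$ furnished by Theorem \ref{t3} (or the interior Theorem \ref{t2}) is large enough: one needs $(\gamma-1)\beta+n>s\gamma$, i.e. the singularity $|x-y|^{(\gamma-1)\beta-s\gamma}$ of the integrand near the diagonal to be integrable in $\er^n$, which holds as soon as $\beta>(s\gamma-n)/(\gamma-1)$ — automatic when $s\gamma\le n$, and when $s\gamma>n$ it is forced precisely because $s\gamma<p$ keeps the required exponent below $1$ (using also $u\in L^\infty_{\loc}$ to handle the case $\gamma<1$ vacuously, as $\gamma>1$ here). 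One must also verify that $x\mapsto\widetilde{f}(x)$ is itself H\"older (or at least that its $L^d$ norm on $\Omega_0$ is finite), which follows from the same splitting together with the uniform H\"older modulus of $u$ and dominated convergence. Everything else — the localization of \eqref{el}, the bookkeeping of constants, and the appeal to classical local gradient regularity — is routine.
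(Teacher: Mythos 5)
Your proposal replaces the nonlocal term in the localized Euler--Lagrange equation by the frozen forcing
\[
\widetilde{f}(x):=-\int_{\mathbb{R}^{n}}\Phi'(u(x)-u(y))K(x,y)\,\mathrm{d}y,
\]
and then appeals to classical interior $C^{1,\alpha}$ theory for $\operatorname{div}\partial_{z}F(Du)=f+\widetilde{f}$ with right-hand side in $L^{d}_{\loc}$, $d>n$. The second half of this plan is fine. The gap is in the first half: in the full range of exponents allowed by the theorem, $\widetilde{f}$ is in general not a pointwise-defined function, let alone an element of $L^{d}_{\loc}$.

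Concretely, the near-diagonal part of the integral defining $\widetilde{f}$ behaves, when $u\in C^{0,\beta}$, like
\[
\int_{|x-y|<1}\frac{|u(x)-u(y)|^{\gamma-1}}{|x-y|^{n+s\gamma}}\,\mathrm{d}y\lesssim\int_{|z|<1}|z|^{(\gamma-1)\beta-n-s\gamma}\,\mathrm{d}z\,,
\]
which converges if and only if $(\gamma-1)\beta-n-s\gamma>-n$, that is, $\beta>\frac{s\gamma}{\gamma-1}$. Theorem~\ref{t2} (or Theorem~\ref{t3}) gives every $\beta<1$, so you need $\frac{s\gamma}{\gamma-1}<1$, equivalently $\gamma>\frac{1}{1-s}$. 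This is \emph{not} implied by the standing hypothesis $p>s\gamma$. For instance $p=10$, $\gamma=1.1$, $s=0.5$ satisfies $p>s\gamma$ but $\frac{s\gamma}{\gamma-1}=5.5>1$; here the integral diverges even if $u$ were Lipschitz, since the pointwise principal-value cancellation that would rescue the $p$-fractional Laplacian of a smooth function already requires $u\in C^{1}$ -- exactly what you are trying to prove, hence circular. The condition you write in the obstacle paragraph, ``$\beta>(s\gamma-n)/(\gamma-1)$'', drops the factor $|x-y|^{-n}$ coming from the dimension of the kernel singularity and is incorrect; once fixed, the restriction is $\beta>\frac{s\gamma}{\gamma-1}$, which fails for $\gamma$ close to $1$ (or $s$ close to $1$), so your claim that ``the available H\"older exponent range reaches past $s\gamma/(\gamma-1)$'' does not hold in general.

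The paper avoids this issue entirely by never forming $\widetilde{f}$ as a function. In Section~\ref{ghol} one compares $u$ with the $F$-harmonic function $h$ in $B_{\rr/4}$ and estimates the \emph{pairing} of the nonlocal term against the test function $w=u-h$, e.g.\ the term
\[
\int_{B_{\rr/2}}\mint_{B_{\rr/2}}\frac{|u(x)-u(y)|^{\gamma-1}|w(x)-w(y)|}{|x-y|^{n+s\gamma}}\,\mathrm{d}x\,\mathrm{d}y,
\]
which is handled by H\"older's inequality in the product measure, producing the factor $[u]_{s,\gamma;B_{\rr/2}}^{\gamma-1}$ and then using \eqref{launa} to bound $[u]_{s,\gamma;B_{\rr/2}}^{\gamma}\leq c\rr^{(\beta-s)\gamma}$. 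This only requires $\beta>s$, which the almost-Lipschitz Theorem~\ref{t2} always supplies; there is no need for the threshold $\beta>\frac{s\gamma}{\gamma-1}$. The remaining nonlocal piece (the tail) is controlled by \eqref{snailetto}. So the paper's weak (dual) estimate is essential, and the freezing strategy you propose cannot be made to work across the whole admissible range $p>s\gamma$.
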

Let us briefly comment on the assumptions considered in Theorems \ref{t2}-\ref{t4}. These are essentially sharp. For instance, the statement of Theorem \ref{t2} does not hold when only assuming that $f \in L^t$, for any $t<n$. As for Theorem \ref{t4}, one cannot obtain in general the gradient H\"older continuity only assuming that $f\in L^n$; counterexamples arise already in the purely local (and linear) case $-\Delta u=f$ \cite{cianchi}. In Theorem \ref{t3} the assumptions on $g$ guarantee that $g\in C^{0, \kappa}(\er^n)$ with $\kappa = \min\{1-n/q, a-n/\chi\}$; indeed, note that $W^{a, \chi}\subset C^{0,\kappa}$ (see \cite[Theorem 8.2]{guide}). This is the natural assumption in this setting in order to guarantee that the boundary regularity of solutions obtained matches with the one of the boundary data. Note that, accordingly, Morrey-Sobolev embedding gives $W^{1,q}  \subset C^{0,\kappa}$. In other words, assumption \rif{g3}$_3$ encodes the necessary H\"older continuity of the boundary data $g$ both with respect to the Sobolev space related to the local part of the functional in \rif{fun.1}, and with respect to the nonlocal one. In the following, letting $f\equiv 0$ in $\er^n\setminus \Omega$, we can always take $f \in L^d(\er^n)$ and $f \in L^n(\er^n)$ in Theorems \ref{t2}-\ref{t3} and \ref{t4}, respectively.

\begin{remark}\label{localerem}\emph{
When considering the case $\gamma \leq p$, in Theorems \ref{t2} and \ref{t4} no assumption is put on boundary datum $g$, and, in fact, our results are purely local. See Remark \ref{dipendenza} and Theorem \ref{t6}. Note that, in the case $\gamma \leq p$, on the contrary of other papers devoted to the subject, we dot not need to prove that $u$ is bounded to get its H\"older continuity.}
\end{remark}

Theorems \ref{t2}-\ref{t4} come along with explicit a priori estimates. These can be directly inferred from the proofs and whose shape reflects the optimal approach used here. For brevity we confine ourselves to report the a priori estimate related to Theorem \ref{t2}. This is in the next
\begin{theorem}[Campanato type estimate for Theorem \ref{t2}]\label{t5}
Under assumptions \eqref{assf}-\eqref{assk} and \eqref{g33}, let $u\in \mathbb{X}_{g}(\Omega)$ be as in \eqref{fun}. For every $\alpha <1$, there exist $r_*>0$ and $c\geq 1$ such that
\begin{flalign}
\notag  \mint_{B_{\rr}}|u-(u)_{B_{\rr}}|^p\, dx &
\leq c\left(\frac{\rr}{r}\right)^{\alpha p} 
\left[ \mint_{B_r}|u-(u)_{B_{r}}|^p\, dx+  r^{\alpha p}\nr{f}_{L^{n}(B_{r})}^{p/(p-1)}+r^{\alpha p}\right] \\
&
\quad +c\left(\frac{\rr}{r}\right)^{\alpha p} 
\mint_{\mathbb{R}^{n}\setminus B_{r}} \snr{u-(u)_{B_{r}}}^{\gamma}\, \d\lambda_{x_0}\,, \qquad \d\lambda_{x_0}(x):= \frac{\dx}{\snr{x-x_{0}}^{n+s\gamma}}   \label{campanato}
\end{flalign}
holds whenever $B_{\rr}\equiv B_{\rr}(x_0)\subset B_{r}(x_0)\equiv B_{r}  \subset \Omega$ are concentric balls with $r\leq r_*$. Both $r_*$ and $c$ depend on $n,p,s,\gamma,\Lambda, \alpha$ if $\gamma\leq p$, and on $\datah, \alpha$ when $\gamma >p$. Assumption \eqref{g33} can be dropped when $\gamma \leq p$.  
\end{theorem}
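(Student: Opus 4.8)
The plan is to run a Campanato-type iteration in which the local part of the Euler--Lagrange equation \eqref{el} is treated by a freezing/comparison argument, while the nonlocal term is absorbed as a perturbation controlled by the quantities listed in Section \ref{lalista}. Fix a ball $B_{r}\equiv B_{r}(x_0)\Subset \Omega$ with $r\le r_*$ and let $v$ solve the frozen purely local problem $\dv\,\partial_zF(Dv)=0$ in $B_{r}$ with $v=u$ on $\partial B_{r}$. From \eqref{assf} one has the classical De Giorgi / Uhlenbeck type decay for $v$, namely $\mint_{B_\rr}|v-(v)_{B_\rr}|^p\dx \le c(\rr/r)^{\alpha p}\mint_{B_r}|v-(v)_{B_r}|^p\dx$ for any $\alpha<1$ (this is exactly the regularity of $-\Delta_p$ we are allowed to assume is in the background). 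The difference $w:=u-v\in W^{1,p}_0(B_r)$ is handled by testing the equation for $w$ against $w$ itself: the local terms give, via ellipticity and monotonicity of $\partial_zF$, a lower bound $\sim \int_{B_r}(|Du|^2+|Dv|^2+\mu^2)^{(p-2)/2}|Dw|^2\dx$, while on the right one collects the contribution of $f$ — estimated by $\|f\|_{L^n(B_r)}$ via Sobolev embedding and Young's inequality, producing the $r^{\alpha p}\|f\|_{L^n(B_r)}^{p/(p-1)}$ term — and the contribution of the nonlocal term.

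The decisive point is the nonlocal contribution $\int_{\er^n}\int_{\er^n}\Phi'(u(x)-u(y))(w(x)-w(y))K(x,y)\dxy$. Splitting $\er^n=B_r\cup(\er^n\setminus B_r)$ in each variable and using \eqref{assph}, \eqref{assk}, this is bounded by $c\int_{B_r}\int_{B_r}|u(x)-u(y)|^{\gamma-1}\frac{|w(x)-w(y)|}{|x-y|^{n+s\gamma}}\dxy$ plus the mixed ``tail'' term $c\int_{B_r}|w(x)|\left(\int_{\er^n\setminus B_r}\frac{|u(x)-u(y)|^{\gamma-1}}{|x-y|^{n+s\gamma}}\dy\right)\dx$. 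The diagonal term is controlled by the $W^{s,\gamma}(B_r)$-seminorm of $u$ and $w$; here the hypothesis $p>s\gamma$ is what lets the fractional $W^{s,\gamma}$-energy be reabsorbed into the $W^{1,p}$-energy on $B_r$ after Hölder and a scaling estimate of the form $[w]_{W^{s,\gamma}(B_r)}\lesssim r^{1-s-n/\gamma+n/p}\|Dw\|_{L^p(B_r)}$ (a fractional Sobolev–Poincaré inequality), the exponent being positive precisely because $p>s\gamma$. The tail term, after Hölder in $x$, is bounded by $\|w\|_{L^p(B_r)}$ times a power of $r$ times the excess quantity $\left(\mint_{\er^n\setminus B_r}|u-(u)_{B_r}|^{\gamma}\d\lambda_{x_0}\right)^{(\gamma-1)/\gamma}$ — this is the source of the last line in \eqref{campanato} — where one also uses $\gamma>1$, and, when $\gamma>p$, the boundedness \eqref{g33} of $g$ (hence of $u$ by the maximum principle) to dominate lower powers by $\|u-(u)_{B_r}\|_{L^\gamma}$-type quantities; this is exactly why $r_*$ and $c$ depend on $\datah$ in that regime. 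Collecting, one obtains $\mint_{B_r}|Dw|^p\dx \le (\text{small})\,\mint_{B_r}(|Du|+\mu)^p\dx + c\,r^{\alpha p}[\cdots]$ after a further application of Young's inequality to pass from the degenerate weighted quantity to the plain $L^p$ one.

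Finally I would combine the two estimates in the standard way: writing $\mint_{B_\rr}|u-(u)_{B_\rr}|^p\le c\,\mint_{B_\rr}|v-(v)_{B_\rr}|^p+c\,\mint_{B_\rr}|w|^p$, using the decay for $v$ and $\mint_{B_\rr}|w|^p\le (r/\rr)^n\mint_{B_r}|w|^p \le (r/\rr)^n c\,r^p\mint_{B_r}|Dw|^p$, one arrives at an inequality of the form
\begin{flalign*}
\mathcal{E}(\rr) \le c\Bigl[(\rr/r)^{\alpha p}+(r/\rr)^n\varepsilon\Bigr]\mathcal{E}(r) + c\,(r/\rr)^n\Bigl(r^{\alpha p}\|f\|_{L^n(B_r)}^{p/(p-1)}+r^{\alpha p}+r^{\alpha p}\,\mathcal{T}(r)\Bigr),
\end{flalign*}
where $\mathcal E(r):=\mint_{B_r}|u-(u)_{B_r}|^p\dx$ and $\mathcal T(r):=\mint_{\er^n\setminus B_r}|u-(u)_{B_r}|^\gamma\d\lambda_{x_0}$. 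A standard iteration lemma (e.g. Giaquinta–Giusti, using $\alpha<\alpha'<1$ and then choosing first the ratio $\rr/r$, then $\varepsilon$, i.e.\ the smallness of $r_*$, to beat the $(r/\rr)^n$ loss) converts this into \eqref{campanato}; monotone-in-inclusion behavior of the tail term $\mathcal T$ (which only \emph{improves} as $r$ shrinks, after adjusting averages by a harmless additive constant absorbed in $\mathcal E$) is used to keep $\mathcal T(r)$ on the right-hand side at the fixed scale $r$. The main obstacle is precisely the uniform control of the mixed tail term and its stable behaviour under the iteration: one must ensure that the sum over dyadic scales of the tail contributions does not blow up, which is where the threshold $p>s\gamma$ and, in the supercritical case $\gamma>p$, the a priori bound \eqref{g33} enter decisively; everything else is a careful but routine adaptation of the local Campanato scheme.
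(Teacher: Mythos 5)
Your proposal follows the same overall route as the paper: compare $u$ with the solution $h$ of the frozen purely local problem $-\diver\partial_z F(Dh)=0$, test the difference $w=u-h$ in both Euler--Lagrange equations, control the diagonal nonlocal term via the embedding $W^{1,p}_0\hookrightarrow W^{s,\gamma}$ on balls (using $p>s\gamma$ and, when $\gamma>p$, the $L^\infty$-bound from Proposition \ref{boundprop}), control the long-range part via the $\snail$-type quantity, and close with a Campanato iteration using the $C^{1,\alpha}$-decay of $p$-harmonic maps. So far, so good.

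However, there is a genuine gap at the heart of the iteration step. You assert that the tail quantity $\mathcal T(r)=\mint_{\er^n\setminus B_r}|u-(u)_{B_r}|^\gamma\,\d\lambda_{x_0}$ "only improves as $r$ shrinks, after adjusting averages by a harmless additive constant absorbed in $\mathcal E$", and you use this to freeze $\mathcal T$ at the outermost scale while iterating only $\mathcal E$. This is false: $\mathcal T(\rho)\approx[\snail_{s\gamma}(\rho)]^\gamma$ is \emph{not} monotone under shrinking balls. Passing from $B_r$ to $B_\rho$ brings the annulus $B_r\setminus B_\rho$ into the tail integral and changes the reference average from $(u)_{B_r}$ to $(u)_{B_\rho}$; the resulting contribution is of order $\int_\rho^r\av_\gamma(\nu)\,\d\nu/\nu$, which is precisely what one is trying to estimate in the first place. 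Lemma 3.1, estimate \eqref{scasnail}, quantifies this: $\snail_\delta(t)$ picks up a weighted integral of $\av_\gamma(\nu)$ over intermediate scales. Since the iteration lemma (your Giaquinta--Giusti step) requires the recursive inequality at all nested pairs $t\le\rho\le r_0$, not just at the outermost pair, the data term $\rho^{\alpha p}\mathcal T(\rho)$ must be shown controllable at each intermediate scale, and your monotonicity claim does not provide this.

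The paper's fix is to set up a \emph{coupled} iteration on the combined excess $\GG(\rho)=\ecc(\rho)+\rhs(\rho)$, where $\ecc$ contains both $\av_p$ and a suitable power of $\snail_\delta$, with $\delta$ chosen in $(s\gamma,p)$ close to $p$ so that the snail decays at a rate compatible with the local oscillation. Step 3 then proves a genuine decay $\GG(\tau\rho)\le\tau^{\alpha_1}\GG(\rho)$ (see \eqref{analoga}) by combining \eqref{9} for $\av_p$, \eqref{9s} for $\snail_\delta$, and the trivial decay of $\rhs$; the snail decay \eqref{9s} crucially uses \eqref{scasnail} to absorb the intermediate-scale contributions into the decaying quantity $\GG$ itself. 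Only after establishing this coupled decay, via a sharp fractional maximal function argument \eqref{maximalestimate}, does the paper disentangle the terms and return to the two-scale form \eqref{campanato}. Without an argument that iterates the snail jointly with the excess, the scheme as you describe it does not close.
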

When $\gamma \leq p$ the constant $c$ in \rif{campanato} only depends on $n,p,s,\gamma,\Lambda$. Therefore in this case \rif{campanato}, if reduced to the content of the first line, gives back the classical Campanato type decay estimate for solutions to local non-homogeneous equations (see for instance \cite[Theorem 7.7]{giu}). As it is well-known, such decay estimates on the integral average of $u-(u)_{B_r}$ imply the local $C^{0, \alpha}$-regularity of solutions. Instead, the second line of \rif{campanato} encodes the long-range interactions due to the presence of the nonlocal term in the functional. In this respect, the average $u-(u)_{B_r}$ is performed with respect to a suitable measure, on the complement of $B_r$; the resulting term is often  called $\snail$, it is essentially the nonlocal counterpart of the integrals appearing in the first line and some variations of it are of common use in nonlocal problems (see Section \ref{lalista} for more). In the range $\gamma >p$, the nonlocal term exhibits a growth larger than the local one, and a careful analysis of the proofs, actually reveals that the constant $c$ appearing in \rif{campanato}, depends on $n,p,s, \gamma, \Lambda$ and $\|u\|_{L^\infty}$ (see Remark \ref{dipendenza} for details). This typically happens in all those situations when anisotropic operators are considered, especially in the setting of nonuniformly elliptic problems (see for instance the a priori estimates in \cite{CM, dm3, ciccio, dm}). Apart from this unavoidable detail, the shape of \rif{campanato} still neatly reproduces the one known for the classical local case. We note that estimate \rif{campanato} can be further improved including the decay rate of the last term appearing in \rif{campanato}; this follows from the estimate on certain (fractional) sharp maximal operators considered in Section \ref{intit}, estimate \rif{maximalestimate}, eventually implying \rif{campanato}. 

\subsection{Possible extensions, local solutions}\label{estensione}
The results in this paper can be extended in several directions. For instance, one can consider more general functionals of the type
$$
w \mapsto \int_{\Omega}\left[F(x,Dw)-fw\right]\dx+\int_{\mathbb{R}^{n}}\int_{\mathbb{R}^{n}}\Phi(w(x)-w(y))K(x,y)\dx\dy\,,
$$
where this time we assume that $z \mapsto F(x, z)$ satisfies \rif{assf} uniformly with respect to $x\in \Omega$. The assumption regulating coefficients is 
\eqn{condizione}
$$
\snr{\partial_z F(x,z)-\partial_z F(y, z)} \leq \Lambda \omega(|x-y|)(\snr{z}^{2}+\mu^{2})^{(p-1)/2}\,,
$$
to hold for every choice $x, y \in \Omega$ and $z \in \er^n$. Here $\omega\colon [0, \infty) \to [0, 1)$ is a modulus of continuity, that is, a continuous and non-decreasing function, such that $\omega(0)=0$. Under assumption \rif{condizione}, it is then easy to see that Theorems \ref{t2}-\ref{t3} continue to hold. In order to get an analog of Theorem \ref{t4} we assume in addition that $\omega(t) \leq t^{\sigma}$ holds for some $\sigma \in (0,1)$, this condition being necessary; then the H\"older exponent of $Du$ does not exceed $\sigma$. We note the proof of these assertions is in fact implicit in the proof of boundary regularity provided in Proposition \ref{campi} below. 

Another extension, already mentioned above, is about general solutions to nonlinear integroredifferential operators, not necessarily coming from integral functionals. Moreover, a purely  local regularity approach can be considered. For this, we consider a general vector field $A \colon \er^n\to \er^n$ such that $A \in C^{0}(\er^n)\cap C^1(\er^n\setminus \{0\})$, and a function $\Psi \in C^{0}(\er)$ such that 
\eqn{assA}
$$
\begin{cases}
\ \snr{ A(z)}+(\snr{z}^{2}+\mu^{2})^{1/2}\snr{\partial_{z}A(z)}\le \Lambda(\snr{z}^{2}+\mu^{2})^{(p-1)/2}\\
\ \Lambda^{-1}(\snr{z}^{2}+\mu^{2})^{(p-2)/2}\snr{\xi}^{2}\leq \partial_{z}A(z)\xi\cdot \xi \\
\ \Lambda^{-1}\snr{t}^{\gamma}\le \Psi(t)t\le \Lambda \snr{t}^{\gamma}
\end{cases}
$$
with the same meaning of \rif{assf}-\rif{assph}. Note that the classical $p$-Laplacean operator given by $A(z)\equiv |z|^{p-2}$z is covered by \rif{assA}. We consider functions $u\in W^{1,p}(\Omega)\cap W^{s, \gamma}(\er^n)$, where $\Omega\subset \er^n$ is as usual a bounded and Lipschitz-regular domain, such that 
\eqn{elA}
$$
 \int_{\Omega}\left[A(Du)\cdot D\varphi -f\varphi\right]\dx
+\int_{\mathbb{R}^{n}}\int_{\mathbb{R}^{n}}\Psi(u(x)-u(y))(\varphi(x)-\varphi(y)) K(x,y)\dx\dy=0 
$$
holds for every $\varphi\in \mathbb{X}_{0}(\Omega)$. Note that here no boundary datum $g$ appears.  The definition of solution is instead purely local. In this case we have 
\begin{theorem}\label{t6}
Under assumptions \eqref{assk} and \eqref{assA}, let $u\in W^{1,p}(\Omega)\cap W^{s, \gamma}(\er^n)$ be a solution to \eqref{elA}.
\begin{itemize}
\item 
If $u \in L^{\infty}_{\loc}(\Omega)$  when $\gamma >p$, and $f \in L^n_{\loc}(\Omega)$, then $u\in C^{0,\alpha}_{\loc}(\Omega)$ for every $\alpha\in (0,1)$. 
\item If $u \in \mathbb{X}_{g}(\Omega)$ and conditions \eqref{g3} hold, then $u \in C^{0,\alpha}(\er^n)$ for every $\alpha < \kappa$. 
\item If $u \in L^{\infty}_{\loc}(\Omega)$ when $\gamma >p$, and $f \in L^d_{\loc}(\Omega)$ for some $d>n$, then $u\in C^{1,\alpha}_{\loc}(\Omega)$ for some $\alpha\in (0,1)$. 
\end{itemize}
\end{theorem}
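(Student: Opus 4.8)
The plan is to deduce Theorem \ref{t6} from the variational results (Theorems \ref{t2}--\ref{t4}, and their localizations) by observing that all the regularity proofs in this paper only ever use the Euler--Lagrange equation rather than minimality itself. Concretely, I would first note that the pair of assumptions \eqref{assk}--\eqref{assA} for the vector field $A$ and the function $\Psi$ matches verbatim the structure conditions on $\partial_z F$ and $\Phi'$ that are actually invoked in the Caccioppoli estimates and the comparison arguments: the monotonicity and $p$-growth of $z\mapsto A(z)$ coming from \eqref{assA}$_{1,2}$ play the role of \eqref{assf}$_2$--\eqref{assf}$_3$, and the bound $\Lambda^{-1}|t|^\gamma\le \Psi(t)t\le \Lambda|t|^\gamma$ in \eqref{assA}$_3$ plays the role of \eqref{assph}$_2$. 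Thus equation \eqref{elA} is, structurally, exactly \eqref{el} with $A(Du)$ in place of $\partial_z F(Du)$ and $\Psi(u(x)-u(y))$ in place of $\Phi'(u(x)-u(y))$.

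Next, for the first bullet, I would run the interior part of the proof of Theorem \ref{t2}/Theorem \ref{t5} directly on a solution $u\in W^{1,p}(\Omega)\cap W^{s,\gamma}(\er^n)$ of \eqref{elA}: testing \eqref{elA} with cutoff multiples of $u-(u)_{B_r}$ (and with $\eme$-type truncations where needed) gives the same Caccioppoli inequality on balls $B_r\Subset\Omega$, with the nonlocal contribution again controlled by a $\snail$-type tail; the assumption $u\in L^\infty_{\loc}(\Omega)$ when $\gamma>p$ is precisely what substitutes for \eqref{g33} in bounding that tail, exactly as in the remark following Theorem \ref{t5}. Since no boundary datum enters, the conclusion is the purely local statement $u\in C^{0,\alpha}_{\loc}(\Omega)$ for every $\alpha<1$, with constants depending on $\datah$ and $\dist(\cdot,\partial\Omega)$. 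For the second bullet, if moreover $u\in\mathbb{X}_g(\Omega)$ and \eqref{g3} holds, then $u$ is in particular an admissible competitor with the right boundary behaviour, and the global scheme behind Theorem \ref{t3} (which again only uses \eqref{el}, now in the form \eqref{elA}) applies unchanged, yielding $u\in C^{0,\alpha}(\er^n)$ for every $\alpha<\kappa$. For the third bullet, with $f\in L^d_{\loc}$, $d>n$, I would feed the $C^{0,\alpha}$ interior regularity just obtained into the perturbative freezing argument behind Theorem \ref{t4}: comparing $u$ on a small ball with the solution $v$ of the purely local, homogeneous frozen equation $\dv A(Dv)=0$ (whose $C^{1,\alpha}$ estimates are Uraltseva--Uhlenbeck theory), and absorbing the nonlocal error term plus the $f$-term via the decay already proved, gives $Du\in C^{0,\alpha}_{\loc}(\Omega;\er^n)$ for some $\alpha\equiv\alpha(n,p,s,\gamma,\Lambda,d)$.

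The main point to check carefully — and the step I expect to be the genuine obstacle — is that the a priori estimates of Theorems \ref{t2}--\ref{t5} really are stated and proved for (weak) solutions of the Euler--Lagrange equation and not for minimizers, so that no hidden use of minimality (for instance in an existence/approximation step producing the competitor, or in a lower-semicontinuity argument) creeps in. I would address this by pointing to Section \ref{estensione}'s own claim that ``our analysis is essentially based on the use of the Euler--Lagrange equation'' and by checking the two places where minimality could plausibly matter: the construction of comparison maps (which here is replaced by solving the frozen local PDE, available under \eqref{assA} alone) and the tail/$L^\infty$ bookkeeping when $\gamma>p$ (handled by the explicit hypothesis $u\in L^\infty_{\loc}$). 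A secondary, more technical point is the derivation of the differentiated (second-order) Caccioppoli inequality used in the $C^{1,\alpha}$ step: one must verify that the difference-quotient manipulations on the nonlocal term require only $\Psi\in C^0$ together with the two-sided bound \eqref{assA}$_3$, not a $C^1$ or convexity hypothesis on $\Psi$; this is exactly the role played by convexity of $\Phi$ in \eqref{assph}, and I would replace that use by monotonicity of $t\mapsto\Psi(t)$ forced by \eqref{assA}$_3$ on the relevant difference quotients (or, if strict monotonicity is genuinely needed, by a routine regularization of $\Psi$ followed by passage to the limit). Once these checks are in place, the theorem follows verbatim from the arguments already developed for \eqref{el}.
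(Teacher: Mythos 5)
Your overall strategy is exactly the paper's proof: the paper itself states only that ``the proof of Theorem~\ref{t6} follows verbatim the ones for Theorems~\ref{t2}--\ref{t4}'', relying precisely on the observation you make, that the regularity estimates are built on the Euler--Lagrange equation \eqref{el}/\eqref{el2} and not on minimality. Your identifications of the structural correspondences ($\eqref{assA}_{1,2}\leftrightarrow\eqref{assf}_{2,3}$, $\eqref{assA}_3\leftrightarrow\eqref{assph}_2$), of the hypothesis $u\in L^\infty_{\loc}(\Omega)$ as the replacement for Proposition~\ref{boundprop} when $\gamma>p$, and of the frozen--local comparison map for the $C^{1,\alpha}$ step, all match the paper.

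One point you flag as a potential obstacle is, however, a misconception about the $C^{1,\alpha}$ argument: there is no ``differentiated (second-order) Caccioppoli inequality'' for the mixed equation, and no difference-quotient manipulation of the nonlocal term anywhere in Section~\ref{ghol}. The gradient regularity is obtained by a pure comparison: one sets $h$ to be the solution of the frozen \emph{local} problem \eqref{pdd}, bounds $\int_{B_{\rr/4}}|Du-Dh|^p\,dx\le c\rr^{\sigma_2 p}$ in Lemma~\ref{harfinal} using only the first-order equation \eqref{el2}, the monotonicity \eqref{monoin}, the tail bound \eqref{snailetto}, and the already established $C^{0,\beta}$ oscillation bounds (all of which only need $\eqref{assA}_3$ in the form of the two-sided kernel comparability \eqref{cacc.0}), and then imports the excess decay \eqref{x7} for $Dh$ from classical local $p$-Laplacean theory. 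The $C^1$-regularity of $A$ away from the origin is used only for $h$ (through the classical local estimates); convexity of $\Phi$ plays no role in this step (its only use is for existence and uniqueness of minimizers, which is not part of Theorem~\ref{t6}). In particular no regularization of $\Psi$ is needed, and indeed none is available in a natural way since the equation \eqref{elA} has no variational structure to pass limits through. Modulo this spurious worry, your proof is correct.
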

The proof of Theorem \ref{t6} follows verbatim the ones for Theorems \ref{t2}-\ref{t4}. Again, the assumption $u \in L^\infty_{\loc}$ is only needed when $\gamma> p$. Note that Theorem \ref{ccc6} follows as a corollary of Theorem \ref{t6}. 
 \section{Preliminaries}
\subsection{Notation}
Unless otherwise specified, we denote by $c$ a general constant larger or equal than~$1$. Different occurrences from line to line will be still denoted by $c$. Special occurrences will be denoted by $c_*, c_1$ or likewise. Relevant dependencies on parameters will be as usual emphasized by putting them in parentheses. In the following, given $a\in \er$, we denote $a_+:=\max\{a,0\}$. We denote by $ B_r(x_0):= \{x \in \er^n  :   |x-x_0|< r\}$ the open ball with center $x_0$ and radius $r>0$; we omit denoting the center when it is not necessary, i.e., $B \equiv B_r \equiv B_r(x_0)$; this especially happens when various balls in the same context will share the same center. With $B_{r}^{+}(x_{0})$ we mean the upper half ball $B_{r}(x_{0})\cap \left\{x\in \mathbb{R}^{n}\colon x_{n}>0 \right\}$; in connection, we denote $\Gamma_{r}(x_{0}):=B_{r}(x_0)\cap \{x_n=0\}$, whenever $x_0 \in \{x_n=0\}$. Moreover ,given a domain $\Omega\subset \er^n$. 
With $\mathcal B \subset \er^{n}$ being a measurable subset with respect to a Borel (non-negative) measure $\lambda_0$ in $\er^n$, with bounded positive measure $0<\lambda_0(\mathcal B)<\infty$, and with $b \colon \mathcal B \to \er^{k}$, $k\geq 1$, being a measurable map, we denote  
$$
   (b)_{\mathcal B} \equiv \mint_{\mathcal B}  b(x)\, \d\lambda_0(x)  :=  \frac{1}{\lambda_0(B)}\int_{\mathcal B}  b(x)\, \d\lambda_0(x)\;.
$$
According to the standard notation, given $b \colon \mathcal B \to \er^k$, we denote
$$
[b]_{0,\alpha; \mathcal B}:= \sup_{x,y\in \mathcal B; x\not= y}\, \frac{|b(x)-b(y)|}{|x-y|^\alpha}\,, \qquad 
\osc_{\mathcal B} b := \sup_{x, y \in \mathcal B} \, |b(x)-b(y)|
$$
for $0< \alpha \leq 1$ and $\mathcal B \subset \er^{n}$ being a set. 
\subsection{Fractional spaces}\label{fss}
For $\gamma \geq 1$ and $s\in (0,1)$, the space $W^{s,\gamma}(\mathbb{R}^{n})$ is defined via 
$$
W^{s,\gamma}(\mathbb{R}^{n}):=\left\{w\in L^{\gamma}(\mathbb{R}^{n})\colon \int_{\mathbb{R}^{n}}\int_{\mathbb{R}^{n}}\frac{\snr{w(x)-w(y)}^{\gamma}}{\snr{x-y}^{n+s\gamma}}\dx\dy<\infty\right\},
$$
and it is endowed with the norm
$$
\nr{w}_{W^{s,\gamma}(\mathbb{R}^{n})}:=\left(\int_{\mathbb{R}^{n}}\snr{w}^{\gamma}\dx\right)^{1/\gamma}+\left(\int_{\mathbb{R}^{n}}\int_{\mathbb{R}^{n}}\frac{\snr{w(x)-w(y)}^{\gamma}}{\snr{x-y}^{n+s\gamma}}\dx\dy\right)^{1/\gamma}\,.
$$
With $w\in W^{s,\gamma}(\mathbb{R}^{n})$, we also denote 
$$
[w]_{s,\gamma;A}:= \left(\int_{A}\int_{A}\frac{\snr{w(x)-w(y)}^{\gamma}}{\snr{x-y}^{n+s\gamma}}\dx\dy\right)^{1/\gamma}
$$
whenever $A \subset \er^n$ is measurable. 
In a similar way, by replacing $\er^n$ by $\Omega$ in the domain of integration, it is possible to define the fractional Sobolev space $W^{s,\gamma}(\Omega)$ in an open domain $\Omega\subset \mathbb{R}^{n}$. Good general references for fractional Sobolev spaces are \cite{Adams, guide}. For the next result, see also \cite{AKM} and related references. 
\begin{lemma}[Fractional Poincar\'e]
Let $\gamma \in [1, \infty)$, $s\in (0,1)$, $B_{\rr}\subset \mathbb{R}^{n}$ be a ball. If $w\in W^{s,\gamma}(B_{\rr})$, then
\eqn{fraso}
$$
\left(\mint_{B_{\rr}}\snr{w-(w)_{B_{\rr}}}^{\gamma}\dx\right)^{1/\gamma} \le c\rr^s\left(\int_{B_{\rr}}\mint_{B_{\rr}}\frac{\snr{w(x)-w(y)}^{\gamma}}{\snr{x-y}^{n+s\gamma}}\dx\dy\right)^{1/\gamma}
$$
holds with $c\equiv c(n,s,\gamma)$. 
 \end{lemma}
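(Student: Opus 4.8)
The plan is to prove \eqref{fraso} by the elementary route: control the left-hand average by a double average via Jensen's inequality, and then insert the Gagliardo weight by hand using the crude bound $\snr{x-y}\le 2\rr$ valid for all $x,y\in B_{\rr}$. If the right-hand side of \eqref{fraso} is infinite there is nothing to prove, so we may assume $w\in W^{s,\gamma}(B_{\rr})$ with finite seminorm; since $B_{\rr}$ is bounded, $w\in L^{\gamma}(B_{\rr})\subset L^{1}(B_{\rr})$ and $(w)_{B_{\rr}}$ is well defined.

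First I would write $w(x)-(w)_{B_{\rr}}=\mint_{B_{\rr}}(w(x)-w(y))\dy$ and apply Jensen's inequality to the convex function $t\mapsto \snr{t}^{\gamma}$, both inside and then outside, to get
$$
\mint_{B_{\rr}}\snr{w-(w)_{B_{\rr}}}^{\gamma}\dx\ \le\ \mint_{B_{\rr}}\mint_{B_{\rr}}\snr{w(x)-w(y)}^{\gamma}\dxy\,.
$$
Next, for $x,y\in B_{\rr}$ one has $\snr{x-y}\le 2\rr$, hence $\snr{x-y}^{n+s\gamma}\le (2\rr)^{n+s\gamma}$, so that
$$
\snr{w(x)-w(y)}^{\gamma}=\frac{\snr{w(x)-w(y)}^{\gamma}}{\snr{x-y}^{n+s\gamma}}\,\snr{x-y}^{n+s\gamma}\le (2\rr)^{n+s\gamma}\,\frac{\snr{w(x)-w(y)}^{\gamma}}{\snr{x-y}^{n+s\gamma}}\,.
$$
Plugging this into the previous display and recalling $\snr{B_{\rr}}=\snr{B_1}\rr^{n}$, the two averaging denominators combine to give
$$
\mint_{B_{\rr}}\snr{w-(w)_{B_{\rr}}}^{\gamma}\dx\ \le\ \frac{(2\rr)^{n+s\gamma}}{\snr{B_{\rr}}}\int_{B_{\rr}}\mint_{B_{\rr}}\frac{\snr{w(x)-w(y)}^{\gamma}}{\snr{x-y}^{n+s\gamma}}\dxy=\frac{2^{n+s\gamma}}{\snr{B_1}}\,\rr^{s\gamma}\int_{B_{\rr}}\mint_{B_{\rr}}\frac{\snr{w(x)-w(y)}^{\gamma}}{\snr{x-y}^{n+s\gamma}}\dxy\,,
$$
and taking $\gamma$-th roots yields \eqref{fraso} with $c=(2^{n+s\gamma}/\snr{B_1})^{1/\gamma}$, depending only on $n,s,\gamma$.

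There is essentially no obstacle here: this is the easy (Jensen) direction of the fractional Poincar\'e inequality, and the only point requiring a little care is the bookkeeping of the exact power of $\rr$ produced by the normalization of the averages, which is what turns the factor $\rr^{n+s\gamma}/\snr{B_{\rr}}$ into $\rr^{s\gamma}$ and hence gives the claimed scaling $\rr^{s}$ after extracting the root. (A sharper statement, in which the full Gagliardo seminorm over $B_{\rr}$ controls the oscillation without the loss of the double-average structure, would instead require a compactness or covering argument; but that is not needed for \eqref{fraso} as stated, and the crude bound above suffices for all subsequent uses.)
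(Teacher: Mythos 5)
Your proof is correct and complete: the Jensen step gives the double average of $\snr{w(x)-w(y)}^{\gamma}$, the crude bound $\snr{x-y}\le 2\rr$ inserts the Gagliardo weight, and the bookkeeping $(2\rr)^{n+s\gamma}/\snr{B_{\rr}}=2^{n+s\gamma}\rr^{s\gamma}/\snr{B_{1}}$ correctly converts the extra averaging denominator into the claimed $\rr^{s\gamma}$, matching the mixed $\int\mint$ normalization on the right-hand side of \eqref{fraso}. The paper states this lemma without proof, citing \cite{AKM} and related references; what you wrote is precisely the standard elementary argument used there, so there is no discrepancy.
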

\begin{lemma}[Embedding]\label{em}
Let $1\leq \gamma\le p<\infty$, $s\in (0,1)$ and $B_{\rr} \subset \mathbb{R}^{n}$ be a ball. If $w\in W^{1,p}_{0}(B_{\rr})$, then $w\in W^{s,\gamma}(B_{\rr})$ and 
$$
\left(\int_{B_{\rr} }\mint_{B_{\rr} }\frac{\snr{w(x)-w(y)}^{\gamma}}{\snr{x-y}^{n+s\gamma}}\dx\dy\right)^{1/\gamma}\le c\rr^{1-s}\left(\mint_{B_{\rr}}\snr{Dw}^{p}\dx\right)^{1/p}
$$
holds with $c\equiv c(n,p,s,\gamma)$.
\end{lemma}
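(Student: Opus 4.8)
The plan is to reduce the estimate to the unit ball by scaling and then to exploit the zero boundary values of $w$ through an elementary translation estimate; differently from the previous lemma, no Poincar\'e-type inequality is needed. Given $w\in W^{1,p}_0(B_{\rr}(x_0))$, I would set $\tilde w(y):=w(x_0+\rr y)$ for $y\in B_1$, so that $\tilde w\in W^{1,p}_0(B_1)$ and $D\tilde w(y)=\rr\, Dw(x_0+\rr y)$. A change of variables in the two integrals appearing in the statement shows that the left-hand side scales as $\rr^{-s}$ and the right-hand side as $\rr^{1-s}\rr^{-1}=\rr^{-s}$ as well, so these powers cancel and it suffices to treat $\rr=1$; after absorbing the dimensional constant $\snr{B_1}$ coming from the averages, this amounts to proving $[\tilde w]_{s,\gamma;B_1}\le c\,\nr{D\tilde w}_{L^p(B_1)}$ with $c\equiv c(n,p,s,\gamma)$.

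For the unit-ball estimate, I would extend $\tilde w$ by zero outside $B_1$: since $\tilde w\in W^{1,p}_0(B_1)$, the extension $\bar w$ belongs to $W^{1,p}(\ern)$ with $\nr{D\bar w}_{L^p(\ern)}=\nr{D\tilde w}_{L^p(B_1)}$, and the classical finite-difference bound $\nr{\bar w(\cdot+h)-\bar w}_{L^p(\ern)}\le \snr{h}\,\nr{D\bar w}_{L^p(\ern)}$ holds (by density of smooth functions and Minkowski's integral inequality). Writing the Gagliardo double integral over $B_1\times B_1$ in the variables $x$ and $h:=y-x$, and using that $x,y\in B_1$ forces $\snr{h}\le 2$, I get
$$
[\tilde w]_{s,\gamma;B_1}^{\gamma}\le \int_{\{\snr{h}\le 2\}}\frac{1}{\snr{h}^{n+s\gamma}}\left(\int_{\ern}\snr{\bar w(x+h)-\bar w(x)}^{\gamma}\dx\right)\d h\,.
$$
Because $\bar w(\cdot+h)-\bar w$ is supported in $B_1\cup(B_1-h)\subset B_3$ whenever $\snr{h}\le 2$, H\"older's inequality in $x$ (using $\gamma\le p$ and $\snr{B_3}<\infty$) together with the finite-difference bound gives $\int_{\ern}\snr{\bar w(x+h)-\bar w(x)}^{\gamma}\dx\le c(n,p,\gamma)\snr{h}^{\gamma}\nr{D\tilde w}_{L^p(B_1)}^{\gamma}$, and inserting this leaves $[\tilde w]_{s,\gamma;B_1}^{\gamma}\le c\,\nr{D\tilde w}_{L^p(B_1)}^{\gamma}\int_{\{\snr{h}\le 2\}}\snr{h}^{\gamma-n-s\gamma}\d h$; the last integral is finite precisely because $s<1$, i.e. $\gamma(1-s)>0$ (notably, no relation between $s\gamma$ and $n$ is required), and the Lemma follows.

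The one point that is not purely mechanical is the passage from an $L^{\gamma}$- to an $L^{p}$-difference quotient when $\gamma<p$: the finite-difference estimate is available only in $L^{p}$, and it is essential that, thanks to the zero extension, the increment $\bar w(\cdot+h)-\bar w$ is concentrated in a \emph{fixed} bounded ball, so that H\"older's inequality raises the integrability exponent without destroying the factor $\snr{h}^{\gamma}$ that secures convergence of the $h$-integral. (Alternatively, one may first prove the estimate for $\gamma=p$ with an auxiliary $s'\in(s,1)$ in place of $s$, and then interpolate by H\"older in the double integral against the kernel $\snr{x-y}^{-n-s'p}$; there the surplus $s'-s>0$ plays the same role.) All remaining steps are routine computations.
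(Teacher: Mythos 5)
Your proof is correct. The paper's own argument is shorter but also more terse: after the same reduction by rescaling to $B_1$, it simply cites \cite[Proposition~2.2]{guide} (the embedding $W^{1,p}(B_1)\hookrightarrow W^{s,p}(B_1)$) together with Poincar\'e's inequality, leaving implicit the H\"older step on the double integral that lowers the exponent from $p$ to $\gamma$ (which, as you note in your closing remark, requires a little slack in the order, say an auxiliary $s'\in(s,1)$, for the kernel integral to converge). You instead give a self-contained proof of the core unit-ball estimate: extend $\tilde w$ by zero, apply the classical translation estimate $\nr{\bar w(\cdot+h)-\bar w}_{L^p(\ern)}\le\snr{h}\,\nr{D\bar w}_{L^p(\ern)}$, raise the integrability exponent from $\gamma$ to $p$ by H\"older on the fixed ball $B_3$ that carries the increment, and integrate the kernel in $h$. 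Two concrete payoffs of your route: (i) no Poincar\'e inequality is needed, since the finite-difference bound already involves only $Dw$, not $w$, as you observe; and (ii) it is visible that convergence of the $h$-integral hinges solely on $\gamma(1-s)>0$, with no hidden relation between $n$, $s$, $\gamma$, $p$. Your alternative sketch (interpolate against $\snr{x-y}^{-n-s'p}$, $s'\in(s,1)$) is essentially the computation the paper's citation is resting on, so you have in effect spelled out both proofs.
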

\begin{proof}
By standard rescaling - i.e., passing to $B_1\ni x\mapsto w(x_{0}+\rr x)$, with $x_0$ being the center of $B_{\rr}$ - we can reduce to the case $B_{\rr}\equiv B_1(0)$. The assertion then follows by \cite[Proposition 2.2]{guide} and standard Poincar\'e's inequality, as $w\in W^{1,p}_{0}(B_{1})$.
\end{proof}
Using interpolation from \cite{brmi} (see also \cite{brasco4}), we can also prove the following improved imbedding: 
\begin{lemma}[Localized interpolation]\label{bm}
Let $1<p <  \gamma \leq  p/s$ and $s\in (0,1)$, 
$B_{\rr}\subset \mathbb{R}^{n}$. If $w\in W^{1,p}_{0}(B_{\rr})\cap L^{\infty}(B_{\rr})$, then $w\in W^{s,\gamma}(B_{\rr})$ and 
\eqn{bm1}
$$
\left(\int_{B_{\rr}}\mint_{B_{\rr} }\frac{\snr{w(x)-w(y)}^{\gamma}}{\snr{x-y}^{n+s\gamma}}\dx\dy\right)^{1/\gamma}\le c\nr{w}_{L^{\infty}(B_{\rr})}^{1-s}\left(\mint_{B_{\rr}}\snr{Dw}^{p}\dx\right)^{s/p}
$$
holds with $c\equiv c(n,p,s,\gamma)$.
\end{lemma}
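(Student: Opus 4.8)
The plan is to derive \eqref{bm1} from a Gagliardo--Nirenberg interpolation inequality of Brezis--Mironescu type \cite{brmi} (see also \cite{brasco4}), together with the elementary inclusion $L^{\infty}(B_{\rr})\hookrightarrow L^{q}(B_{\rr})$ available on bounded domains. As in the proof of Lemma \ref{em}, I first reduce to $B_{\rr}\equiv B_{1}(0)$ through the rescaling $B_{1}\ni x\mapsto w(x_{0}+\rr x)$, with $x_{0}$ the center of $B_{\rr}$: a direct change of variables shows that, with the normalized integrals appearing in \eqref{bm1}, both sides get multiplied by the same power $\rr^{-s}$, so that the inequality on $B_{\rr}$ is equivalent to the one on $B_{1}$ (with the same constant). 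Hence I may assume $w\in W^{1,p}_{0}(B_{1})\cap L^{\infty}(B_{1})$; extending $w$ by zero I regard it as an element of $W^{1,p}(\ern)$, and then $\nr{Dw}_{L^{p}(\ern)}=\nr{Dw}_{L^{p}(B_{1})}$, while $w\in L^{q}(\ern)$ with $\nr{w}_{L^{q}(\ern)}=\nr{w}_{L^{q}(B_{1})}$ for every $q\in[p,\infty]$.

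Set $q:=(1-s)p\gamma/(p-s\gamma)$. Since $1<p<\gamma\le p/s$, one has $q\in(p,\infty]$, with $q=\infty$ precisely when $\gamma=p/s$, and, crucially, the compatibility relations
$$
s=s\cdot 1+(1-s)\cdot 0\,,\qquad \frac{1}{\gamma}=\frac{s}{p}+\frac{1-s}{q}
$$
hold. These are exactly the relations under which $W^{s,\gamma}$ interpolates between $W^{1,p}$ (weight $s$) and $L^{q}=W^{0,q}$ (weight $1-s$); as $s\in(0,1)$ and $p>1$, this configuration is non-degenerate in the sense of \cite{brmi}, so the Gagliardo--Nirenberg inequality there gives
$$
[w]_{s,\gamma;\ern}\le c\,\nr{Dw}_{L^{p}(\ern)}^{s}\,\nr{w}_{L^{q}(\ern)}^{1-s}
$$
with $c\equiv c(n,p,s,\gamma)$ ($q$ being determined by $p,s,\gamma$); when $q=\infty$ the factor $\nr{w}_{L^{q}}$ is read as $\nr{w}_{L^{\infty}}$.

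To finish, I bound the left-hand side from below by $[w]_{s,\gamma;B_{1}}$, estimate $\nr{w}_{L^{q}(\ern)}=\nr{w}_{L^{q}(B_{1})}\le|B_{1}|^{1/q}\nr{w}_{L^{\infty}(B_{1})}$ by H\"older's inequality on the bounded ball (trivial if $q=\infty$), and use $\nr{Dw}_{L^{p}(\ern)}=\nr{Dw}_{L^{p}(B_{1})}$. This yields $[w]_{s,\gamma;B_{1}}\le c\,\nr{w}_{L^{\infty}(B_{1})}^{1-s}\nr{Dw}_{L^{p}(B_{1})}^{s}$; reinstating the fixed dimensional factor $|B_{1}|$ coming from the normalized integrals $\mint$ in \eqref{bm1} (here $1/\gamma-s/p\ge0$, since $s\gamma\le p$), this is precisely \eqref{bm1} on $B_{1}$ --- in particular $w\in W^{s,\gamma}(B_{1})$, as $w\in L^{\infty}(B_{1})\subset L^{\gamma}(B_{1})$ and the right-hand side is finite. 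Undoing the initial rescaling delivers \eqref{bm1} on $B_{\rr}$.

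The one substantial ingredient here is the use of the Brezis--Mironescu inequality with the \emph{intermediate, non-endpoint} exponent $q<\infty$, that is, with $\gamma<p/s$ strict. The obvious alternative, namely the pointwise bound $\snr{w(x)-w(y)}^{\gamma}\le(2\nr{w}_{L^{\infty}})^{\gamma-p}\snr{w(x)-w(y)}^{p}$ combined with Lemma \ref{em}, only reproduces \eqref{bm1} with the exponent $p/\gamma$ instead of $s$ on $\nr{Dw}_{L^{p}}$; since $p/\gamma>s$ when $\gamma<p/s$, this is genuinely weaker (it is not sharp against, e.g., rapidly oscillating functions of bounded amplitude), so recovering the exponent $s$ forces one to interpolate against $L^{q}$ rather than against $L^{\infty}$. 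The remaining steps --- the rescaling, the zero extension, and the inclusion $L^{\infty}(B_{1})\hookrightarrow L^{q}(B_{1})$ --- are routine.
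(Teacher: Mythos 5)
Your proof follows essentially the same route as the paper's: reduce to $B_{1}$ by rescaling, extend by zero, invoke the Brezis--Mironescu interpolation inequality with the intermediate exponent $q=(1-s)p\gamma/(p-s\gamma)$ (the paper calls this $\lambda_{1}$), and then absorb $\nr{w}_{L^{q}(B_{1})}$ into $\nr{w}_{L^{\infty}(B_{1})}$ by H\"older on the bounded ball; your verification of the compatibility relations $s=s\cdot 1+(1-s)\cdot 0$ and $1/\gamma=s/p+(1-s)/q$ is the same algebra underlying the paper's choices $\sigma=s$, $\lambda_{2}=p$.

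The one place where you are too cavalier is the endpoint $\gamma=p/s$, i.e. $q=\infty$. The off-diagonal Triebel--Lizorkin interpolation in \cite[Lemma 3.1]{brmi} --- the result you (and the paper) are actually invoking --- is stated for $\lambda_{1},\lambda_{2}\in(1,\infty)$; the $L^{\infty}$ endpoint does not follow by ``reading $\nr{w}_{L^{q}}$ as $\nr{w}_{L^{\infty}}$'' but requires a separate statement, since $L^{\infty}$ is not one of the Triebel--Lizorkin spaces in that identification. The paper handles this by switching to \cite[Corollary 3.2, (c)]{brmi}, namely $[w]_{\theta\sigma,\lambda/\theta;\ern}\le c\nr{w}_{L^{\infty}(\ern)}^{1-\theta}\nr{w}_{W^{\sigma,\lambda}(\ern)}^{\theta}$ with $\sigma=1$, $\lambda=p$, $\theta=s$. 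Your lemma as stated does include $\gamma=p/s$, so you should either cite that corollary for the borderline case, or restrict to $\gamma<p/s$ (which, as the paper remarks, is all that is actually used later on, since $p>s\gamma$ is in force throughout).
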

\begin{proof}
Note that, on the contrary to the rest of the paper, here we are allowing $p=s\gamma$; this is not really needed in what follows, but we include this case for completeness. Again we can assume that $B_{\rr}(x_{0})\equiv B_1(0)$, and, letting $w\equiv 0$ outside $B_1(0)$, we can assume $w\in W^{1,p}_{0}(\er^n)\cap L^{\infty}(\er^n)$. We first consider the case $p>s\gamma$. We shall use the off-diagonal interpolation results from \cite{brmi} in Triebel-Lizorkin spaces $\ti{F}^{\sigma}_{\lambda, t}$ \cite[2.3.1]{triebel}. Specifically, we use the following interpolation inequality, that holds whenever $0\le \sigma_{1}<\sigma_{2}<\infty$ and $\lambda_{1},\lambda_{2}\in (1,\infty)$ and $t, t_1, t_2>0$
\eqn{bm.0}
$$
\nr{w}_{\ti{F}^{\sigma}_{\lambda, t}(\mathbb{R}^{n})}\le c\nr{w}_{\ti{F}^{\sigma_1}_{\lambda_1, t_1}(\mathbb{R}^{n})}^{\theta}\nr{w}_{\ti{F}^{\sigma_2}_{\lambda_2, t_2}(\mathbb{R}^{n})}^{1-\theta}\,,
$$
provided $
\theta\in (0,1)$ is such that $\sigma=\theta\sigma_{1}+(1-\theta)\sigma_{2}$ and $1/\lambda=\theta/\lambda_{1}+(1-\theta)/\lambda_{2}$,  
where $c\equiv c(n,\sigma_{i},\lambda_{i},t_{i},\theta)$; see \cite[Lemma 3.1]{brmi}. Note the off-diagonal character of \rif{bm.0}, that lies in the fact that $t,t_1,t_2$ can be chosen arbitrarily. From \cite[Pag. 390]{brmi} we recall the identities $\ti{F}^{0}_{\lambda_1,2}\equiv L^{\lambda_1} $, $\ti{F}^{1}_{\lambda_2,2}\equiv W^{1,\lambda_2}$ and $\ti{F}^{\sigma}_{\lambda,\lambda}\equiv W^{\sigma,\lambda}$ when $\sigma \in (0,1)$. This means that \rif{bm.0} turns into
\eqn{bm.00}
$$
\nr{w}_{W^{\sigma,\lambda}(\mathbb{R}^{n})}\le c\nr{w}_{L^{\lambda_1}(\mathbb{R}^{n})}^{1-\sigma}\nr{w}_{W^{1,\lambda_2}(\mathbb{R}^{n})}^{\sigma}\,,
$$
with $\sigma\in (0,1)$ and $1/\lambda=(1-\sigma)/\lambda_{1}+\sigma/\lambda_{2}$. Now, observe that $1<p< \gamma$ and $p>s\gamma$ imply   
$
(1-s)\gamma p/(p-s\gamma)>1,
$
therefore in \rif{bm.00}, we can take $\lambda_{1}=(1-s)\gamma p/(p-s\gamma)$, $\sigma=s$ and $\lambda_{2}=p$; via Poincar\'e's inequality this yields
$$
[w]_{s,\gamma;B_1} 
\leq c\nr{w}_{L^{\lambda_{1}}(B_1)}^{1-s}\nr{w}^{s}_{W^{1,p}(B_1)}
\le c\nr{w}_{L^{\infty}(B_1)}^{1-s}\nr{Dw}^{s}_{L^{p}(B_1)},
$$
with $c\equiv c(n,p,s,\gamma)$, that is \rif{bm1} when $s\gamma <p$. On the other hand, if $s\gamma=p$, we use \cite[Corollary 3.2, (c)]{brmi}, that is
$[w]_{\theta\sigma,\lambda/\theta;\mathbb{R}^{n}}\le c\nr{w}_{L^{\infty}(\mathbb{R}^{n})}^{1-\theta}\nr{w}_{W^{\sigma,\lambda}(\mathbb{R}^{n})}^{\theta}$, that holds whenever $ \theta\in (0,1)$, where 
$c\equiv c(n,\sigma,\lambda,\theta)$. We use this with $\sigma=1$, $\lambda=p$, $\theta=s$ and get
$$
[w]_{s,p/s;B_1}\leq c\nr{w}_{L^{\infty}(B_1)}^{1-s}\nr{w}_{W^{1,p}(B_1)}^{s} \leq c\nr{w}_{L^{\infty}(B_1)}^{1-s}\nr{Dw}_{L^{p}(B_1)}^{s},
$$
with $c\equiv c(n,p,s)$, and the proof is complete.
\end{proof}
We find it useful to have a unified reformulation of Lemmas \ref{em}-\ref{bm}. For this, we introduce, with reference to the exponents $p, s, \gamma$ considered in Theorems \ref{t1}-\ref{t4}, the following quantities: 
\begin{eqnarray}\label{t1t2}
 \vartheta:=\begin{cases}
\ s\quad &\mbox{if} \ \ \gamma>p\\
\ 1\quad &\mbox{if} \ \ \gamma\le p\,,
\end{cases}
\quad \mbox{and we set} \quad 
\begin{cases}
 \mathds{A}_{\gamma} := 1\ &\mbox{if $\gamma >p$ and $0$ otherwise}\\
 \mathds{B}_{\gamma} := 1\ &\mbox{if $\gamma <p$ and $0$ otherwise}\\
  \mathds{C}_{\gamma} := 1\ &\mbox{if $\gamma =p$ and $0$ otherwise}.
\end{cases}
\end{eqnarray}
Note that 
$ \mathds{A}_{\gamma}+ \mathds{B}_{\gamma}+ \mathds{C}_{\gamma}=1$. With this definition we note that \rif{centralass} translates into
\eqn{centralass2}
$$
p\not= \gamma \Longrightarrow p > \vartheta \gamma \quad \mbox{and} \quad p \geq \vartheta\gamma \,.
$$ 
We can now summarize the parts we need of Lemmas \ref{em} and \ref{bm} in the following:
\begin{lemma}\label{bmfinal}
Let $w\in W^{1,p}_{0}(B_{\rr})$, with $p,\gamma>1$, $s\in (0,1)$ be such that $s\gamma \leq p$; assume also that $w\in L^{\infty}(B_{\rr})$, when $\gamma >p$. Then $w\in W^{s,\gamma}(B_{\rr})$ and 
\eqn{bm111}
$$
\left(\int_{B_{\rr}}\mint_{B_{\rr} }\frac{\snr{w(x)-w(y)}^{\gamma}}{\snr{x-y}^{n+s\gamma}}\dx\dy\right)^{1/\gamma}\le c\nr{w}_{L^{\infty}(B_{\rr})}^{1-\vartheta}\rr^{\vartheta-s}\left(\mint_{B_{\rr}}\snr{Dw}^{p}\dx\right)^{\vartheta/p}
$$
holds with $c\equiv c(n,p,s,\gamma)$. In \trif{bm111} we interpret $\nr{w}_{L^{\infty}(B_{\rr})}^{1-\vartheta}=1$ when $\gamma\leq p$ and therefore $\vartheta=1$.
\end{lemma}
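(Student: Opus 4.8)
The plan is to read off \rif{bm111} directly from Lemmas \ref{em} and \ref{bm}, splitting into the three cases recorded by $\mathds{A}_\gamma,\mathds{B}_\gamma,\mathds{C}_\gamma$ in \rif{t1t2} and checking in each case that the exponents of $\nr{w}_{L^{\infty}(B_{\rr})}$, of $\rr$, and of $\mint_{B_{\rr}}\snr{Dw}^{p}\dx$ delivered by the relevant lemma agree with those on the right-hand side of \rif{bm111} once $\vartheta$ is specialized. In other words, \rif{bm111} is a pure bookkeeping consequence of the two preceding imbeddings; there is no new analytic content.

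First I would dispose of the case $\gamma\le p$, that is $\mathds{B}_\gamma+\mathds{C}_\gamma=1$, where by \rif{t1t2} we have $\vartheta=1$. Here $1<\gamma\le p<\infty$, so Lemma \ref{em} applies verbatim to $w\in W^{1,p}_{0}(B_{\rr})$ and yields
$$
\left(\int_{B_{\rr}}\mint_{B_{\rr} }\frac{\snr{w(x)-w(y)}^{\gamma}}{\snr{x-y}^{n+s\gamma}}\dx\dy\right)^{1/\gamma}\le c\rr^{1-s}\left(\mint_{B_{\rr}}\snr{Dw}^{p}\dx\right)^{1/p}\,,
$$
which is exactly \rif{bm111} with $\vartheta=1$: indeed $\rr^{\vartheta-s}=\rr^{1-s}$, $\vartheta/p=1/p$, and $\nr{w}_{L^{\infty}(B_{\rr})}^{1-\vartheta}=\nr{w}_{L^{\infty}(B_{\rr})}^{0}=1$ by the stated convention. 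No bound on $w$ is used, consistently with the hypotheses (note also that $s\gamma<\gamma\le p$ is automatic here).

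Next I would handle the case $\gamma>p$, that is $\mathds{A}_\gamma=1$, where $\vartheta=s$. The standing assumption $s\gamma\le p$ now reads $\gamma\le p/s$, so that $1<p<\gamma\le p/s$ and Lemma \ref{bm} applies to $w\in W^{1,p}_{0}(B_{\rr})\cap L^{\infty}(B_{\rr})$ — this is the only place where the extra hypothesis $w\in L^{\infty}(B_{\rr})$ enters — giving
$$
\left(\int_{B_{\rr}}\mint_{B_{\rr} }\frac{\snr{w(x)-w(y)}^{\gamma}}{\snr{x-y}^{n+s\gamma}}\dx\dy\right)^{1/\gamma}\le c\nr{w}_{L^{\infty}(B_{\rr})}^{1-s}\left(\mint_{B_{\rr}}\snr{Dw}^{p}\dx\right)^{s/p}\,.
$$
Since $\vartheta=s$ here, $\rr^{\vartheta-s}=\rr^{0}=1$ and $\vartheta/p=s/p$, so this last display coincides with \rif{bm111}. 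In both cases the constant is $c\equiv c(n,p,s,\gamma)$, inherited from Lemmas \ref{em}--\ref{bm}, which completes the argument. There is no genuine obstacle: the only care needed is matching $\{\gamma<p\}\cup\{\gamma=p\}$ with the admissible range $\gamma\le p$ of Lemma \ref{em}, matching $\{\gamma>p\}\cap\{s\gamma\le p\}$ with the range $p<\gamma\le p/s$ of Lemma \ref{bm}, and recalling the interpretation of $\nr{w}_{L^{\infty}(B_{\rr})}^{1-\vartheta}$ when $\vartheta=1$.
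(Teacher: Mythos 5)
Your proof is correct and is precisely the intended one: the paper states Lemma \ref{bmfinal} only as a summary of Lemmas \ref{em} and \ref{bm} ("We can now summarize the parts we need of Lemmas \ref{em} and \ref{bm}") and offers no separate argument, so the bookkeeping verification you carry out — reading off \eqref{bm111} from Lemma \ref{em} with $\vartheta=1$ when $\gamma\le p$, and from Lemma \ref{bm} with $\vartheta=s$ when $p<\gamma\le p/s$ — is exactly what is meant.
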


\subsection{Miscellanea}
We shall often use the auxiliary vector field $V_{\mu}\colon \er^{n} \to  \er^{n}$, defined by
\eqn{vpvqm}
$$
V_{\mu}(z):= (|z|^{2}+\mu^{2})^{(p-2)/4}z
$$
whenever $z \in \er^{n}$, where $p\in (1,\infty)$ and $\mu\in [0,1]$ are as in \rif{assf}. It follows that
\eqn{Vm}
$$
\snr{V_{\mu}(z_{1})-V_{\mu}(z_{2})}\approx (\snr{z_{1}}^{2}+\snr{z_{2}}^{2}+\mu^{2})^{(p-2)/4}\snr{z_{1}-z_{2}}, 
$$
where the equivalence holds up to constants depending only on $n,p$.  A standard consequence of \rif{assf}$_3$ is the following strict monotonicity inequality:
\eqn{monoin}
$$
\snr{V_{\mu}(z_1)-V_{\mu}(z_2)}^{2} \leq c(\partial_z F(z_2)-\partial_z F(z_1))\cdot(z_2-z_1)
$$
holds whenever $z_1, z_2 \in \er^n$, where $c \equiv c (n,p,\Lambda)$. The two inequalities in the last two displays are in turn based in on the following one 
\eqn{algebra}
$$
\int_{0}^{1}(\snr{z_{1}+\lambda(z_{2}-z_{1})}^{2}+\mu^2)^{\texttt{t}/2} \ \d\lambda\approx_{n,\texttt{t}} (\snr{z_{1}}^{2}+\snr{z_{2}}^{2}+\mu^2)^{\texttt{t}/2}
$$
that holds whenever $\texttt{t}>-1$ and $z_{1},z_{2}\in \mathbb{R}^{n}$ are such that $\snr{z_{1}}+\snr{z_{2}}+\mu>0$. 
As a consequence of \rif{Vm} and \rif{monoin}, it also follows that 
\eqn{usa}
$$
|z|^{p} \leq c\, \partial_{z} F(z) \cdot z  +  c\mu^{p}
$$
holds for every $z \in \er^n$, where, again, it is $c \equiv c (n,p,\Lambda)$; for the facts in the last four displays see for instance \cite{AKM, dm3, ha} and related references. Finally, three classical iteration lemmas. The first one can be obtained by  \cite[Lemma 6.1]{giu} after a straightforward adapatation. Lemma \ref{l5bis} comes via a reading of the proof of (the very similar) \cite[Lemma 2.2]{gg}. Finally, Lemma \ref{l5.1} is nothing but De Giorgi's geometric convergence lemma \cite[Lemma 7.1]{giu}. 
\begin{lemma}\label{l5}
Let $h\colon [\rr_{0},\rr_{1}]\to \mathbb{R}$ be a non-negative and bounded function, and let $\theta \in (0,1)$, $a_i,\gamma_{i}, b\ge 0$ be numbers, $i\leq k\in \en$.  Assume that
$$
h(t)\le \theta h(s)+\sum_{i=1}^k \frac{a_i}{(s-t)^{\gamma_{i}}}+b
$$
holds whenever $\rr_{0}\le t<s\le \rr_{1}$. Then 
$$ 
h(\rr_{0})\le c\sum_{i=1}^k \frac{a_i}{(\rr_{1}-\rr_{0})^{\gamma_{i}}}+c\,b$$ holds too, where $c\equiv c (\theta, \gamma_i)$. 
\end{lemma}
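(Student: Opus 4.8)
The plan is to run the classical hole-filling iteration of Widman, following \cite[Lemma 6.1]{giu} with the only cosmetic change that several powers $(s-t)^{-\gamma_i}$ appear instead of one. First I would introduce a geometrically shrinking sequence of radii interpolating between $\rr_0$ and $\rr_1$: fix $\tau\in(0,1)$ to be chosen later, set $t_0:=\rr_0$ and $t_{j+1}:=t_j+(1-\tau)\tau^{j}(\rr_1-\rr_0)$, so that $t_j=\rr_0+(1-\tau^{j})(\rr_1-\rr_0)\nearrow \rr_1$ and $t_{j+1}-t_j=(1-\tau)\tau^{j}(\rr_1-\rr_0)$. Applying the hypothesis with $t=t_j$ and $s=t_{j+1}$ then gives
$$
h(t_j)\le \theta\, h(t_{j+1})+\sum_{i=1}^{k}\frac{a_i\,\tau^{-j\gamma_i}}{[(1-\tau)(\rr_1-\rr_0)]^{\gamma_i}}+b\,.
$$

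Next I would iterate this inequality $m$ times, starting from $j=0$, to obtain
$$
h(\rr_0)=h(t_0)\le \theta^{m}h(t_m)+\sum_{j=0}^{m-1}\theta^{j}\left(\sum_{i=1}^{k}\frac{a_i\,\tau^{-j\gamma_i}}{[(1-\tau)(\rr_1-\rr_0)]^{\gamma_i}}+b\right)\,.
$$
Since $h$ is bounded and $\theta\in(0,1)$, the term $\theta^{m}h(t_m)$ tends to $0$ as $m\to\infty$; this is the only point where the boundedness of $h$ is used. It then remains to sum the geometric series obtained by letting $m\to\infty$. The $b$-terms sum to $b/(1-\theta)$. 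For the $a_i$-terms, I would choose $\tau\in(0,1)$ close enough to $1$ so that $\theta\tau^{-\gamma_i}<1$ for every index $i$ with $\gamma_i>0$ (this amounts to finitely many constraints $\tau>\theta^{1/\gamma_i}$, while indices with $\gamma_i=0$ impose none); such a $\tau$ depends only on $\theta$ and the $\gamma_i$. Then $\sum_{j\ge0}(\theta\tau^{-\gamma_i})^{j}=(1-\theta\tau^{-\gamma_i})^{-1}<\infty$ for each $i$, and, recalling $\gamma_i\ge 0$, passing to the limit yields
$$
h(\rr_0)\le \frac{b}{1-\theta}+\sum_{i=1}^{k}\frac{a_i}{(1-\tau)^{\gamma_i}(1-\theta\tau^{-\gamma_i})(\rr_1-\rr_0)^{\gamma_i}}\le c\sum_{i=1}^{k}\frac{a_i}{(\rr_1-\rr_0)^{\gamma_i}}+c\,b\,,
$$
with $c\equiv c(\theta,\gamma_i)$, which is the assertion.

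There is no genuine obstacle here, since the argument is entirely elementary; the only point deserving a moment's care is the simultaneous choice of the scaling factor $\tau$ ensuring convergence of all $k$ geometric series at once, which is immediate because $k$ is finite, and the bookkeeping showing that the final constant $c$ depends on $\theta$ and $\gamma_1,\dots,\gamma_k$ only — not on the $a_i$, on $b$, or on the length $\rr_1-\rr_0$. Alternatively, one may simply invoke \cite[Lemma 6.1]{giu}, observing that its proof carries over verbatim to a finite sum of terms $a_i(s-t)^{-\gamma_i}$.
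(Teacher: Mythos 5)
Your proof is correct and is precisely the standard hole-filling iteration of \cite[Lemma 6.1]{giu} adapted to a finite sum of power terms, which is exactly what the paper means by ``a straightforward adaptation.'' The only delicate point --- choosing a single $\tau\in(0,1)$ with $\tau>\theta^{1/\gamma_i}$ for all $i$ with $\gamma_i>0$ so that every geometric series $\sum_j(\theta\tau^{-\gamma_i})^j$ converges simultaneously --- is handled correctly, and the resulting constant depends only on $\theta$ and the $\gamma_i$ as required.
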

\begin{lemma}\label{l5bis}
\, Let $h\colon [0,r_0]\to \mathbb{R}$ be a non-negative and non-decreasing function such that the inequality 
$
h(t) \leq a[ (t/\rr )^{n} +\eps]h(\rr) + a\rr^{\beta} 
$
holds whenever $0 \leq t \leq \rr\leq r_0$, where $a>0$ and $0 < \beta < n$. For every positive $\textnormal{\texttt{b}} <n$, there exists $\eps_0 \equiv \eps_0(a,n, \beta, \textnormal{\texttt{b}})$ such that, if $\eps\leq \eps_0$, then 
$
h(t) \leq c(t/\rr )^{\textnormal{\texttt{b}}}h(\rr) + ct^{\beta}
$
holds too, whenever $0 \leq t \leq \rr\leq r_0$, where $c \equiv c (a,n,\beta, \textnormal{\texttt{b}})$. 
\end{lemma}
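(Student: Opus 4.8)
The statement is the classical ``improvement of decay'' lemma obtained by iterating the hypothesis along a geometric sequence of radii, exactly in the spirit of \cite[Lemma 2.2]{gg} and of Giusti's book \cite{giu}. The first remark is that one may assume $\beta<\texttt{b}<n$ without loss of generality: if $0\le t\le\rr$ and $\texttt{b}\le\texttt{b}'$ then $(t/\rr)^{\texttt{b}'}\le(t/\rr)^{\texttt{b}}$, so the desired inequality for an exponent $\texttt{b}'\in(\beta,n)$ immediately gives the one for every smaller $\texttt{b}$, the additive term $ct^{\beta}$ being left untouched. Moreover this reduction is essential and not merely cosmetic: applied directly with $\texttt{b}\le\beta$, the iteration carried out below would fail to reproduce the term $ct^{\beta}$ with the sharp power of $t$. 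So we fix once and for all an exponent in $(\beta,n)$, still denoted $\texttt{b}$, and allow $\eps_0$ and $c$ to depend on it (hence, in the end, on $a,n,\beta$ and on the originally given exponent).

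Next, I would choose $\tau\in(0,1)$ so small that $2a\tau^{n}\le\tau^{\texttt{b}}$ --- possible precisely because $\texttt{b}<n$ forces $\tau^{n-\texttt{b}}\to 0$ as $\tau\to0$ --- and set $\eps_0:=\tau^{n}$. Applying the hypothesis with $t$ replaced by $\tau\rr$ and using $\eps\le\eps_0$ yields the one-step decay
\[
h(\tau\rr)\le a\big[\tau^{n}+\eps\big]h(\rr)+a\rr^{\beta}\le\tau^{\texttt{b}}h(\rr)+a\rr^{\beta},\qquad 0<\rr\le r_0.
\]
Since $h$ is non-decreasing the hypothesis is applicable at every scale $\tau^{j}\rr$, and an elementary induction gives, for all $k\in\en$,
\[
h(\tau^{k}\rr)\le\tau^{k\texttt{b}}h(\rr)+a\rr^{\beta}\sum_{j=0}^{k-1}\tau^{j\texttt{b}}\tau^{(k-1-j)\beta}=\tau^{k\texttt{b}}h(\rr)+a\tau^{(k-1)\beta}\rr^{\beta}\sum_{j=0}^{k-1}\tau^{j(\texttt{b}-\beta)}.
\]
As $\texttt{b}>\beta$, the last sum is bounded by $\sum_{j\ge0}\tau^{j(\texttt{b}-\beta)}=(1-\tau^{\texttt{b}-\beta})^{-1}$, so that the whole term is $\le c(\tau^{k}\rr)^{\beta}$ with $c\equiv c(a,n,\beta,\texttt{b})$; hence $h(\tau^{k}\rr)\le\tau^{k\texttt{b}}h(\rr)+c(\tau^{k}\rr)^{\beta}$ for every $k$.

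Finally, given an arbitrary $t\in(0,\rr]$, I would pick the integer $k\ge0$ with $\tau^{k+1}\rr<t\le\tau^{k}\rr$. Monotonicity gives $h(t)\le h(\tau^{k}\rr)$, while $\tau^{k}<\tau^{-1}(t/\rr)$ turns $\tau^{k\texttt{b}}$ into $\tau^{-\texttt{b}}(t/\rr)^{\texttt{b}}$ and $(\tau^{k}\rr)^{\beta}$ into $\tau^{-\beta}t^{\beta}$, whence $h(t)\le c(t/\rr)^{\texttt{b}}h(\rr)+ct^{\beta}$ with $c\equiv c(a,n,\beta,\texttt{b})$, because $\tau$ depends only on $a,n,\texttt{b}$ (and on $\beta$ through the reduction). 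There is no real obstacle in this argument: the one point requiring a little care is the estimate of the geometric sum, and that is precisely what the preliminary reduction to $\texttt{b}>\beta$ renders transparent, ruling out the borderline case $\texttt{b}=\beta$ in which the sum would grow linearly in $k$ and one would be forced to trade an arbitrarily small part of the decay exponent.
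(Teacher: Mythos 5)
Your proof is correct and is essentially the standard dyadic iteration argument that the paper itself defers to, by citing \cite[Lemma 2.2]{gg}: reduce to $\texttt{b}\in(\beta,n)$, pick $\tau$ with $2a\tau^n\le\tau^{\texttt{b}}$, set $\eps_0=\tau^n$, iterate the one-step decay, sum the geometric series (which converges precisely because $\texttt{b}>\beta$), and interpolate via monotonicity of $h$. The only cosmetic remark is that for $t=0$ the conclusion is vacuous (or follows by letting $\rr\to 0$), but that is standard for lemmas of this type and does not affect the argument.
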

\begin{lemma}\label{l5.1}
Let $t>0$ and $\{\ti{v}_{i}\}_{i\in \N_{0}}\subset [0, \infty)$ be such that $\ti{v}_{i+1}\le c_{*}\textnormal{\texttt{a}}^{i}\ti{v}_{i}^{1+t}$ holds for every $i \geq 0$, with $c_{*}>0$, $\textnormal{\texttt{a}} \geq 1$ and $ t >0$. If $\ti{v}_{0}\le c_{*}^{-1/t}\textnormal{\texttt{a}}^{-1/t^{2}}$, then $\ti{v}_{i}\le \textnormal{\texttt{a}}^{-i/t}\ti{v}_{0}$ holds for every $i\geq 0$ and hence $ \ti{v}_{i}\to 0$.
\end{lemma}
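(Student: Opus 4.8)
The plan is to prove the decay bound $\ti{v}_{i}\le \textnormal{\texttt{a}}^{-i/t}\ti{v}_{0}$ by induction on $i\in\N_{0}$, after which the convergence $\ti{v}_{i}\to 0$ is immediate. The base case $i=0$ is trivial, since $\textnormal{\texttt{a}}^{0}=1$. For the inductive step I would assume $\ti{v}_{i}\le \textnormal{\texttt{a}}^{-i/t}\ti{v}_{0}$ and feed it into the recursion satisfied by the sequence:
$$
\ti{v}_{i+1}\le c_{*}\textnormal{\texttt{a}}^{i}\ti{v}_{i}^{1+t}\le c_{*}\textnormal{\texttt{a}}^{i}\bigl(\textnormal{\texttt{a}}^{-i/t}\ti{v}_{0}\bigr)^{1+t}=c_{*}\ti{v}_{0}^{t}\,\textnormal{\texttt{a}}^{\,i-i(1+t)/t}\,\ti{v}_{0}=c_{*}\ti{v}_{0}^{t}\,\textnormal{\texttt{a}}^{-i/t}\,\ti{v}_{0}\,,
$$
the only computation being the elementary exponent identity $i-i(1+t)/t=-i/t$.

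At this point the whole argument rests on using the smallness hypothesis in the correct form: rewriting $\ti{v}_{0}\le c_{*}^{-1/t}\textnormal{\texttt{a}}^{-1/t^{2}}$ as $c_{*}\ti{v}_{0}^{t}\le \textnormal{\texttt{a}}^{-1/t}$ and inserting this bound into the last display yields exactly $\ti{v}_{i+1}\le \textnormal{\texttt{a}}^{-1/t}\textnormal{\texttt{a}}^{-i/t}\ti{v}_{0}=\textnormal{\texttt{a}}^{-(i+1)/t}\ti{v}_{0}$, which is the asserted bound at step $i+1$; this closes the induction. Since $t>0$ and $\textnormal{\texttt{a}}>1$ (the case of interest; for $\textnormal{\texttt{a}}=1$ the first conclusion still holds but the final claim is vacuous) one has $\textnormal{\texttt{a}}^{-i/t}\to 0$, and therefore $\ti{v}_{i}\to 0$.

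I do not anticipate any genuine obstacle here: this is De Giorgi's classical geometric-convergence lemma, essentially \cite[Lemma 7.1]{giu}, and the only point requiring care is that the smallness condition on $\ti{v}_{0}$ be exploited precisely so that the inductive hypothesis at level $i$ reproduces a bound of the \emph{same shape} at level $i+1$ — i.e. so that the induction is self-propagating rather than merely checkable for the first few indices.
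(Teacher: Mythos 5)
Your proof is correct and is precisely the standard inductive argument behind De Giorgi's geometric-convergence lemma; the paper itself does not prove the statement but cites \cite[Lemma 7.1]{giu}, so there is nothing to compare against, and your computation (the exponent identity $i-i(1+t)/t=-i/t$ and the reformulation $c_*\ti{v}_0^t\le \textnormal{\texttt{a}}^{-1/t}$ of the smallness hypothesis) is exactly what makes the induction self-propagating. One minor remark: in the degenerate case $\textnormal{\texttt{a}}=1$ the final claim $\ti{v}_i\to 0$ is not so much ``vacuous'' as unestablished by the proven bound, but this is immaterial since in the paper's only application (Proposition \ref{boundprop}) one has $\textnormal{\texttt{a}}=2^{p^*}>1$.
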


\subsection{Global boundedness}\label{globou} Instrumental to the proof of Theorems \ref{t2}-\ref{t4}, is the boundedness of  minimizers. 
This proceeds via a variation of the classical De Giorgi's iteration scheme (see for instance \cite[Theorem 4.7]{BDVV}, \cite[Chapter 7]{giu}), and we report the full details for completeness in the subsequent 
\begin{proposition}\label{boundprop}
Under assumptions\eqref{assf}-\eqref{assk} and \eqref{g33}, let $u\in \mathbb{X}_{g}(\Omega)$ be as in \eqref{fun}. There exists a constant $c\equiv  c(\datab)$ such that 
$\nr{u}_{W^{1,p}(\Omega)}+\nr{u}_{L^{\infty}(\mathbb{R}^{n})}\le c\,.$ 
\end{proposition}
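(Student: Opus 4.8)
The plan is to proceed in two steps: first a Sobolev bound $\nr{u}_{W^{1,p}(\Omega)}\le c(\datab)$ obtained by a direct comparison, and then the $L^{\infty}$-bound via a De Giorgi iteration on the super- and sublevel sets of $u$. Throughout, the favourable feature of the nonlocal part is that it only enters in the "right" direction, because truncations are contractions and the structural conditions on $\Phi$ make the nonlocal energy monotone under truncation.

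For the energy bound I would test the minimality property \eqref{fun} against the competitor $g\in\mathbb{X}_{g}(\Omega)$ itself, which gives $\mathcal{F}(u)\le\mathcal{F}(g)$. Since $\Phi\ge0$ and $K>0$ by \eqref{assph}--\eqref{assk}, the double integral in $\mathcal{F}(u)$ is nonnegative, while the one in $\mathcal{F}(g)$ is bounded by $c\,[g]_{s,\gamma;\er^n}^{\gamma}\le c\nr{g}_{W^{s,\gamma}(\er^n)}^{\gamma}$; together with the lower bound in \eqref{assf} this reduces to
\[
\Lambda^{-1}\int_{\Omega}\snr{Du}^{p}\dx\le\int_{\Omega}F(Dg)\dx+\int_{\Omega}f(u-g)\dx+c\nr{g}_{W^{s,\gamma}(\er^n)}^{\gamma}\,.
\]
The term $\int_{\Omega}F(Dg)\dx$ is controlled by $\nr{g}_{W^{1,p}(\Omega)}$ and $\snr{\Omega}$ via the upper bound in \eqref{assf}; since $u-g\in W^{1,p}_{0}(\Omega)$, Hölder's inequality and the Sobolev--Poincar\'e inequality bound $\int_{\Omega}f(u-g)\dx$ by $c\nr{f}_{L^{n}(\Omega)}\big(\nr{Du}_{L^{p}(\Omega)}+\nr{g}_{W^{1,p}(\Omega)}\big)$, and Young's inequality reabsorbs the $\nr{Du}_{L^{p}(\Omega)}$-contribution on the left. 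A final application of Poincar\'e's inequality yields the claimed bound on $\nr{u}_{W^{1,p}(\Omega)}$.

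For the $L^{\infty}$-bound the decisive observation is that, since $u\equiv g$ on $\er^n\setminus\Omega$ and $g\in L^{\infty}(\er^n)$ by \eqref{g33}, for every level $k\ge\kappa_{0}:=\nr{g}_{L^{\infty}(\er^n)}$ the truncation $v_{k}:=\min\{u,k\}$ still belongs to $\mathbb{X}_{g}(\Omega)$ — indeed $u-v_{k}=(u-k)_{+}$ is supported in $\Omega$ (as $u=g\le\kappa_{0}\le k$ there) and lies in $\mathbb{X}_{0}(\Omega)$ — hence $v_{k}$ is admissible in \eqref{fun}. Moreover, truncations are contractions, so $\snr{v_{k}(x)-v_{k}(y)}\le\snr{u(x)-u(y)}$ with the two differences having the same sign, and since the sign condition $\Phi'(t)t\ge\Lambda^{-1}\snr{t}^{\gamma}$ in \eqref{assph} forces $\Phi$ to be nondecreasing on $[0,\infty)$ and nonincreasing on $(-\infty,0]$, we get $\Phi(v_{k}(x)-v_{k}(y))\le\Phi(u(x)-u(y))$ pointwise; thus the nonlocal energy of $v_{k}$ does not exceed that of $u$. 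Inserting this into $\mathcal{F}(u)\le\mathcal{F}(v_{k})$ and using the coercivity and growth in \eqref{assf} together with $\mu\le1$, the nonlocal terms cancel in the favourable direction and one is left with the Caccioppoli-type inequality
\[
\int_{A_{k}}\snr{Du}^{p}\dx\le c\int_{A_{k}}\snr{f}\,(u-k)_{+}\dx+c\snr{A_{k}}\,,\qquad A_{k}:=\{x\in\Omega\colon u(x)>k\}\,,
\]
with $c\equiv c(n,p,\Lambda)$.

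From here the argument is the classical De Giorgi measure-theoretic scheme. Writing $(u-k)_{+}\in W^{1,p}_{0}(\Omega)$, the Sobolev inequality bounds a suitable Lebesgue norm $\nr{(u-k)_{+}}_{L^{q}(\Omega)}$, for some fixed $q>p$ (any $q<\infty$ when $p\ge n$), in terms of the left-hand side above; a routine combination with Hölder's and Young's inequalities, followed by the elementary bound $(h-k)^{q}\snr{A_{h}}\le\int_{A_{k}}(u-k)_{+}^{q}\dx$ for $h>k$, produces a recursion $\snr{A_{h}}\le c(h-k)^{-q}\snr{A_{k}}^{1+\sigma}$ for $h>k\ge\kappa_{0}$, with $\sigma>0$ and $c$ depending on $\datab$. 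Choosing the levels $k_{i}:=\kappa_{0}+d(1-2^{-i})$ fits the hypotheses of Lemma \ref{l5.1}, provided $d\equiv d(\datab)$ is taken large enough that the starting value $\snr{A_{\kappa_{0}}}\le\snr{\Omega}$ meets the required smallness; Lemma \ref{l5.1} then gives $\snr{A_{k_{i}}}\to0$, i.e. $\eup_{\Omega}u\le\kappa_{0}+d\le c(\datab)$. The symmetric argument applied to $-u$ (testing with $\max\{u,-k\}$) controls $\eif_{\Omega}u$ from below, and since $u\equiv g$ outside $\Omega$ with $\nr{g}_{L^{\infty}(\er^n)}\le\kappa_{0}$ we conclude $\nr{u}_{L^{\infty}(\er^n)}\le c(\datab)$. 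The one step that is not entirely routine is the monotonicity of the nonlocal energy under truncation; but, as indicated, it is a direct consequence of \eqref{assph}, after which the nonlocal term plays no active role in the iteration and the scheme reduces to the standard local one (cf. \cite[Chapter 7]{giu}, \cite[Theorem 4.7]{BDVV}).
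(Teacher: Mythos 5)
Your proof is correct, and it departs from the paper's argument in two notable ways. First, for the $L^{\infty}$-bound the paper works with the Euler--Lagrange equation \eqref{el}, tests against $(u-\kappa_{i+1})_{+}$, and then shows by a case-by-case sign analysis that the resulting nonlocal term (II) is nonnegative. You instead work directly with minimality, comparing $\mathcal{F}(u)\le\mathcal{F}(\min\{u,k\})$, and disarm the nonlocal part by the cleaner pointwise observation that truncation is nonexpansive, sign-preserving on differences, and $\Phi$ (being convex with $\Phi(0)=0$ and $\Phi'(t)t\ge 0$) is monotone away from $0$ — hence $\Phi(v_k(x)-v_k(y))\le\Phi(u(x)-u(y))$. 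This avoids deriving the Euler--Lagrange equation and turns the nonlocal contribution into a one-line favourable cancellation, which is slightly more elementary and self-contained. Second, your De Giorgi iteration runs on the level-set measures $\snr{A_k}$ through the recursion $\snr{A_h}\le c(h-k)^{-q}\snr{A_k}^{1+\sigma}$, with the starting smallness supplied by $\snr{A_{\kappa_0}}\le\snr{\Omega}$ and a large enough gap $d$, whereas the paper iterates on the renormalized Lebesgue norms $\tilde v_i=m^{-p}\nr{(u-\kappa_i)_+}^p_{L^{p^*}}$ and feeds the $L^{p^*}$-bound from \eqref{bd.1} into the initial condition of Lemma \ref{l5.1}. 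Both are standard dual formulations of the same scheme and lead to the same conclusion; the paper's variant recycles the energy bound already proved and so keeps the constant in the iteration tied to $\mathcal{M}$, while yours keeps it tied to $\snr{\Omega}$. The $W^{1,p}$ step is the same in both: compare with $g$, use \eqref{assf}$_1$, Sobolev--Poincar\'e and Young. One small point worth making explicit if you wrote this out in full: the paper deliberately chooses to run the argument through \eqref{el} because the same computation then covers critical points (not just minimizers), which is needed later for Theorem \ref{t6}; your minimality-based route would not immediately transfer to that setting.
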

\begin{proof} We denote the Sobolev conjugate exponent $p^{*}$ as $p^{*}=np/(n-p)$ when $p<n$, and  $p^{*}> np/(n-1)=pn'$ when $p\geq n$.  By the minimality of $u$, Sobolev and Young's inequalities, we get, after a few standard manipulations involving in particular \eqref{assf}$_{1}$, \eqref{assph}$_{2}$ and \eqref{assk}
\begin{flalign*}
& \int_{\Omega}\snr{Du}^{p}\dx+\int_{\mathbb{R}^{n}}\int_{\mathbb{R}^{n}}\frac{\snr{u(x)-u(y)}^{\gamma}}{\snr{x-y}^{n+s\gamma}}\dx\dy\nonumber \\
&\quad \le c\int_{\Omega}\snr{Dg}^{p}\dx+c\int_{\mathbb{R}^{n}}\int_{\mathbb{R}^{n}}\frac{\snr{g(x)-g(y)}^{\gamma}}{\snr{x-y}^{n+s\gamma}}\dx\dy+c\left(\int_{\Omega}\snr{f}^{n}\dx\right)^{\frac{p}{n(p-1)}},
\end{flalign*}
for $c\equiv c(n,p,\gamma,\Lambda, \Omega)$. Note that this still holds for critical points, i.e., solutions to \rif{el}, and therefore connects to the setting of Theorem \ref{t6}; this goes via the use of \rif{usa}. Using Sobolev inequality of the left-hand side of the inequality in the above display yields
\eqn{bd.1}
$$
\|u\|^{p}_{L^{p^{*}}(\Omega)}\le c\nr{f}_{L^{n}(\Omega)}^{p/(p-1)}+c(\datab)=:\mathcal{M}\equiv \mathcal M(\datab),
$$
with $c\equiv c(n,p,\gamma,\Lambda,\Omega)$. This implies the bound $\nr{u}_{W^{1,p}(\Omega)}\le c(\datab)$. It remains to prove a similar bound for $\nr{u}_{L^{\infty}(\mathbb{R}^{n})}$. We start taking $m$ large enough to have 
\eqn{bd.0}
$$
m>\nr{g}_{L^{\infty}(\mathbb{R}^{n})}+\mathcal{M}^{1/p}+1\,.
$$
Eventually, we shall further enlarge the above lower bound on $m$. For $i\in \N_{0}$, define the increasing sequence $\{\kk_{i}\}_{i\in \N_{0}}:=\{2m(1-2^{-i-1})\}_{i\in \N_{0}}$ so that $2m\ge \kk_{i}\ge m$ holds for all $i \in \N_{0}$. By \eqref{bd.0} and $u\in \mathbb{X}_{g}(\Omega)$, we see that $v_{i}:=(u-\kk_{i})_{+}\in \mathbb{X}_{0}(\Omega)$ for all $i \in \N_{0}$. Testing \eqref{el} against $v_{i+1}$ we have
\begin{eqnarray}
\nonumber &&0=\int_{\Omega}\left[\partial_{z}F(Du)\cdot Dv_{i+1}-fv_{i+1}\right]\dx\nonumber \\
&& \qquad +\int_{\mathbb{R}^{n}}\int_{\mathbb{R}^{n}}\Phi'(u(x)-u(y))(v_{i+1}(x)-v_{i+1}(y))K(x,y)\dx\dy=:\mbox{(I)}+\mbox{(II)} \label{bd.54}
\end{eqnarray}
for every $i\geq 0$. 
Using \rif{usa}, Sobolev embedding and H\"older's inequalities yield
\begin{flalign}
\notag \mbox{(I)} & \ge\frac 1c\nr{v_{i+1}}_{L^{p^{*}}(\Omega)}^{p}-c\nr{f}_{L^{n}(\Omega)}\nr{v_{i+1}}_{L^{p^{*}}(\Omega)}\snr{\Omega\cap\{v_{i+1}>0\}}^{1/n'-1/p^{*}}\\
& \quad - c \snr{\Omega\cap\{v_{i+1}>0\}} \label{bd.5}
\end{flalign}
for $c\equiv c(n,p,\Lambda)$. To estimate term $\mbox{(II)}$, first consider the case $u(x)>\kk_{i+1}$ and $u(y)>\kk_{i+1}$, when we have, via \eqref{assph}$_{2}$
\begin{flalign*}
\Phi'(u(x)-u(y))(v_{i+1}(x)-v_{i+1}(y))&=\Phi'(v_{i+1}(x)-v_{i+1}(y))(v_{i+1}(x)-v_{i+1}(y))\nonumber \\
&\ge\Lambda^{-1}\snr{v_{i+1}(x)-v_{i+1}(y)}^{\gamma}\,.
\end{flalign*}
On the other hand, when $u(x)>\kk_{i+1}$ and $u(y)\le \kk_{i+1}$, by $\eqref{assph}_{2}$ it is
\begin{flalign*}
&\Phi'(u(x)-u(y))(v_{i+1}(x)-v_{i+1}(y))=\Phi'((u(x)-\kk_{i+1})_{+}+(\kk_{i+1}-u(y))_{+})(u(x)-\kk_{i+1})_{+}\nonumber \\
&\qquad \quad =\frac{\Phi'\left(v_{i+1}(x)+(\kk_{i+1}-u(y))_{+}\right)}{v_{i+1}(x)+(\kk_{i+1}-u(y))_{+}}\left[v_{i+1}(x)+(\kk_{i+1}-u(y))_{+}\right]v_{i+1}(x)\nonumber \\
&\qquad \quad \ge\Lambda^{-1}\snr{v_{i+1}(x)+(\kk_{i+1}-u(y))_{+}}^{\gamma-1}v_{i+1}(x)\nonumber  \ge\Lambda^{-1}[v_{i+1}(x)]^{\gamma}=\Lambda^{-1}\snr{v_{i+1}(x)-v_{i+1}(y)}^{\gamma}.
\end{flalign*}
In the opposite situation, i.e. when $u(x)\le \kk_{i+1}$ and $u(y)>\kk_{i+1}$, again by $\eqref{assph}_{2}$ we have
\begin{flalign*}
&\Phi'(u(x)-u(y))(v_{i+1}(x)-v_{i+1}(y))=-\Phi'\left(-\left((u(y)-\kk_{i+1})_++(\kk_{i+1}-u(x)_+\right)\right)v_{i+1}(y)\nonumber \\
&\qquad \quad =\frac{\Phi\left(-\left(v_{i+1}(y)+(\kk_{i+1}-u(x))_{+}\right)\right)}{v_{i+1}(y)+(\kk_{i+1}-u(x))_{+}}\left(-\left(v_{i+1}(y)+(\kk_{i+1}-u(x))_{+}\right)\right)v_{i+1}(y)\nonumber \\
&\qquad \quad \ge \Lambda^{-1}\snr{v_{i+1}(y)+(\kk_{i+1}-u(x))_{+}}^{\gamma-1}v_{i+1}(y)\nonumber  \ge \Lambda^{-1}[v_{i+1}(y)]^{\gamma}=\Lambda^{-1}\snr{v_{i+1}(x)-v_{i+1}(y)}^{\gamma}.
\end{flalign*}
Finally, when $u(x)\le \kk_{i+1}$ and $u(y)\le\kk_{i+1}$, it is
$
\Phi'(u(x)-u(y))(v_{i+1}(x)-v_{i+1}(y))=0.
$
Collecting all the above cases and recalling \eqref{assk}, leads to $\mbox{(II)}\ge 0$. Now note that 
$ v_{i}\ge v_{i+1}$ 
and $v_{i}\ge \kk_{i+1}-\kk_{i}=m /2^{i+1}$ on $\left\{v_{i+1}\geq 0\right\}$, so that $\Omega\cap\left\{v_{i+1}\geq 0\right\}\subseteq \Omega\cap \{v_{i}\ge m/2^{i+1}\}$, 
therefore we bound $$
\snr{\Omega\cap\left\{v_{i+1}>0\right\}}\leq\snr{ \Omega\cap \{v_{i}\ge m/2^{i+1}\} }\le(2^{i+1}/m )^{p^{*}}\nr{v_{i}}_{L^{p^{*}}(\Omega)}^{p^{*}}\,.
$$
Using these last inequalities in \eqref{bd.54}-\eqref{bd.5} together with $\mbox{(II)}\ge 0$, yields 
\begin{flalign}\label{bd.6}
\nr{v_{i+1}}_{L^{p^{*}}(\Omega)}^{p}&\leq c\nr{v_{i}}_{L^{p^{*}}(\Omega)}\snr{\Omega\cap \{v_{i} \geq  m/2^{i+1}\}}^{1/n'-1/p^{*}}+
c\snr{\Omega\cap \{v_{i} \geq  m/2^{i+1}\}}\nonumber \\
&\leq c(2^{i+1}/m)^{p^{*}/n'-1}\nr{v_{i}}_{L^{p^{*}}(\Omega)}^{p^{*}/n'}+ c(2^{i+1}/m)^{p^{*}}\nr{v_{i}}_{L^{p^{*}}(\Omega)}^{p^{*}}
\end{flalign}
with $c\equiv c(\datab)$. Setting $\tilde v_{i}:=m ^{-p}\nr{v_{i}}_{L^{p^{*}}(\Omega)}^{p}$, \rif{bd.1}-\rif{bd.0} imply $\tilde v_{i}\leq 1$, and \eqref{bd.6} reads as
$$
\tilde v_{i+1} \le c2^{(i+1)\left(p^{*}/n'-1\right)}m ^{1-p}\tilde v_{i}^{p^{*}/(pn')}+
c2^{(i+1) p^{*}}m ^{-p}\tilde v_{i}^{p^{*}/p} \stackrel{\eqref{bd.0}}{\le}c_*2^{ip^{*}}\tilde v_{i}^{1+t}
$$
for $c_*\equiv c_*(\datab)$ and $
t:=p^{*}/(pn')-1>0
$. 
In addition to \eqref{bd.0}, we increase $m $ in such a way that
$
m ^{p}\ge c_*^{1/t}2^{p^{*}/t^{2}}\mathcal{M}$ that implies, via \eqref{bd.1}, $\ti{v}_{0}\le c_*^{-1/t}2^{-p^{*}/t^{2}}$. Lemma \ref{l5.1} now applies and gives
$$
0=\lim_{i\to \infty}\tilde v_{i}=\lim_{i\to \infty}m ^{-p}\left(\int_{\Omega}v_{i}^{p^{*}}\dx\right)^{p/p^{*}}=m ^{-p}\left(\int_{\Omega}(u-2m )_{+}^{p^{*}}\dx\right)^{p/p^{*}}
$$
so that $\snr{\Omega\cap \left\{u>2m \right\}}=0$, and therefore $
u\le 2m$ holds a.e.\,in $\Omega$. For a lower bound, set $\hat{g}:=-g\in \mathbb{X}(\hat{g};\Omega)$, $\hat{f}:=-f\in L^{n}(\Omega)$ and consider functional $
\mathbb{X}_{\hat g}(\Omega)\ni w\mapsto \hat{\mathcal{F}}(w),
$ 
where 
$$
\hat{\mathcal{F}}(w):=\int_{\Omega}[\hat{F}(Dw)-\hat{f}w]\dx+\int_{\mathbb{R}^{n}}\int_{\mathbb{R}^{n}}\hat{\Phi}(w(x)-w(y))K(x,y)\dx\dy,
$$
$\hat{F}(z):=F(-z)$, $\hat{\Phi}(t):=\Phi(-t)$. $\hat{F}(\cdot)$ and $\hat{\Phi}(\cdot)$ satisfy \rif{assf}-\rif{assph} and $\hat{u}:=-u$ is the unique minimizer of $\hat{\mathcal{F}}(\cdot)$ in $ \mathbb{X}_{\hat g}(\Omega)$. The above argument apply to $\hat{u}$ and leads to $\hat{u}\le 2m$ a.e. in $\Omega$. All in all we have that $\snr{u}\le 2m $ a.e. in $\Omega$ and the proof is complete recalling the way $m$ has been determined.
\end{proof}
\begin{remark}
\emph{In the proof of Proposition \ref{boundprop} we do not need to assume that $p>s\gamma$; any choice of $p, \gamma>1$ and $s \in (0,1)$ suffices. Moreover, the assumption $f\in L^n(\Omega)$ can be relaxed in $f\in L^{\ti{n}}(\Omega)$, where $\ti{n}>n/p$ if $p<n$ and $\ti{n}>1$ otherwise, in that case we take $p^{*}> p\ti{n}/(\ti{n}-1)=p\ti{n}'$ when $p\geq n$. This is in accordance with the classical results for the local equation $-\Delta_p u=-\diver\, (|Du|^{p-2}Du)=f$.}
\end{remark}
\subsection{Rewriting the Euler-Lagrange equation}\label{rewrite}
Following \cite[Section 1.5]{kms1}, let us set 
\begin{flalign}
K'(x,y)& :=\begin{cases}
\ \frac{\Phi'(u(x)-u(y))K(x,y)}{\snr{u(x)-u(y)}^{\gamma-2}(u(x)-u(y))}\quad &\mbox{if} \ \ x\not =y, \ u(x)\not = u(y) \\
\ \snr{x-y}^{-n-s\gamma}\quad &\mbox{if} \  \ x\not =y, \ u(x) = u(y)\,,
\end{cases} \label{uu1}\\
\sK(x,y)&:=\frac{K'(x,y)+K'(y,x)}{2} \label{uu2}\,.
\end{flalign}
By $\eqref{assph}_{2}$, \eqref{assk} and \rif{uu1}-\rif{uu2}, it then follows that
\begin{eqnarray}\label{cacc.0}
\sK(x,y)=\sK(y,x)\quad \mbox{and}\quad \sK(x,y)\approx_{\Lambda} \frac{1}{\snr{x-y}^{n+s\gamma}}
\end{eqnarray}
hold for every $x, y \in \er^n$, provided $x\not =y$. Then, changing variables, \rif{el} can be rewritten as
\begin{flalign}
 \nonumber &\int_{\Omega}\left[\partial_{z} F(Du)\cdot D\varphi -f\varphi\right]\dx\\
 & \qquad 
+\int_{\mathbb{R}^{n}}\int_{\mathbb{R}^{n}}|u(x)-u(y)|^{\gamma-2}(u(x)-u(y))(\varphi(x)-\varphi(y)) \sK(x,y)\dx\dy=0 \label{el2}
\end{flalign}
that holds for every $\varphi\in \mathbb{X}_{0}(\Omega)$. From now on, we shall use \rif{el2} instead of \rif{el}. 
\section{Integral quantities measuring oscillations}\label{lalista}
In this section we fix two generic functions $w$ and $f$, such that, unless otherwise specified, $w \in W^{1,p}(\Omega)\cap W^{s, \gamma}(\er^n)$ and $f \in L^n(\er^n)$, and an arbitrary ball $B_{\rr}(x_{0})\subset \mathbb{R}^{n}$. We are going to list a number of basic quantities that will play an important role in this paper. In most of the times, such quantities give an integral measure of the oscillations of a function $w$ in $B_{\rr}(x_{0})$ or in its complement.  
A fundamental tool in the regularity theory of fractional problems is the nonlocal tail, first introduced in \cite{DKP}, which, in some sense, keeps track of long range interactions. In \cite{brasco1}, a related  nonlocal quantity, called \texttt{snail}, was considered, namely 
\eqn{snail0}
$$
\left(\rr^{s\gamma}\int_{\mathbb{R}^{n}\setminus B_{\rr}(x_{0})}\frac{\snr{w(x)}^{\gamma}}{\snr{x-x_{0}}^{n+s\gamma}} \dx\right)^{1/\gamma}\,.
$$
The \texttt{snail} can be essentially seen as the $L^{\gamma}$-average of $\snr{w}$ on $\mathbb{R}^{n}\setminus B_{\rr}(x_{0})$ with respect to the measure defined by $\d\lambda_{x_0}:=\snr{x-x_{0}}^{-n-s\gamma}\dx$.
We refer to \cite{brasco1,brasco2,brasco3,DKP,kkp,kms1,pala} for extra details on this matter. In this paper we use a Campanato-type variation of \rif{snail0}, that is
\eqn{snaildef}
$$
\snail_{\delta}(\rr)\equiv \snail_{\delta}(w,B_{\rr}(x_{0})):=\left(\rr^{\delta}\int_{\mathbb{R}^{n}\setminus B_{\rr}(x_{0})}\frac{\snr{w(y)-(w)_{B_{\rr}(x_{0})}}^{\gamma}}{\snr{x-x_{0}}^{n+s\gamma}} \dy\right)^{1/\gamma},\qquad \delta\ge s\gamma.
$$
Note that
\eqn{snail-l}
$$
\snail_{\delta}(w,B_{\rr}(x_{0})) \leq c(n,s,\gamma)r^{\delta/\gamma-s} \|w\|_{L^{\infty}(\er^n)} \qquad  \forall \ \rr  \leq r <  \infty\,.
$$
This clearly involves the oscillations of $u$ and it is a nonlocal version of the more classical object $$
\av_{\texttt{q}}(w,B_{\rr}(x_{0})):=\left(\mint_{B_{\rr}(x_{0})}\snr{w-(w)_{B_{\rr}(x_{0})}}^{\texttt{q}}\dx\right)^{1/\texttt{q}} \,,\qquad \mbox{ $\texttt{q}>0$}\,.
$$
The right notion of excess functional now combines the previous two quantities, i.e., 
\eqn{eccessi}
$$
\ecc(\rr) \equiv \ecc(w,B_{\rr}(x_{0})) :=\av_{p}(w,B_{\rr}(x_{0}))+\left[\snail_{\delta}(w,B_{\rr}(x_{0}))\right]^{\gamma/p}\,.$$
With $\theta \in (0,1)$ and $\delta\ge s\gamma$, we further define
\begin{flalign}
&[\rhs(\rr)]^{p}\equiv \left[\rhs(B_{\varrho}(x_0))\right]^{p}:=\rr^{p-\theta}\left(\nr{f}_{L^{n}(B_{\rr}(x_{0}))}^{p/(p-1)}+1\right)\label{rightdef} \\ 
&\caccs(\rr)\equiv \caccs(w,B_{\varrho}(x_0)):=\rr^{-p}[\av_{p}(w,B_{\rr}(x_{0}))]^{p}+\rr^{-s\gamma}[\av_{\gamma}(w,B_{\rr}(x_{0}))]^{\gamma}\notag \\
& \hspace{45mm}+\rr^{-\delta}[\snail_{\delta}(w,B_{\rr}(x_{0}))]^{\gamma}+\nr{f}_{L^{n}(B_{\rr}(x_{0}))}^{p/(p-1)}+1 \label{caccica}
\end{flalign}
\eqn{caccdef}
$$
\cacc(\rr)\equiv \cacc(w, B_\rr(x_0)):= \rr^{-p}[\av_{p}(\rr)]^{p}+\rr^{-\delta}[\snail_{\delta}(\rr)]^{\gamma}+\nr{f}_{L^{n}(B_{\rr})}^{p/(p-1)}+1$$
\eqn{ggdef}
$$
 [\GG(w,B_{\rr}(x_{0}))]^{p}:=[\ecc(w,B_{\rr}(x_{0}))]^p+[\rhs(B_{\rr}(x_{0}))]^{p}.
$$
Note that 
\eqn{triviala}
$$
p\geq \delta, \rr \leq 1 \Longrightarrow \rr^p \, \cacc(\rr) \leq [\GG(\rr)]^{p} \,.
$$
Abbreviations above such as $\av_{p}(\rr)\equiv \av_{p}(w,B_{\rr}(x_{0}))$, $\caccs(\rr)\equiv \caccs(w,B_{\varrho}(x_0))$, and the like, will be made in the following whenever there will be no ambiguity on what $w$ and $B_{\rr}(x_{0})$ are. Of course all the quantities defined above also depend on $f$, but this dependence will be omitted as it will be clear from the context. The motivation for the notation above is that terms of the type $\rhs(\cdot)$ appear as right-sides quantities of certain inequalities related to equations as in \rif{el}. Terms of the type $\cacc(\cdot)$ will instead occur in certain Caccioppoli type inequalities.
\begin{lemma}
Let $B_{ t}(x_{0})\subset B_{\rr}(x_{0})$ be two concentric balls, $\gamma\ge 1$, $\delta\ge s\gamma$ and $w\in W^{s,\gamma}(\mathbb{R}^{n})$.
\begin{itemize}
\item Whenever $0< t<\rr\le 1$, it holds that
\begin{flalign}
\notag \snail_{\delta}(w,B_{ t}(x_{0}))&\le c\left(\frac{t}{\rr}\right)^{\delta/\gamma}\snail_{\delta}(w,B_{\rr}(x_{0}))+c t^{\delta/\gamma-s}\int_{ t}^{\rr}\left(\frac{ t}{\nu}\right)^{s}\av_{\gamma}(w,B_{\nu}(x_{0})) \, \frac{\dtau}{\nu}\nonumber \\
&\qquad +c t^{\delta/\gamma-s}\left(\frac{t}{\rr}\right)^{s}\av_{\gamma}(w,B_{\rr}(x_{0})),\label{scasnail}
\end{flalign}
with $c\equiv c(n,s,\gamma)$.
\item With $\textnormal{\texttt{q}}\geq 1$, if $\nu >0$ and $\theta \in (0,1)$ are such that $\theta \varrho \leq \nu \leq \rr$, then 
\eqn{averages}
$$
\av_{\textnormal{\texttt{q}}}(w,B_{\nu}(x_{0})) \leq 2\theta^{-n/\textnormal{\texttt{q}}}\av_{\textnormal{\texttt{q}}}(w,B_{\rr}(x_{0}))\,.
$$
\end{itemize}
\end{lemma}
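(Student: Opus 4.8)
I would first establish \eqref{averages}, which is elementary, and then prove \eqref{scasnail} by a dyadic chain of concentric balls. For \eqref{averages}: fix $\textnormal{\texttt{q}}\ge1$ and $c\in\er$; Jensen's inequality gives $\snr{(w)_{B_{\nu}(x_{0})}-c}\le\big(\mint_{B_{\nu}(x_{0})}\snr{w-c}^{\textnormal{\texttt{q}}}\dx\big)^{1/\textnormal{\texttt{q}}}$, so the triangle inequality in $L^{\textnormal{\texttt{q}}}$ yields the near-minimality of the mean, $\av_{\textnormal{\texttt{q}}}(w,B_{\nu}(x_{0}))\le2\big(\mint_{B_{\nu}(x_{0})}\snr{w-c}^{\textnormal{\texttt{q}}}\dx\big)^{1/\textnormal{\texttt{q}}}$. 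Taking $c=(w)_{B_{\rr}(x_{0})}$, enlarging the domain of integration from $B_{\nu}(x_{0})$ to the larger concentric ball $B_{\rr}(x_{0})$ at the price of the volume ratio $(\rr/\nu)^{n}\le\theta^{-n}$, and extracting the $\textnormal{\texttt{q}}$-th root gives \eqref{averages}.

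For \eqref{scasnail}, drop the common center $x_{0}$, set $\rr_{j}:=2^{-j}\rr$, and pick the integer $J\ge1$ with $\rr_{J}<t\le\rr_{J-1}$, so that $B_{\rr_{J}}\subset B_{t}\subset B_{\rr_{J-1}}$ and $\rr_{J-1}<2t$. Starting from $[\snail_{\delta}(w,B_{t})]^{\gamma}=t^{\delta}\int_{\er^n\setminus B_{t}}\snr{w(y)-(w)_{B_{t}}}^{\gamma}\snr{y-x_{0}}^{-n-s\gamma}\dy$, I would split $\er^n\setminus B_{t}$ into $\er^n\setminus B_{\rr}$ and $B_{\rr}\setminus B_{t}\subseteq\bigcup_{j=0}^{J-1}(B_{\rr_{j}}\setminus B_{\rr_{j+1}})$. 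On $\er^n\setminus B_{\rr}$, using $\snr{w(y)-(w)_{B_{t}}}^{\gamma}\le c\snr{w(y)-(w)_{B_{\rr}}}^{\gamma}+c\snr{(w)_{B_{\rr}}-(w)_{B_{t}}}^{\gamma}$, the rescaling $t^{\delta}=(t/\rr)^{\delta}\rr^{\delta}$ converts the first piece into $c(t/\rr)^{\delta}[\snail_{\delta}(w,B_{\rr})]^{\gamma}$, while $\int_{\er^n\setminus B_{\rr}}\snr{y-x_{0}}^{-n-s\gamma}\dy\approx\rr^{-s\gamma}$ converts the second into $c\,t^{\delta}\rr^{-s\gamma}\snr{(w)_{B_{\rr}}-(w)_{B_{t}}}^{\gamma}$. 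On each annulus $B_{\rr_{j}}\setminus B_{\rr_{j+1}}$, where $\snr{y-x_{0}}\approx\rr_{j}$, the analogous splitting around $(w)_{B_{\rr_{j}}}$ together with $\rr_{j}^{-n-s\gamma}\snr{B_{\rr_{j}}}\approx\rr_{j}^{-s\gamma}$ yields $c\,t^{\delta}\sum_{j=0}^{J-1}\rr_{j}^{-s\gamma}\big([\av_{\gamma}(w,B_{\rr_{j}})]^{\gamma}+\snr{(w)_{B_{\rr_{j}}}-(w)_{B_{t}}}^{\gamma}\big)$. Since $(\sum_{i}x_{i})^{1/\gamma}\le\sum_{i}x_{i}^{1/\gamma}$ for $\gamma\ge1$, taking $\gamma$-th roots reduces everything to controlling the three mean-difference packets above.

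The decisive point is the bookkeeping of these means. From $\snr{(w)_{B_{a}}-(w)_{B_{b}}}\le(b/a)^{n}\av_{1}(w,B_{b})\le(b/a)^{n}\av_{\gamma}(w,B_{b})$ for concentric $B_{a}\subset B_{b}$, chaining $B_{\rr_{j}}\supset\dots\supset B_{\rr_{J-1}}\supset B_{t}$ with ratios $\rr_{k}/\rr_{k+1}=2$ and $\rr_{J-1}/t<2$ gives $\snr{(w)_{B_{\rr_{j}}}-(w)_{B_{t}}}\le2^{n}\sum_{k=j}^{J-1}\av_{\gamma}(w,B_{\rr_{k}})$ for $0\le j\le J-1$. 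Writing $a_{j}:=(t/\rr_{j})^{s}\av_{\gamma}(w,B_{\rr_{j}})$, the key reduction is the discrete-to-continuous comparison $\sum_{j=0}^{J-1}a_{j}\le c\int_{t}^{\rr}(t/\nu)^{s}\av_{\gamma}(w,B_{\nu})\,\frac{\dtau}{\nu}+(t/\rr)^{s}\av_{\gamma}(w,B_{\rr})$, obtained because for $j\ge1$ and $\nu\in[\rr_{j},\rr_{j-1}]$ one has $(t/\rr_{j})^{s}\le2^{s}(t/\nu)^{s}$ and, by \eqref{averages} applied to the comparable pair $B_{\rr_{j}}\subset B_{\nu}$, $\av_{\gamma}(w,B_{\rr_{j}})\le c\,\av_{\gamma}(w,B_{\nu})$, so averaging $a_{j}$ over $[\rr_{j},\rr_{j-1}]$ against $\frac{\dtau}{\nu}$ and telescoping (using $\rr_{J-1}\ge t$) absorbs $\sum_{j\ge1}a_{j}$ into the integral and leaves the term $a_{0}=(t/\rr)^{s}\av_{\gamma}(w,B_{\rr})$. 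Each of the three packets is then dominated by $c\,t^{\delta-s\gamma}\big(\sum_{k}a_{k}\big)^{\gamma}$: the $\av_{\gamma}$-packet directly, using $\rr_{j}^{-s\gamma}[\av_{\gamma}(w,B_{\rr_{j}})]^{\gamma}=t^{-s\gamma}a_{j}^{\gamma}$ and $\sum_{j}a_{j}^{\gamma}\le\big(\sum_{j}a_{j}\big)^{\gamma}$; the outer packet via $\snr{(w)_{B_{\rr}}-(w)_{B_{t}}}\le2^{n}(\rr/t)^{s}\sum_{k}a_{k}$, whose penalising factor $(\rr/t)^{s}$ is cancelled by the weight $\rr^{-s\gamma}$; and the nested annular packet via a discrete Hardy inequality ($\ell^{\gamma}\hookrightarrow\ell^{1}$, exchanging the order of summation, and $\sum_{j\le k}(t/\rr_{j})^{s}\le c\,(t/\rr_{k})^{s}$, the weights being geometric). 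Taking $\gamma$-th roots, inserting the comparison for $\sum_{k}a_{k}$, and collecting terms gives \eqref{scasnail} with $c\equiv c(n,s,\gamma)$; the restriction $\rr\le1$ in the statement is not used here.

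The main obstacle I foresee is exactly this last step: fitting the three tail-type contributions --- the genuine tail $[\snail_{\delta}(w,B_{\rr})]^{\gamma}$, the residue of the mean differences over $\er^n\setminus B_{\rr}$, and the nested annular sum --- into the single integral $\int_{t}^{\rr}(t/\nu)^{s}\av_{\gamma}(w,B_{\nu})\,\frac{\dtau}{\nu}$ plus the isolated term $(t/\rr)^{s}\av_{\gamma}(w,B_{\rr})$ without leaking a spurious power of $\rr/t$. The trick is never to estimate $\snr{(w)_{B_{\rr}}-(w)_{B_{t}}}$ directly, but through the weighted sum $\sum_{k}a_{k}$, so that the unavoidable factor $(\rr/t)^{s}$ is exactly compensated by the favourable weight $\rr^{-s\gamma}$ coming from integrating the kernel over $\er^n\setminus B_{\rr}$; the same mechanism, recast as a discrete Hardy inequality, disposes of the nested sum. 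All remaining estimates are routine.
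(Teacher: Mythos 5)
Your proof is correct and follows essentially the same route as the paper's: decompose $\er^n\setminus B_t$ into the far tail $\er^n\setminus B_\rho$ plus dyadic annuli, control mean differences by a telescoping chain, convert the dyadic sum into the integral against $\av_\gamma$ via \eqref{averages}, and absorb the nested annular sum by exchanging the order of summation and summing the geometric weights. The only differences are cosmetic — the paper uses a base $\lambda\in(1/4,1/2)$ calibrated so that $t=\lambda^{\kappa}\rr$ exactly (and bundles what you call the ``$\av_\gamma$-packet'' and the ``nested annular packet'' into a single telescoping estimate \eqref{ttt}), whereas you use base-$2$ dyadics and separate the two packets explicitly.
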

\begin{proof}
In the following all the balls will be centred at $x_0$. Let us first recall the standard property 
\eqn{basicav}
$$
\left(\mint_{B_{\rr}}\snr{w-(w)_{B_{\rr}}}^{\textnormal{\texttt{q}}}\dx\right)^{1/\textnormal{\texttt{q}}} \leq 2
\left(\mint_{B_{\rr}}\snr{w-\texttt{w}}^{\textnormal{\texttt{q}}}\dx\right)^{1/\textnormal{\texttt{q}}}
$$
that holds whenever $\texttt{w} \in \er$ and $\textnormal{\texttt{q}}\geq 1$; from this \rif{averages} follows immediately. For the proof of \rif{scasnail}, we shall use a few arguments developed in \cite{kms1}. Let $B_{ t} \subset B_{\rr}$, we then split
\begin{flalign}
\notag \snail_{\delta}( t)&\le c\left(\frac{t}{\rr}\right)^{\delta/\gamma}\snail_{\delta}(\rr)+c t^{\delta/\gamma-s}\left(\frac{t}{\rr}\right)^{s}\snr{(w)_{B_{t}}-(w)_{B_{\rr}}}\nonumber \\
&\qquad+c\left( t^{\delta}\int_{B_{\rr}\setminus B_{ t}}\frac{\snr{w(x)-(w)_{B_{t}}}^{\gamma}}{\snr{x-x_{0}}^{n+s\gamma}}  \dx\right)^{1/\gamma}=:c\left(\frac{t}{\rr}\right)^{\delta/\gamma}\snail_{\delta}(\rr)+cT_{1}+cT_{2},\label{tt1}
\end{flalign}
where $c\equiv c(n,s,\gamma)$. We have used 
\eqn{misura}
$$\d\lambda_{x_0}(\er^n\setminus B_t) = ct^{-s\gamma}\,, \quad \quad \d\lambda_{x_0}(x):= \frac{\dx}{\snr{x-x_{0}}^{n+s\gamma}} \,, $$ where $c\equiv c (n,s,\gamma) $. 
If $ \rr/4\leq t <\rr$, also using this last identity, standard manipulations based on \rif{averages} ensure that
$
T_{1}+T_{2}\le c t^{\delta/\gamma-s}(t/\rr)^{s}\av_{\gamma}(\rr) 
$
holds with $c\equiv c(n,s,\gamma)$. We can therefore assume that $ t< \rr/4$. This means that there exists $\lambda\in \left(1/4,1/2\right)$ and $\kk\in \mathbb{N}$, $\kk\ge 2$ so that $ t=\lambda^{\kk}\rr$. Using triangle and H\"older's inequalities, we estimate, using \rif{averages}-\rif{basicav} repeatedly
\begin{flalign*}
T_{1}&\le  t^{\delta/\gamma-s}\left(\frac{t}{\rr}\right)^{s}\snr{(w)_{B_{\lambda \rr}}-(w)_{B_{\rr}}}+ t^{\delta/\gamma-s}\left(\frac{t}{\rr}\right)^{s}\snr{(w)_{B_{\lambda \rr}}-(w)_{B_{\lambda^{\kk}\rr}}}\nonumber \\
&\le c t^{\delta/\gamma-s}\left(\frac{t}{\rr}\right)^{s}\av_{\gamma}(\rr)\nonumber+ t^{\delta/\gamma-s}\left(\frac{t}{\rr}\right)^{s}\sum_{i=1}^{\kk-1}\snr{(w)_{B_{\lambda^{i}\rr}}-(w)_{B_{\lambda^{i+1}\rr}}}\nonumber \\
&\le c t^{\delta/\gamma-s}\left(\frac{t}{\rr}\right)^{s}\av_{\gamma}(\rr)+c t^{\delta/\gamma-s}\left(\frac{t}{\rr}\right)^{s}\sum_{i=1}^{\kk-1}\left(\mint_{B_{\lambda^{i}\rr}}\snr{w(x)-(w)_{B_{\lambda^{i}\rr}}}^{\gamma}\dx\right)^{1/\gamma}\nonumber \\
&\le c t^{\delta/\gamma-s}\left(\frac{t}{\rr}\right)^{s}\av_{\gamma}(\rr)+c t^{\delta/\gamma-s}\left(\frac{t}{\rr}\right)^{s}\sum_{i=1}^{\kk-1}\int_{\lambda^{i}\rr}^{\lambda^{i-1}\rr}\av_{\gamma}(\lambda^{i}\rr) \, \frac{\dtau}{\nu}\nonumber \\
&\le c t^{\delta/\gamma-s}\left(\frac{t}{\rr}\right)^{s}\av_{\gamma}(\rr)+c t^{\delta/\gamma-s}\left(\frac{t}{\rr}\right)^{s}\sum_{i=1}^{\kk-1}\int_{\lambda^{i}\rr}^{\lambda^{i-1}\rr}\av_{\gamma}(\nu) \, \frac{\dtau}{\nu}\nonumber \\
&\le c t^{\delta/\gamma-s}\left(\frac{t}{\rr}\right)^{s}\av_{\gamma}(\rr)+c t^{\delta/\gamma-s}\left(\frac{t}{\rr}\right)^{s}\int_{ t}^{\rr}\av_{\gamma}(\nu) \, \frac{\dtau}{\nu},
\end{flalign*}
with $c\equiv c(n,s,\gamma)$. For $T_{2}$, we rewrite $\rr=\lambda^{-\kk} t$ and estimate, by telescoping and Jensen's inequality
\eqn{ttt}
$$
\left(\mint_{B_{\lambda^{-i} t}}\snr{w(x)-(w)_{B_{ t}}}^{\gamma}\dx\right)^{1/\gamma}\le 2^{n/\gamma+1}\sum_{m=0}^{i}\av_{\gamma}(\lambda^{-m} t),
$$
for $0 \leq i \leq k$. Then, via \eqref{basicav}, \eqref{ttt} and the discrete Fubini theorem, we obtain
\begin{flalign*}
T_{2}&\le c t^{\delta/\gamma-s}\left(\sum_{i=0}^{\kk-1}\lambda^{i s\gamma}(\lambda^{-i} t)^{-n}\int_{B_{\lambda^{-i-1} t}\setminus B_{\lambda^{-i} t}}\snr{w(x)-(w)_{B_{ t}}}^{\gamma}\dx\right)^{1/\gamma}\nonumber \\
&\le c t^{\delta/\gamma-s}\sum_{i=0}^{\kk}\left(\lambda^{i s\gamma}\mint_{B_{\lambda^{-i} t}}\snr{w(x)-(w)_{B_{ t}}}^{\gamma}\dx\right)^{1/\gamma}\nonumber \\
&\le c t^{\delta/\gamma-s}\sum_{i=0}^{\kk}\lambda^{i s}\sum_{m=0}^{i}\av_{\gamma}(\lambda^{-m} t)\\ 
&= c t^{\delta/\gamma-s}\sum_{m=0}^{\kk}\av_{\gamma}(\lambda^{-m} t) \sum_{i=m}^{\kk}\lambda^{i s}\nonumber \\
&\le c t^{\delta/\gamma-s}\sum_{m=0}^{\kk}\lambda^{m s}\av_{\gamma}(\lambda^{-m} t)\nonumber \\
&\le c t^{\delta/\gamma-s}\sum_{m=0}^{\kk-1}\int_{\lambda^{-m} t}^{\lambda^{-m-1} t}\lambda^{m s}\av_{\gamma}(\nu) \, \frac{\dtau}{\nu}+c t^{\delta/\gamma-s}\left(\frac{t}{\rr}\right)^{s}\av_{\gamma}(\rr)\nonumber \\
&\le c t^{\delta/\gamma-s}\sum_{m=0}^{\kk-1}\int_{\lambda^{-m} t}^{\lambda^{-m-1} t}\left(\frac{ t}{\nu}\right)^{s}\av_{\gamma}(\nu) \, \frac{\dtau}{\nu}+c t^{\delta/\gamma-s}\left(\frac{t}{\rr}\right)^{s}\av_{\gamma}(\rr)\nonumber \\
&\le c t^{\delta/\gamma-s}\int_{ t}^{\rr}\left(\frac{ t}{\nu}\right)^{s}\av_{\gamma}(\nu) \, \frac{\dtau}{\nu}+c t^{\delta/\gamma-s}\left(\frac{t}{\rr}\right)^{s}\av_{\gamma}(\rr),
\end{flalign*}
for $c\equiv c(n,s,\gamma)$. Merging the estimates found for $T_{1}$ and $T_{2}$ to \rif{tt1}, we obtain \eqref{scasnail}.
\end{proof}
\begin{lemma}
Let $w\in L^{\infty}(\mathbb{R}^{n})$ and $B_{t}(x_{0})\subset \er^n$ be a ball. Then
\eqn{cacc.2}
$$
 \int_{\mathbb{R}^{n}\setminus B_{t}}\frac{\snr{w(y)}^{\gamma-1}}{\snr{y-x_{0}}^{n+s\gamma}} \dy  
\le \frac{c}{t^{s}}\left(\int_{\mathbb{R}^{n}\setminus B_{\rr}}\frac{\snr{w(y)}^{\gamma}}{\snr{y-x_{0}}^{n+s\gamma}} \dy\right)^{1-1/\gamma}\,,
$$
where $c\equiv c (n,s,\gamma)$. 
\end{lemma}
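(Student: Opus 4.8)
The plan is to derive \eqref{cacc.2} by a single application of Hölder's inequality, distributing the singular weight $\snr{y-x_{0}}^{-n-s\gamma}$ so that one part recombines with $\snr{w}^{\gamma-1}$ to reproduce the $L^{\gamma}$-type density appearing on the right-hand side, while the leftover weight, once integrated over $\ern\setminus B_{t}(x_{0})$, yields the factor $t^{-s}$. Assuming $\gamma>1$ (for $\gamma=1$ the inequality reduces to the identity computed below, since the exponent $1-1/\gamma$ on the right-hand side vanishes), I would write, for $y\neq x_{0}$,
$$
\frac{\snr{w(y)}^{\gamma-1}}{\snr{y-x_{0}}^{n+s\gamma}}=\left(\frac{\snr{w(y)}^{\gamma}}{\snr{y-x_{0}}^{n+s\gamma}}\right)^{1-1/\gamma}\frac{1}{\snr{y-x_{0}}^{(n+s\gamma)/\gamma}}\,,
$$
and apply Hölder's inequality on $\ern\setminus B_{t}(x_{0})$ with conjugate exponents $\gamma/(\gamma-1)$ and $\gamma$. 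The assumption $w\in L^{\infty}(\ern)$ is used only to guarantee that $\int_{\ern\setminus B_{t}}\snr{w}^{\gamma}\snr{y-x_{0}}^{-n-s\gamma}\dy$ is finite, so that the splitting and the resulting bound are meaningful; it could be replaced by the mere finiteness of this integral.

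The second ingredient is the elementary tail computation: passing to polar coordinates centred at $x_{0}$,
$$
\int_{\ern\setminus B_{t}(x_{0})}\frac{\dy}{\snr{y-x_{0}}^{n+s\gamma}}=c(n)\int_{t}^{\infty}\rho^{-1-s\gamma}\,\d\rho=\frac{c(n)}{s\gamma}\,t^{-s\gamma}\,,
$$
which is precisely the quantity $\d\lambda_{x_{0}}(\ern\setminus B_{t})$ already recorded in \eqref{misura}. Raising this to the power $1/\gamma$ produces a constant $c\equiv c(n,s,\gamma)$ times $t^{-s}$; combining it with the Hölder estimate of the previous step gives exactly \eqref{cacc.2} with $B_{t}$ in place of $B_{\varrho}$ on the right-hand side, which is the form actually used in the sequel. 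If one wants the displayed statement with a possibly larger concentric ball $B_{\varrho}\supseteq B_{t}$, i.e. $\varrho\le t$, it follows immediately by monotonicity of the integral in its domain, since then $\int_{\ern\setminus B_{\varrho}}\snr{w}^{\gamma}\snr{y-x_{0}}^{-n-s\gamma}\dy\ge\int_{\ern\setminus B_{t}}\snr{w}^{\gamma}\snr{y-x_{0}}^{-n-s\gamma}\dy$.

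I do not anticipate any genuine obstacle: the statement is an interpolation-type weighted bound whose proof amounts to two lines of Hölder's inequality together with one explicit radial integral, and no property of $w$ beyond finiteness of the right-hand side is needed. If anything, the only point deserving a word of care is the $\gamma=1$ borderline case and the bookkeeping of the (harmless) discrepancy between $B_{t}$ and $B_{\varrho}$ in the display.
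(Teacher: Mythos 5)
Your argument is correct and is essentially the same as the paper's: you apply Hölder's inequality with exponents $\gamma/(\gamma-1)$ and $\gamma$ against the remaining weight $\snr{y-x_0}^{-(n+s\gamma)/\gamma}$, while the paper normalizes by $\lambda_{x_0}(\ern\setminus B_t)\approx t^{-s\gamma}$ and invokes Jensen's inequality for the concave map $\tau\mapsto\tau^{1-1/\gamma}$, which is the identical computation in probabilistic dress. Your observation that the $B_{\varrho}$ on the right-hand side should read $B_t$ (a typo in the statement, since $\varrho$ is never introduced) and your handling of the $\gamma=1$ borderline are both accurate.
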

\begin{proof} By \rif{misura} note that 
$$
 \int_{\mathbb{R}^{n}\setminus B_{t}}\frac{\snr{w(y)}^{\gamma-1}}{\snr{y-x_{0}}^{n+s\gamma}} \dy  = \frac{c}{ t^{s\gamma}}\mint_{\mathbb{R}^{n}\setminus B_{t}}\snr{w(y)}^{\gamma-1} \ \d\lambda_{x_{0}}(y)
$$
and apply Jensen's inequality with resect to the concave function $\tau  \mapsto \tau^{1-1/\gamma}$. 
\end{proof}
\section{Proof of Theorems \ref{t2} and \ref{t5}}\label{inthold}
The main steps of the proofs of Theorems \ref{t2} and \ref{t5} are contained in Sections \ref{ih1}-\ref{intit} below, where we permanently assume \rif{assf}-\rif{assk} and \rif{g33} and $u$ is as in \rif{fun}. Any ball $B_{\rr} \equiv B_{\rr}(x_0)\Subset \Omega$ will be such that $\rr\leq 1$. We yet introduce the notation
\eqn{datta}
$$\datar:=\left(n,p,s,\gamma,\Lambda,\|u\|_{L^\infty}^{1-\vartheta}\right)\,,$$ where $\vartheta$ is in \rif{t1t2}, i.e., no dependence on $\|u\|_{L^\infty}$ occurs in $\datar$ when $\gamma \leq p $ and therefore $\vartheta=1$.
\subsection{Step 1: Basic Caccioppoli inequality}\label{ih1}
This is in the following:
\begin{lemma}\label{cacclem}
The inequality 
\eqn{caccp}
$$
 \mint_{B_{\rr/2}(x_{0})}(\snr{Du}^{2}+\mu^{2})^{p/2}\dx +\int_{B_{\rr/2}(x_{0})}\mint_{B_{\rr/2}(x_{0})}\frac{\snr{u(x)-u(y)}^{\gamma}}{\snr{x-y}^{n+s\gamma}}\dx\dy
\leq c\, \caccs(u,B_{\varrho}(x_0)) 
$$
holds whenever $B_{\rr}\equiv B_{\rr}(x_0)\Subset \Omega$ with $\rr\in (0,1]$, where $c\equiv c(n,p,s,\gamma, \Lambda)$ and $\delta\ge s\gamma$.
\end{lemma}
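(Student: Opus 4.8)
The plan is to obtain \rif{caccp} by testing the Euler--Lagrange equation, in the rewritten form \rif{el2}, with the function $\varphi:=\eta^{p}w$, where $w:=u-(u)_{B_{\rr}(x_{0})}$ and $\eta\in C^{\infty}_{c}(B_{3\rr/4}(x_{0}))$ is a cut-off with $\eta\equiv 1$ on $B_{\rr/2}(x_{0})$, $0\le\eta\le 1$ and $\snr{D\eta}\le c/\rr$. Since $B_{3\rr/4}(x_{0})\Subset\Omega$, this $\varphi$ is compactly supported in $\Omega$, it lies in $W^{1,p}_{0}(\Omega)$ because $u\in W^{1,p}(\Omega)$, and in $W^{s,\gamma}(\er^{n})$ because $u\in W^{s,\gamma}(\er^{n})$ and $\eta$ is smooth with compact support; hence $\varphi\in\mathbb{X}_{0}(\Omega)$ and \rif{el2} applies to it. Using $u(x)-u(y)=w(x)-w(y)$ and that $\varphi\equiv 0$ outside $B_{3\rr/4}(x_{0})\subset B_{\rr}(x_{0})$, the resulting identity reads ``(local term) $=$ (forcing term) $-$ (nonlocal term)'', with local term $\int_{\Omega}\partial_{z}F(Du)\cdot D\varphi\dx$, forcing term $\int_{\Omega}f\varphi\dx$, and a nonlocal term which I would further split according to the decomposition of $\er^{n}\times\er^{n}$ into $B_{\rr}(x_{0})^{2}$, the two symmetric ``cross'' regions $B_{\rr}(x_{0})\times(\er^{n}\setminus B_{\rr}(x_{0}))$ and $(\er^{n}\setminus B_{\rr}(x_{0}))\times B_{\rr}(x_{0})$, and $(\er^{n}\setminus B_{\rr}(x_{0}))^{2}$ (on which $\varphi\equiv 0$, so there is no contribution). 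By elementary pointwise bounds, the left-hand side of \rif{caccp} is dominated by $c\snr{B_{\rr}}^{-1}\int\eta^{p}\snr{Du}^{p}\dx+c\mu^{p}$ plus the nonlocal energy of $u$ on $B_{\rr/2}(x_{0})$, so it suffices to control those quantities; all the estimates below are taken before a final division by $\snr{B_{\rr/2}(x_{0})}\approx\rr^{n}$.

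For the local term I would reproduce the classical $p$-Laplacean Caccioppoli computation: expanding $D\varphi=\eta^{p}Du+p\eta^{p-1}wD\eta$, using the growth bound on $\partial_{z}F$ in \rif{assf}, the lower bound \rif{usa}, and Young's inequality, one reaches $c^{-1}\int\eta^{p}\snr{Du}^{p}\dx\le\varepsilon\int\eta^{p}\snr{Du}^{p}\dx+c_{\varepsilon}\rr^{-p}\int_{B_{\rr}}\snr{w}^{p}\dx+c\mu^{p}\snr{B_{\rr}}+\snr{\int_{\Omega}f\varphi\dx}+\snr{(\textnormal{the nonlocal term})}$, the parameter $\varepsilon$ being reabsorbed at the end. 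The forcing term is treated by H\"older's inequality, $\snr{\int_{\Omega}f\varphi\dx}\le\nr{f}_{L^{n}(B_{\rr})}\nr{\eta^{p}w}_{L^{n/(n-1)}(\er^{n})}$, then by the Sobolev embedding $\nr{\eta^{p}w}_{L^{n/(n-1)}(\er^{n})}\le c\nr{D(\eta^{p}w)}_{L^{1}(\er^{n})}\le c\int\eta^{p}\snr{Du}\dx+c\rr^{-1}\int_{B_{\rr}}\snr{w}\dx$, and finally by Young's inequality with weights so chosen that $\nr{f}_{L^{n}(B_{\rr})}^{p/(p-1)}$ appears multiplied by $\snr{B_{\rr}}\approx\rr^{n}$ and $\rr^{-1}\int_{B_{\rr}}\snr{w}\dx$ turns into $\rr^{-p}\int_{B_{\rr}}\snr{w}^{p}\dx$; this contributes a further reabsorbable multiple of $\int\eta^{p}\snr{Du}^{p}\dx$, plus $c\rr^{n}\nr{f}_{L^{n}(B_{\rr})}^{p/(p-1)}$ and $c\rr^{-p}\int_{B_{\rr}}\snr{w}^{p}\dx$. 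After dividing by $\snr{B_{\rr/2}}$, these three contributions match the $\rr^{-p}[\av_{p}(\rr)]^{p}$ term, the $\nr{f}_{L^{n}(B_{\rr})}^{p/(p-1)}$ term and (via $\mu\le 1$) the $1$ in $\caccs(\rr)$.

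The heart of the matter is the nonlocal term. On $B_{\rr}(x_{0})^{2}$ I would use the standard pointwise algebraic inequality (as in \cite{DKP}), of the schematic form $\snr{w(x)-w(y)}^{\gamma-2}(w(x)-w(y))(\eta^{p}(x)w(x)-\eta^{p}(y)w(y))\ge c^{-1}\eta^{p}(y)\snr{w(x)-w(y)}^{\gamma}-c\snr{\eta(x)-\eta(y)}^{\gamma}(\snr{w(x)}^{\gamma}+\snr{w(y)}^{\gamma})$; integrated against $\sK(x,y)\approx\snr{x-y}^{-n-s\gamma}$, the main term over $B_{\rr/2}(x_{0})^{2}$ (where $\eta\equiv 1$, and using positivity of $\eta^{p}$ elsewhere) produces the nonlocal energy of $u$ on $B_{\rr/2}(x_{0})$, while the error term, using $\snr{\eta(x)-\eta(y)}^{\gamma}\le c\min\{1,(\snr{x-y}/\rr)^{\gamma}\}$ and $\int_{\snr{z}\le\rr}\snr{z}^{\gamma(1-s)-n}\dz\le c\rr^{\gamma(1-s)}$ (only $s<1$ is used, the standing condition $p>s\gamma$ playing no role), is bounded by $c\rr^{-s\gamma}\int_{B_{\rr}}\snr{w}^{\gamma}\dx$. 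On a cross region, say $x\in B_{3\rr/4}(x_{0})$ and $y\in\er^{n}\setminus B_{\rr}(x_{0})$, one has $\snr{x-y}\ge\rr/4$ and $\snr{x-y}\ge\snr{x_{0}-y}/4$, and since $\varphi(y)=0$ the integrand is $\pm\snr{w(x)-w(y)}^{\gamma-2}(w(x)-w(y))\eta^{p}(x)w(x)\sK(x,y)$, of modulus at most $c\eta^{p}(x)\snr{w(x)}(\snr{w(x)}^{\gamma-1}+\snr{w(y)}^{\gamma-1})\snr{x-x_{0}}^{-n-s\gamma}$; the part with $\snr{w(x)}^{\gamma}$ again contributes $c\rr^{-s\gamma}\int_{B_{\rr}}\snr{w}^{\gamma}\dx$, whereas the remaining part, using \rif{cacc.2} (which needs only $u\in W^{s,\gamma}(\er^{n})$), the definition of $\snail_{\delta}$ in \rif{snaildef}, and $\int_{B_{\rr}}\snr{w}\dx\le\snr{B_{\rr}}\av_{\gamma}(\rr)$, is bounded by $c\rr^{n}(\rr^{-s}\av_{\gamma}(\rr))(\rr^{-\delta(1-1/\gamma)}[\snail_{\delta}(\rr)]^{\gamma-1})$; after dividing by $\snr{B_{\rr/2}}$ and applying Young's inequality with exponents $\gamma$ and $\gamma/(\gamma-1)$, this splits into $c\rr^{-s\gamma}[\av_{\gamma}(\rr)]^{\gamma}+c\rr^{-\delta}[\snail_{\delta}(\rr)]^{\gamma}$, exactly the two remaining terms of $\caccs(\rr)$.

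Collecting all the above, reabsorbing the small multiples of $\int\eta^{p}\snr{Du}^{p}\dx$ on the right, and dividing by $\snr{B_{\rr/2}(x_{0})}\approx\rr^{n}$, one arrives at \rif{caccp} with $c\equiv c(n,p,s,\gamma,\Lambda)$. I expect the only genuinely delicate point to be the scaling bookkeeping in the nonlocal cross terms: the total exponent $s+\delta(1-1/\gamma)$ produced there must be split, in the last Young's inequality, as $s$ on the $\av_{\gamma}$ factor and $\delta(1-1/\gamma)$ on the $\snail_{\delta}$ factor, so that the long-range contributions land precisely on the $\rr^{-s\gamma}[\av_{\gamma}(\rr)]^{\gamma}$ and $\rr^{-\delta}[\snail_{\delta}(\rr)]^{\gamma}$ terms already available in $\caccs(\rr)$, and so that no dependence on $\nr{u}_{L^{\infty}}$ creeps in --- which is possible exactly because $\snail_{\delta}(\rr)$ is retained as is on the right-hand side, rather than being estimated via \rif{snail-l}.
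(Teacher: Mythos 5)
Your overall scheme (test $\rif{el2}$ with a cut-off times $u-(u)_{B_\rr}$, split the nonlocal double integral into the diagonal block $B_\rr\times B_\rr$ and the cross blocks, handle the latter via $\rif{cacc.2}$ and the $\snail$) is the right one and matches the paper's. But there is a genuine gap in the diagonal block when $\gamma>p$, and it traces back to your choice of cut-off exponent.

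You take $\varphi=\eta^{p}w$, whereas the paper takes $\varphi=\eta^{m}\um$ with $m=\max\{p,\gamma\}$. Your ``schematic'' pointwise inequality
$$
\snr{w(x)-w(y)}^{\gamma-2}(w(x)-w(y))\bigl(\eta^{p}(x)w(x)-\eta^{p}(y)w(y)\bigr)\ \ge\ c^{-1}\eta^{p}(y)\snr{w(x)-w(y)}^{\gamma}-c\,\snr{\eta(x)-\eta(y)}^{\gamma}\bigl(\snr{w(x)}^{\gamma}+\snr{w(y)}^{\gamma}\bigr)
$$
is false when $\gamma>p$: taking $\eta(y)=0$, $\eta(x)=t\in(0,1)$, $w(x)=1$, $w(y)=2$, the left-hand side is $-t^{p}$ while the right-hand side is $\gtrsim -t^{\gamma}$, and $-t^{p}\ge -ct^{\gamma}$ fails as $t\to0$. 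The correct decomposition (split off $\eta^{p}(x)\snr{w(x)-w(y)}^{\gamma}$, apply the Mean Value Theorem to $\eta^{p}=(\eta^{p/\gamma})^{\gamma}$, then Young) produces the error weight $\snr{\eta^{p/\gamma}(x)-\eta^{p/\gamma}(y)}^{\gamma}$, not $\snr{\eta(x)-\eta(y)}^{\gamma}$. When $p/\gamma<1$, the function $\eta^{p/\gamma}$ is only $p/\gamma$-H\"older, so $\snr{\eta^{p/\gamma}(x)-\eta^{p/\gamma}(y)}^{\gamma}\lesssim (\snr{x-y}/\rr)^{p}$; the resulting kernel integral $\int_{\snr{z}\le\rr}\snr{z}^{p-s\gamma-n}\dz$ converges precisely because $p>s\gamma$ — contrary to your explicit assertion that ``the standing condition $p>s\gamma$ plays no role.'' Your argument can therefore be repaired, but not without invoking $p>s\gamma$; the paper's choice $m=\max\{p,\gamma\}$ makes $\eta^{m/\gamma}$ genuinely Lipschitz with constant $\lesssim 1/\rr$, yielding the clean error $\rr^{-\gamma}\snr{x-y}^{\gamma}$ and an estimate that does not use $p>s\gamma$ at all. (Minor: in the cross block the bound on $\sK(x,y)$ should read $\snr{y-x_0}^{-n-s\gamma}$, not $\snr{x-x_0}^{-n-s\gamma}$; this is presumably a typo.) For $\gamma\le p$ your choice of exponent coincides with the paper's and the proposal is essentially the paper's proof.
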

\begin{proof}
All the balls will be centred at $x_{0}$. We denote $\um:=u-(u)_{B_{\rr}}$, fix $\eta\in C^{1}_{0}(B_{\rr})$ such that
$
\mathds{1}_{B_{\rr/2}}\le \eta\le \mathds{1}_{B_{3\rr/4}} $ and $\snr{D\eta}\lesssim 1/\rr,
$ and set $m:=\max\{p,\gamma\}$. Note that $\varphi:=\eta^{m}\um\in \mathbb{X}_{0}(\Omega)$, so that it can be used in \eqref{el2}; this yields
\begin{flalign*}
0&=\mint_{B_{\rr}}\left[\partial_{z}F(Du)\cdot D(\eta^{m}\um)-\eta^{m}f\um\right]\dx\nonumber \\
&\qquad  +\snr{B_{\rr}}^{-1}\int_{\mathbb{R}^{n}}\int_{\mathbb{R}^{n}}|u(x)-u(y)|^{\gamma-2}(u(x)-u(y))(\eta^{m}(x)\um(x)-\eta^{m}(y)\um(y))\sK(x,y)\dx\dy\\
& =:\mbox{(I)}+\mbox{(II)}.
\end{flalign*}
The estimation of (I) goes via \rif{usa} and Young and Sobolev inequalities as follows:
\begin{flalign*}
 \mbox{(I)} &\geq c\mint_{B_{\rr}}\eta^{m}(\snr{Du}^{2}+\mu^{2})^{p/2}\dx-c\rr^{-p}\mint_{B_{\rr}}|\um|^{p}\dx-c-\left(\mint_{B_{\rr}}\snr{f}^{n}\dx\right)^{1/n}\left(\mint_{B_{\rr}}|\eta^{m}\um|^{p^{*}}\dx\right)^{1/p^{*}}\nonumber \\
&\ge c \mint_{B_{\rr}}\eta^{m}(\snr{Du}^{2}+\mu^{2})^{p/2}\dx-c\rr^{-p}\mint_{B_{\rr}}|\um|^{p}\dx-c\nr{f}_{L^{n}(B_{\rr})}\left(\mint_{B_{\rr}}\snr{D(\eta^{m}\um)}^{p}\dx\right)^{1/p}-c\nonumber \\
&\geq c\mint_{B_{\rr}}\eta^{m}(\snr{Du}^{2}+\mu^{2})^{p/2}\dx-c\rr^{-p}[\av_{p}(\rr)]^{p}-c\nr{f}_{L^{n}(B_{\rr})}^{p/(p-1)}-c
\end{flalign*}
with $c\equiv c(n,p,\Lambda)$. Here $p^*$ is the Sobolev conjugate exponent as described at the beginning of the proof of Proposition \ref{boundprop}. Using \rif{cacc.0} we find
\begin{flalign*} 
\mbox{(II)}
&=\int_{B_{\rr}}\mint_{B_{\rr}}\snr{\um(x)-\um(y)}^{\gamma-2}(\um(x)-\um(y))(\eta^{m}(x)\um(x)-\eta^{m}(y)\um(y))\sK(x,y)\dx\dy\nonumber \\
& \ +2\int_{\mathbb{R}^{n}\setminus B_{\rr}}\mint_{B_{\rr}}\snr{\um(x)-\um(y)}^{\gamma-2}(\um(x)-\um(y))\eta^{m}(x)\um(x)\sK(x,y)\dx\dy\nonumber \\
&=:\mbox{(II)}_{1}+\mbox{(II)}_{2}\,.
\end{flalign*}
We now observe that 
\begin{flalign}
\mbox{(II)}_{1}&\ge \frac 1c\int_{B_{\rr}}\mint_{B_{\rr}}\frac{\snr{\eta^{m/\gamma}(x)\um(x)-\eta^{m/\gamma}(y)\um(y)}^{\gamma}}{\snr{x-y}^{n+s\gamma}}\dx\dy\nonumber \\
&\quad -c\int_{B_{\rr}}\mint_{B_{\rr}}\frac{\max\{|\um(x)|,|\um(y)|\}^{\gamma}\snr{\eta^{m/\gamma}(x)-\eta^{m/\gamma}(y)}^{\gamma}}{\snr{x-y}^{n+s\gamma}}\dx\dy \label{dimisi00}
\end{flalign}
where $c\equiv c (p,\gamma)$. 
Indeed, let us set $\mathcal T(x, y):= \snr{\um(x)-\um(y)}^{\gamma-2}(\um(x)-\um(y))(\eta^{m}(x)\um(x)-\eta^{m}(y)\um(y))$. We first consider the case
$\eta(x)\geq \eta(y)$ and rewrite 
$
\mathcal T(x, y) =\mathcal T_1(x, y)+\mathcal T_2(x, y), 
$
where 
$\mathcal T_1(x, y):= \snr{\um(x)-\um(y)}^{\gamma}\eta^{m}(x)$ 
and 
$\mathcal T_2(x, y):=\snr{\um(x)-\um(y)}^{\gamma-2}(\um(x)-\um(y))(\eta^{m}(x)-\eta^{m}(y))\um(y).$ 
Mean Value Theorem yields
$|\mathcal T_2(x, y)| \leq c |\eta^{m(\gamma-1)/\gamma}(x)|
\snr{\eta^{m/\gamma}(x)-\eta^{m/\gamma}(y)}\snr{\um(x)-\um(y)}^{\gamma-1}|\um(y)|$ and, by Young's inequality, we obtain
$
\mathcal T_1(x, y) \leq c\mathcal T(x, y)+ c\snr{\eta^{m/\gamma}(x)-\eta^{m/\gamma}(y)}^{\gamma}|\um(y)|^{\gamma}
$. When $\eta(x)<\eta(y)$, we note that $\mathcal T(x, y)=\mathcal T(y,x)$ and exchanging  the role of $x$ and $y$ in the above argument, in any case we conclude with  
$
\snr{\um(x)-\um(y)}^{\gamma}\eta^{m}(x) \leq c\mathcal T(x, y)+c\max\{|\um(x)|,|\um(y)|\}^{\gamma}\snr{\eta^{m/\gamma}(x)-\eta^{m/\gamma}(y)}^{\gamma}, 
$ with $c\equiv c(p,\gamma)$. 
From this and triangle inequality \rif{dimisi00} follows via easy manipulations; in turn, \rif{dimisi00} implies
\begin{flalign}
\mbox{(II)}_{1}
&\geq \frac 1c\int_{B_{\rr}}\mint_{B_{\rr}}\frac{\snr{\eta^{m/\gamma}(x)\um(x)-\eta^{m/\gamma}(y)\um(y)}^{\gamma}}{\snr{x-y}^{n+s\gamma}}\dx\dy\nonumber \\
&\quad -c\rr^{-\gamma}\int_{B_{\rr}}\mint_{B_{\rr}}\frac{\max\{|\um(x)|,|\um(y)|\}^{\gamma}}{\snr{x-y}^{n+\gamma(s-1)}}\dx\dy\nonumber \\
&\geq \frac 1c\int_{B_{\rr}}\mint_{B_{\rr}}\frac{\snr{\eta^{m/\gamma}(x)\um(x)-\eta^{m/\gamma}(y)\um(y)}^{\gamma}}{\snr{x-y}^{n+s\gamma}}\dx\dy-c\rr^{-s\gamma}[\av_{\gamma}(\rr)]^{\gamma}\nonumber \\
&\geq \frac 1c\int_{B_{\rr/2}}\mint_{B_{\rr/2}}\frac{\snr{u(x)-u(y)}^{\gamma}}{\snr{x-y}^{n+s\gamma}}\dx\dy-c \, \caccs(\rr), \label{dimisi}
\end{flalign}
for $c\equiv c(n,p,s,\gamma,\Lambda)$. For (II)$_{2}$, note that
\eqn{cacc.1}
$$
x\in B_{3\rr/4}, \ y\in \mathbb{R}^{n}\setminus B_{\rr} \ \Longrightarrow \ 1\leq \frac{\snr{y-x_{0}}}{\snr{x-y}} \le 4
$$
and then, recalling that $\eta$ is supported in $B_{3\rr/4}$, we have
\begin{eqnarray*}
\snr{\mbox{(II)}_{2}}&\stackrel{\eqref{cacc.0}}{\le}&c\int_{\mathbb{R}^{n}\setminus B_{\rr}}\mint_{B_{\rr}}\frac{\snr{\um(x)-\um(y)}^{\gamma-1}|\um(x)|\eta^{m}(x)}{\snr{x-y}^{n+s\gamma}}\dx\dy\nonumber \\
&\stackrel{\eqref{cacc.1}}{\le}&c\int_{\mathbb{R}^{n}\setminus B_{\rr}}\mint_{B_{\rr}}\frac{\max\left\{\snr{\um(x)},\snr{\um(y)}\right\}^{\gamma-1}|\um(x)|}{\snr{y-x_0}^{n+s\gamma}}\dx\dy\nonumber \\
&\leq &c\rr^{-s\gamma}\mint_{B_{\rr}}\snr{\um}^{\gamma}\dx\nonumber +c\int_{\mathbb{R}^{n}\setminus B_{\rr}}\frac{\snr{\um(y)}^{\gamma-1}}{\snr{y-x_{0}}^{n+s\gamma}} \dy\left(\mint_{B_{\rr}}|\um|^{\gamma}\dx\right)^{1/\gamma}\nonumber \\
&\stackrel{\eqref{cacc.2}}{\le}&c\rr^{-s\gamma}[\av_{\gamma}(\rr)]^{\gamma}\nonumber +c\left(\int_{\mathbb{R}^{n}\setminus B_{\rr}}\frac{\snr{\um(y)}^{\gamma}}{\snr{y-x_{0}}^{n+s\gamma}} \dy\right)^{1-1/\gamma}\rr^{-s}\av_{\gamma}(\rr)\nonumber \\
&\leq &\rr^{-s\gamma}[\av_{\gamma}(\rr)]^{\gamma}+c\rr^{-\delta}[\snail_{\delta}(\rr)]^{\gamma}\leq c \, \caccs(\rr),
\end{eqnarray*}
whenever $\delta\ge s\gamma$, and where $c\equiv c(n,s,\gamma,\Lambda)$. Combining the estimates for the terms $\mbox{(I)}$-$\mbox{(II)}$, and recalling that $\eta\equiv 1$ on $B_{\rr/2}$,  we arrive at \eqref{caccp}. \end{proof}
\subsection{Step 2: Localization}\label{clp}  We define $h\in u+W^{1,p}_{0}(B_{\rr/4}(x_{0}))$ as the (unique) solution to
\eqn{pdd}
$$
h\mapsto \min_{w \in u+W^{1,p}_{0}(B_{\rr/4}(x_{0}))} \int_{B_{\rr/4}(x_{0})}F(Dw)\dx\,.
$$
The function $h$ solves the Euler-Lagrange equation
\eqn{elpd}
$$
\int_{B_{\rr/4}(x_{0})}\partial_{z}F(Dh)\cdot D\varphi\dx=0\qquad \mbox{for every} \ \varphi\in W^{1,p}_{0}(B_{\rr/4})\,.
$$
Moreover, by minimality of $h$, \rif{assf}$_1$ and \eqref{caccp} we gain
\eqn{enes}
$$
\mint_{B_{\rr/4}}(\snr{Dh}^{2}+\mu^{2})^{p/2}\dx \le \Lambda^{2}\mint_{B_{\rr/4}}(\snr{Du}^{2}+\mu^{2})^{p/2}\dx \leq 
c \, \caccs(\rr) 
$$
with $c \equiv c(n,p,s,\gamma,\Lambda)$. The standard Maximum Principle 
\eqn{bhh}
$$
 \nr{h}_{L^{\infty}(B_{\rr/4})}\le \nr{u}_{L^{\infty}(B_{\rr/4})}\,.
$$
This last inequality is only going to be used when $\gamma > p$, that is when we know that the right-hand side is finite by Proposition \ref{boundprop}. Finally, we recall the $L^\infty$-$L^p$ inequality for $p$-harmonic type functions (see \cite{manth1, manth2})
\eqn{7}
$$
\nr{Dh}_{L^{\infty}(B_{\rr/8})}^p\le c\mint_{B_{\rr/4}}(\snr{Dh}^{2}+\mu^{2})^{p/2}\dx \stackrel{\rif{enes}}{\leq} 
c \, \caccs(\rr) 
$$
that holds with $c\equiv c(n,p,s,\gamma,\Lambda)$. 
\begin{lemma}\label{har}
Let $h\in u+W^{1,p}_{0}(B_{\rr/4}(x_{0}))$ be as in \eqref{pdd}. There exists $\sigma\equiv \sigma(p,s,\gamma)\in (0,1)$ such that
\eqn{cl.11}
$$
\mint_{B_{\rr/4}(x_{0})}\snr{u-h}^{p}\dx \le c\rr^{\theta\sigma}[\GG(u,B_{\rr}(x_{0}))]^{p}
$$
holds for every $\theta \in (0,1)$, where $c\equiv c(\datar)$ and $\datar$ is defined in \eqref{datta}.
\end{lemma}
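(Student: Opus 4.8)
The strategy is a comparison argument: transfer to $u$ the good behaviour of the $\mathcal F$–free minimizer $h$ from \eqref{pdd} by testing the two Euler–Lagrange equations against $u-h$, treating both the source term and the nonlocal term in \eqref{el2} as lower–order perturbations. First one checks that $\psi:=u-h$ is admissible. Since $h\in u+W^{1,p}_{0}(B_{\rr/4}(x_0))$, extending $\psi$ by zero gives $\psi\in W^{1,p}(\er^n)$ with $\supp\psi\subset\overline{B_{\rr/4}(x_0)}\subset\Omega$; as $s\gamma\le p$ and, when $\gamma>p$, $\nr{\psi}_{L^{\infty}(\er^n)}\le\nr{u}_{L^{\infty}(\er^n)}+\nr{h}_{L^{\infty}(B_{\rr/4})}\le 2\nr{u}_{L^{\infty}(\er^n)}$ is finite by Proposition \ref{boundprop} and \eqref{bhh}, Lemma \ref{bmfinal} (in its zero–extension form, producing the full $\er^n$ Gagliardo seminorm) gives $\psi\in W^{s,\gamma}(\er^n)$, hence $\psi\in\mathbb{X}_{0}(\Omega)$. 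Testing \eqref{el2} and \eqref{elpd} with $\psi$, subtracting, and using monotonicity \eqref{monoin} yields
\[
\mathcal{I}:=\int_{B_{\rr/4}}\snr{V_{\mu}(Du)-V_{\mu}(Dh)}^{2}\dx\ \le\ c\,\Big|\int_{B_{\rr/4}}f\psi\dx\Big|+c\,\snr{\mathcal{N}}\,,
\]
where $\mathcal N$ is the nonlocal bilinear term of \eqref{el2} evaluated at $\varphi=\psi$. By \eqref{Vm}, $\mathcal I\ge c\int_{B_{\rr/4}}\snr{D\psi}^{p}$ if $p\ge2$, while if $1<p<2$ Hölder's inequality with \eqref{caccp}–\eqref{enes} gives $\int_{B_{\rr/4}}\snr{D\psi}^{p}\le c\,\mathcal I^{p/2}(\rr^{n}\caccs(u,B_{\rr}))^{(2-p)/2}$; I carry out the case $p\ge2$, the other being identical up to this harmless extra factor, which is reabsorbed at the end.

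Next one estimates the right–hand side. The source term: Hölder and the Sobolev inequality for $W^{1,p}_{0}(B_{\rr/4})$ give $\big|\int_{B_{\rr/4}}f\psi\,\dx\big|\le c\rr^{n/p'}\nr{f}_{L^{n}(B_{\rr})}\,\mathcal I^{1/p}$. For $\mathcal N$, the support of $\psi$ splits the $\er^n\times\er^n$ integral into a diagonal part over $B_{\rr/4}\times B_{\rr/4}$ and a symmetric mixed part over $B_{\rr/4}\times(\er^n\setminus B_{\rr/4})$, the part over the complement of $B_{\rr/4}$ in both variables vanishing; in the mixed part write $\psi(x)=\psi(x)-\psi(y)$ for $y\notin B_{\rr/4}$. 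Using $\snr{\Phi'(t)}\le\Lambda\snr{t}^{\gamma-1}$, \eqref{cacc.0}, and Hölder with exponents $\gamma/(\gamma-1),\gamma$ in the Gagliardo measure, both parts are bounded by $c\,(\rr^{n}\caccs(u,B_{\rr}))^{(\gamma-1)/\gamma}[\psi]_{s,\gamma;\er^n}$: here \eqref{caccp} gives $[u]_{s,\gamma;B_{\rr/4}}^{\gamma}\le c\rr^{n}\caccs(u,B_{\rr})$, and for the mixed part one controls $\int_{B_{\rr/4}}\int_{\er^n\setminus B_{\rr/4}}\snr{u(x)-u(y)}^{\gamma}\snr{x-y}^{-n-s\gamma}$ by splitting $y$ over $B_{\rr/2}\setminus B_{\rr/4}$ (using \eqref{caccp} again) and over $\er^n\setminus B_{\rr/2}$ (using the geometric comparison \eqref{cacc.1}, \eqref{misura}, \eqref{scasnail} and \eqref{averages}), again landing on $c\rr^{n}\caccs(u,B_{\rr})$. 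Finally, the zero–extension embedding of Lemma \ref{bmfinal}, together with $\mint_{B_{\rr/4}}\snr{D\psi}^{p}\dx\le c\,\caccs(u,B_{\rr})$ from \eqref{caccp}–\eqref{enes} and $\mathcal I\ge c\int_{B_{\rr/4}}\snr{D\psi}^{p}$, gives $[\psi]_{s,\gamma;\er^n}\le c\,\nr{u}_{L^{\infty}}^{1-\vartheta}\rr^{\vartheta-s+n/\gamma}(\rr^{-n}\mathcal I)^{\vartheta/p}$, whence
\[
\snr{\mathcal N}\ \le\ c\,\nr{u}_{L^{\infty}}^{1-\vartheta}\,\rr^{\,n+\vartheta-s-n\vartheta/p}\,\caccs(u,B_{\rr})^{(\gamma-1)/\gamma}\,\mathcal I^{\vartheta/p}\,.
\]

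Inserting these two bounds into the inequality for $\mathcal I$ and applying Young's inequality to absorb the factors $\mathcal I^{1/p}$ and $\mathcal I^{\vartheta/p}$ (possible since $1/p,\vartheta/p<1$) yields $\mathcal I\le c\rr^{n}\nr{f}_{L^{n}(B_{\rr})}^{p/(p-1)}+c\,\nr{u}_{L^{\infty}}^{(1-\vartheta)p/(p-\vartheta)}\rr^{\,n+p(\vartheta-s)/(p-\vartheta)}\caccs(u,B_{\rr})^{\rho}$, with $\rho:=(\gamma-1)p/(\gamma(p-\vartheta))$. Since $\mint_{B_{\rr/4}}\snr{u-h}^{p}\dx\le c\rr^{p}\mint_{B_{\rr/4}}\snr{D\psi}^{p}\dx\le c\rr^{p-n}\mathcal I$ by Poincaré, the source contribution gives $c\rr^{p}\nr{f}_{L^{n}(B_{\rr})}^{p/(p-1)}\le c\rr^{\theta}[\GG(u,B_{\rr})]^{p}$ via \eqref{rightdef}. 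For the other contribution one first checks the elementary inequality $\caccs(u,B_{\rr})\le c\rr^{-p}[\GG(u,B_{\rr})]^{p}$: comparing \eqref{caccica} with \eqref{ggdef}, every term of $\rr^{p}\caccs(u,B_{\rr})$ already appears in $[\GG(u,B_{\rr})]^{p}$ up to constants, except $\rr^{p-s\gamma}[\av_{\gamma}(u,B_{\rr})]^{\gamma}$, dominated by $[\GG(u,B_{\rr})]^{p}$ through Jensen and Young against $[\av_{p}(u,B_{\rr})]^{p}$ and the $+1$ of \eqref{rightdef} when $\gamma<p$, reducing to $\rr^{p(1-s)}[\av_{p}]^{p}$ when $\gamma=p$, and bounded by $c\,\nr{u}_{L^{\infty}}^{\gamma-p}[\av_{p}(u,B_{\rr})]^{p}$ when $\gamma>p$ (using $p>s\gamma$ and Proposition \ref{boundprop}). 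Because $\rho<1$ precisely when $\gamma\neq p$ — which is the central assumption $p>s\gamma$ in disguise, via $\gamma s<p\Leftrightarrow(\gamma-1)p<\gamma(p-\vartheta)$ — and $[\GG(u,B_{\rr})]^{p}\ge c\rr^{p-\theta}$, one may trade the negative power: with $G:=[\GG(u,B_{\rr})]^{p}$,
\[
\rr^{p-n}\cdot\rr^{\,n+p(\vartheta-s)/(p-\vartheta)}\caccs(u,B_{\rr})^{\rho}\ \le\ c\,\rr^{\,p(1-\rho)+p(\vartheta-s)/(p-\vartheta)}\,G^{\rho}\ \le\ c\,\rr^{\,\theta(1-\rho)}\,G\,,
\]
the last step using $p(\vartheta-s)/(p-\vartheta)\ge0$ (and, when $\gamma=p$ so that $\rho=1$, that same strictly positive exponent $p(1-s)/(p-1)$ supplies the gain directly). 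Choosing $\sigma\equiv\sigma(p,s,\gamma)\in(0,1)$ not exceeding $1-\rho$ (respectively $p(1-s)/(p-1)$ if $\gamma=p$) and combining with the source bound gives \eqref{cl.11}. The crux is the treatment of $\mathcal N$ in the second step: one must both guarantee that $u-h\in\mathbb{X}_{0}(\Omega)$ — which is where $p>s\gamma$ (for the embedding of the zero extension into $W^{s,\gamma}$) and the $L^{\infty}$ bound when $\gamma>p$ are essential — and verify that, after reabsorption, the power $\rho$ of the Caccioppoli right–hand side is strictly below $1$, equivalently $\gamma s<p$; it is exactly this strictness that demotes the nonlocal term to a genuine perturbation and produces the decay factor $\rr^{\theta\sigma}$.
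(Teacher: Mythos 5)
Your argument is sound and follows the same overall comparison scheme as the paper's proof of Lemma~\ref{har}: test \eqref{el2} and \eqref{elpd} against $u-h$, use monotonicity \eqref{monoin}, control the local part via the Caccioppoli inequality \eqref{caccp} and the nonlocal part via Lemma~\ref{bmfinal}, and close on the central fact $p>\vartheta\gamma$. But the closing mechanism is genuinely different and worth noting. The paper bounds $\mint_{B_{\rr/4}}\snr{Dw}^p\dx$ by $\caccs(\rr)$ \emph{up front} (through \eqref{enes} and \eqref{caccg}), so that the right side of the comparison inequality never contains $\mathcal V^2$ itself; the subcriticality is then extracted by the $\rr^{p\pm\dots}$ insertion trick in \eqref{uh1}--\eqref{uh4}. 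You instead keep the powers $\mathcal I^{1/p}$ and $\mathcal I^{\vartheta/p}$ on the right-hand side and reabsorb them via Young, which is a cleaner way to exploit $\vartheta/p<1$. Similarly, you treat the whole bilinear term $\mathcal N$ with a single Hölder in the Gagliardo measure against $[\psi]_{s,\gamma;\er^n}$, whereas the paper splits into a diagonal part handled that way and an off-diagonal part (III) estimated through $\|w\|_{L^\gamma}$ and \eqref{intermedia2}. Both routes arrive at the same kind of exponent.

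Two caveats. First, the assertion that the case $1<p<2$ is ``identical up to a harmless extra factor'' is not substantiated. When $p<2$ the Sobolev step for the source term and the conversion $\int\snr{D\psi}^p\le c\,\mathcal I^{p/2}(\rr^n\caccs)^{(2-p)/2}$ inject extra powers of $\caccs$ into \emph{both} right-hand-side terms, the Young exponents change (to $2$ and $2/\vartheta$ rather than $p$ and $p/\vartheta$), and the resulting $\rho$ and $\sigma$ come out differently — compare the paper's computation \eqref{uh2}/\eqref{uh4}, where $3-p$ appears and the final $\sigma$ is not the one you obtain for $p\ge2$. The argument does close, but you would need to carry it out. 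Second, the stated equivalence ``$\gamma s<p\Leftrightarrow(\gamma-1)p<\gamma(p-\vartheta)$'' is not quite right: the right-hand side is $\vartheta\gamma<p$, which for $\gamma<p$ (where $\vartheta=1$) holds trivially and says nothing about $s\gamma<p$; the equivalence with the central assumption is only accurate in the regime $\gamma>p$. This does not affect the validity of the proof, but the remark as written misrepresents where the hypothesis $p>s\gamma$ is actually used.
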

\begin{proof}
We are going to use Lemma \ref{cacclem} with 
\eqn{dp}
$$
\delta\in (s\gamma,p)
$$
in \rif{snaildef}, which makes sense by $p>s\gamma$. We keep this choice until the end of the proof of Theorem \ref{t2}; later on, in Step 3,  we shall choose $\delta$ suitably close to $p$. We preliminary observe that 
\eqn{caccg}
$$
\caccs(\rr)\le  c\, \cacc(\rr)
$$
holds with $c\equiv c(\datar)$. Indeed, recalling \rif{caccica}-\rif{caccdef}, it is sufficient to estimate the term $\rr^{-s\gamma}[\av_{\gamma}(\rr)]^{\gamma}$ appearing in the definition of $\caccs(\rr)$; for this, still denoting $\av_{\textnormal{\texttt{q}}}(t)\equiv \av_{\textnormal{\texttt{q}}}(u, B_t(x_0))$ for every $\texttt{q}>0$ and $t \leq \rr$, observe that
\begin{flalign}
\nonumber \rr^{-s\gamma}[\av_{\gamma}(\rr)]^{\gamma} &\leq c \|u\|_{L^\infty(B_{\rr})}^{(1-\vartheta)\gamma} \varrho^{-s\gamma}
[\av_{\vartheta\gamma}(\rr)]^{\vartheta\gamma}\\ 
\nonumber&\leq c\|u\|_{L^\infty(B_{\rr})}^{(1-\vartheta)\gamma}\rr^{(\vartheta-s)\gamma}
[ \varrho^{-p}[\av_{p}(\rr)]^p]^{\vartheta\gamma/p}
\\ & \leq c\|u\|_{L^\infty(B_{\rr})}^{(1-\vartheta)\gamma}\rr^{(\vartheta-s)\gamma}
[\cacc(\rr)]^{\vartheta\gamma/p} \leq  c\,\cacc(\rr) \,, \label{intermedia}
\end{flalign}
from which
\rif{caccg} follows, with the required dependence of the constants (recall Proposition \ref{boundprop} in the case $\gamma >p$); we have used \rif{centralass2} and that $\cacc(\rr) \geq 1\geq \rr$. We now extend $h\equiv u$ outside $B_{\rr/4}$, thereby getting, in particular, that $h \in W^{1,p}(\Omega)$, and in addition, when $\gamma >p$, we have $h \in L^{\infty}(\er^n)$ by Proposition \ref{boundprop} and \rif{bhh}. If we set $w:=u-h$, then $w \in  W^{1,p}_0(B_{\varrho})$, and also $w \in  L^{\infty}(B_{\varrho})$ when $\gamma >p$. Lemma \ref{bmfinal} implies $w \in  W^{s,\gamma}(B_{\varrho})$ and, since $w \equiv 0$ in $B_{\rr}\setminus B_{\rr/4}$, by \cite[Lemma 5.1]{guide} it follows that $w \in W^{s, \gamma}(\er^n)$. In this way  $w\in \mathbb{X}_{0}(\Omega)$ and can be used as a test function both in \eqref{el2} and \rif{elpd}. 
Setting 
$\mathcal{V}^{2}:=\snr{V_{\mu}(Du)-V_{\mu}(Dh)}^{2}$, with $V_{\mu}(\cdot)$ being defined in \rif{vpvqm}, we have
\begin{eqnarray}
&& \mint_{B_{\rr/4}}\mathcal{V}^{2}\dx\stackrel{\eqref{monoin}}{\le} c\mint_{B_{\rr/4}}(\partial_{z}F(Du)-\partial_{z}F(Dh))\cdot Dw\dx\nonumber \stackrel{\eqref{elpd}}{=}c\mint_{B_{\rr/4}}\partial_{z}F(Du)\cdot Dw\dx\nonumber \\
&&\stackrel{\eqref{el2}}{=}c\mint_{B_{\rr/4}}fw\dx-c\int_{B_{\rr/2}}\mint_{B_{\rr/2}}\snr{u(x)-u(y)}^{\gamma-2}(u(x)-u(y))(w(x)-w(y))\sK(x,y)\dx\dy\nonumber \\
&& \qquad -2c\int_{\mathbb{R}^{n}\setminus B_{\rr/2}}\mint_{B_{\rr/2}}\snr{u(x)-u(y)}^{\gamma-2}(u(x)-u(y))w(x)\sK(x,y)\dx\dy\nonumber \\
&&\ \,  =:\mbox{(I)}+\mbox{(II)}+\mbox{(III)}\,,\label{daje}
\end{eqnarray}
where $c\equiv c(n,p,\Lambda)$. H\"older and Sobolev inequalities (as in Lemma \ref{cacclem}) yield
\begin{eqnarray}
\nonumber \snr{\mbox{(I)}}&\le &\nr{f}_{L^{n}(B_{\rr/4})}\left(\mint_{B_{\rr/4}}\snr{Dw}^{p}\dx\right)^{1/p} \\
\nonumber &\stackrel{\rif{enes}}{\le}& c\nr{f}_{L^{n}(B_{\rr/4})}[\caccs(\rr)]^{1/p}\\ &
\stackrel{\rif{caccg}}{\le} & c\nr{f}_{L^{n}(B_{\rr/4})}[\cacc(\rr)]^{1/p},
\label{cl.3}
\end{eqnarray}
with $c\equiv c(n,p,s,\gamma, \Lambda)$. Again by H\"older's inequality, it is
\begin{eqnarray}
\snr{\mbox{(II)}}&\le&c\left(\int_{B_{\rr/2}}\mint_{B_{\rr/2}}\frac{\snr{u(x)-u(y)}^{\gamma}}{\snr{x-y}^{n+s\gamma}}\dx\dy\right)^{1-1/\gamma}\left(\int_{B_{\rr/4}}\mint_{B_{\rr/4}}\frac{\snr{w(x)-w(y)}^{\gamma}}{\snr{x-y}^{n+s\gamma}}\dx\dy\right)^{1/\gamma}\nonumber \\
&\stackrel{\eqref{caccp}, \eqref{caccg}}{\le}&c[\cacc(\rr)]^{1-1/\gamma}\left(\int_{B_{\rr/4}}\mint_{B_{\rr/4}}\frac{\snr{w(x)-w(y)}^{\gamma}}{\snr{x-y}^{n+s\gamma}}\dx\dy\right)^{1/\gamma}\nonumber \\
&\stackleq{bm111} &c[\cacc(\rr)]^{1-1/\gamma}\nr{w}_{L^{\infty}(B_{\rr/4})}^{1-\vartheta}\rr^{\vartheta-s}\left(\mint_{B_{\rr/4}}\snr{Dw}^{p}\dx\right)^{\vartheta/p}\nonumber \\
&\stackleq{bhh} &c[\cacc(\rr)]^{1-1/\gamma}\nr{u}_{L^{\infty}(B_{\rr/4})}^{1-\vartheta}\rr^{\vartheta-s}\left(\mint_{B_{\rr/4}}\left(\snr{Du}^{p}+\snr{Dh}^{p}\right)\dx\right)^{\vartheta/p}\notag \\
&\stackrel{\eqref{enes}, \eqref{caccg}}{\le} &c\rr^{\vartheta-s}[\cacc(\rr)]^{1-1/\gamma+\vartheta/p}\,,
\label{cl.4}
\end{eqnarray}
with $c\equiv c(\datar)$. Note that in the last line we have also used the content of Proposition \ref{boundprop} in the case $\gamma >p$; again, no appearance of $\nr{w}_{L^{\infty}},\nr{u}_{L^{\infty}}$ takes place when $\gamma \leq p$. For $\mbox{(III)}$ we note that we can replace $u$ by 
$u-(u)_{B_{\rr/2}}$ and use that 
$x\in B_{\rr/4}, y\in \er^n\setminus B_{\rr/2}$ imply $\snr{y-x_{0}}/\snr{x-y}\le 2$. Recalling that $w$ is supported in $B_{\varrho/4}$, we then have
\begin{eqnarray}
\snr{\mbox{(III)}}&\leq&c\int_{\mathbb{R}^{n}\setminus B_{\rr/2}}\mint_{B_{\rr/2}}\frac{\max\{\snr{u(x)-(u)_{B_{\rr/2}}},\snr{u(y)-(u)_{B_{\rr/2}}}\}^{\gamma-1}\snr{w(x)}}{\snr{x-y}^{n+s\gamma}} \dx\dy\nonumber \\
&\leq &c\int_{\mathbb{R}^{n}\setminus B_{\rr/2}}\mint_{B_{\rr/2}}\frac{\max\{\snr{u(x)-(u)_{B_{\rr/2}}},\snr{u(y)-(u)_{B_{\rr/2}}}\}^{\gamma-1}\snr{w(x)}}{\snr{y-x_0}^{n+s\gamma}} \dx\dy\nonumber \\
&\le&c\rr^{-s\gamma}\mint_{B_{\rr/2}}\snr{u-(u)_{B_{\rr/2}}}^{\gamma-1}\snr{w}\dx \nonumber  +c\int_{\mathbb{R}^{n}\setminus B_{\rr/2}}\frac{\snr{u(y)-(u)_{B_{\rr/2}}}^{\gamma-1}}{\snr{y-x_0}^{n+s\gamma}} \dy \mint_{B_{\rr/4}}\snr{w}\dx\nonumber \\
&\stackleq{cacc.2} &c\left[\rr^{-s\gamma}[\av_{\gamma}(\rr/2)]^{\gamma-1}+\rr^{-s}\left(\int_{\mathbb{R}^{n}\setminus B_{\rr/2}}\frac{\snr{u(y)-(u)_{B_{\rr/2}}}^{\gamma}}{\snr{y-x_{0}}^{n+s\gamma}} \dy\right)^{1-1/\gamma}\right]\left(\mint_{B_{\rr/4}}\snr{w}^{\gamma}\dx\right)^{1/\gamma}\nonumber\\
&\leq&c\rr^{-s}\left[\left(\rr^{-s\gamma}[\av_{\gamma}(\rr/2)]^\gamma\right)^{1-1/\gamma}+\left(\rr^{-\delta}[\snail_{\delta}(\rr/2)]^{\gamma}\right)^{1-1/\gamma}\right]\left(\mint_{B_{\rr/4}}\snr{w}^{\gamma}\dx\right)^{1/\gamma}\nonumber\\
&\leq&c\rr^{-s}\left[\left(\rr^{-s\gamma}[\av_{\gamma}(\rr)]^\gamma\right)^{1-1/\gamma}+\left(\rr^{-\delta}[\snail_{\delta}(\rr)]^{\gamma}\right)^{1-1/\gamma}\right]\left(\mint_{B_{\rr/4}}\snr{w}^{\gamma}\dx\right)^{1/\gamma}\nonumber\\
&\stackleq{caccica} &c\rr^{-s}[\caccs(\rr)]^{1-1/\gamma}\left(\mint_{B_{\rr/4}}\snr{w}^{\gamma}\dx\right)^{1/\gamma}\notag \\
&\stackleq{caccg} &c[\cacc(\rr)]^{1-1/\gamma}\left(\rr^{-s\gamma}\mint_{B_{\rr/4}}\snr{w}^{\gamma}\dx\right)^{1/\gamma}, \label{cl.5}
\end{eqnarray}
for $c\equiv c(\datar)$; we have used \eqref{scasnail}-\eqref{averages} in the third-last line. Similarly to \rif{intermedia}, we have 
\begin{flalign}
\notag \rr^{-s\gamma}\mint_{B_{\rr/4}} \snr{w}^{\gamma}\dx
&\leq  c \left(\|u\|_{L^\infty(B_{\rr/4})}+\|h\|_{L^\infty(B_{\rr/4})}\right)^{(1-\vartheta)\gamma} \rr^{(\vartheta-s)\gamma}
\left(\rr^{-p}\mint_{B_{\rr/4}} \snr{w}^{p}\dx\right)^{\vartheta\gamma/p}\\
&\leq  c\|u\|_{L^\infty(B_{\rr})}^{(1-\vartheta)\gamma} \rr^{(\vartheta-s)\gamma}
\left(\mint_{B_{\rr/4}} \snr{Dw}^{p}\dx\right)^{\vartheta\gamma/p} \stackrel{\rif{enes}, \rif{caccg}}{\leq}  c\rr^{(\vartheta-s)\gamma}[ \cacc(\rr)]^{\vartheta\gamma/p}\,.\label{intermedia2}
\end{flalign}
Combining the content of the last displays we conclude with 
$$
 \snr{\mbox{(III)}} \leq  c\rr^{\vartheta-s}[\cacc(\rr)]^{1-1/\gamma+\vartheta/p}\,,
$$
again with $c\equiv c(\datar)$. 
Using this last estimate with \rif{cl.3}-\rif{cl.4} in \rif{daje} we conclude that
\eqn{cl.6}
$$
\mint_{B_{\rr/4}}\mathcal{V}^{2}\dx \le c\nr{f}_{L^{n}(B_{\rr})}[\cacc(\rr)]^{1/p}+c\rr^{\vartheta-s}[\cacc(\rr)]^{1-1/\gamma+\vartheta/p}\,,
$$
holds with $c\equiv c(\datar)$. For the specific dependence of the constant on $\datar$, see also Remark \ref{dipendenza}. To proceed, for the moment we consider the case $p\not= \gamma$, when $[p(\gamma-1)+\vartheta\gamma]/(p\gamma) <1$ and  $[2\gamma-(p-\vartheta\gamma)]/(2\gamma)<1$ are true by \rif{centralass2}; these facts will be used in the cases $p\geq 2$ and $1<p<2$, respectively. Now, if $p\ge 2$, we take $\theta\in (0,1)$ as in \rif{rightdef} and estimate, via PoincarÃ© and Young's inequality
\begin{eqnarray}
\notag \mint_{B_{\rr/4}}\snr{u-h}^{p}\dx&\le&c\rr^{p}\mint_{B_{\rr/4}}\snr{Du-Dh}^{p}\dx\stackrel{\eqref{Vm}}{\le}c\rr^{p}\mint_{B_{\rr/4}}\mathcal{V}^{2}\dx \nonumber \\
\notag&\stackrel{\eqref{cl.6}}{\le}&c\rr^{p-1}\nr{f}_{L^{n}(B_{\rr})}\left(\rr^{p\pm \theta(p-1)/2}\cacc(\rr)\right)^{1/p}\nonumber \\
\notag&& \ +c\rr^{\frac{p-s\gamma}{\gamma}}\left(\rr^{p\pm\frac{\theta(p-\vartheta\gamma)}{2[p(\gamma-1)+\vartheta\gamma]}}\cacc(\rr)\right)^{\frac{p(\gamma-1)+\vartheta\gamma}{p\gamma}}\nonumber \\
\notag&\le&c\left(\rr^{\frac{\theta(p-1)}{2}}+\rr^{\frac{\theta(p-\vartheta\gamma)}{2[p(\gamma-1)+\vartheta\gamma]}}\right)\rr^p\cacc(\rr)+c\rr^{p-\theta/2}\left(\nr{f}_{L^{n}(B_{\rr})}^{p/(p-1)}+\rr^{\frac{p\gamma(\vartheta-s)}{p-\vartheta\gamma}}\right)\nonumber \\
&\stackrel{\eqref{triviala}}{\le}&c\rr^{\theta\sigma} [\GG(\rr)]^{p} ,\label{uh1}
\end{eqnarray}
where $\sigma:=\frac{1}{2}\min\left\{\frac{p-\vartheta\gamma}{p(\gamma-1)+\vartheta\gamma},1\right\}>0$ and $c\equiv c(\datar)$. When $p< 2$, we instead 
estimate
\begin{eqnarray}
\notag \mint_{B_{\rr/4}}\snr{u-h}^{p}\dx &\leq &c\rr^{p}\mint_{B_{\rr/4}}\snr{Du-Dh}^{p}\dx\notag \\ 
\notag&\stackleq{Vm}&c\rr^{p}\left(\mint_{B_{\rr/4}}\mathcal{V}^{2}\dx\right)^{p/2}\left(\mint_{B_{\rr/4}}(\snr{Du}^{2}+\snr{Dh}^{2}+\mu^2)^{p/2}\dx\right)^{1-p/2}\nonumber \\
\notag&\stackrel{\eqref{enes},\eqref{cl.6}}{\le}&c\rr^{p}\left[\nr{f}_{L^{n}(B_{\rr})}[\cacc(\rr)]^{1/p}+\rr^{\vartheta-s}[\cacc(\rr)]^{1-1/\gamma+\vartheta/p}\right]^{p/2}
[\caccs(\rr)]^{1-p/2}\nonumber \\
&\stackrel{\eqref{caccp}, \eqref{caccg}}{\le}&c\rr^{\frac{p(p-1)}{2}}\nr{f}_{L^{n}(B_{\rr})}^{p/2}\left(\rr^{p\pm\frac{\theta(p-1)}{2(3-p)}}\cacc(\rr)\right)^{\frac{3-p}{2}}\nonumber \\
&& \ +c\rr^{\frac{p(p-s\gamma)}{2\gamma}}\left(\rr^{p\pm\frac{\theta(p-\vartheta\gamma)}{2[2\gamma-(p-\vartheta\gamma)]}}\cacc(\rr)\right)^{\frac{2\gamma-(p-\vartheta\gamma)}{2\gamma}}\nonumber \\
&\le&c\left(\rr^{\frac{\theta(p-1)}{2(3-p)}}+\rr^{\frac{\theta(p-\vartheta\gamma)}{2[2\gamma-(p-\vartheta\gamma)]}}\right)\rr^p\cacc(\rr)\nonumber +c\rr^{p-\theta/2}\left(\nr{f}_{L^{n}(B_{\rr})}^{p/(p-1)}+\rr^{\frac{p\gamma(\vartheta-s)}{p-\vartheta\gamma}}\right)\nonumber \\
&\stackrel{\eqref{triviala}}{\le}&c\rr^{\theta\sigma} [\GG(\rr)]^{p},\label{uh2}
\end{eqnarray}
where $\sigma:=\frac{1}{2}\min\left\{\frac{p-1}{3-p},\frac{p-\vartheta\gamma}{2\gamma-(p-\vartheta\gamma)}\right\}>0$ and $c\equiv c(\datar)$. We have so far proved \rif{cl.11} in the case $p\not= \gamma$. When $p=\gamma$ we partially proceed as in \rif{uh1}-\rif{uh2}. When $p\geq 2$, from \rif{cl.6} we directly gain
\eqn{uh3}
$$
\mint_{B_{\rr/4}}\snr{u-h}^{p}\dx \le c\left(\rr^{\theta(p-1)/2}+\rr^{1-s}\right)\rr^p\cacc(\rr)+ c\rr^{p-\theta/2}\nr{f}_{L^{n}(B_{\rr})}^{p/(p-1)}
$$
with $c\equiv c(\datar)$, so that \rif{cl.11} follows via \rif{triviala}, with $\sigma:=(1-s)/2$. If $p<2$, we have
\eqn{uh4}
$$
\mint_{B_{\rr/4}}\snr{u-h}^{p}\dx 
\le c\left(\rr^{\frac{\theta(p-1)}{2(3-p)}}+\rr^{\frac{p(1-s)}{2}}\right)\rr^p\cacc(\rr)+c\rr^{p-\theta/2}\nr{f}_{L^{n}(B_{\rr})}^{p/(p-1)}\,,
$$
where $c\equiv c(\datar)$, so that \rif{cl.11} follows with $ \sigma:=\frac{1}{2}\min\left\{\frac{p-1}{3-p},p(1-s)\right\}$. \end{proof}
\subsection{Step 3: H\"older integral decay and conclusion}\label{intit}
With $t \leq \rr/8$, we bound
\begin{eqnarray}
\av_{p}(t)&\stackleq{basicav}&c\,\left(\mint_{B_{t}}\snr{h-(h)_{B_t}}^p\dx \right)^{1/p}+c\left(\frac{\rr}{t}\right)^{n/p}\left(\mint_{B_{\rr/4}}\snr{u-h}^p\dx \right)^{1/p}\nonumber\\
&\stackrel{\mbox{Poincar\'e}}{\leq}&ct\,\left(\mint_{B_{t}}\snr{Dh}^p\dx \right)^{1/p}+c\left(\frac{\rr}{t}\right)^{n/p}\left(\mint_{B_{\rr/4}}\snr{u-h}^p\dx \right)^{1/p}\nonumber\\
&\stackrel{\eqref{cl.11}}{\le}&ct\nr{Dh}_{L^{\infty}(B_{t})}+c\rr^{\theta\sigma/p}\left(\frac{\rr}{t}\right)^{n/p}\GG(\rr)\nonumber \\
&\stackrel{\eqref{7},\rif{caccg}}{\le}&ct[\cacc(\rr)]^{1/p}+c\rr^{\theta\sigma/p}\left(\frac{\rr}{t}\right)^{n/p}\GG(\rr)\nonumber \\
&\stackrel{\eqref{triviala}}{\le}&c\left[\left(\frac{t}{\rr}\right)+c\rr^{\theta\sigma/p}\left(\frac{\rr}{t}\right)^{n/p}\right]\GG(\rr)\label{8.1}
\end{eqnarray}
with $c\equiv c(\datar)$; the same inequality holds in the case $\rr/8\leq t \leq \rr$ by \rif{averages}. It follows 
\eqn{8}
$$
\begin{cases}
\av_{\gamma}(t) \leq 2\nr{u}_{L^{\infty}(B_{t})}^{1-\vartheta}[\av_{p}(t)]^{\vartheta} \leq c [\av_{p}(t)]^{\vartheta} \\
  \av_{\gamma}(t)  \leq c[(t/\rr)^{\vartheta}+\rr^{\vartheta\theta\sigma/p}(\rr/t)^{n\vartheta/p}][\GG(\rr)]^{\vartheta} \qquad \forall\, t \leq \rr\,,
\end{cases}
$$
for $c\equiv c (\datar)$. Indeed, \rif{8}$_1$ follows as in \rif{intermedia}, while \rif{8}$_2$ follows from \rif{8.1} and \rif{8}$_1$. 
Taking $t\equiv \tau \rr$ in \rif{8.1} with $\tau\in (0,1/8)$, we find, in particular
\eqn{9}
$$\av_{p}( \tau\rr)\le c(\tau+\rr^{\theta\sigma/p}\tau^{-n/p})\, \GG( \rr)$$
with $c\equiv c(\datar)$. In order to get a full decay estimate for $\GG(\cdot)$ from \rif{9}, we need to evaluate the \texttt{snail} and the  \texttt{rhs} terms. For this we use \eqref{scasnail}, that yields 
\begin{flalign}\label{1000}
[\snail_{\delta}(\tau\rr)]^{\gamma} &\leq c\tau^{\delta}[\snail_{\delta}(\rr)]^{\gamma}\nonumber +c(\tau\rr)^{\delta}\left(\int_{\tau\rr}^{\rr}\frac{\av_{\gamma}(\nu)}{\nu^s} \,\frac{\dtau}{\nu}\right)^{\gamma}\nonumber +c\tau^{\delta}\rr^{\delta-s\gamma}[\av_{\gamma}(\rr)]^{\gamma} \\
& =:S_1+S_2+S_3\,.
\end{flalign}
We have $S_1 \leq c \tau^{\delta} [\GG(\rr)]^{p}$ by \rif{eccessi} and \rif{ggdef}. By \rif{8}$_2$ and Young's inequality (recall \rif{centralass2}), we have 
\begin{flalign*}
S_2 & \leq c \tau^{\delta}\rr^{\delta-\vartheta \gamma} \left(\int_{\tau\rr}^{\rr} \,\frac{\dtau}{\nu^{1+s-\vartheta}}\right)^{\gamma}[\GG(\rr)]^{\vartheta\gamma}+ c \tau^{\delta}\rr^{\delta+(\theta\sigma+n)\vartheta\gamma/p}
\left(\int_{\tau\rr}^{\rr} \,\frac{\dtau}{\nu^{1+s+n\vartheta/p}}\right)^{\gamma}[\GG(\rr)]^{\vartheta\gamma}\\
& \leq c \tau^{\delta}\rr^{\delta-s\gamma}\log^{\gamma}\left(\frac 1\tau\right)[\GG(\rr)]^{\vartheta\gamma}
+ c \tau^{\delta-s\gamma -n\vartheta\gamma/p }\rr^{\delta -s\gamma + \theta \sigma \vartheta \gamma/p}
[\GG(\rr)]^{\vartheta\gamma}\\
&\leq c  \left[\tau^{\delta}\log^{p/\vartheta}\left(\frac 1\tau\right)+ 
 \rr^{ \theta \sigma}\tau^{-n-sp/\vartheta}
\right][\GG(\rr)]^{p}+c(\mathds{A}_{\gamma}+\mathds{B}_{\gamma})\tau^{\delta}\ \rr^{\frac{p(\delta - s\gamma)}{p-\vartheta \gamma}}
\,,
\end{flalign*}
where $c\equiv c (\datar)$ and $\mathds{A}_{\gamma},\mathds{B}_{\gamma},\mathds{C}_{\gamma}$ are defined in \rif{t1t2}.   
Using again Young's inequality, we have 
$$
S_3\stackrel{\eqref{8}_1}{\leq}  c\tau^{\delta}\rr^{\delta-s\gamma}[\av_{p}(\rr)]^{\vartheta\gamma}  \leq  c   \tau^{\delta}[\GG(\rr)]^{p}+  c(\mathds{A}_{\gamma}+\mathds{B}_{\gamma}) \tau^{\delta}\rr^{\frac{p(\delta - s\gamma)}{p-\vartheta \gamma}}\,.
$$
Connecting the above inequalities for $S_1, S_2, S_3$, and gathering terms, leads to 
\eqn{9s}
$$[\snail_{\delta}(\tau\rr)]^{\gamma}  \leq c \left[\tau^{\delta}\log^{p/\vartheta}\left(\frac 1\tau\right)+ \rr^{ \theta \sigma}\tau^{-n-sp/\vartheta}\right]
[\GG(\rr)]^{p}+ c (\mathds{A}_{\gamma}+\mathds{B}_{\gamma}) \tau^{\delta} \rr^{\frac{p(\delta - s\gamma)}{p-\vartheta \gamma}}\,.
$$
Noting that 
$[\rhs(\tau\rr)]^{p}\leq \tau^{p-\theta}[\rhs(\rr)]^{p}$, recalling \rif{ggdef}, and 
connecting \rif{9} and \rif{9s}, gives 
\eqn{primadecay0}
$$
\GG(\tau \rr) \leq c \left[\tau^{\delta/p}\log^{1/\vartheta}\left(\frac 1\tau\right)+ \tau^{1-\theta/p}
+ \rr^{ \theta \sigma/p} \tau^{-n/p-s/\vartheta}\right]
\GG(\rr)+ c(\mathds{A}_{\gamma}+\mathds{B}_{\gamma}) \tau^{\delta/p}  \rr^{\frac{\delta - s\gamma}{p-\vartheta \gamma}}
$$  
with $c\equiv c(\datar)$. From now on we consider balls $B_{\rr}\equiv B_{\rr}(x_0)\subset B_{r}(x_0)\equiv B_{r}\Subset \Omega$ with $r \leq r_* \leq 1$; further restrictions on $r_*$ will be put in a few lines. We now fix $\alpha$ such that $0< \alpha <1$ and set $\alpha_1 := (1+\alpha)/2$. We then find
$\theta\equiv \theta (p, \alpha) \in (0,1)$ sufficiently small and then $\delta\equiv \delta(p,s,\gamma,\alpha) \in (s\gamma, p)$ sufficiently close to $p$, such that 
\eqn{condy1}
$$
\alpha_1< 1-\frac{\theta}{p}\,,\qquad  \alpha_1  <  \frac{\delta}{p}\,,\qquad 1-\frac{\theta}{p} \leq \frac{\delta - s\gamma}{p-\vartheta \gamma}$$ 
(this last condition is not required when $p=\gamma$). 
Also note that \rif{condy1} imply
$$
(\mathds{A}_{\gamma}+\mathds{B}_{\gamma}) \rr^{\frac{\delta - s\gamma}{p-\vartheta \gamma}} \leq \rr^{1-\frac{\theta}{p}} \leq  \rhs(\rr)\,.
$$
Using this inequality in \rif{primadecay0}, and recalling the definitions in Section \ref{lalista}, yields
\eqn{primadecay}
$$
\GG(\tau \rr) \leq c_1 \left[\tau^{\delta/p}\log^{1/\vartheta}\left(\frac 1\tau\right)+ \tau^{1-\theta/p}
+ \rr^{ \theta \sigma/p} \tau^{-n/p-s/\vartheta}\right]
\GG(\rr)
$$  
and $c_1\equiv c_1(\datar)$. We eventually determine $\tau\equiv \tau (\datar, \alpha)\leq 1/8$ such that 
\eqn{cadil}
$$
3c_1\tau^{\delta/p-\alpha_1}\log^{1/\vartheta}\left(\frac{1}{\tau}\right) \leq 1\,, \qquad 3c_1\tau^{1-\theta/p-\alpha_1}\leq 1\,\quad  \mbox{and} \quad  \tau^{(1-\alpha)/2}\leq \frac 12\,.$$ 
Once $\tau$ has been determined as a function of the $\datar$ and $\alpha$, we find $r_*\equiv r_*(\datar, \alpha)$ such that if 
$
\rr \leq r \leq r_* $, then 
$
3c_1\rr^{ \theta \sigma/p} \tau^{-n/p-s/\vartheta-\alpha_1}\leq 1.
$ 
With such choices \rif{primadecay} becomes
\eqn{analoga}
$$
\GG(\tau \rr)  \leq \tau^{\alpha_1} \GG(\rr)\,,
$$ that now holds whenever $\rr \leq r\leq r_{*}$. We now introduce the sharp fractional maximal type operator
\eqn{analoganc}
$$
 \texttt{M}\,(x_0,\rr):= \sup_{\nu \leq \rr}\,  \nu^{-\alpha} \GG(u, B_\nu(x_0))
$$
and its truncated version 
$$
 \texttt{M}_{\eps}\,(x_0,\rr):= \sup_{\eps r\leq \nu \leq \rr}\,  \nu^{-\alpha} \GG(u, B_\nu(x_0))\,,  \qquad 0<\eps <\tau\,.
$$
Multiplying both sides of \rif{analoga} by $(\tau\rr)^{-\alpha}$, taking the sup with respect to $\rr \in (\eps r , r)$, we arrive at
\begin{eqnarray*}
 \texttt{M}_{\eps}\,(x_0,\tau r)  &\leq & \sup_{\eps \tau r\leq \nu \leq \tau r}\,  \nu^{-\alpha} \GG(\rr)\\ &
  \leq & \tau^{(1-\alpha)/2}  \texttt{M}_{\eps}\,(x_0,r)  \\ &\stackleq{cadil} & \frac 12  \texttt{M}_{\eps}\,(x_0, \tau r)  +  \sup_{\tau r \leq \nu \leq r} \nu^{-\alpha}\GG(u, B_{\nu}(x_0))\,,
\end{eqnarray*}
 that in turn implies, reabsorbing terms (note that $ \texttt{M}_{\eps}$ is always finite), and recalling that $\tau \equiv \tau (\datar, \alpha)$
 $$
  \texttt{M}_{\eps}\,(x_0,r)  \leq \frac{c}{r^\alpha}  \sup_{\tau r \leq \nu \leq r} \GG(u, B_\nu(x_0)) \,.
$$
Letting $\eps \to 0$ yields
\eqn{maximalestimate}
$$
  \texttt{M}\,(x_0,r)  \leq \frac{c}{r^\alpha}  \sup_{\tau r \leq \nu \leq r} \GG(u, B_\nu(x_0)) \,,
$$
with again $c \equiv c (\datar, \alpha)$. 
In order to estimate the right hand side we use \rif{scasnail}-\rif{averages}, that yields
\begin{flalign}
\nonumber   \texttt{M}\,(x_0,r) & \leq \frac{c}{r^\alpha}  \left( \av_{p}(r) +r^{(\delta-s\gamma)/p} [\snail_{s\gamma}(r)]^{\gamma/p}+ r^{(\delta-s\gamma)/p}[\av_{\gamma}(r)]^{\gamma/p}+ \rhs(r)\right)\\
 & \leq \frac{c}{r^\alpha}  \left( \av_{p}(r) + [\snail_{s\gamma}(r)]^{\gamma/p}+r^{\alpha_1}\|f\|_{L^n(B_r)}^{1/(p-1)}+r^{\alpha_1}\right)\,, \label{senza}
\end{flalign}
where $c \equiv c (\datar, \alpha)$. From \rif{senza}, recalling the definition in \rif{analoganc}, estimate \rif{campanato} and Theorem \ref{t5} follow via elementary manipulations; see Remark \ref{dipendenza} below.  
Moreover, estimating $\av_{p}(r_*) +\snail_{s\gamma}(r_*)  \leq c (\datah)$, we have proved the following:
\begin{proposition}\label{outcome}
Under assumptions \eqref{assf}-\eqref{assk} and \eqref{g33}, let $u\in \mathbb{X}_{g}(\Omega)$ be as in \eqref{fun}. For every $\alpha \in (0,1)$ there exist $r_{*}\equiv r_*(\datah, \alpha) \in (0,1)$ and $c\equiv c(\datah,\alpha)\geq 1$, such that 
\eqn{stimetta}
$$ 
\mint_{B_{\rr}}\snr{u-(u)_{B_{\rr}}}^{p}\dx \leq c \left(\frac{\rr}{r_*}\right)^{\alpha p}
$$
holds whenever $B_{\rr} \Subset \Omega$ and $\rr \leq r_*$. 
\end{proposition}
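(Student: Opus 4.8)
The plan is to read off \eqref{stimetta} directly from the sharp fractional maximal estimate \eqref{maximalestimate}--\eqref{senza} established in Step~3, specialized to the reference radius $r_*$. First I would fix $\alpha\in(0,1)$ and keep the threshold radius $r_*$, the ratio $\tau$, the exponent $\alpha_1=(1+\alpha)/2$ and the sharp maximal operator $\texttt{M}(x_0,\cdot)$ from \eqref{analoganc} exactly as constructed there, so that \eqref{senza} is at our disposal on every ball $B_{r_*}(x_0)\Subset\Omega$. I would then record that all the constants built so far depend only on $\datah$ and $\alpha$: by construction they depend on $\datar$ in \eqref{datta} and on $\alpha$, and $\datar$ reduces to $(n,p,s,\gamma,\Lambda)$ when $\gamma\le p$ (since then $\vartheta=1$ in \eqref{t1t2}), while when $\gamma>p$ Proposition~\ref{boundprop} bounds $\nr{u}_{L^\infty(\er^n)}$ in terms of $\datab=\datah$; hence $r_*\equiv r_*(\datah,\alpha)$.

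Next I would estimate the right-hand side of \eqref{senza} evaluated at $r=r_*$. Using $B_{r_*}(x_0)\subset\Omega$ and \eqref{basicav}, one has $\av_{p}(u,B_{r_*}(x_0))\le 2(\mint_{B_{r_*}}\snr{u}^{p}\dx)^{1/p}\le c\,r_*^{-n/p}\nr{u}_{W^{1,p}(\Omega)}$. For the tail, writing $\snr{u(y)-(u)_{B_{r_*}}}^{\gamma}\le c\snr{u(y)}^{\gamma}+c\mint_{B_{r_*}}\snr{u}^{\gamma}\dx$, using $\snr{y-x_0}^{-n-s\gamma}\le r_*^{-n-s\gamma}$ on $\er^n\setminus B_{r_*}(x_0)$ together with $\int_{\er^n\setminus B_{r_*}(x_0)}\snr{y-x_0}^{-n-s\gamma}\dy=c\,r_*^{-s\gamma}$, one arrives at $\snail_{s\gamma}(u,B_{r_*}(x_0))\le c\,r_*^{-n/\gamma}\nr{u}_{L^{\gamma}(\er^n)}$, and $\nr{u}_{L^{\gamma}(\er^n)}$ is controlled by $\datah$ (directly by $\nr{u}_{W^{s,\gamma}(\er^n)}$ when $\gamma\le p$; by Proposition~\ref{boundprop}, $\snr{\Omega}<\infty$ and $\nr{g}_{W^{s,\gamma}(\er^n)}$ when $\gamma>p$). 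Finally $r_*^{\alpha_1}\nr{f}_{L^{n}(B_{r_*})}^{1/(p-1)}+r_*^{\alpha_1}\le \nr{f}_{L^{n}(\Omega)}^{1/(p-1)}+1$ because $r_*\le 1$. Plugging these bounds into \eqref{senza} gives $\texttt{M}(x_0,r_*)\le c(\datah,\alpha)\,r_*^{-\alpha}$.

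Finally I would conclude: for any ball $B_\rr(x_0)\Subset\Omega$ with $\rr\le r_*$, the very definition \eqref{analoganc} of $\texttt{M}$ yields $\rr^{-\alpha}\GG(u,B_\rr(x_0))\le\texttt{M}(x_0,r_*)\le c(\datah,\alpha)\,r_*^{-\alpha}$, that is $\GG(u,B_\rr(x_0))\le c(\datah,\alpha)(\rr/r_*)^{\alpha}$; since \eqref{eccessi}--\eqref{ggdef} give $\av_{p}(u,B_\rr(x_0))\le\ecc(u,B_\rr(x_0))\le\GG(u,B_\rr(x_0))$, raising to the power $p$ produces exactly \eqref{stimetta} with $c\equiv c(\datah,\alpha)$.

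Since the substantive analysis---the Caccioppoli inequality of Lemma~\ref{cacclem}, the comparison Lemma~\ref{har}, the decay \eqref{analoga} and the reabsorption leading to \eqref{maximalestimate}---is already in place, this last step is essentially bookkeeping and I do not expect a real obstacle; the only mild points to be careful about are the control of the nonlocal tail $\snail_{s\gamma}(r_*)$ by the global $W^{s,\gamma}(\er^n)$-information on $u$, the fact that the dependence on $\nr{u}_{L^\infty}$ disappears when $\gamma\le p$, and the tacit admissibility of the reference ball $B_{r_*}(x_0)$ (handled by requiring $B_{r_*}(x_0)\Subset\Omega$ as part of the hypotheses, or by reading the statement for $x_0$ in a fixed compact subset of $\Omega$).
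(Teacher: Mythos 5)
Your argument reproduces the paper's own proof: evaluate the sharp maximal estimate \eqref{senza} at $r=r_*$, bound $\av_p(r_*)+\snail_{s\gamma}(r_*)\le c(\datah,\alpha)$ (a step the paper asserts without detail and which you correctly supply, distinguishing $\gamma\le p$ from $\gamma>p$), and then read off \eqref{stimetta} from the definition \eqref{analoganc} of $\texttt{M}$ together with $\av_p\le\GG$. Your caveat about the admissibility of $B_{r_*}(x_0)$ is also apt -- the paper's statement tacitly presumes $B_{r_*}(x_0)\Subset\Omega$, which is exactly how the subsequent covering argument leading to Theorem~\ref{t2} uses the proposition.
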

Theorem \ref{t2} now follows from \rif{stimetta} and the classical Campanato-Meyers integral characterization of H\"older continuity (via a standard covering argument); see for instance \cite{giu} and Remark \ref{devore-re}. 
\begin{remark}\label{dipendenza}{\em The constant $c$ and the radius $r_*$ appearing in \rif{campanato}, depend on $n,p,s,\gamma, \Lambda$ and $\|u\|_{L^{\infty}}^{1-\vartheta}$, see the definition in \rif{datta}. In turn, in the case $\gamma >p$, via Proposition \ref{boundprop}, $\|u\|_{L^{\infty}}$ can be bounded via a constant depending on $\datab$ and this justifies the final dependence of $c, r_*$ described in the statement of Theorem \ref{t5}. Accordingly, by \rif{t1t2}, when $\gamma \leq p$ no dependence on $\|u\|_{L^{\infty}}$ occurs in the estimates as $\vartheta=1$ and this explains the peculiar definition of $\datah$ in \rif{idati}$_{2.3}$. In fact, when $\gamma \leq p$, we are directly proving H\"older estimates on $u$ without using any bound on $\|u\|_{L^{\infty}}$ and this justifies the claim in Theorem \ref{t2} that we can avoid using assumption \rif{g33}.}
\end{remark}
\begin{remark}\label{devore-re} \emph{When neglecting the presence of the $\snail_{\delta}$ and $\rhs$ in the definition of $\GG$ in \rif{ggdef}, that is, when considering the purely local, homogenous setting, we have that 
\rif{analoganc} turns into
$$
 \texttt{M}\,(x_0,\rr)= \sup_{\nu \leq \rr}\,  \nu^{-\alpha} \left(\mint_{B_{\nu}(x_0)}|u-(u)_{B_{\nu}(x_0)}|^p\dx \right)^{1/p}\,.
$$
This is nothing but the classical local and fractional variant of the Feffermain-Stein Sharp Maximal Operator widely used in \cite{devore}. Moreover, note that a bound of the type in \rif{senza} immediately implies the local H\"older continuity of $u$ as 
$$
|u(x)-u(y)| \leq \frac{c}{\alpha}\left[ \texttt{M}\,(x,\rr)+ \texttt{M}\,(y,\rr)\right]\snr{x-y}^{\alpha}
$$
holds whenever $x, y \in B_{\rr/4}$, for every ball $B_{\rr}\subset \er^n$ (see \cite{devore}).}
\end{remark}
\section{Proof of Theorem \ref{t3}}\label{prova3}
In this section we permanently work under the assumptions of Theorem \ref{t3}, that is \eqref{assf}-\eqref{assk} and \eqref{g3}. The proof goes in seven different steps. 
\subsection{Step 1: Flattening of the boundary and global diffeomorphisms}\label{flbo} The classical flattening-of-the-boundary procedure needs to be upgraded here, as we are in a nonlocal setting. We first recall the standard local procedure, as for instance described in \cite{beck, beck2, krime, kronz}, and summarize its main points. Let us consider $x_0 \in \partial\Omega$; without loss of generality (by translation) we can assume that $x_0\in \{x_n=0\}$ and that $\Omega$ touches $\{x_n=0\}$ tangentially, so that its normal at $x_0$ is $e_n$, where $\{e_i\}_{i\leq n}$ is the standard basis of $\er^n$. By the assumption $\partial \Omega \in C^{1, \alpha_b}$, there exists a radius $r_0\equiv r_{x_0}$, depending on $x_0$, and a $C^{1, \alpha_b}$-regular diffeomorphism $\TT\equiv \TT_{x_0}\colon B_{4r_{0}}(x_0)  \mapsto \er^n$ such that $\TT(x_0)=x_0$, $B_{2r_{0}}^+(x_0) \subset \TT(\Omega_{3r_{0}}(x_0)) \subset B_{4r_{0}}^+(x_0) $, $\Gamma_{2\rr}(x_0)\subset \TT(\partial \Omega \cap B_{2r_0}(x_0)) \subset \Gamma_{3\rr}(x_0)$ and $|z|/c_* \leq |D\TT(x)z|\leq c^*|z|$, $x \in B_{4r_0}(x_0)$, where $c_*\in (1, 10/9)$ can be chosen close to $1$ at will taking a smaller $r_0$. Moreover, it is 
\begin{flalign}\label{diff0}
\begin{cases}
 \|\TT\|_{C^{1,\alpha_b}(B_{4r_{0}}(x_0))}+ \|\mathcal{T}^{-1}\|_{C^{1,\alpha_b}(B_{4r_{0}}(x_0))}<\infty\\
\nr{\mathcal{J}_{\ti{\mathcal{T}}}}_{L^{\infty}(B_{4r_{0}}(x_0))}+\nr{\mathcal{J}_{\ti{\mathcal{T}}^{-1}}}_{L^{\infty}(B_{4r_{0}}(x_0))}<\infty\,, \end{cases}
\end{flalign}
where $\mathcal{J}_{\mathcal{T}}$ and $\mathcal{J}_{\mathcal{T}^{-1}}$ denote the Jacobian determinants of 
$\mathcal{T}$ and $\mathcal{T}^{-1}$, respectively. 
We refer for instance to \cite[Section 3.2]{beck} and \cite[pages 306 and 318]{beck2} for the full details and for the explicit expression of the map $\TT$ considered here; see also \cite{krime, kronz}. We next extend $\TT$ to a $C^{1}$-regular global diffeomorphism of $\mathbb{R}^{n}$ into itself, following a discussion we found in math stackexchange \footnote{\url{https://math.stackexchange.com/questions/148808/the-extension-of-diffeomorphism}}. With $\eta\in C^{\infty}_{0}(B_{4r_{0}}(x_0))$ being such that
$
\mathds{1}_{B_{3r_{0}}}\le \eta\le \mathds{1}_{B_{4r_{0}}}$ and $ \snr{D\eta}\lesssim 1/r_{0}
$, we define 
\eqn{estendi}
$$
\begin{cases} 
\ \TT_{x_0}(x):= \TT(x_0)+D\TT(x_0)\cdot (x-x_0)\\
\ \ti{\TT}_{x_0}(x):=(1-\eta(x))\TT_{x_0}(x)  +\eta(x)\TT(x)\,.
\end{cases}
$$ It follows that $\ti{\TT}_{x_0}$ is $ C^{1,\alpha_b}$-regular and, being $D\TT(x_0)$ invertible, that $
\TT_{x_0}$ is a smooth global diffeomorphism of $\mathbb{R}^{n}$. We now use that 
the set of $C^1$-diffeomorphisms of $\er^n$ (into itself) is open in the (strong) topology of $C^{1}(\er^n,\er^n)$ (see \cite[Chapter 2, Theorem 1.6]{hirsch}, also for the relevant definitions). For this, we take $\texttt{r}_{x_0}>0$, such that if $\mathcal H \in C^{1}(\er^n,\er^n)$ and $\|\mathcal H-\TT_{x_0}\|_{C^1(\er^n,\er^n)}< \texttt{r}_{x_0}$, then $\mathcal H$ is a global $C^1$-regular diffeomorphism. By using \rif{diff} and mean value theorem, it now easily follows that 
\eqn{ildiffeo}
$$\|\ti{\TT}_{x_0}-\TT_{x_0}\|_{C^1(\er^n)}\leq c \|\TT\|_{C^{1,\alpha_b}(B_{4r_{0}}(x_0))} r_0^{\alpha_b}\equiv cr_0^{\alpha_b}\,, $$ with $c$ depending again on $x_0$, so that, by taking $r_0$ such that $ c r_0^{\alpha_b}< \texttt{r}_{x_0}$, we obtain that $\ti{\TT}_{x_0}$ (from now on also denoted by $\TT$) is a $C^1$-regular global diffeomorphism. Summarizing, and recalling the explicit expression of $\ti{\TT}_{x_0}$ in \rif{estendi}, we have that for every $x_0\in \partial \Omega$, there exists a global $C^1$-regular diffeomorphism $\TT \equiv \ti{\TT}_{x_0}$ such that 
\begin{flalign}\label{diff}
\begin{cases}
 \ \|D\ti{\mathcal{T}}\|_{L^{\infty}(\mathbb{R}^{n})},\|D\ti{\mathcal{T}}^{-1}\|_{L^{\infty}(\mathbb{R}^{n})}\le c_{0}<\infty\\
\ \nr{\mathcal{J}_{\ti{\mathcal{T}}}}_{L^{\infty}(\mathbb{R}^{n})},\nr{\mathcal{J}_{\ti{\mathcal{T}}^{-1}}}_{L^{\infty}(\mathbb{R}^{n})}\le c_{0}<\infty 
\end{cases}
\end{flalign}
 (here we are further enlarging $c_0$) and which is $C^{1, \alpha_b}$-regular diffeomorphism on $B_{2r_0}$. A comment needs perhaps to be made here, on the inequalities in \rif{diff}. Since $\ti{\TT}_{x_0}$ is a $C^1$-regular diffeomorphism, then \rif{diff} holds when replacing $\er^n$ by $B_{4r_0}(x_0)$ by compactness, for a suitable constant $c_0$; on the other hand $\ti{\TT}_{x_0}$ is affine on $\er^n\setminus B_{4r_0}(x_0)$ and it is $D\ti{\TT}_{x_0}=D\TT(x_0)$, which is invertible as $\TT$ is a local diffeomorphism in $B_{2r_0}$. Therefore \rif{diff} holds as stated, by eventually enlarging $c_0$. Note that, at this stage, the constant $c_0$ appearing in \rif{diff} is still depending on the point $x_0$ via the diffeomorphism $\TT$. As we are going to flatten the entire boundary $\partial \Omega$ with maps as $\TT$, by compactness we can assume that $r_0$ and $c_0$ are independent of $x_0 \in \partial \Omega$; see also Remark \ref{appiatti} below for more on this aspect. 
\subsection{Step 2: The flattened functional around a point $x_0\in \partial \Omega$}\label{listabordo} We set $\ti{\Omega}:= \TT(\Omega)$, so that $\Omega:= \TT^{-1}(\ti{\Omega})$, and also set $\tilde{g}:=g\circ\TT ^{-1}$. Note that if $w \in \mathbb{X}_{g}(\Omega)$, then $\ti{w}:=w\circ\TT ^{-1} \in \mathbb{X}_{\tilde{g}}(\ti{\Omega})$; on the other hand, any $\ti{w} \in \mathbb{X}_{\tilde{g}}(\ti{\Omega})$ can be written as $\ti{w}=w\circ\TT ^{-1}$ where $w \in \mathbb{X}_{g}(\Omega)$ is simply defined by $w:=\ti{w}\circ \TT$. By \rif{g3} and \rif{diff} it follows
\begin{eqnarray}\label{ttgg}
\begin{cases}
\ti{g}\in W^{1,q}(\ti{\Omega})\cap W^{s,\gamma}(\mathbb{R}^{n})\cap W^{a,\chi}(\mathbb{R}^{n})\\
\nr{\ti{g}}_{W^{1,q}(\ti{\Omega})}+\nr{\ti{g}}_{W^{s,\gamma}(\er^n)}+\nr{\ti{g}}_{W^{a,\chi}(\er^n)}\leq c (\data)\,.
\end{cases}
\end{eqnarray}
We then define the (flattened) functional 
$$
\mathbb{X}_{\tilde g}(\ti{\Omega}) \ni \ti{w} \mapsto  \tilde{\mathcal{F}}(\ti{w}):=\int_{\ti{\Omega}}\ccc(x)[\tilde{F}(x,D\ti{w})-\ti{f}\ti{w}]\dx+\int_{\mathbb{R}^{n}}\int_{\mathbb{R}^{n}}\Phi(\ti{w}(x)-\ti{w}(y))\tilde{K}(x,y)\dx\dy
$$
where 
$$
\begin{cases}
\tilde{F}(x,z):=F(zD\TT(\TT^{-1}(x))),\quad \ccc(x):=\snr{\mathcal{J}_{\mathcal{T}^{-1}}(x)},\\
\ti{f}(x):=f(\TT^{-1}(x)),\quad \tilde{K}(x,y):=\ccc(x)\ccc(y)K(\TT^{-1}(x),\TT^{-1}(y))\,.
\end{cases}
$$
Defining $\ti{u}:=u\circ\TT ^{-1} $, by \rif{fun} we have 
 \eqn{funb}
 $$
\mathbb{X}_{\tilde{g}}(\ti{\Omega})\ni \ti{u} \mapsto\min_{\ti{w} \in \mathbb{X}_{\tilde{g}}(\ti{\Omega})} \tilde{\mathcal{F}}(\ti{w})\,.
$$
By the very definition of $\ti{u}$, Proposition \ref{boundprop}, and directly from \rif{ttgg}, we also find 
\eqn{boundprop2}
$$
\|\ti{u}\|_{L^{\infty}(\er^n)} +\|\ti{g}\|_{L^{\infty}(\er^n)}+\|\ti{f}\|_{L^{n}(\ti{\er^n})}\leq c (\data)\,.
$$
From now on, any dependence of the various constants from $\TT$, that is $\nr{\mathcal{T}}_{C^{1, \alpha_b}(B_{r_0}(x_0))}$, $\nr{\mathcal{T}}_{C^{1}(\er^n)}$ and the like, will be incorporated in the dependence on $\Omega$, and therefore on $\data$ (compare with \rif{idati}$_4$). It follows from the very definitions given, \eqref{assk} and \eqref{diff} that $\ccc(\cdot)$ is continuous and 
\eqn{assc}
$$
\begin{cases} 
\ \snr{\ccc(x)-\ccc(y)}\le \tilde{\Lambda}\snr{x-y}^{\alpha_b}\,, \qquad \forall \ x,y\in B_{r_0}(x_0)\\
\ 0<\tilde{\Lambda}^{-1}\le \ccc(x)\le \tilde{\Lambda}\,, \qquad \forall \  x \in \er^n\\
 \ti{\Lambda}^{-1}\snr{x-y}^{-n-s\gamma} \leq \tilde{K}(x,y)\leq \ti{\Lambda} \snr{x-y}^{-n-s\gamma}\,,\quad \forall \ x,y\in \er^n, \ x\not = y\,.
\end{cases}
$$
Again by \eqref{assf} and \eqref{diff}, as for the new integrand $\tilde{F}(\cdot)$, we have
\begin{flalign}\label{assft}
\begin{cases}
\, z\mapsto \tilde{F}(x,z)\in C^{2}(\mathbb{R}^{n}\setminus \{0\})\cap C^{1}(\mathbb{R}^{n})\\
\, \tilde{\Lambda}^{-1}(\snr{z}^{2}+\mu^{2})^{p/2}\le \tilde{F}(x,z)\le \tilde{\Lambda}(\snr{z}^{2}+\mu^{2})^{p/2}\\
\ (\snr{z}^{2}+\mu^{2})\snr{\partial_{zz}\tilde{F}(x,z)}+(\snr{z}^{2}+\mu^{2})^{1/2}\snr{\partial_{z}\tilde{F}(x,z)}\le \tilde{\Lambda}(\snr{z}^{2}+\mu^{2})^{p/2}\\
\,  \tilde{\Lambda}^{-1}(\snr{z}^{2}+\mu^{2})^{(p-2)/2}\snr{\xi}^{2}\leq \partial_{zz}\tilde{F}(x,z)\xi\cdot \xi \\
\, \snr{\partial_{z}\tilde{F}(x,z)-\partial_{z}\ti{F}(y,z)}\le \tilde{\Lambda}\snr{x-y}^{\alpha_b}(\snr{z}^{2}+\mu^{2})^{(p-1)/2},
\end{cases}
\end{flalign}
for all $\xi \in \mathbb{R}^{n}$, $z\in \mathbb{R}^{n}\setminus \{0\}$, $x,y\in B_{r_{0}}(x_0)$. In \eqref{assc} and \eqref{assft} it is $\ti{\Lambda} \equiv \ti{\Lambda}(\data)\geq 1$. The Euler-Lagrange equation corresponding to \rif{funb} is now
\begin{flalign}\label{elb0}
 &\int_{\ti{\Omega}}\ccc(x)\left[\partial_{z}\ti{F}(x,D\tilde{u})\cdot D\ti{\varphi}-\ti{f}\ti{\varphi}\right]\dx\nonumber \\
 & \qquad +\int_{\mathbb{R}^{n}}\int_{\mathbb{R}^{n}}\Phi'(\ti{u}(x)-\ti{u}(y))(\ti{\varphi}(x)-\ti{\varphi}(y))\ti{K}(x,y)\dx\dy =0\,,
\end{flalign}
and holds for all $\ti{\varphi}\in \mathbb{X}_{0}(\ti{\Omega})$. Performing the same transformation described in Section \ref{rewrite},  
we can use 
\begin{flalign}\label{elb}
 &\int_{\ti{\Omega}}\ccc(x)\left[\partial_{z}\ti{F}(x,D\tilde{u})\cdot D\ti{\varphi}-\ti{f}\ti{\varphi}\right]\dx\nonumber \\
 & \qquad +\int_{\mathbb{R}^{n}}\int_{\mathbb{R}^{n}}|\ti{u}(x)-\ti{u}(y)|^{\gamma-2}(\ti{u}(x)-\ti{u}(y))(\ti{\varphi}(x)-\ti{\varphi}(y))\ti{K}_{\texttt{s}}(x,y)\dx\dy=0\,,
\end{flalign}
with the new kernel $\Kst(\cdot)$ that can be obtained by $\ti{K}(\cdot)$ as explained in \rif{cacc.0} and satisfies 
\eqn{asskt}
$$
\ti{K}_{\texttt{s}}(x,y)=\ti{K}_{\texttt{s}}(y,x)\quad \mbox{and}\quad \Kst(x,y)\approx_{\ti{\Lambda}} \frac{1}{\snr{x-y}^{n+s\gamma}}
$$
for every $x,y\in\er^n$, $x\not = y$.
\begin{remark}\label{piattitutto}\emph{
The various constants generically appealed to as $\tilde{\Lambda}$, $c_0$ and $c \equiv c (\data)$ from Sections \ref{flbo} and \ref{listabordo}, actually depend on the point $x_0$ via the features of the map $\TT$ considered; this dependence has been omitted above, and we will continue to do so. Indeed, by a standard compactness argument, we can cover and flatten the whole boundary $\partial \Omega$ by using a finite number of such diffeomorphisms $\{\TT_i\}_{i\leq k}$ (and points $\{x_i\}_{\leq k}$), generating the corresponding constants in the estimates. Eventually, we take the largest constants/lowest and make all the resulting constants independent of the specific point $x_i$ considered. We note that all such dependences will be incorporated in $\data$, since this last one also depends on $\Omega$. Similarly, we can assume that the size of the radius $r_0$, that can be decreased at will, is independent of the point $x_0$; we remark that such reasoning is standard \cite{beck, beck2, krime, kronz}.}
\end{remark}\label{appiatti}
 \subsection{Step 3: Localized regularity} In order to prove Theorem \ref{t3} it is now sufficient to show that $u \in C^{0, \alpha}(\Omega)$ holds for every $\alpha < \kappa$, with $[u]_{0, \alpha;\Omega} \leq c(\data, \alpha)$, and where $\kappa$ is defined in \eqref{g3}$_3$. This follows from the fact that $u \in g+W^{1,p}_0(\Omega)$ and $g \in W^{a, \chi}(\er^n)$, and therefore $g \in C^{0, a-n/\chi}(\er^n)$, as $W^{a, \chi}(\er^n) \subset  C^{0, a-n/\chi}(\er^n)$ with $
 \|g\|_{C^{0, \kappa}(\er^n)} \leq c \|g\|_{W^{a, \chi}(\er^n)}
 $. This is implied by  \rif{g3}$_3$ and \cite[Theorem 8.2]{guide}. The last two estimates also give $[u]_{0, \alpha;\er^n} \leq c(\data, \alpha)$ as claimed in Theorem \ref{t3}. Finally, to get that $u \in C^{0, \alpha}(\Omega)$ for every $\alpha <1$ when $g \in W^{1,\infty}(\er^n)$, it is then sufficient to note that a careful reading of the (forthcoming) proof of Theorem \ref{t3} reveals that Theorem \ref{t3} continuous to hold when replacing the assumption $g \in W^{a, \chi}(\er^n)$ by $g \in W^{a, \chi}_{\loc}(\er^n)$ and $g \in C^{0, \kappa}(\er^n)$ (or even by taking $g \in W^{a, \chi}(\Omega')$ with $\Omega \Subset \Omega'$). If $g \in W^{1,\infty}(\er^n)$, then these new conditions are obviously satisfied. Also taking Remark \ref{piattitutto} into account, via a standard covering argument, we are left to prove the following fact, from which Theorem \ref{t3} follows:
\begin{proposition} \label{campi}
Let $\ti{u}\in \mathbb{X}_{\ti{g}}(\ti{\Omega})$ be the solution to \eqref{funb}. Then $\ti{u}\in C^{0,\alpha}(\bar{B}_{r_{0}/2}^{+}(x_0))$ for every $\alpha < \kappa$. Moreover, there exists a constant $c \equiv c (\data, \alpha)$ such that $[\ti{u}]_{0, \alpha; \bar{B}_{r_{0}/2}^{+}(x_0)}\leq c$. 
\end{proposition}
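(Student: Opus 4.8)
The plan is to reproduce, at a fixed boundary point $x_{0}\in\partial\Omega$ and on upper half-balls $B^{+}_{\rr}(x_{0})$ with $\Gamma_{2\rr}(x_{0})\subset\partial\ti\Omega$ and $\rr\le r_{0}$, the three-step scheme of Sections \ref{ih1}--\ref{intit}, now for the flattened functional $\tilde{\mathcal F}$ and equation \eqref{elb}, using the structural facts \eqref{assc}--\eqref{asskt} together with the global bounds \eqref{boundprop2}. The one genuine change is that all oscillations are now measured relative to the boundary datum $\ti g$ rather than to constants: since $\kappa<1$ always, \eqref{g3}$_{3}$, \cite[Theorem 8.2]{guide} and Morrey's embedding give $\ti g\in C^{0,\kappa}(\er^{n})$ with $\|\ti g\|_{C^{0,\kappa}(\er^{n})}\le c(\data)$, so $\osc_{B_{\rr}}\ti g\le c\rr^{\kappa}$. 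As the target is only $C^{0,\alpha}$ with $\alpha<\kappa<1$, I would fix such an $\alpha$ once and for all and then, exactly as in \eqref{condy1}--\eqref{cadil}, build auxiliary parameters $\theta\in(0,1)$, $\delta\in(s\gamma,p)$ close to $p$, a contraction ratio $\tau$ and a radius $r_{*}$, depending only on $\data$ and $\alpha$; no boundary gradient regularity is needed.

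The first step is a boundary Caccioppoli inequality. Testing \eqref{elb} with $\eta^{m}(\ti u-\ti g)$ --- admissible because $\ti u-\ti g\in\mathbb{X}_{0}(\ti\Omega)$ and $\ti u=\ti g$ on $\Gamma_{2\rr}(x_{0})$ --- and repeating verbatim the estimates of Lemma \ref{cacclem} with \eqref{asskt} in place of \eqref{cacc.0} and $B^{+}_{\rr}$ in place of $B_{\rr}$, one gets
\[
\mint_{B^{+}_{\rr/2}}(\snr{D\ti u}^{2}+\mu^{2})^{p/2}\dx+\int_{B^{+}_{\rr/2}}\mint_{B^{+}_{\rr/2}}\frac{\snr{\ti u(x)-\ti u(y)}^{\gamma}}{\snr{x-y}^{n+s\gamma}}\dxy\le c\,\caccs(\ti u-\ti g,B^{+}_{\rr}(x_{0}))+(\text{error})\,,
\]
where the error terms are: the one carrying $D\ti g$, absorbed via H\"older's inequality and $q>p$ (hence of $\rr^{\kappa}$-type, since $W^{1,q}\subset C^{0,\kappa}$), and the ones produced, in the nonlocal piece, by the oscillation of $\ti g$, absorbed through $\ti g\in W^{s,\gamma}(\er^{n})\cap C^{0,\kappa}(\er^{n})$ and \eqref{boundprop2}, just as the $\snail_{\delta}$ term was handled in Lemma \ref{cacclem}.

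For the localization step I would let $h\in\ti u+W^{1,p}_{0}(B^{+}_{\rr/4}(x_{0}))$ minimise $w\mapsto\int_{B^{+}_{\rr/4}(x_{0})}\ccc(x_{0})\ti F(x_{0},Dw)\dx$, so $h$ solves a frozen homogeneous $p$-Laplacian-type equation with $h=\ti g$ on $\Gamma_{\rr/4}(x_{0})$ and $\|h\|_{L^{\infty}(B^{+}_{\rr/4})}\le\|\ti u\|_{L^{\infty}(\er^{n})}$ (used only when $\gamma>p$). By standard boundary estimates for such equations in a half-ball with $C^{0,\kappa}$ flat Dirichlet datum (see \cite{beck,beck2,krime,kronz,manth1,manth2} and the references therein), $h$ is H\"older up to $\Gamma_{\rr/8}(x_{0})$ with any exponent $\alpha_{2}\in(\alpha,\kappa)$, whence $\mint_{B^{+}_{t}}\snr{h-(h)_{B^{+}_{t}}}^{p}\dx\le c(t/\rr)^{\alpha_{2}p}[\mint_{B^{+}_{\rr/4}}\snr{\ti u-\ti g(x_{0})}^{p}\dx+\rr^{\kappa p}]$ for $t\le\rr/8$. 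For the comparison estimate I would test \eqref{elb} and the frozen equation with $\ti u-h$, use the monotonicity \eqref{monoin}, and control the three error contributions as in Lemma \ref{har}: the $\ti f$-term as in \eqref{cl.3}; the nonlocal terms as in \eqref{cl.4}--\eqref{cl.5}, using $p>s\gamma$ (i.e.\ \eqref{centralass2}) and Lemma \ref{bmfinal}, which yields favourable powers $\rr^{\vartheta-s}>0$; and the new term coming from replacing $\ccc(x)\partial_{z}\ti F(x,\cdot)$ by its value at $x_{0}$, bounded by $c\rr^{\alpha_{b}}$ times the energy via \eqref{assc} and \eqref{assft}$_{5}$. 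Arguing as in \eqref{uh1}--\eqref{uh4} this gives $\mint_{B^{+}_{\rr/4}}\snr{\ti u-h}^{p}\dx\le c\rr^{\theta\sigma}[\GGp(\ti u,B^{+}_{\rr}(x_{0}))]^{p}$ for some $\sigma\equiv\sigma(n,p,s,\gamma,\Lambda,\alpha_{b})>0$, where $\GGp$ is the half-ball excess built on $\ti u-\ti g$ and carrying the $\snail_{\delta}$ and $\rhsp$ terms as in \eqref{ggdef}.

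Finally, splitting $\av_{p}(\ti u-\ti g,B^{+}_{\tau\rr})\le c\,\av_{p}(h-(h)_{B^{+}_{\tau\rr}},B^{+}_{\tau\rr})+c\tau^{-n/p}(\mint_{B^{+}_{\rr/4}}\snr{\ti u-h}^{p})^{1/p}$, estimating the half-ball $\snail_{\delta}$ by the obvious variant of \eqref{scasnail}--\eqref{averages} and the $\rhsp$ term by scaling, one reaches a decay inequality $\GGp(\tau\rr)\le c_{1}[\tau^{\alpha_{2}}+\tau^{-n/p-s/\vartheta}\rr^{\theta\sigma/p}]\GGp(\rr)+c_{1}\tau^{\alpha_{2}}\rr^{\kappa}$ with $\alpha_{2}>\alpha$ (ensured by \eqref{condy1}-type choices and $\alpha<\kappa<1$). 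Fixing $\tau$ small and then $r_{*}$ small exactly as in \eqref{cadil} gives $\GGp(\tau\rr)\le\tau^{\alpha'}\GGp(\rr)+c\tau^{\alpha'}\rr^{\kappa}$ with $\alpha<\alpha'<\kappa$; iterating (a geometric sum, using $\alpha'<\kappa$) yields $\mint_{B^{+}_{\rr}}\snr{\ti u-\ti g}^{p}\dx\le c\rr^{\alpha p}$ for $\rr\le r_{*}$, which together with $\ti g\in C^{0,\kappa}\subset C^{0,\alpha}$, the interior estimate of Theorem \ref{t2} on balls inside $\ti\Omega$, the Campanato--Meyers characterization of H\"older continuity (as at the end of Section \ref{intit}, see also \cite{giu}) and a covering argument proves $\ti u\in C^{0,\alpha}(\bar B^{+}_{r_{0}/2}(x_{0}))$ with the stated bound. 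The main difficulty is not a single step but the requirement that all three error sources --- the nonlocal snail ($p>s\gamma$), the datum $\ti g$ ($q>p$, $\ti g\in C^{0,\kappa}$) and the freezing of $(\ccc,\ti F)$ ($\alpha_{b}>0$) --- leave strictly positive powers of $\rr$ in the final inequality, with the two absorbable errors ($\rr^{\theta\sigma/p}$ and $\rr^{\kappa}$) never competing with the target exponent; in particular one must check that the $D\ti g$ and $\ti g$-oscillation terms in the boundary Caccioppoli are absorbed with a power of $\rr$ no worse than $\rr^{\kappa}$, and that the frozen homogeneous problem genuinely enjoys the near-$\kappa$ boundary H\"older estimate on the flat face. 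Reproducing the delicate interpolation between local scale, nonlocal scale and boundary-datum scale --- the analogue of \eqref{condy1}--\eqref{cadil} with the extra parameter $\kappa$ --- is the bookkeeping-heavy heart of the argument.
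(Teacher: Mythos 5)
Your three-step scheme --- boundary Caccioppoli inequality for $\ti u-\ti g$, comparison with a frozen homogeneous $p$-harmonic-type map $\ti h$ on half-balls, and a contraction/iteration for a boundary excess $\GGp$ --- matches the paper's proof (Steps 4--7 of Section \ref{prova3}, Lemmas \ref{caccb} and \ref{bplemma}); you also correctly identify all three error sources that must leave positive powers of $\rr$ (the snail via $p>s\gamma$, the datum $\ti g$ via \eqref{g3}, the freezing of $(\ccc,\ti F)$ via $\alpha_b>0$). The one genuine divergence is the mechanism for the boundary decay of the frozen problem. You invoke boundary $C^{0,\alpha_2}$-regularity of $\ti h$ with $C^{0,\kappa}$ flat Dirichlet datum for every $\alpha_2<\kappa$, a sup-level estimate. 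The paper instead stays at the gradient level: it proves the Morrey-type decay \eqref{breg} for $\int_{B_t^+}(|D\ti h|^2+\mu^2)^{p/2}$ by comparing $\ti w=\ti h-\ti g$ with the solution $\ti v$ of the homogeneous frozen equation with $\ti v=0$ on $\Gamma$, for which the boundary Lipschitz estimate \cite[Theorem 2.2]{CM} applies, and then feeds in $D\ti g\in L^q$ with $q>n$. The decay exponent produced that way is $1-n/q$, which sits inside $\kappa=\min\{1-n/q,a-n/\chi\}$ exactly as required. What your route buys is a shorter argument; what it costs is that the near-$\kappa$ boundary H\"older estimate for $p$-harmonic-type functions with H\"older data is a true but nontrivial fact requiring its own barrier/iteration proof, and is not simply available off the shelf from the references you cite --- whereas the paper's route reduces to the well-documented case of vanishing flat data and is therefore more self-contained.

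One further small point worth flagging: the paper's boundary excess is built on $\mint_{B^+_\rr}|\ti u-\ti g|^p\dx$ with no mean subtracted (this is why Poincar\'e with $\ti u-\ti g=0$ on $\Gamma$ works), and Campanato decay for $\mint_{B^+_\rr}|\ti u-(\ti u)_{B^+_\rr}|^p\dx$ is only recovered at the very end using $\ti g\in C^{0,\kappa}$ plus \eqref{averages}. If in your sketch $\av_p(\ti u-\ti g,B^+_{\tau\rr})$ tacitly subtracts a mean, the Caccioppoli and snail absorption as written would not close; presumably a notational slip, but worth stating precisely.
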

For the proof of Proposition \ref{campi}, from now on we shall consider points $\tilde x_0 \in \Gamma_{r_{0}/2}(x_0)$, radii
$\rr \leq r_{0}/4 \leq 1/4$, and upper balls $B_{\rr}\equiv B_{\rr}^+(\ti{x}_0)\subset B_{r_0}^+(x_0)$. Unless otherwise stated, all the upper balls will be centred at $\ti{x}_0$, and $\ti{x}_0$ will be a fixed, but generic point as just specified. In analogy to the interior case, with $\delta$ being such that 
$s\gamma < \delta < p$ (such a choice is allowed by \rif{centralass}), 
we define the boundary analog of the quantities introduced in Section \ref{lalista} as follows:
\eqn{eccessig}
$$
\displaystyle \ecc^{+}(\rr)\equiv \ecc^{+}(\ti{u},B_{\rr}(\ti{x}_{0})):=\left(\mint_{B_{\rr}^{+}(\ti{x}_{0})}\snr{\ti{u}-\ti{g}}^p\dx \right)^{1/p}+\left[\snail_{\delta}(\ti{u},B_{\rr}(\ti{x}_{0}))\right]^{\gamma/p}\,,
$$
\begin{flalign}
[\rhsp(\rr)]^{p}\equiv [\rhsp(B_{\varrho}(\tilde x_0))]^{p}&:=\rr^{p-\theta}\left(\nr{\ti{f}}_{L^{n}(B_{\rr}^{+}(\ti{x}_{0}))}^{p/(p-1)}+1\right)+\left(\rr^{q}\mint_{B_{\rr}^{+}(\ti{x}_{0})}\snr{D\tilde{g}}^{q}\dx\right)^{p/q}\nonumber\\
&\qquad  \ +\left(\rr^{a\chi}\int_{B_{\rr}(\ti{x}_{0})}\mint_{B_{\rr}(\ti{x}_{0})}\frac{\snr{\ti{g}(x)-\ti{g}(y)}^{\chi}}{\snr{x-y}^{n+a\chi}}\dx\dy\right)^{p/(\vartheta\chi)},\label{rightb}
\end{flalign}
where $\vartheta$ has been defined in \rif{t1t2},
\begin{flalign}
\notag \cacc^{+}(\rr)\equiv \cacc^{+}(\ti{u},B_{\varrho}(\tilde x_0))&:=\rr^{-p}\mint_{B_{\rr}^{+}(\ti{x}_{0})}\snr{\ti{u}-\ti{g}}^{p}\dx +\rr^{-\delta}[\snail_{\delta}(\ti{u},B_{\rr}(\ti{x}_{0}))]^{\gamma}\nonumber \\
\notag & \qquad +\left(\nr{\ti{f}}_{L^{n}(B_{\rr}^{+}(\ti{x}_{0}))}^{p/(p-1)}+1\right)+\left(\mint_{B_{\rr}^{+}(\ti{x}_{0})}\snr{D\ti{g}}^{q}\dx\right)^{p/q}\nonumber \\
& \qquad  +\left(\rr^{\chi(a-s)}\int_{B_{\rr}(\ti{x}_{0})}\mint_{B_{\rr}(\ti{x}_{0})}\frac{\snr{\ti{g}(x)-\ti{g}(y)}^{\chi}}{\snr{x-y}^{n+a\chi}}\dx\dy\right)^{\gamma/\chi}\,,\label{cccb}
\end{flalign}
and, finally
\begin{flalign}
\notag [\GGp(\rr)]^{p} &\equiv [\GGp(\ti{u},B_{\varrho}(\tilde x_0))]^{p}\\
& :=[\ecc^{+}(\ti{u},B_{\varrho}(\tilde x_0))]^{p}+[\rhsp(B_{\varrho}(\tilde x_0))]^{p}+(\mathds{A}_{\gamma}+\mathds{B}_{\gamma}) \rr^{\frac{p(\delta - s\gamma)}{p-\vartheta \gamma}}\,,\label{ilglobale}
\end{flalign}
where $\mathds{A}_{\gamma},\mathds{B}_{\gamma}$ are defined in \rif{t1t2}. Thanks to \rif{centralass2}, by Young's inequality, $\delta <p$ and $\rr \leq 1$, we find
\begin{flalign*}
\rr^p\left(\rr^{\chi(a-s)}\int_{B_{\rr}}\mint_{B_{\rr}}\frac{\snr{\ti{g}(x)-\ti{g}(y)}^{\chi}}{\snr{x-y}^{n+a\chi}}\dx\dy\right)^{\frac{\gamma}{\chi}}
& \leq \left(\rr^{a\chi}\int_{B_{\rr}}\mint_{B_{\rr}}\frac{\snr{\ti{g}(x)-\ti{g}(y)}^{\chi}}{\snr{x-y}^{n+a\chi}}\dx\dy\right)^{\frac{p}{\vartheta\chi}} \\
 & \ \ \quad  + (\mathds{A}_{\gamma}+\mathds{B}_{\gamma}) \rr^{\frac{p(\delta - s\gamma)}{p-\vartheta \gamma}}\,.
\end{flalign*} 
The above definitions, and the content of the last display, yield
\eqn{late}
$$
\rr^{p}\cacc^{+}(\rr)\leq c[\GGp(\rr)]^{p}
$$
with $c\equiv c(s,\gamma,p)$. 
We shall often use the inequality 
\eqn{holtri}
$$\int_{B}\mint_{B}\frac{\snr{\ti{g}(x)-\ti{g}(y)}^{\gamma}}{\snr{x-y}^{n+s\gamma}}\dx\dy \leq c\left(|B|^{\chi(a-s)/n}\int_{B}\mint_{B}\frac{\snr{\ti{g}(x)-\ti{g}(y)}^{\chi}}{\snr{x-y}^{n+a\chi}}\dx\dy\right)^{\gamma/\chi}\,,
$$
that follows by a simple application of H\"older's inequality. 
\subsection{Step 4: Boundary Caccioppoli type inequality} 
We begin the proof of Proposition \ref{campi} with 
\begin{lemma}\label{caccb}
The inequality
\eqn{cacc+}
$$
\mint_{B_{\rr/2}^{+}(\ti{x}_{0})}(\snr{D\ti{u}}^{2}+\mu^{2})^{p/2}\dx +\int_{B_{\rr/2}(\ti{x}_{0})}\mint_{B_{\rr/2}(\ti{x}_{0})}\frac{\snr{\ti{u}(x)-\ti{u}(y)}^{\gamma}}{\snr{x-y}^{n+s\gamma}}\dx\dy \leq c\, \cacc^{+}(u,B_{\varrho}(\tilde x_0))
$$
holds with $c\equiv c(\data)$.
\end{lemma}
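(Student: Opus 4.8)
\subsection*{Proof plan for Lemma~\ref{caccb}}

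The plan is to adapt the interior argument of Lemma~\ref{cacclem} to the upper half ball $B_{\rr}^{+}(\ti{x}_{0})$, testing the flattened Euler--Lagrange equation \eqref{elb} with $\varphi:=\eta^{m}(\ti{u}-\ti{g})$, where $m:=\max\{p,\gamma\}$ and $\eta\in C^{1}_{0}(B_{\rr}(\ti{x}_{0}))$ satisfies $\mathds{1}_{B_{\rr/2}(\ti{x}_{0})}\le \eta\le \mathds{1}_{B_{3\rr/4}(\ti{x}_{0})}$, $\snr{D\eta}\lesssim 1/\rr$. The structural novelty with respect to Lemma~\ref{cacclem} is that here we subtract the boundary datum $\ti{g}$ rather than a mean value: since $\ti{u}-\ti{g}\in W^{1,p}_{0}(\ti{\Omega})$ and $\ti{u}\equiv \ti{g}$ on $\er^{n}\setminus\ti{\Omega}$ (and $\ti{u},\ti{g}\in L^{\infty}$ by \eqref{boundprop2} when $\gamma>p$), the function $\varphi$ is admissible in $\mathbb{X}_{0}(\ti{\Omega})$, and moreover $\ti{u}-\ti{g}$ has zero trace on $\Gamma_{\rr}(\ti{x}_{0})$. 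As in Lemma~\ref{cacclem} I would write the resulting identity as $0=\mbox{(I)}+\mbox{(II)}$, the local and nonlocal contributions.

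For $\mbox{(I)}$ I would argue exactly as in Lemma~\ref{cacclem}, using the analogue of \eqref{usa} for $\ti{F}(x,\cdot)$ (granted by \eqref{assft}) together with Young's and Sobolev's inequalities and the usual convention on $p^{*}$ when $p\geq n$; since $D\varphi=\eta^{m}D\ti{u}-\eta^{m}D\ti{g}+m\eta^{m-1}(D\eta)(\ti{u}-\ti{g})$, after reabsorbing the $\varepsilon$-small copy of the gradient term produced by Young's inequality inside $\mbox{(I)}$ this gives
$$
\mbox{(I)}\ge \frac1c\mint_{B_{\rr}^{+}}\eta^{m}(\snr{D\ti{u}}^{2}+\mu^{2})^{p/2}\dx-c\rr^{-p}\mint_{B_{\rr}^{+}}\snr{\ti{u}-\ti{g}}^{p}\dx-c\left(\mint_{B_{\rr}^{+}}\snr{D\ti{g}}^{q}\dx\right)^{p/q}-c\nr{\ti{f}}_{L^{n}(B_{\rr}^{+})}^{p/(p-1)}-c,
$$
where $\mint\snr{D\ti{g}}^{p}\le(\mint\snr{D\ti{g}}^{q})^{p/q}$ by Jensen ($q>p$), and every term on the right is $\le c\,\cacc^{+}(\rr)$ by \eqref{cccb}.

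For $\mbox{(II)}$, by the symmetry of $\Kst$ I would split $\mbox{(II)}=\mbox{(II)}_{1}+\mbox{(II)}_{2}$ over $B_{\rr}\times B_{\rr}$ and $(\er^{n}\setminus B_{\rr})\times B_{\rr}$. In $\mbox{(II)}_{1}$ I would run the pointwise algebra of Lemma~\ref{cacclem} with $w:=\ti{u}-\ti{g}$ in the role of $\um$, writing $\ti{u}(x)-\ti{u}(y)=(w(x)-w(y))+(\ti{g}(x)-\ti{g}(y))$ and splitting the cross-terms via Young's inequality, to obtain
$$
\mbox{(II)}_{1}\ge \frac1c\int_{B_{\rr/2}(\ti{x}_{0})}\mint_{B_{\rr/2}(\ti{x}_{0})}\frac{\snr{w(x)-w(y)}^{\gamma}}{\snr{x-y}^{n+s\gamma}}\dxy-c\rr^{-s\gamma}\mint_{B_{\rr}^{+}}\snr{w}^{\gamma}\dx-c\int_{B_{\rr}(\ti{x}_{0})}\mint_{B_{\rr}(\ti{x}_{0})}\frac{\snr{\ti{g}(x)-\ti{g}(y)}^{\gamma}}{\snr{x-y}^{n+s\gamma}}\dxy.
$$
Here $\rr^{-s\gamma}\mint_{B_{\rr}^{+}}\snr{w}^{\gamma}\dx$ is handled exactly as in \eqref{intermedia} (Jensen and $p>s\gamma$ when $\gamma\le p$; $\mint_{B_{\rr}^{+}}\snr{w}^{\gamma}\le\nr{w}_{L^{\infty}}^{\gamma-p}\mint_{B_{\rr}^{+}}\snr{w}^{p}$ and $p>s\gamma$ when $\gamma>p$), and the genuinely new term, the $(s,\gamma)$-oscillation of $\ti{g}$, is absorbed by the last term of $\cacc^{+}(\rr)$ precisely through the Hölder inequality \eqref{holtri}, where the strict gaps $a>s$, $\chi>\gamma$ of \eqref{g3} are essential. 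For $\mbox{(II)}_{2}$ I would proceed as in Lemma~\ref{cacclem}: using $1\le\snr{y-\ti{x}_{0}}/\snr{x-y}\le 4$ for $x$ in the support of $\varphi$ and $y\notin B_{\rr}$, replacing $\ti{u}$ by $\ti{u}-(\ti{u})_{B_{\rr}(\ti{x}_{0})}$, invoking \eqref{cacc.2} and $\snr{\varphi}\le\snr{w}$, and recalling the definition \eqref{snaildef}, one bounds $\snr{\mbox{(II)}_{2}}$ by $\rr^{-s\gamma}[\av_{\gamma}(\ti{u},B_{\rr}(\ti{x}_{0}))]^{\gamma}+\rr^{-\delta}[\snail_{\delta}(\ti{u},B_{\rr}(\ti{x}_{0}))]^{\gamma}$ plus an $(s,\gamma)$-oscillation of $\ti{g}$; applying the fractional Poincaré inequality \eqref{fraso} to $\ti{g}$ and once more \eqref{holtri}, this is $\le c\,\cacc^{+}(\rr)$.

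Summing the lower bounds for $\mbox{(I)}$ and $\mbox{(II)}$ and using $\mbox{(I)}+\mbox{(II)}=0$, both the gradient term $\mint_{B_{\rr}^{+}}\eta^{m}(\snr{D\ti{u}}^{2}+\mu^{2})^{p/2}\dx$ and $\int_{B_{\rr/2}}\mint_{B_{\rr/2}}\snr{w(x)-w(y)}^{\gamma}\snr{x-y}^{-n-s\gamma}\dxy$ are $\le c\,\cacc^{+}(\rr)$; using $\eta\equiv1$ on $B_{\rr/2}(\ti{x}_{0})$ and passing from $w$ back to $\ti{u}$ via $[\ti{u}]_{s,\gamma;B_{\rr/2}}\le c[w]_{s,\gamma;B_{\rr/2}}+c[\ti{g}]_{s,\gamma;B_{\rr/2}}$ together with a final application of \eqref{holtri} yields \eqref{cacc+}, with $c\equiv c(\data)$ (the dependence on $\ti{\Lambda}$ in \eqref{assc}--\eqref{assft} and, when $\gamma>p$, on $\nr{\ti{u}}_{L^{\infty}}+\nr{\ti{g}}_{L^{\infty}}\le c(\data)$ from \eqref{boundprop2} being all subsumed into $\data$). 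I expect the main obstacle to be exactly this bookkeeping forced by subtracting $\ti{g}$ rather than a constant: unlike in the interior case $\ti{u}(x)-\ti{u}(y)\ne w(x)-w(y)$, so the clean monotonicity identity of Lemma~\ref{cacclem} breaks and one must track the increments $\ti{g}(x)-\ti{g}(y)$, whose natural $(s,\gamma)$-differentiability is weaker than what $\cacc^{+}$ directly records; the resolution is that these increments can always be upgraded to the $(a,\chi)$-scale thanks to the gaps in \eqref{g3} and the elementary inequality \eqref{holtri}.
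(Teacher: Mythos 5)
You correctly identify the right test function ($\varphi=\eta^m(\ti{u}-\ti{g})$), the correct role of the gaps $a>s$, $\chi>\gamma$ via \eqref{holtri}, and the $L^\infty$ input from \eqref{boundprop2}. However, there is a genuine gap in how you treat the diagonal nonlocal term, and it is precisely the part where the boundary case departs from the interior Lemma~\ref{cacclem}.

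The issue is that $\ti{u}-\ti{g}$ is \emph{not} a constant shift of $\ti{u}$, so $\ti{u}(x)-\ti{u}(y)\neq w(x)-w(y)$. Your plan writes $\ti{u}(x)-\ti{u}(y)=(w(x)-w(y))+(\ti{g}(x)-\ti{g}(y))$ inside the nonlinearity $|\cdot|^{\gamma-2}(\cdot)$ and then runs ``the pointwise algebra of Lemma~\ref{cacclem} with $w$ in the role of $\um$.'' But that algebra requires the argument of $|\cdot|^{\gamma-2}(\cdot)$ to coincide with the function being tested ($\um$ on both sides); this is false here, so no clean coercivity is produced and the cross-terms have to be split by Young's inequality. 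Once you Young-split you necessarily generate a term of order $\eps\int_{B_\rr}\mint_{B_\rr}|\ti{u}(x)-\ti{u}(y)|^\gamma\Kst(x,y)\dx\dy$, i.e.\ $\eps[\tilde u]_{s,\gamma;B_\rr}^\gamma$ at the \emph{outer} radius $\rr$, while the coercive piece recovered from the monotonicity is only $[\ti u]_{s,\gamma;B_{\rr/2}}^\gamma$ at the inner radius (where $\eta\equiv 1$). With a single, fixed cutoff this cannot be reabsorbed, and your claimed lower bound for (II)$_1$ — which shows no $[\tilde u]_{s,\gamma;B_\rr}^\gamma$ loss — is not actually attainable by the argument you describe.

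The paper avoids the first problem and solves the second by (i) keeping the nonlinearity clean: it replaces $\ti{u}$ by the constant shift $\umm=\ti{u}-(\ti{u})_{B_{\tau_2}}$ so that $\ti{u}(x)-\ti{u}(y)=\umm(x)-\umm(y)$, and decomposes only the \emph{test function} $\eta^m\wmm=\eta^m\umm-\eta^m\gmm$, isolating the error term in $\gmm=\ti{g}-(\ti{u})_{B_{\tau_2}}$; and (ii) performing a Widman-type hole-filling: the cutoff $\eta$ is taken between intermediate radii $\rr/2\le\tau_1<\tau_2\le\rr$, so the Young error appears as $\frac{1}{2c_*}[\tilde u]_{s,\gamma;B_{\tau_2}}^\gamma$ against a coercive term $\frac{1}{c_*}[\tilde u]_{s,\gamma;B_{\tau_1}}^\gamma$, and this mismatch is then removed by the iteration Lemma~\ref{l5} applied to $h(t)=\int_{B_t^+}(|D\ti u|^2+\mu^2)^{p/2}\dx+[\tilde u]_{s,\gamma;B_t}^\gamma$. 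Without this device the estimate does not close, and a correct proof must introduce it (or an equivalent mechanism).
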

\begin{proof}
Fix parameters $\rr/2  \leq  \tau_{1}<\tau_{2}  \le \rr$, a function $\eta\in C^{1}_{0}(B_{\tau_{2}})$ such that $
\mathds{1}_{B_{\tau_{1}}}\le \eta\le \mathds{1}_{B_{(3\tau_{2}+\tau_{1})/4}}$ and $\snr{D\eta}\lesssim 1/(\tau_{2}-\tau_{1})$.
With $m:=\max\{\gamma,p\}$, set $\umm:=\tilde{u}-(\ti{u})_{B_{\tau_{2}}}$, $\gmm:=\tilde{g}-(\ti{u})_{B_{\tau_{2}}}$, $\wmm:=\umm-\gmm = \ti{u}-\ti{g}$ and consider $\ti{\varphi}:=\eta^{m}\wmm$. By its very definition, $\ti{\varphi}$ vanishes outside $B_{\tau_{2}}^{+}\subset B_{\rr}^{+}\subset B_{r_0}^{+}(x_0)$, so that \rif{ttgg} implies $\varphi\in \mathbb{X}_{0}(B_{\rr}^{+})$. Testing \eqref{elb} with $\ti{\varphi}$ we find 
\begin{flalign*}
0&=\int_{B_{\rr}^{+}}\eta^{m}\ccc(x)\left[\partial_{z}\ti{F}(x,D\ti{u})\cdot D\wmm-\ti{f}\wmm\right]\dx \nonumber  +m\int_{B_{\rr}^{+}}\eta^{m-1}\wmm\ccc(x)\partial_{z}\ti{F}(x,D\ti{u})\cdot D\eta\dx\nonumber \\
& \quad +\int_{B_{\tau_{2}}}\int_{B_{\tau_{2}}}|\ti{u}(x)-\ti{u}(y)|^{\gamma-2}(\ti{u}(x)-\ti{u}(y))(\eta^{m}(x)\wmm(x)-\eta^{m}(y)\wmm(y))\Kst(x,y)\dx\dy\nonumber \\
& \quad  +2\int_{\mathbb{R}^{n}\setminus B_{\tau_{2}}}\int_{B_{\tau_{2}}}|\ti{u}(x)-\ti{u}(y)|^{\gamma-2}(\ti{u}(x)-\ti{u}(y))\eta^{m}(x)\wmm(x)\Kst(x,y)\dx\dy\nonumber \\
&=:\mbox{(I)}+\mbox{(II)}+\mbox{(III)}+\mbox{(IV)}.
\end{flalign*}
Via \rif{usa}, $\eqref{assc}$, $\eqref{assft}$, and Sobolev, Poincar\'e and Young's inequalities (as in Lemma \ref{cacclem}) we obtain
\begin{flalign*}
\mbox{(I)}+\mbox{(II)}&\ge\frac 1c_{*}\int_{B_{\rr}^{+}}\eta^{m}(\snr{D\ti{u}}^{2}+\mu^{2})^{p/2}\dx-c\snr{B_{\rr}}\left(\mint_{B_{\tau_{2}}^{+}}\snr{D\ti{g}}^{q}\dx\right)^{p/q}\nonumber \\
&\quad -\frac{c}{(\tau_{2}-\tau_{1})^{p}}\int_{B_{\rr}^{+}}\snr{\ti{u}-\ti{g}}^{p}\dx-c\snr{B_{\rr}}\left(\nr{\ti{f}}_{L^{n}(B^{+}_{\rr})}^{p/(p-1)}+1\right) ,
\end{flalign*}
where $c\equiv c(\data)$. We then write $\mbox{(III)}$ as
\begin{flalign*}
\mbox{(III)}&=\int_{B_{\tau_{2}}}\int_{B_{\tau_{2}}}|\umm(x)-\umm(y)|^{\gamma-2}(\umm(x)-\umm(y))(\eta^{m}(x)\umm(x)-\eta^{m}(y)\umm(y))\Kst(x,y)\dx\dy\nonumber \\
&\quad -\int_{B_{\tau_{2}}}\int_{B_{\tau_{2}}}|\umm(x)-\umm(y)|^{\gamma-2}(\umm(x)-\umm(y))(\eta^{m}(x)\gmm(x)-\eta^{m}(y)\gmm(y))\Kst(x,y)\dx\dy\nonumber \\
&=:\mbox{(III)}_{1}+\mbox{(III)}_{2}.
\end{flalign*}
The term $\mbox{(III)}_{1}$ can be estimated similarly to \rif{dimisi00}-\rif{dimisi}, i.e.:
\begin{flalign*}
\mbox{(III)}_{1}&\ge \frac 1{c_*}\int_{B_{\tau_{2}}}\int_{B_{\tau_{2}}}\frac{\snr{\eta^{m/\gamma}(x)\umm(x)-\eta^{m/\gamma}(y)\umm(y)}^{\gamma}}{\snr{x-y}^{n+s\gamma}}\dx\dy\nonumber \\
&\quad -c\int_{B_{\tau_{2}}}\int_{B_{\tau_{2}}}\frac{\max\left\{\umm(x),\umm(y)\right\}^{\gamma}\snr{\eta^{m/\gamma}(x)-\eta^{m/\gamma}(y)}^{\gamma}}{\snr{x-y}^{n+s\gamma}}\dx\dy\nonumber \\
&\ge \frac{1}{c_*}[\tilde u]_{s, \gamma;B_{\tau_1}}^{\gamma} -\frac{c\rr^{(1-s)\gamma}}{(\tau_{2}-\tau_{1})^{\gamma}}\int_{B_{\tau_2}}|\umm|^{\gamma}\dx
\ge \frac{1}{c_*}[\tilde u]_{s, \gamma;B_{\tau_1}}^{\gamma} -\frac{c\rr^{(1-s)\gamma}}{(\tau_{2}-\tau_{1})^{\gamma}}\int_{B_{\rr}}\snr{u-(u)_{B_{\rr}}}^{\gamma}\dx,
\end{flalign*}
for $c, c_*\equiv c, c_*(\data)$. As for $\mbox{(III)}_{2}$, we have 
\begin{flalign}
\notag \snr{\mbox{(III)}_{2}}&\le \frac 1{2c_*} [\tilde u]_{s, \gamma;B_{\tau_2}}^{\gamma}\nonumber+c\int_{B_{\tau_{2}}}\int_{B_{\tau_{2}}}\frac{\snr{\eta(x)\gmm(x)-\eta(y)\gmm(y)}^{\gamma}}{\snr{x-y}^{n+s\gamma}}\dx\dy\nonumber \\
\notag&\le \frac 1{2c_*}[\tilde u]_{s, \gamma;B_{\tau_2}}^{\gamma} +c[\tilde g]_{s, \gamma;B_{\tau_2}}^{\gamma}+\frac{c\tau_{2}^{(1-s)\gamma}}{(\tau_{2}-\tau_{1})^{\gamma}}\int_{B_{\tau_{2}}}\snr{\ti{g}-(\ti{u})_{B_{\tau_2}}}^{\gamma}\dx\nonumber \\
\notag &\leq \frac 1{2c_*}[\tilde u]_{s, \gamma;B_{\tau_2}}^{\gamma} +c[\tilde g]_{s, \gamma;B_{\tau_2}}^{\gamma}+\frac{c\tau_2^{(1-s)\gamma}}{(\tau_{2}-\tau_{1})^{\gamma}}\int_{B_{\tau_{2}}}\left(\snr{\ti{u}-\ti{g}}^{\gamma}+\snr{\ti{u}-(\ti{u})_{B_{\tau_2}}}^{\gamma}\right)\dx\nonumber \\
\notag &\leq \frac 1{2c_*}[\tilde u]_{s, \gamma;B_{\tau_2}}^{\gamma}+\frac{c\rr^{(1-s)\gamma}}{(\tau_{2}-\tau_{1})^{\gamma}}\int_{B_{\rr}^+}\snr{\ti{u}-\ti{g}}^{\gamma}\dx
+\frac{c\rr^{(1-s)\gamma}}{(\tau_{2}-\tau_{1})^{\gamma}}\int_{B_{\rr}}\snr{\ti{u}-(\ti{u})_{B_{\rr}}}^{\gamma}\dx\nonumber \\
& \quad \  +c\snr{B_{\rr}}\left(\rr^{\chi(a-s)}\int_{B_{\rr}}\mint_{B_{\rr}}\frac{\snr{\ti{g}(x)-\ti{g}(y)}^{\chi}}{\snr{x-y}^{n+a\chi}}\dx\dy\right)^{\gamma/\chi} \label{asas}
\end{flalign}
with $c, c_*\equiv c, c_*(\data)$, and we can assume that the constant $c_*$ appearing in the last two displays is the same. Note that in the last line we have also used \rif{averages} and \rif{holtri}. In order to estimate $\mbox{(IV)}$, we note 
\eqn{bb.0}
$$
x\in B_{(3\tau_{2}+\tau_{1})/4},\ y\in \mathbb{R}^{n}\setminus B_{\tau_{2}} \ \Longrightarrow \ 1 \leq  \frac{\snr{y-\ti{x}_{0}}}{\snr{x-y}}\le 1+ \frac{3\tau_{2}+\tau_{1}}{\tau_{2}-\tau_{1}}\leq \frac{4 \tau_{2}}{\tau_{2}-\tau_{1}}\,.
$$
Recalling that  $\eta$ is supported in $B_{(3\tau_{2}+\tau_{1})/4}$, and using \eqref{assc} and \eqref{asskt}, we get
\begin{eqnarray*}
\snr{\mbox{(IV)}}&\stackrel{\eqref{bb.0}}{\leq}&\frac{c\tau_2^{n+s\gamma}}{(\tau_2-\tau_1)^{n+s\gamma}}\int_{\mathbb{R}^{n}\setminus B_{\tau_{2}}}\int_{B_{\tau_{2}}}\frac{\snr{\umm(x)-\umm(y)}^{\gamma-1}\eta^{m}(x)\snr{\wmm(x)}}{\snr{y-\tilde x_0}^{n+s\gamma}}\dx\dy\nonumber \\
&\le&\frac{c\tau_2^{n}}{(\tau_2-\tau_1)^{n+s\gamma}}\int_{B_{\tau_{2}}}\snr{\umm}^{\gamma-1}\snr{\wmm}\dx\nonumber+\frac{c\tau_2^{n+s\gamma}}{(\tau_2-\tau_1)^{n+s\gamma}}\int_{\mathbb{R}^{n}\setminus B_{\tau_{2}}}\frac{\snr{\umm(y)}^{\gamma-1}}{\snr{y-\tilde{x}_0}^{n+s\gamma}} \dy\int_{B_{\tau_{2}}^{+}}\snr{\wmm}\dx\nonumber \\
&\stackrel{\eqref{cacc.2}}{\le}&\frac{c\tau_2^{n}}{(\tau_2-\tau_1)^{n+s\gamma}}\int_{B_{\tau_{2}}}\snr{\umm}^{\gamma}\dx +\frac{c\tau_2^{n}}{(\tau_2-\tau_1)^{n+s\gamma}}\int_{B_{\tau_{2}}^{+}}\snr{\wmm}^{\gamma}\dx\nonumber \\
&& \ +\frac{c\tau_2^{n+s(\gamma-1)}}{(\tau_{2}-\tau_{1})^{n+s\gamma}}\left(\int_{\mathbb{R}^{n}\setminus B_{\tau_{2}}}\frac{\snr{\ti{u}(y)-(\ti{u})_{B_{\tau_{2}}}}^{\gamma}}{\snr{y-\ti{x}_{0}}^{n+s\gamma}} \dy\right)^{1-1/\gamma}\int_{B_{\tau_{2}}^{+}}\snr{\wmm}\dx\nonumber \\
&\leq&\frac{c \tau_2^{n}}{(\tau_2-\tau_1)^{n+s\gamma}}\int_{B_{\tau_{2}}}\snr{\umm}^{\gamma}\dx +\frac{c\tau_2^{n}}{(\tau_2-\tau_1)^{n+s\gamma}}\int_{B_{\tau_{2}}^{+}}\snr{\wmm}^{\gamma}\dx\nonumber  \\
&& \ +\frac{c\tau_2^{n+s\gamma}}{(\tau_2-\tau_1)^{n+s\gamma}}\tau_2^{-\delta (1-1/\gamma)}|B_{\tau_2}|^{1-1/\gamma}[\snail_{\delta}(\tau_{2})]^{\gamma-1}\tau_2^{-s}\left(\int_{B_{\tau_{2}}^{+}}\snr{\wmm}^{\gamma}\dx\right)^{1/\gamma}\nonumber \\
&\le&\frac{c \rr^{n}}{(\tau_2-\tau_1)^{n+s\gamma}}\int_{B_{\tau_{2}}}\snr{\umm}^{\gamma}\dx +\frac{c\rr^{n}}{(\tau_2-\tau_1)^{n+s\gamma}}\int_{B_{\tau_{2}}^{+}}\snr{\wmm}^{\gamma}\dx\nonumber \\
&& \ +\frac{c\rr^{n+s\gamma}}{(\tau_{2}-\tau_{1})^{n+s\gamma}}\snr{B_{\tau_{2}}} \tau_{2}^{-\delta}[\snail_{\delta}(\tau_{2})]^{\gamma}\,.
\end{eqnarray*}
By further using \rif{scasnail} and \rif{averages}, we find
\begin{flalign*}
\snr{\mbox{(IV)}}&\leq  \frac{c \rr^{n}}{(\tau_2-\tau_1)^{n+s\gamma}}\int_{B_{\rr}}\snr{\ti{u}-(\ti{u})_{B_{\rr}}}^{\gamma}\dx +\frac{c\rr^{n}}{(\tau_2-\tau_1)^{n+s\gamma}}\int_{B_{\rr}^{+}}\snr{\ti{u}-\ti{g}}^{\gamma}\dx\nonumber \\
& \quad +\frac{c\rr^{n+s\gamma}}{(\tau_{2}-\tau_{1})^{n+s\gamma}}\left(\snr{B_{\rr}}\rr^{-\delta}[\snail_{\delta}(\rr)]^{\gamma}+\rr^{-s\gamma}\int_{B_{\rr}}\snr{\ti{u}-(\ti{u})_{B_{\rr}}}^{\gamma}\dx\right)
\end{flalign*}
for $c\equiv c(\data)$. 
Merging the estimates for terms $\mbox{(I)}$-$\mbox{(IV)}$, and again using \rif{averages}, yields
\begin{flalign*}
& \int_{B_{\tau_1}^{+}}(\snr{D\ti{u}}^{2}+\mu^{2})^{p/2}\dx +[\tilde u]_{s, \gamma;B_{\tau_1}}^{\gamma} \leq \frac12  [\tilde u]_{s, \gamma;B_{\tau_2}}^{\gamma}+\frac{c}{(\tau_{2}-\tau_{1})^{p}}\int_{B_{\rr}^{+}}\snr{\ti{u}-\ti{g}}^{p}\dx\nonumber\\
&  \qquad  \qquad \qquad\qquad +c\left[ \frac{ \rr^{n}}{(\tau_{2}-\tau_{1})^{n+s\gamma}}+\frac{\rr^{(1-s)\gamma}}{(\tau_{2}-\tau_{1})^{\gamma}}\right]
\left(\int_{B_{\rr}^{+}}\snr{\tilde u -\tilde g}^{\gamma}\dx+\int_{B_{\rr}}\snr{\ti{u}-(\ti{u})_{B_{\rr}}}^{\gamma}\dx\right)\nonumber\\
&\qquad  \qquad \qquad   \qquad+ \frac{c \rr^{n+s\gamma}}{(\tau_{2}-\tau_{1})^{n+s\gamma}}\snr{B_{\rr}}\rr^{-\delta}[\snail_{\delta}(\rr)]^{\gamma}
\nonumber+c\snr{B_{\rr}}\left(\nr{\ti{f}}_{L^{n}(B^{+}_{\rr})}^{p/(p-1)}+1\right)\\
&\qquad  \qquad  \qquad  \qquad+c\snr{B_{\rr}}\left(\mint_{B_{\rr}^{+}}\snr{D\ti{g}}^{q}\dx\right)^{p/q}\nonumber+c\snr{B_{\rr}}\left(\rr^{\chi(a-s)}\int_{B_{\rr}}\mint_{B_{\rr}}\frac{\snr{\ti{g}(x)-\ti{g}(y)}^{\chi}}{\snr{x-y}^{n+a\chi}}\dx\dy\right)^{\gamma/\chi}\nonumber
\end{flalign*}
with $c\equiv c(\data)$. Applying Lemma \ref{l5} with the choice 
$$
h(t):=\int_{B_{t}^{+}}(\snr{D\ti{u}}^{2}+\mu^{2})^{p/2}\dx + [\tilde u]_{s, \gamma;B_{t}}^{\gamma} 
$$
now yields, after a few manipulations,and recalling he definition in \rif{cccb}
\begin{eqnarray}
\notag && \mint_{B_{\rr/2}^{+}}(\snr{D\ti{u}}^{2}+\mu^{2})^{p/2}\dx + |B_{\rr}|^{-1}[\tilde u]_{s, \gamma;B_{\rr/2}}^{\gamma} \nonumber \leq c\rr^{-p}\mint_{B_{\rr}^{+}}\snr{\ti{u}-\ti{g}}^{p}\dx\\
\notag && \qquad\quad+  c\rr^{-s\gamma}\mint_{B_{\rr}^+}\snr{\tilde u -\tilde g}^{\gamma}\dx+  c\rr^{-s\gamma}\mint_{B_{\rr}}\snr{\ti{u}-(\ti{u})_{B_{\rr}}}^{\gamma}\dx+c\rr^{-\delta}[\snail_{\delta}(\rr)]^{\gamma}\nonumber\\
\notag &&  \qquad \quad+c\left(\nr{\ti{f}}_{L^{n}(B^{+}_{\rr})}^{p/(p-1)}+1\right)+\left(\mint_{B_{\rr}^{+}}\snr{D\ti{g}}^{q}\dx\right)^{p/q}+c \left(\rr^{\chi(a-s)}\int_{B_{\rr}}\mint_{B_{\rr}}\frac{\snr{\ti{g}(x)-\ti{g}(y)}^{\chi}}{\snr{x-y}^{n+a\chi}}\dx\dy\right)^{\gamma/\chi}\\
&& \quad\  \leq  c\rr^{-s\gamma}[\av_{\gamma}(\rr)]^{\gamma}+c\rr^{-s\gamma}\mint_{B_{\rr}^+}\snr{\tilde u -\tilde g}^{\gamma}\dx+ c \, \cacc^{+}(\rr)\,.\label{sending}
\end{eqnarray}
Then we have
\begin{eqnarray*}
\rr^{-s\gamma}[\av_{\gamma}(\rr)]^{\gamma}& \stackleq{averages} &
c\rr^{-s\gamma}\mint_{B_{\rr}^+} \snr{\ti{u}-\ti{g}}^{\gamma}\dx  + c\rr^{-s\gamma} \mint_{B_{\rr}} \snr{\ti{g}- (\ti{g})_{B_{\rr}}}^{\gamma}\dx \\
&  \stackleq{fraso}  &
c\rr^{-s\gamma}\mint_{B_{\rr}^+} \snr{\ti{u}-\ti{g}}^{\gamma}\dx  + c|B_{\rr}|^{-1}[\tilde g]_{s,\gamma;B_{\rr}}^\gamma\\
&  \stackleq{cccb}  & \rr^{-s\gamma}\mint_{B_{\rr}^+} \snr{\ti{u}-\ti{g}}^{\gamma}\dx+ c \left(\rr^{\chi(a-s)}\int_{B_{\rr}}\mint_{B_{\rr}}\frac{\snr{\ti{g}(x)-\ti{g}(y)}^{\chi}}{\snr{x-y}^{n+a\chi}}\dx\dy\right)^{\gamma/\chi}\\
& \leq &\rr^{-s\gamma}\mint_{B_{\rr}^+} \snr{\ti{u}-\ti{g}}^{\gamma}\dx+ c \, \cacc^{+}(\rr)\,.
\end{eqnarray*}
On the other hand, proceeding as in the proof of \rif{intermedia2}, we obtain
\begin{flalign}
\nonumber \rr^{-s\gamma}\mint_{B_{\rr}^+} \snr{\ti{u}-\ti{g}}^{\gamma}\dx
&\leq  c \left(\|\ti{u}\|_{L^\infty(\er^n)}+\|\ti{g}\|_{L^\infty(\er^n)}\right)^{(1-\vartheta)\gamma} \rr^{(\vartheta-s)\gamma}
\left(\rr^{-p}\mint_{B_{\rr}^+} \snr{\ti{u}-\ti{g}}^{p}\dx\right)^{\vartheta\gamma/p}\\
&\hspace{-1.5mm}\stackleq{boundprop2}  c\rr^{(\vartheta-s)\gamma}[ \cacc^{+}(\rr)]^{\vartheta\gamma/p} \leq c \, \cacc^{+}(\rr)
\,,\label{intermedia3}
\end{flalign}
with $c\equiv c (\data)$, as $\cacc^{+}(\rr)\geq 1 \geq \rr$ and $p \geq \vartheta \gamma$, and therefore, from the content of the last two displays, we conclude with 
\eqn{bb.8}
$$
\rr^{-s\gamma}[\av_{\gamma}(\rr)]^{\gamma}\leq c\, \cacc^{+}(\rr)\,.
$$
Using the last two inequalities in \rif{sending} finally leads to \eqref{cacc+}.
\end{proof}
\subsection{Step 5: Boundary $p$-harmonic functions}\label{clb} Here we have 
\begin{lemma}\label{bplemma}
Let $\ti{h}\in \ti{u}+W^{1,p}_{0}(B_{\rr/4}^{+}(\ti{x}_0))$ be the solution to
\eqn{pddb}
$$
\ti h\mapsto \min_{\ti{w} \in \ti{u}+W^{1,p}_{0}(B_{\rr/4}^+(\ti{x}_{0}))} \int_{B_{\rr/4}(\ti{x}_{0})}\ccc(\ti{x}_0)\ti{F}(\ti{x}_{0},D\ti{w}) \dx\,.
$$ Then 
\eqn{bb.12}
$$
\mint_{B_{\rr/4}^{+}(\ti{x}_0)}\snr{\ti{u}-\ti{h}}^{p}\dx \le c\rr^{\theta\ti{\sigma} }[\GGp(\ti{u},B_{\rr}(\ti{x}_0))]^{p}
$$
holds for any $\theta\in (0,1)$, where $c\equiv c(\data)$. Here $\ti{\sigma}\equiv \ti{\sigma}(p,s,\gamma, \alpha_b)\in (0,1)$ is given by $\ti{\sigma}:=\min\{\sigma, \alpha_b, p\alpha_b/2\}$ and $\sigma$ comes from \eqref{cl.11}. 
\end{lemma}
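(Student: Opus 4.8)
The plan is to adapt the proof of the interior comparison estimate, Lemma~\ref{har}, the only genuinely new ingredient being that $\ti{h}$ solves a \emph{frozen-coefficient} problem, so that an additional term measuring the oscillation of $x\mapsto\ccc(x)\partial_{z}\ti{F}(x,\cdot)$ must be controlled. First I would extend $\ti{h}\equiv\ti{u}$ outside $B_{\rr/4}^{+}(\ti{x}_{0})$ and set $w:=\ti{u}-\ti{h}$. Then $w\in W^{1,p}_{0}(B_{\rr/4}^{+}(\ti{x}_{0}))$ vanishes on all of $\partial B_{\rr/4}^{+}(\ti{x}_{0})$, so its null extension to the full ball belongs to $W^{1,p}_{0}(B_{\rr/4}(\ti{x}_{0}))$; combining this with the bound $\|w\|_{L^{\infty}(\er^{n})}\le c(\data)$ -- which follows from \rif{boundprop2} and the maximum principle $\|\ti{h}\|_{L^{\infty}(B_{\rr/4}^{+})}\le\|\ti{u}\|_{L^{\infty}(B_{\rr/4}^{+})}$ for the frozen problem (needed only when $\gamma>p$) -- Lemma~\ref{bmfinal} and \cite[Lemma 5.1]{guide} give $w\in W^{s,\gamma}(\er^{n})$, hence $w\in\mathbb{X}_{0}(\ti{\Omega})$ is admissible both in \rif{elb} and in the Euler-Lagrange equation of \rif{pddb}. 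By minimality in \rif{pddb} and the bounds in \rif{assft} one has $\mint_{B_{\rr/4}^{+}}(|D\ti{h}|^{2}+\mu^{2})^{p/2}\dx\le c\mint_{B_{\rr/4}^{+}}(|D\ti{u}|^{2}+\mu^{2})^{p/2}\dx$, which together with the boundary Caccioppoli inequality \rif{cacc+} yields the energy bound $\mint_{B_{\rr/4}^{+}}(|D\ti{u}|^{p}+|D\ti{h}|^{p}+|Dw|^{p})\dx\le c\,\cacc^{+}(\rr)$.

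Setting $\mathcal{V}^{2}:=|V_{\mu}(D\ti{u})-V_{\mu}(D\ti{h})|^{2}$, I would apply the strict monotonicity \rif{monoin} to the frozen integrand $z\mapsto\ccc(\ti{x}_{0})\ti{F}(\ti{x}_{0},z)$ (which still satisfies \rif{assft}-type conditions), then use the Euler-Lagrange equation of \rif{pddb} tested with $w$ (killing the $D\ti{h}$-contribution) and finally \rif{elb} tested with $w$, to arrive at
\[
\mint_{B_{\rr/4}^{+}}\mathcal{V}^{2}\dx\le c\,\mathrm{(0)}+c\,\mathrm{(I)}+c\,\mathrm{(II)}+c\,\mathrm{(III)},\qquad\mathrm{(0)}:=\mint_{B_{\rr/4}^{+}}\bigl|\ccc(\ti{x}_{0})\partial_{z}\ti{F}(\ti{x}_{0},D\ti{u})-\ccc(x)\partial_{z}\ti{F}(x,D\ti{u})\bigr||Dw|\dx,
\]
while $\mathrm{(I)},\mathrm{(II)},\mathrm{(III)}$ are, respectively, the $\ti{f}$-term, the nonlocal term on $B_{\rr/2}^{+}\times B_{\rr/2}^{+}$ and the long-range term on $(\er^{n}\setminus B_{\rr/2}^{+})\times B_{\rr/2}^{+}$, exactly as the terms $\mathrm{(I)}$-$\mathrm{(III)}$ in \rif{daje}. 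The new term $\mathrm{(0)}$ is estimated using the H\"older continuity of $\ccc(\cdot)$ and of $x\mapsto\partial_{z}\ti{F}(x,\cdot)$ recorded in \rif{assc} and \rif{assft} (so that $|x-\ti{x}_{0}|^{\alpha_{b}}\le c\rr^{\alpha_{b}}$ on $B_{\rr/4}^{+}$), the $(p-1)$-growth of $\partial_{z}\ti{F}$, H\"older's inequality and the energy bound established above, which give $\mathrm{(0)}\le c\rr^{\alpha_{b}}\cacc^{+}(\rr)$. The remaining three terms are bounded essentially verbatim as in \rif{cl.3}-\rif{intermedia2}: $\mathrm{(I)}$ by H\"older-Sobolev-Poincar\'e, $\mathrm{(II)}$ by H\"older, Lemma~\ref{bmfinal} and the $L^{\infty}$-bound on $w$, and $\mathrm{(III)}$ by \rif{cacc.2}, \rif{scasnail}, \rif{averages} and the half-ball analogue of \rif{intermedia2} (here the $\ti{g}$-dependent quantities that come up, notably $\rr^{-s\gamma}[\av_{\gamma}(\rr)]^{\gamma}$, are absorbed into $\cacc^{+}(\rr)$ via \rif{bb.8}). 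Altogether this yields the boundary version of \rif{cl.6}, namely
\[
\mint_{B_{\rr/4}^{+}}\mathcal{V}^{2}\dx\le c\rr^{\alpha_{b}}\cacc^{+}(\rr)+c\|\ti{f}\|_{L^{n}(B_{\rr}^{+})}[\cacc^{+}(\rr)]^{1/p}+c\rr^{\vartheta-s}[\cacc^{+}(\rr)]^{1-1/\gamma+\vartheta/p},
\]
with $c\equiv c(\data)$.

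Finally I would pass from $\mathcal{V}^{2}$ to $\mint_{B_{\rr/4}^{+}}|w|^{p}\dx$ by Poincar\'e's inequality together with \rif{Vm} -- directly when $p\ge2$, and by a further use of H\"older's inequality together with the bound $\mint(|D\ti{u}|^{2}+|D\ti{h}|^{2}+\mu^{2})^{p/2}\dx\le c\,\cacc^{+}(\rr)$ when $1<p<2$ -- and then conclude exactly as in \rif{uh1}-\rif{uh4}: Young's inequality against the factor $\rr^{p-\theta}$ sitting inside $[\rhsp(\rr)]^{p}$, and \rif{late} to absorb $\rr^{p}\cacc^{+}(\rr)\le c[\GGp(\rr)]^{p}$. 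The $\|\ti{f}\|_{L^{n}}$ and $\rr^{\vartheta-s}$ contributions produce, as in the interior computation, a factor $\rr^{\theta\sigma}$ with the same $\sigma$ as in \rif{cl.11}, whereas the new term $\mathrm{(0)}$ contributes $\rr^{p}\cdot\rr^{\alpha_{b}}\cacc^{+}(\rr)\le\rr^{\alpha_{b}}[\GGp(\rr)]^{p}$ when $p\ge2$ and, after the H\"older splitting, $\rr^{p}\cdot\rr^{p\alpha_{b}/2}\cacc^{+}(\rr)\le\rr^{p\alpha_{b}/2}[\GGp(\rr)]^{p}$ when $1<p<2$. Since $\rr\le1$ and $\theta<1$, collecting these exponents gives \rif{bb.12} with $\ti{\sigma}=\min\{\sigma,\alpha_{b},p\alpha_{b}/2\}$.

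I expect the main obstacle to be precisely the oscillation term $\mathrm{(0)}$: one has to combine the two H\"older continuity properties in \rif{assc} and \rif{assft} with the Caccioppoli energy bound, and then carefully track the different powers of $\rr$ that $\mathrm{(0)}$ generates after the Poincar\'e/Young step in the two ranges $p\ge2$ and $1<p<2$ -- this is exactly what fixes the value of $\ti{\sigma}$. By contrast, the nonlocal terms $\mathrm{(II)},\mathrm{(III)}$ and the reabsorption of the boundary-data quantities require no new idea beyond what is already present in the proofs of Lemmas~\ref{har} and \ref{caccb}.
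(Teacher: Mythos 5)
Your proposal is correct and follows essentially the same route as the paper's own proof: you use the same decomposition into a coefficient-oscillation term plus the $\ti f$-term and the two nonlocal terms, estimate each by the same tools (\rif{assc}, \rif{assft}$_5$, the boundary Caccioppoli inequality, Lemma \ref{bmfinal}, \rif{cacc.2}, \rif{scasnail}, \rif{bb.8}), and conclude via the same Poincar\'e/Young step split over $p\ge 2$ and $1<p<2$, correctly arriving at $\ti\sigma=\min\{\sigma,\alpha_b,p\alpha_b/2\}$. The only differences are cosmetic (you carry the coefficient oscillation as a single absolute-value term before applying \rif{assc} and \rif{assft}$_5$, whereas the paper splits it into two; and you write $B_{\rr/2}^+$ where the paper uses the full ball $B_{\rr/2}$ in the nonlocal splitting).
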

\begin{proof}
We shall abbreviate, as usual, $B_{\rr}^{+}\equiv B_{\rr}^{+}(\ti{x}_{0})$. From \rif{pddb} it follows that 
\eqn{elpdb}
$$
\int_{B_{\rr/4}^{+}}\ccc(\ti{x}_{0})\partial_{z}\ti{F}(\ti{x}_{0},D\ti{h}) \cdot D\varphi\dx=0\qquad \mbox{for all} \ \ \varphi\in W^{1,p}_{0}(B_{\rr/4}^{+})
$$
and, as for \rif{enes}-\rif{bhh}
\eqn{b5}
$$
\mint_{B_{\rr/4}^{+}}(\snr{D\ti{h}}^{2}+\mu^{2})^{p/2}\dx \le \ti{\Lambda}^2\mint_{B_{\rr/4}^{+}}(\snr{D\ti{u}}^{2}+\mu^{2})^{p/2}\dx \,, \quad \nr{\ti{h}}_{L^{\infty}(B_{\rr/4}^{+})}\le \nr{\ti{u}}_{L^{\infty}(B_{\rr/4}^{+})}
$$
hold. As $\ti{h}= \ti{u}$ on $\partial B_{\rr/4}^{+}$ (in the sense of traces), we define $\ti{w}:=\ti{u}-\ti{h}\in W^{1,p}_0(B_{\rr/4}^{+})$ and extend it to the whole $\er^n$ by setting $\ti{w}\equiv 0$ in $\mathbb{R}^{n}\setminus B_{\rr/4}^{+}$. This implies $\ti{w}\in \mathbb{X}_{0}(\ti{\Omega})$, 
so that $\ti{w}$ is an admissible test function for both \eqref{elb} and \eqref{elpdb}. Indeed, note that 
$\ti{w} \in W^{1,p}_0(B_{\rr/2})\cap L^{\infty}(\er^n)$ and therefore by Lemma \ref{bmfinal} it follows that $\ti{w}\in W^{s,\gamma}(B_{\rr/2})$. As $\ti{w}\equiv 0$ outside $B_{\rr/4}$, it follows that $\ti{w}\in W^{s, \gamma}(\er^n)$ by \cite[Lemma 5.1]{guide}, and therefore $\ti{w}\in \mathbb{X}_{0}(\ti{\Omega})$. This means that $\ti{w}$ can be used as a test function both in \rif{elb} and in \rif{elpdb}. Moreover, by \rif{cacc+} and \rif{b5}, it follows that 
\eqn{stimaww}
$$
\mint_{B_{\rr/4}^{+}}(\snr{D\ti{w}}^{2}+\mu^{2})^{p/2}\dx\leq c  \mint_{B_{\rr/4}^{+}}(\snr{D\ti{u}}^{2}+\mu^{2})^{p/2}\dx \leq c\, \cacc^{+}(\rr)\,.
$$
With $\ti{\mathcal{V}}^{2}:=\snr{V_{\mu}(D\ti{u})-V_{\mu}(D\ti{h})}^{2}$, we estimate (via inequality \rif{monoin} applied to $\partial_z \tilde F$, as allowed by \rif{assft}$_4$)
\begin{eqnarray}
\nonumber \frac1c\mint_{B_{\rr/4}^{+}}\ti{\mathcal{V}}^{2}\dx&\stackleq{monoin} &\mint_{B_{\rr/4}^{+}}\ccc(\ti{x}_{0})(\partial \ti{F}(\ti{x}_{0},D\ti{u})-\partial\ti{F}(\ti{x}_{0},D\ti{h}))\cdot D\ti{w}\dx\nonumber\\
&\stackrel{\rif{elpdb}}{=}&\mint_{B_{\rr/4}^{+}}\ccc(\ti{x}_{0})\partial \ti{F}(\ti{x}_{0},D\ti{u})\cdot D\ti{w}\dx\nonumber\\
&\stackrel{\rif{elb}}{=} &\mint_{B_{\rr/4}^{+}}[\ccc(\ti{x}_{0})-\ccc(x)]\partial\ti{F}(\ti{x}_{0},D\ti{u})\cdot D\ti{w} \dx\nonumber \\
\nonumber && \ +\mint_{B_{\rr/4}^{+}}\ccc(x)( \partial \ti{F}(\ti{x}_{0},D\ti{u})-\partial \ti{F}(x,D\ti{u}))\cdot D\ti{w}\dx+\mint_{B_{\rr/4}^{+}}\ccc(x)\ti{f}\ti{w}\dx\nonumber \\
\nonumber &&\ -\snr{B_{\rr/4}^{+}}^{-1}\int_{\mathbb{R}^{n}}\int_{\mathbb{R}^{n}}|\ti{u}(x)-\ti{u}(y)|^{\gamma-2}(\ti{u}(x)-\ti{u}(y))(\ti{w}(x)-\ti{w}(y))\Kst(x,y)\dx\dy\nonumber \\
\nonumber &\stackrel{\eqref{assft}_5}{\le}&c\rr^{\alpha_b}\mint_{B_{\rr/4}^{+}}(\snr{D\ti{u}}^{2}+\mu^{2})^{(p-1)/2}\snr{D\ti{w}}\dx+c\mint_{B_{\rr/4}^{+}}\snr{\ti{f}\ti{w}}\dx\nonumber \\
\nonumber && \ +c\int_{B_{\rr/2}}\mint_{B_{\rr/2}}\frac{\snr{\ti{u}(x)-\ti{u}(y)}^{\gamma-1}\snr{\ti{w}(x)-\ti{w}(y)}}{\snr{x-y}^{n+s\gamma}}\dx\dy\nonumber \\
\nonumber && \ +c\int_{\mathbb{R}^{n}\setminus B_{\rr/2}}\mint_{B_{\rr/2}}\frac{\snr{\ti{u}(x)-\ti{u}(y)}^{\gamma-1}\snr{\ti{w}(x)}}{\snr{x-y}^{n+s\gamma}}\dx\dy\nonumber \\
&=:&\mbox{(O)}+\mbox{(I)}+\mbox{(II)}+\mbox{(III)},\label{mettila}
\end{eqnarray}
where $c\equiv c(n,p,\ti{\Lambda})$; we have also used \rif{assc}. The first two terms can be controlled via Sobolev inequality
\begin{eqnarray*}
\mbox{(O)}+\mbox{(I)}
&\stackrel{\eqref{cacc+}}{\le}&c\left[\rr^{\alpha_b}[\cacc^{+}(\rr)]^{1-1/p}+\nr{\ti{f}}_{L^{n}(B_{\rr/4}^{+})}\right]\left(\mint_{B_{\rr/4}^{+}}\snr{D\ti{w}}^{p}\dx\right)^{1/p}\nonumber \\
&\stackrel{\eqref{stimaww}}{\le}&c\rr^{\alpha_b}\cacc^{+}(\rr)+c\nr{\ti{f}}_{L^{n}(B_{\rr/4}^{+})}[\cacc^{+}(\rr)]^{1/p},
\end{eqnarray*}
with $c\equiv c(\data)$ (also recall \rif{cl.3}). The term $\mbox{(II)}$ can be estimated as the homonym term in Lemma \ref{har}, but this time using  \rif{boundprop2} and \rif{stimaww}; this yields
$$
\mbox{(II)} \stackrel{\eqref{cacc+}}{\le}c[\cacc^{+}(\rr)]^{1-1/\gamma}\left(\int_{B_{\rr/4}}\mint_{B_{\rr/4}}\frac{\snr{w(x)-w(y)}^{\gamma}}{\snr{x-y}^{n+s\gamma}}\dx\dy\right)^{1/\gamma}\stackrel{\eqref{bm111}}{\leq} c\rr^{\vartheta-s}[\cacc^{+}(\rr)]^{1-1/\gamma+\vartheta/p}
$$
where $\vartheta$ is in \eqref{t1t2} and $c\equiv c(\data)$. Now, similarly to \rif{intermedia2}, but using \rif{boundprop2} and \rif{b5}-\rif{stimaww}, we find
\eqn{bb.7}
$$
\left(\mint_{B_{\rr/4}^{+}}\snr{\ti{w}}^{\gamma}\dx\right)^{1/\gamma}  \leq   
c\nr{\ti{u}}_{L^\infty(B_{\rr/4}^{+})}^{1-\vartheta}\rr^{\vartheta}
\left(\mint_{B_{\rr/4}^{+}}\snr{D\ti{w}}^{p}\dx\right)^{\vartheta/p} \leq c\rr^{\vartheta}[\cacc^{+}(\rr)]^{\vartheta/p}\,.
$$
We then have
\begin{eqnarray*}
\mbox{(III)}&\le&c\int_{\mathbb{R}^{n}\setminus B_{\rr/2}}\mint_{B_{\rr/2}}\frac{\max\{\snr{\ti{u}(x)-(\ti{u})_{B_{\rr/2}}},\snr{\ti{u}(y)-(\ti{u})_{B_{\rr/2}}}\}^{\gamma-1}\snr{\ti{w}(x)}}{\snr{y-\ti{x}_0}^{n+s\gamma}}\dx\dy\nonumber \\
&\le&c\rr^{-s\gamma}\left(\mint_{B_{\rr/2}}\snr{\ti{u}(x)-(\ti{u})_{B_{\rr/2}}}^{\gamma}\dx\right)^{1-1/\gamma}\left(\mint_{B_{\rr/4}^{+}}\snr{\ti{w}}^{\gamma}\dx\right)^{1/\gamma}\nonumber \\
&& \quad  +c \int_{\mathbb{R}^{n}\setminus B_{\rr/2}}\frac{\snr{\ti{u}(y)-(\ti{u})_{B_{\rr/2}}}^{\gamma-1}}{\snr{y-\ti{x}_0}^{n+s\gamma}} \dy \mint_{B_{\rr/4}^{+}}\snr{\ti{w}}\dx\nonumber \\
&\stackrel{\eqref{fraso},\eqref{cacc.2}}{\le}&c\rr^{-s}\left(\int_{B_{\rr/2}}\mint_{B_{\rr/2}}\frac{\snr{\ti{u}(x)-\ti{u}(y)}^{\gamma}}{\snr{x-y}^{n+s\gamma}}\dx\dy\right)^{1-1/\gamma}\left(\mint_{B_{\rr/4}^{+}}\snr{\ti{w}}^{\gamma}\dx\right)^{1/\gamma}\nonumber \\
&& \quad +c \rr^{-s}\left(\int_{\mathbb{R}^{n}\setminus B_{\rr/2}}\frac{\snr{\ti{u}(y)-(\ti{u})_{B_{\rr/2}}}^{\gamma}}{\snr{y-\ti{x}_{0}}^{n+s\gamma}} \dy\right)^{1-1/\gamma}\left(\mint_{B_{\rr/4}^{+}}\snr{\ti{w}}^{\gamma}\dx\right)^{1/\gamma} \nonumber \\
&\stackrel{\eqref{cacc+},\eqref{bb.7}}{\le}&c\rr^{\vartheta-s}[\cacc^{+}(\rr)]^{1-1/\gamma+\vartheta/p}+c\rr^{\vartheta-s-\delta(\gamma-1)/\gamma}[\snail_{\delta}(\rr/2)]^{\gamma-1}[\cacc^{+}(\rr)]^{\vartheta/p}\nonumber \\
&\stackrel{\eqref{scasnail},\rif{averages}}{\le}&c\rr^{\vartheta-s}[\cacc^{+}(\rr)]^{1-1/\gamma+\vartheta/p}+c\rr^{\vartheta-s}\left(\rr^{-\delta}[\snail_{\delta}(\rr)]^{\gamma}\right)^{1-1/\gamma}[\cacc^{+}(\rr)]^{\vartheta/p}\nonumber \\
&& \quad +c\rr^{\vartheta-s}\left(\rr^{-s\gamma}[\av_{\gamma}(\rr)]^{\gamma}\right)^{1-1/\gamma}[\cacc^{+}(\rr)]^{\vartheta/p}\\
&\stackleq{bb.8}&c\rr^{\vartheta-s}[\cacc^{+}(\rr)]^{1-1/\gamma+\vartheta/p},
\end{eqnarray*}
with $c\equiv c(\data)$. Combining the estimates for the terms $\mbox{(O)}, \mbox{(I)}, \mbox{(II)} $ and $\mbox{(III)}$ with \rif{mettila}, we obtain
\eqn{bb.9}
$$
\mint_{B_{\rr/4}^{+}}\ti{\mathcal{V}}^{2}\dx \le c\rr^{\alpha_b}\, \cacc^{+}(\rr)+c\nr{\ti{f}}_{L^{n}(B_{\rr/4}^{+})}[\cacc^{+}(\rr)]^{1/p}+c\rr^{\vartheta-s}[\cacc^{+}(\rr)]^{1-1/\gamma+\vartheta/p},
$$
for $c\equiv c(\data)$. This is the boundary analog of \rif{cl.6}. We can then proceed as in \rif{uh1}-\rif{uh4}, but using \rif{bb.9} instead of \rif{cl.6}, and \rif{late} instead of \rif{triviala}, to obtain
$$
\mint_{B_{\rr/4}^{+}}\snr{\ti{u}-\ti{h}}^{p}\dx\le c \rr^{\alpha_b\min\{1,p/2\}+p}\cacc^{+}(\rr)+ c\rr^{\theta\sigma}[\GGp(\rr)]^{p}\,,
$$
where $\sigma$ is as in Lemma \ref{har}, and from which \rif{bb.12} follows again using \rif{late}. 
\end{proof}
\subsection{Step 6: Completion of the proof Theorem \ref{t4}}\label{iterb} We keep on using half-balls centred at a generic point $\ti{x}_0$ as described in Section \ref{listabordo}. We start with a further decay estimate satisfied by $\ti{h}$ defined in \rif{pddb}. This is
\eqn{breg}
$$
\int_{B_{t}^{+}}(\snr{D\ti{h}}^{2}+\mu^{2})^{p/2}\dx\le c\left(\frac{t}{\rr}\right)^{\texttt{b}}\int_{B_{\rr/4}^{+}}(\snr{D\ti{h}}^{2}+\mu^{2})^{p/2}\dx +ct^{n(1-p/q)}\left(\int_{B_{\rr/4}^{+}}\snr{D\ti{g}}^{q}\dx\right)^{p/q}
$$
that holds whenever $t \leq \rr/4$ and $\texttt{b}$ such that $0 \leq \texttt{b} <n $ and $c\equiv c(\data,q,\texttt{b})$. We postpone the proof of \rif{breg} to Section \ref{stimaduz} below. We begin considering positive $\texttt{b}$ such that 
\eqn{condizioneb}
$$
 \frac{n-\texttt{b}}{p} <   \frac{n}{q}  \Longrightarrow \texttt{b}> n\left(1-\frac pq\right)\,.
$$
This fixes $\texttt{b}$ as a function of $n,p,s,q$.  
For positive $t \leq \rr/8$, recalling that $\tilde h \equiv \ti{g}$ on $\Gamma_{t}(\ti{x}_0)$, we have
\begin{eqnarray*}
\left(\mint_{B_{t}^{+}}\snr{\ti{u}-\ti{g}}^p\dx \right)^{1/p}&\leq& 
\left(\mint_{B_{t}^{+}}\snr{\ti{u}-\ti{h}}^p\dx \right)^{1/p}+
\left(\mint_{B_{t}^{+}}\snr{\ti{h}-\ti{g}}^p\dx \right)^{1/p} \notag
\\
&\stackrel{\eqref{bb.12}}{\le}&c\rr^{\theta\ti{\sigma}/p}\left(\frac{\rr}{t}\right)^{n/p}\GGp(\rr)\nonumber  +ct\left(\mint_{B_{t}^{+}}(\snr{D\ti{h}}^p+\snr{D\ti{g}}^p)\dx\right)^{1/p}\nonumber \\
&\stackrel{\eqref{b5},\eqref{breg}}{\le}&c\rr^{\theta\ti{\sigma}/p}\left(\frac{\rr}{t}\right)^{n/p}\GGp(\rr)\nonumber \\
&& \ +ct\left(\frac{t}{\rr}\right)^{\texttt{b}/p-n/p}\left(\mint_{B_{\rr/4}^{+}}(\snr{D\ti{u}}^{2}+\mu^{2})^{p/2}\dx\right)^{1/p}\nonumber \\
&& \ +c\left(\frac{t}{\rr}\right)^{1-n/q}\left(\rr^{q}\mint_{B_{\rr/4}^{+}}\snr{D\ti{g}}^{q}\dx\right)^{1/q}\nonumber \\
&\stackrel{\eqref{cacc+}}{\le}&c\rr^{\theta\ti{\sigma}/p}\left(\frac{\rr}{t}\right)^{n/p}\GGp(\rr)\nonumber \\
&& \ +c\left(\frac{t}{\rr}\right)^{1+\texttt{b}/p-n/p}[\rr^{p}\cacc^{+}(\rr)]^{1/p}+c\left(\frac{t}{\rr}\right)^{1-n/q}\rhsp(\rr)\,.
\end{eqnarray*}
By using \rif{late} and recalling \rif{condizioneb}, we conclude with 
\eqn{bb.13}
$$
\left(\mint_{B_{t}^{+}}\snr{\ti{u}-\ti{g}}^p\dx \right)^{1/p}\leq c\left[\left(\frac{t}{\rr}\right)^{1-n/q}+\rr^{\theta\ti{\sigma}/p}\left(\frac{\rr}{t}\right)^{n/p}\right]\GGp(\rr)\,
$$
with $c\equiv c(\data)$. Next observe that, using \rif{holtri} and recalling the definitions in Section \rif{listabordo}, we find 
\begin{flalign}
\notag \left(\int_{B_{t}}\mint_{B_{t}}\frac{\snr{\ti{g}(x)-\ti{g}(y)}^{\gamma}}{\snr{x-y}^{n+s\gamma}}\dx\dy\right)^{1/\gamma}& \leq 
c\left(t^{\chi(a-s)}\int_{B_{t}}\mint_{B_{t}}\frac{\snr{\ti{g}(x)-\ti{g}(y)}^{\chi}}{\snr{x-y}^{n+a\chi}}\dx\dy\right)^{1/\chi}\\
\notag & \leq c \left(\frac{t}{\rr}\right)^{a-s-n/\chi} 
\left(\rr^{\chi(a-s)}\int_{B_{\rr}}\mint_{B_{\rr}}\frac{\snr{\ti{g}(x)-\ti{g}(y)}^{\chi}}{\snr{x-y}^{n+a\chi}}\dx\dy\right)^{1/\chi}\\
& \leq \frac{c}{t^{s}} \left(\frac{t}{\rr}\right)^{a-n/\chi} [\rhsp(\rr)]^{\vartheta}
\leq \frac{c}{t^{s}} \left(\frac{t}{\rr}\right)^{a-n/\chi} [\GGp(\rr)]^{\vartheta}\label{bopi}\,,
\end{flalign}
where $\vartheta$ is defined in \eqref{t1t2}. 
Using also \rif{basicav} we have
\begin{eqnarray}
\av_{\gamma}(t)&\le&c\left(\mint_{B_{t}^{+}}\snr{\ti{u}-\ti{
g}}^\gamma\dx \right)^{1/\gamma}+c\left(\mint_{B_{t}^{+}}\snr{\ti{g}-(\ti{
g})_{B_t}}^\gamma\dx \right)^{1/\gamma}\nonumber \\
&\stackrel{\eqref{fraso}, \eqref{intermedia3}}{\le}&
c\left(\mint_{B_{t}^{+}}\snr{\ti{u}-\ti{g}}^p\dx \right)^{\vartheta/p}+ct^{s}\left(\int_{B_{t}}\mint_{B_{t}}\frac{\snr{\ti{g}(x)-\ti{g}(y)}^{\gamma}}{\snr{x-y}^{n+s\gamma}}\dx\dy\right)^{1/\gamma}\,.
\label{disum}
\end{eqnarray}
This with \eqref{bb.13} and \eqref{bopi} gives
\eqn{bb.14}
$$
 \av_{\gamma}(t)  \leq c\left[\left(\frac{t}{\rr}\right)^{1-n/q}+\rr^{\theta\ti{\sigma}/p}\left(\frac{\rr}{t}\right)^{n/p}\right]^{\vartheta}[\GGp(\rr)]^{\vartheta}+c\left(\frac{t}{\rr}\right)^{a-n/\chi}[\GGp(\rr)]^{\vartheta}\,
$$
with $c\equiv c(\data)$. Estimates \rif{bopi}-\rif{disum} continue to work when $\rr/8 < t \leq \rr$, so that 
\eqn{bb.17}
$$
\av_{\gamma}(\rr) \leq c\left(\mint_{B_{\rr}^{+}}\snr{\ti{u}-\ti{g}}^p\dx \right)^{\vartheta/p}+c[\rhsp(\rr)]^{\vartheta}\leq c [\GGp(\rr)]^{\vartheta}
$$
holds and we can conclude that \rif{bb.14} takes place in the full range $0 < t \leq \rr$.  
Taking $t=\tau\rr$ in \rif{bb.13}, with $0< \tau \leq 1/8$, yields
\eqn{ecce1}
$$
\left(\mint_{B_{\tau\rr}^{+}}\snr{\ti{u}-\ti{g}}^{p}\dx\right)^{1/p} \leq c\left(\tau^{1-n/q}+\rr^{\theta\ti{\sigma}/p}\tau^{-n/p}\right)\GGp(\rr),
$$
for $c\equiv c(\data)$. 
As for the \texttt{snail}, we have
\begin{eqnarray}
\notag [\snail_{\delta}(\tau\rr)]^{\gamma}&\stackrel{\eqref{scasnail}}{\le} & c\tau^{\delta}[\snail_{\delta}(\rr)]^{\gamma}+c(\tau\rr)^{\delta}\left(\int_{\tau\rr}^{\rr}\frac{\av_{\gamma}(\nu)}{\nu^s} \, \frac{\dtau}{\nu}\right)^{\gamma}\nonumber+c\tau^{\delta}\rr^{\delta-s\gamma}[\av_{\gamma}(\rr)]^{\gamma}\\ &=: & S_5+S_6+S_7\,.
\label{bb.16}
\end{eqnarray}
We have $S_5\leq c \tau^{\delta}[\GGp(\rr)]^{p}$ by \rif{ilglobale}. For $S_6$ we use \rif{bb.14} to estimate $\av_{\gamma}(\nu)$ inside the integral, and in turn estimate separately the resulting three pieces $S_{6.1}, S_{6.2}$ and $S_{6.3}$ generated by the terms appearing in the right-hand side of \rif{bb.14}. 
To estimate $S_{6.1}$ we first consider the case $s \leq 1-n/q$; we have
\begin{flalign*}
S_{6.1} & \leq c \tau^{\delta} \rr^{\delta- \vartheta\gamma(1-n/q)}
\left(\int_{\tau\rr}^{\rr} \frac{\dtau }{\nu^{1+s-\vartheta(1-n/q)}}
\right)^{\gamma}[\GGp(\rr)]^{\vartheta\gamma}\\
& \leq c\mathds{A}_{\gamma}\tau^{\delta-s\gamma n/q}\rr^{\delta-s\gamma} [\GGp(\rr)]^{s\gamma}+ c(\mathds{B}_{\gamma}+\mathds{C}_{\gamma})\tau^{\delta} \log^{\gamma} \left(\frac{1}{\tau}\right)\rr^{\delta-s\gamma} [\GGp(\rr)]^{\gamma}\\
& \leq c \tau^{\delta-s\gamma n/q}[\GGp(\rr)]^{p}+  c(\mathds{A}_{\gamma}+\mathds{B}_{\gamma}) \tau^{\delta-s\gamma n/q}\rr^{\frac{p(\delta - s\gamma)}{p-\vartheta \gamma}} \\ &\leq c \tau^{\delta-s\gamma n/q}[\GGp(\rr)]^{p}\leq c \tau^{\delta- np/q}[\GGp(\rr)]^{p} \,.
\end{flalign*}
The other case is when $s > 1-n/q$, and we have, similarly
\begin{flalign*}
S_{6.1} & \leq c \tau^{\delta} \rr^{\delta- \vartheta\gamma(1-n/q)}
\left(\int_{\tau\rr}^{\infty} \frac{\dtau }{\nu^{1+s-\vartheta(1-n/q)}}
\right)^{\gamma}[\GGp(\rr)]^{\vartheta\gamma}\\
& \leq c\mathds{A}_{\gamma}\tau^{\delta-s\gamma n/q}\rr^{\delta-s\gamma} [\GGp(\rr)]^{s\gamma}+ c(\mathds{B}_{\gamma}+\mathds{C}_{\gamma})\tau^{\delta-s\gamma+\gamma(1-n/q)}\rr^{\delta-s\gamma} [\GGp(\rr)]^{\gamma}\\
& \leq c\mathds{A}_{\gamma}\tau^{\delta-s\gamma n/q}\rr^{\delta-s\gamma} [\GGp(\rr)]^{s\gamma}+ c(\mathds{B}_{\gamma}+\mathds{C}_{\gamma})\tau^{\delta-s\gamma n/q}\rr^{\delta-s\gamma} [\GGp(\rr)]^{\gamma}\\ &\leq c \tau^{\delta-s\gamma n/q}[\GGp(\rr)]^{p}\\ &\leq c \tau^{\delta- np/q}[\GGp(\rr)]^{p}\,.
\end{flalign*}
Note that here we have used $\delta-s\gamma n/q < \delta-s\gamma+\gamma(1-n/q)$, implied by $s<1$. Moreover, 
\begin{flalign*}
S_{6.2}  & \leq c \tau^{\delta} \rr^{\delta +\vartheta \gamma(\theta\ti{\sigma}+n)/p}\left(
\int_{\tau\rr}^{\rr} \frac{\dtau}{\nu^{1+s+\vartheta n/p}}
\right)^{\gamma}[\GGp(\rr)]^{\vartheta\gamma}
\\& 
\leq c \tau^{\delta-s\gamma-n\vartheta\gamma/p}\rr^{\theta \ti{\sigma} \vartheta\gamma/p}\rr^{\delta -s\gamma}  [\GGp(\rr)]^{\vartheta\gamma}\\ 
& \leq    \rr^{ \theta \ti{\sigma} \vartheta\gamma/p}\tau^{-n\vartheta \gamma/p}
[\GG(\rr)]^{p}+ c(\mathds{A}_{\gamma}+\mathds{B}_{\gamma}) \rr^{ \theta \ti{\sigma} \vartheta\gamma/p}\tau^{-n\vartheta \gamma/p} \rr^{\frac{p(\delta - s\gamma)}{p-\vartheta \gamma}}\\
& \leq c \rr^{ \theta \ti{\sigma} \vartheta\gamma/p}\tau^{-n\vartheta \gamma/p}[\GG(\rr)]^{p}\\ & 
\leq c \rr^{ \theta \ti{\sigma} \vartheta\gamma/p}\tau^{-n}[\GG(\rr)]^{p}\,.
\end{flalign*}
For $S_{6.3} $ we first consider the case $a -\chi/n \geq s$, and we have
\begin{flalign*}
S_{6.3} &\leq c \tau^{\delta} \rr^{\delta-\gamma(a-n/\chi)}
\left(\int_{\tau\rr}^{\rr} \frac{\dtau }{\nu^{1+s-a+n/\chi}}
\right)^{\gamma}[\GGp(\rr)]^{\vartheta\gamma}\\
& \leq c\tau^{\delta}\log^{\gamma} \left(\frac 1\tau\right)\rr^{\delta-s\gamma} [\GGp(\rr)]^{\vartheta\gamma}\\
& \leq c\tau^{\delta} \log^{\gamma} \left(\frac{1}{\tau}\right)[\GGp(\rr)]^{p}+  c(\mathds{A}_{\gamma}+\mathds{B}_{\gamma}) \tau^{\delta} \log^{\gamma} \left(\frac{1}{\tau}\right)\rr^{\frac{p(\delta - s\gamma)}{p-\vartheta \gamma}} \\ &\leq c \tau^{\delta} \log^{\gamma} \left(\frac{1}{\tau}\right)[\GGp(\rr)]^{p}\\ &\leq c \tau^{\delta(a-n/\chi)}[\GGp(\rr)]^{p}
\,.
\end{flalign*} When $a -\chi/n < s$, using also that $\delta(a-n/\chi)<\delta-s\gamma+\gamma(a-n/\chi)$, we instead have
\begin{flalign*}
S_{6.3} &\leq c \tau^{\delta} \rr^{\delta-\gamma(a-n/\chi)}
\left(\int_{\tau\rr}^{\infty} \frac{\dtau }{\nu^{1+s-a+n/\chi}}
\right)^{\gamma}[\GGp(\rr)]^{\vartheta\gamma}\\
& \leq c\tau^{\delta-s\gamma+\gamma(a-n/\chi)}\rr^{\delta-s\gamma} [\GGp(\rr)]^{\vartheta\gamma}\\
& \leq c\tau^{\delta(a-n/\chi)}\rr^{\delta-s\gamma} [\GGp(\rr)]^{\vartheta\gamma}\\
& \leq c\tau^{\delta(a-n/\chi)}[\GGp(\rr)]^{p}+  c(\mathds{A}_{\gamma}+\mathds{B}_{\gamma}) \tau^{\delta(a-n/\chi)}\rr^{\frac{p(\delta - s\gamma)}{p-\vartheta \gamma}}\\ & \leq c \tau^{\delta(a-n/\chi)}[\GGp(\rr)]^{p}
\,.
\end{flalign*}
The last term is dealt with as
$$
S_7 \stackleq{bb.17}  c\tau^{\delta}\rr^{\delta-s\gamma}[\GGp(\rr)]^{\vartheta\gamma} \leq c \tau^{\delta} [\GGp(\rr)]^{p}
 + c\tau^{\delta}(\mathds{A}_{\gamma}+\mathds{B}_{\gamma})  \rr^{\frac{p(\delta - s\gamma)}{p-\vartheta \gamma}}\leq c \tau^{\delta} [\GGp(\rr)]^{p}\,.
$$
Connecting the estimates found for $S_5, S_6$ and $S_7$ to \rif{bb.16} we obtain \eqn{ecce2}
$$
[\snail_{\delta}(\tau\rr)]^{\gamma/p}
 \le c \left(\tau^{\delta/p- n/q}+\tau^{\delta(a-n/\chi)/p}+\rr^{ \theta \ti{\sigma} \vartheta\gamma/p^2}\tau^{-n/p}\right) \GGp(\rr)\,.
$$
On the other hand, by the very definition in \rif{rightb}, we trivially have
\eqn{ecce3}
$$
\rhsp(\tau\rr)\le\left(\tau^{1-\theta/p}+\tau^{1-n/q}+\tau^{a-n/\chi}\right)\GGp(\rr)\,.
$$
Connecting \rif{ecce1}, \rif{ecce2} and \rif{ecce3}, and yet keeping \rif{condizioneb} in mind, we arrive at
\eqn{ecce4}
$$
\GGp(\tau\rr)
 \le c_1 \left(\tau^{1-\theta/p}+\tau^{\delta/p- n/q}+\tau^{\delta(a-n/\chi)/p}+\rr^{ \theta \ti{\sigma} \vartheta\gamma/p^2}\tau^{-n/p}\right) \GGp(\rr)\,,
$$
where $c_1\equiv c_1 (\data)$. 
With $
\kappa>0$ being defined in \rif{g3}$_3$, we select a positive 
$\alpha < \kappa$ and then set $\alpha_1:= (\alpha +\kappa)/2$, so that $\alpha < \alpha_1 < \kappa$. We can find  $\delta\equiv \delta (n,p,q,a,\chi, \alpha)$ (close enough to $p$) and $\theta\equiv \theta (n,p,q,a,\chi, \alpha)$ (close enough to zero), such that  
$
\min
\{ 
1-\theta/p,\delta/p- n/q,\delta(a-n/\chi)/p\} >  \alpha_1.
$
Then we take $\tau \equiv \tau (\data, \alpha)$ small enough to have 
$$
c_1 \left(\tau^{1-\theta/p-\alpha_1}+\tau^{\delta/p- n/q-\alpha_1}+\tau^{\delta(a-n/\chi)/p-\alpha_1}\right) \leq \frac 12 \qquad \mbox{and}\qquad \tau^{(\kappa-\alpha)/2}\leq \frac 12\,.
$$
With $\tau$ being determined, we now select a positive radius $r_{**}\equiv r_{**}(\data, \alpha)\leq r_0/4$ such that $\rr \leq r_{**}$ implies  
$
c_1\rr^{ \theta \ti{\sigma} \vartheta\gamma/p^2}\tau^{-n/p-\alpha_1} \leq 1/2.
$
Using this last inequality, and the one in the last display, in \rif{ecce4}, implies $
\GGp(\tau\rr) \leq \tau^{\alpha_1} 
\GGp(\rr),$
which is the boundary analog of \rif{analoga}. This leads to consider the maximal operators
$$
 \texttt{M}^{+}(\tilde{x}_0,\rr):= \sup_{\nu \leq \rr} \nu^{-\alpha} \GGp(u, B_\nu(\ti{x}_0))\,,\qquad 
 \texttt{M}^{+}_{\eps}(\tilde{x}_0,\rr):= \sup_{\eps \rr  \leq \nu \leq \rr} \nu^{-\alpha} \GGp(u, B_\nu(\ti{x}_0))
$$
for $\eps < \tau$. 
Proceeding as after \rif{analoganc}, and taking into account \rif{snail-l} and \rif{boundprop2}, we arrive a
$
  \texttt{M}\,(x_0,r)  \leq c (\data)$. From this and the fact that the chosen point $\ti{x}_0$ is arbitrary, we conclude with 
\eqn{arriveatat}
 $$
 \sup_{\tilde{x}_0\in \Gamma_{r_{0}/2}}\,  \sup_{\rr \leq r_{**}}\,\mint_{B_{\rr}^{+}(x_0)} |\ti{u}-\ti{g}|^p\, dx \leq c \rr^{\alpha p}\,.
$$
Here recall that $r_{**}\equiv r_{**}(\data, \alpha)$. Using Sobolev-Morrey embedding theorem, we find 
$$
\mint_{B_{\rr}^+(\tilde x_0)} |\ti{g}-(\ti{g})_{B_{\rr}(\tilde x_0)}|^p\, dx \leq
\left( \osc_{B_{\rr}^+(x_{0})}\, \ti{g}\right)^p \leq c \rr^{(1-n/q)p} \|D\ti{g}\|_{L^q(B_{\rr}^+)} \leq c \rr^{\kappa p}\leq c \rr^{\alpha p}\,,
$$
where $c\equiv c(\data)$. 
Combining the two inequalities above, and yet using \rif{averages}, we finally get that
\eqn{primadis}
$$  \sup_{\tilde{x}_0\in \Gamma_{r_{0}/2}}\,  \sup_{\rr \leq r_{**}}\,\mint_{B_{\rr}^{+}(\tilde x_0)} |\ti{u}-(\ti{u})_{B_{\rr}^+(\tilde x_0)}|^p\, dx\leq c \rr^{\alpha p} 
$$
holds whenever $\rr \leq r_{**}$, where $c \equiv c (\data, \alpha)$. On the other hand, by Proposition \ref{outcome}  there exists $c \equiv c (\data)\geq 1$ and another positive radius $r_* \equiv r_*(\data, \alpha)\leq r_0/4$, such that 
$$ \mint_{B_{\rr}(y)} |\ti{u}-(\ti{u})_{B_{\rr}(y)}|^p\, dx\leq c \rr^{\alpha p} 
$$
holds whenever $\rr \leq r_*$ and $B_{\rr}(y) \Subset B_{r_{0}}^+(x_0)$. Combining the information in the last two displays in a standard way yields that now \rif{primadis} holds not only when $\ti{x}_0$ belongs to $ \Gamma_{r_{0}/2}$ as in \rif{arriveatat}, but whenever $\ti{x}_0\in B_{r_{0}/2}^{+}(x_0)$ and $\rr \leq \min\{r_*, r_{**}\}/8\leq r_0/4$. This implies the validity of Proposition \ref{campi} via Campanato-Meyers integral characterization of H\"older continuity. 


\subsection{Step 7: Estimate \rif{breg}}\label{stimaduz} Estimates like \rif{breg} can be found in various places in the literature under additional structure conditions and assumptions. We did not find and explicit reference for it and therefore we offer a rapid derivation here for the sake of completeness. We denote $F_0(z):=\ccc(\ti{x}_0)\ti{F}(\ti{x}_{0},z)$, using the same notation of Section \ref{clb}. Note that $\ti{w}=\ti{h}-\ti{g}$ solves 
\eqn{unabordo}
$$
\begin{cases}
\, -\diver\, \partial_zF_0(D\ti{g}+D\ti{w})=0 & \mbox{in $B_{\rr/4}^{+}$} \\
\, \ti{w} \equiv 0 & \mbox{on $\Gamma_{\rr/4}$\,.}
\end{cases}
$$ 
We denote by $\tilde v\in  \ti{w}+W^{1,p}_0(B_{\rr/4}^{+})$ as the solution to 
\eqn{duabordo}
$$
\begin{cases}
\, -\diver\, \partial_zF_0(D\ti{v})=0 & \mbox{in $B_{\rr/4}^{+}$} \\
\, \ti{v} \equiv \ti{w} & \mbox{on $\partial B_{\rr/4}^{+}$\,.}
\end{cases}
$$ 
By \cite[Theorem 2.2]{CM} we obtain that
\eqn{lasty}
$$
\|D\ti{v}\|_{L^{\infty}(B_{\rr/8}^{+})}^p \leq c \mint_{B_{\rr/4}^{+}} (|D\ti{v}|^2+\mu^2)^{p/2} \dx  \leq 
c \mint_{B_{\rr/4}^{+}} (|D\ti{w}|^2+\mu^2)^{p/2} \dx 
$$ 
with $c\equiv c (n,p,\ti{\Lambda})$ (note that \cite[Theorem 2.2]{CM} is stated for the degenerate case $\mu=0$, but the proof applies verbatim in the non-degenerate case $\mu >0$, which is actually simpler). The former inequality in \rif{lasty} follows from a delicate barrier argument, and the latter is a consequence of minimality of $\ti{v}$ (it solves an Euler-Lagrange equation). In turn, also using the minimality of $\ti{h}$ in \rif{pddb}, we find 
$$
 \int_{B_{\rr/4}^{+}} (|D\ti{w}|^2+\mu^2)^{p/2} \dx  \leq 
c \int_{B_{\rr/4}^{+}} (|D\ti{h}|^2+|D\ti{g}|^2+\mu^2)^{p/2} \dx  \leq 
c \int_{B_{\rr/4}^{+}} (|D\ti{u}|^2+|D\ti{g}|^2+\mu^2)^{p/2} \dx
$$ 
with $c\equiv c (n,p,\ti{\Lambda})$. 
On the other hand, this time being $\mathcal V^2:=|V_{\mu}(D\ti{v})-V_{\mu}(D\ti{w})|^2$, we have
\begin{eqnarray*}
\int_{B_{\rr/4}^{+}} \mathcal V^2\dx & \stackleq{monoin} & c \int_{B_{\rr/4}^{+}}  (\partial_zF_0(D\ti{v})-\partial_zF_0(D\ti{w}))\cdot 
(D\ti{v}-D\ti{w})\dx\\
& \stackrel{\eqref{unabordo}}{=}  &c \int_{B_{\rr/4}^{+}}  (\partial_zF_0(D\ti{g}+D\ti{w})-\partial_zF_0(D\ti{w}))\cdot 
(D\ti{v}-D\ti{w})\dx\\
& \stackrel{\eqref{algebra},\eqref{assft}_3}{\leq} &c \int_{B_{\rr/4}^{+}}  (|D\ti{g}|^2+|D\ti{w}|^2+\mu^2)^{(p-2)/2}|D\ti{g}||D\ti{v}-D\ti{w}|\dx\,.
\end{eqnarray*}
In the case $p\geq 2$, \rif{Vm} implies $|D\ti{v}-D\ti{w}|^p \leq c \mathcal V^2$ and, by repeated use of Young's inequality, and reabsorbing terms, we find 
\eqn{aagain}
$$
\int_{B_{\rr/4}^{+}} |D\ti{v}-D\ti{w}|^p\dx  \leq  \eps  \int_{B_{\rr/4}^{+}} (|D\ti{w}|^2+\mu^2)^{p/2} \dx + c_{\eps}\int_{B_{\rr/4}^{+}} |D\ti{g}|^{p} \dx
$$
for every $\eps \in (0,1)$, where $c_\eps$ depends on $n,p,\ti{\Lambda}, \eps$. 
In the case $1<p<2$, as in \rif{uh2}, we instead find 
\begin{flalign*}
\int_{B_{\rr/4}^{+}}|D\ti{v}-D\ti{w}|^p\dx &\leq c\left(\int_{B_{\rr/4}^{+}}\mathcal{V}^{2}\dx\right)^{p/2}\left(\int_{B_{\rr/4}^{+}}(\snr{D\ti{v}}^{p}+\snr{D\ti{w}}^{p})\dx\right)^{1-p/2}\\
&\leq c\left(\int_{B_{\rr/4}^{+}}  |D\ti{g}|^{p-1}|D\ti{v}-D\ti{w}|\dx\right)^{p/2}\left(\int_{B_{\rr/4}^{+}}\snr{D\ti{w}}^{p}\dx\right)^{1-p/2}\\
&\leq c\left(\int_{B_{\rr/4}^{+}}  |D\ti{v}-D\ti{w}|^p\dx\right)^{1/2}
\left(\int_{B_{\rr/4}^{+}}  |D\ti{g}|^{p}\dx\right)^{\frac{p-1}{2}}
\left(\int_{B_{\rr/4}^{+}}\snr{D\ti{w}}^{p}\dx\right)^{\frac{2-p}{2}}\,,
\end{flalign*}
from which \rif{aagain} follows again via Young's inequality with conjugate exponents $(1/(p-1), 1/(2-p))$. 
Combining \rif{lasty} with \rif{aagain} in a standard way, we arrive at 
\begin{align*}
&\int_{B_{t}^{+}}(|D\ti{w}|^2+\mu^2)^{p/2}\dx \\ & \quad \leq c \left[\left(\frac{t}{\rr}\right)^n+\eps\right]
\int_{B_{\rr/4}^{+}}(|D\ti{w}|^2+\mu^2)^{p/2}\dx+ c\left(\int_{B_{\rr/4}^{+}}|D\ti{g}|^{q}\dx\right)^{p/q}
 \rr^{n(1-p/q)}\,,
 \end{align*}
for all $t \leq \rr/4$, where $c\equiv c (n, p, \ti{\Lambda})$. By recalling the definition of $\ti{w}$, the above inequality holds with $\ti{w}$ replaced by $\ti{h}$, so that \rif{breg} follows applying Lemma \ref{l5bis} with the choice $h(t):= \|(D\ti{w}|^2+\mu^2)^{p/2}\|_{L^1(B_{t})}.$

 \section{Proof of Theorem \ref{t4}} \label{ghol}
In the following we select arbitrary open subsets 
$
\Omega_0 \Subset \Omega_1 \Subset \Omega
$, 
and denote $\texttt{d} :=\min\{ \dist(\Omega_0, \Omega_1),\linebreak \dist(\Omega_1, \Omega), 1\}$. We take $B_{\rr}\equiv B_{\rr}(x_{0})\Subset \Omega_1$ with $x_0\in \Omega_0$ and $0<\rr\leq \texttt{d}/4$ and all the balls used in the following will be centred at $x_0$. Moreover, $\beta, \lambda$ will be numbers verifying
$s<\beta<1$ and $\lambda >0$; their precise value will depend on the context they are going to be employed in. We shall often use Theorem \ref{t2} in the form $\|u\|_{C^{0, \beta}(\Omega_1)}\leq c \equiv c (\datah,\beta, \ttdd)$, for every $\beta <1$. 
\begin{lemma}
Under the assumptions on Theorem \ref{t4}
\begin{itemize}
\item If $ s<\beta<1$, then
\eqn{launa}
$$ 
\int_{B_{\rr/2}}\mint_{B_{\rr/2}}\frac{\snr{u(x)-u(y)}^{\gamma}}{\snr{x-y}^{n+s\gamma}}\dx\dy \leq c \rr^{(\beta-s)\gamma} 
$$
holds with $c \equiv c (\datah, \ttdd,\beta)$.
\item The inequality
\eqn{snailetto}
$$t^{-\delta}[\snail_{\delta}(t)]^{\gamma}\equiv t^{-\delta}[\snail_{\delta}(u, B_t(x_0))]^{\gamma} \leq c$$
holds whenever $0<t \leq \rr$, where $c \equiv c (\datah,\ttdd)$. 
\item If $\lambda>0$, then 
\eqn{g.2}
$$
\mint_{B_{\rr/2}}(\snr{Du}^{2}+\mu^{2})^{p/2}\dx
\le c\rr^{-p \lambda}
$$
holds with
$c\equiv c(\datah,
\ttdd,\lambda)$.
\end{itemize}
\end{lemma}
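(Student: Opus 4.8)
The plan is to derive all three bounds from Theorem~\ref{t2}, which (since $B_{\rr}(x_0)\Subset\Omega_1\Subset\Omega$) gives $[u]_{0,\beta;\Omega_1}\le c(\datah,\ttdd,\beta)$ for every $\beta<1$, combined with the Caccioppoli inequality of Lemma~\ref{cacclem}, the decay inequality \eqref{scasnail}, the identity \eqref{misura}, and a handful of elementary radial integrals. I fix once and for all an exponent $\beta\in(s,1)$; the strict inequality $\beta>s$ is precisely what guarantees convergence of the integrals below, and for \eqref{g.2} I will at the end choose $\beta$ close to $1$ depending on $\lambda$.

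For \eqref{launa} I would simply insert $\snr{u(x)-u(y)}\le [u]_{0,\beta;\Omega_1}\snr{x-y}^{\beta}$ into the Gagliardo energy over $B_{\rr/2}$, reducing the estimate to $\int_{B_{\rr/2}}\mint_{B_{\rr/2}}\snr{x-y}^{-n-s\gamma+\beta\gamma}\dx\dy$; bounding the inner integral by $\int_{B_{\rr}(x)}\snr{x-y}^{-n+(\beta-s)\gamma}\dy\le c\rr^{(\beta-s)\gamma}$ (finite as $(\beta-s)\gamma>0$) and then integrating in $x$ gives \eqref{launa} with $c\equiv c(\datah,\ttdd,\beta)$.

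For \eqref{snailetto} the first step is to feed \eqref{scasnail} (with $w=u$) the pointwise bounds $\av_{\gamma}(\nu)\le c\,\osc_{B_{\nu}}u\le c\nu^{\beta}$ and to compute $\int_{t}^{\rr}(t/\nu)^{s}\nu^{\beta}\,\frac{\dtau}{\nu}\le c\,t^{s}\rr^{\beta-s}$; raising the resulting inequality to the power $\gamma$ yields
\[
t^{-\delta}[\snail_{\delta}(t)]^{\gamma}\le c\,\rr^{-\delta}[\snail_{\delta}(\rr)]^{\gamma}+c\,\rr^{(\beta-s)\gamma}\qquad\text{whenever } 0<t\le \rr .
\]
Since $\rr\le1$, it then suffices to bound $\rr^{-\delta}[\snail_{\delta}(\rr)]^{\gamma}=\int_{\er^{n}\setminus B_{\rr}(x_0)}\snr{u(y)-(u)_{B_{\rr}(x_0)}}^{\gamma}\,\d\lambda_{x_0}(y)$ by a constant depending only on $\datah,\ttdd$. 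For this I would split $\er^{n}\setminus B_{\rr}(x_0)$ into the annulus $B_{\ttdd/2}(x_0)\setminus B_{\rr}(x_0)$, which sits inside $\Omega_1$, and the exterior $\er^{n}\setminus B_{\ttdd/2}(x_0)$. On the annulus one uses the \emph{pointwise} interior H\"older bound $\snr{u(y)-(u)_{B_{\rr}(x_0)}}\le c\snr{y-x_0}^{\beta}$ (valid because $\snr{y-x_0}\ge\rr$), so the integrand is $\le c\snr{y-x_0}^{-n+(\beta-s)\gamma}$, integrable over $\Omega_1$; on the exterior one uses $\snr{y-x_0}\ge\ttdd/2$ together with $u\in L^{\gamma}(\er^{n})$ and $\snr{(u)_{B_{\rr}(x_0)}}\le\|u\|_{L^{\infty}(\Omega_1)}$, both controlled by $\datah$ and $\ttdd$ (via $u\in W^{s,\gamma}(\er^{n})$, the energy bound used in the proof of Proposition~\ref{boundprop}, interior boundedness from Theorem~\ref{t2}, and Proposition~\ref{boundprop} itself when $\gamma>p$). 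I expect this tail bound to be the only genuinely delicate point: the naive estimate $\snr{u(y)-(u)_{B_{\rr}}}\le\osc_{B_{\ttdd/2}}u$ is useless, because $\int_{\er^{n}\setminus B_{\rr}}\d\lambda_{x_0}=c\rr^{-s\gamma}\to\infty$ by \eqref{misura}, so one really needs the decay factor $\snr{y-x_0}^{\beta}$ near $\partial B_{\rr}(x_0)$ to absorb this singularity.

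Finally, for \eqref{g.2} I would apply Lemma~\ref{cacclem} to get $\mint_{B_{\rr/2}}(\snr{Du}^{2}+\mu^{2})^{p/2}\dx\le c\,\caccs(u,B_{\rr}(x_0))$ and then estimate the five terms in the definition \eqref{caccica} of $\caccs$: $\rr^{-p}[\av_{p}(\rr)]^{p}\le c(\osc_{B_{\rr}}u)^{p}\rr^{-p}\le c\rr^{-(1-\beta)p}$; $\rr^{-s\gamma}[\av_{\gamma}(\rr)]^{\gamma}\le c\rr^{(\beta-s)\gamma}\le c$; $\rr^{-\delta}[\snail_{\delta}(\rr)]^{\gamma}\le c$ by \eqref{snailetto} (with $t=\rr$); $\|f\|_{L^{n}(B_{\rr})}^{p/(p-1)}\le c$ by H\"older's inequality since $f\in L^{d}_{\loc}(\Omega)$ with $d>n$; and the last term is $1$. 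Hence $\caccs(u,B_{\rr}(x_0))\le c\rr^{-(1-\beta)p}$, and given $\lambda>0$ it only remains to choose $\beta\in(s,1)$ with $1-\beta\le\lambda$, so that $\rr^{-(1-\beta)p}\le\rr^{-\lambda p}$ because $\rr\le1$; this yields \eqref{g.2} with $c\equiv c(\datah,\ttdd,\lambda)$.
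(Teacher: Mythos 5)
Your proposal is correct and proceeds by essentially the same route as the paper: each of the three bounds is extracted from the interior H\"older estimate of Theorem~\ref{t2}, together with the Caccioppoli inequality of Lemma~\ref{cacclem} and an annular split for the nonlocal tail. The only (harmless) detour is in \eqref{snailetto}: you first invoke \eqref{scasnail} to reduce to bounding $\rr^{-\delta}[\snail_{\delta}(\rr)]^{\gamma}$ and then do the annular decomposition, whereas the paper runs the annular decomposition directly at the generic radius $t\leq\rr$ (expanding $(u)_{B_t}$ around $u(x_0)$ and absorbing the factor $t^{-s\gamma}$ via the pointwise H\"older decay), which makes the extra pass through \eqref{scasnail} unnecessary.
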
 
\begin{proof} Estimate in \rif{launa} follows from Theorem \ref{t2} with $\alpha \equiv \beta$. To prove \rif{snailetto} we estimate as follows:
\begin{flalign*}
t^{-\delta}[\snail_{\delta}(t)]^{\gamma}
&\le c\int_{\mathbb{R}^{n}\setminus B_{t}}\frac{\snr{u(y)-u(x_{0})}^{\gamma}}{\snr{y-x_{0}}^{n+s\gamma}} \dy+ct^{-s\gamma}\snr{u(x_{0})-(u)_{B_{t}(x_{0})}}^{\gamma}\nonumber \\
&\le c\int_{\mathbb{R}^{n}\setminus B_{\texttt{d}}}\frac{\snr{u(y)-u(x_{0})}^{\gamma}}{\snr{y-x_{0}}^{n+s\gamma}} \dy+\int_{B_{\ttdd}\setminus B_{t}}\frac{\snr{u(y)-u(x_{0})}^{\gamma}}{\snr{y-x_{0}}^{n+s\gamma}} \dy+ct^{(\beta-s)\gamma}[u]_{0,\beta;\Omega_1}^{\gamma}\nonumber \\
&\le c\ttdd^{-s\gamma}+ c\int_{B_{\ttdd}}\snr{y-x_{0}}^{-n-(s-\beta)\gamma} \dy\, [u]_{0,\beta;\Omega_1}^{\gamma}+ct^{(\beta-s)\gamma}[u]_{0,\beta;\Omega_1}^{\gamma}\\
&\le c\ttdd^{-s\gamma} + c \ttdd^{(\beta-s)\gamma}+ c \leq c   \,,
\end{flalign*}
where $c\equiv c(\datah,\beta)$, that is \rif{snailetto} if we choose $\beta := (1+s)/2$. Finally, to prove \rif{g.2} we use \eqref{caccp} and estimate the various terms stemming from $\caccs(\rr)\equiv \caccs(u,B_{\varrho}(x_0))$, whose definition is in \rif{caccica}. Again by Theorem \ref{t2}, we have 
that $
\rr^{-p}[\av_{p}(\rr)]^{p}+\rr^{-s\gamma}[\av_{\gamma}(\rr)]^{\gamma}\le c\rr^{p(\beta-1)}+c\rr^{\gamma(\beta-s)}\le c\rr^{p(\beta-1)}
$
holds with $c\equiv c(\datah,\ttdd,\beta)$. By \rif{snailetto} we on the hand have $\rr^{-\delta}[\snail_{\delta}(\rr)]^{\gamma}+\nr{f}^{p/(p-1)}_{L^n(B_{\rr})}+1\leq c\leq  c\rr^{p(\beta-1)}$. Choosing $\beta$ such that  $1-\beta\leq   \lambda$, we arrive at \rif{g.2}.  
\end{proof}
\begin{lemma}\label{harfinal}
If $h\in u+W^{1,p}_{0}(B_{\rr/4}(x_{0}))$ is as in \eqref{pdd}, then
\eqn{cl.11f}
$$
\mint_{B_{\rr/4}(x_{0})}\snr{Du-Dh}^{p}\dx \le c\rr^{\sigma_2p}
$$
holds where $\sigma_2\equiv \sigma_2(n,p,s,\gamma,d) \in (0,1)$, and 
 $c\equiv c(\datah,\ttdd,\lambda)$.
\end{lemma}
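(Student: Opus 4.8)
The plan is to combine the comparison estimate from Lemma~\ref{har} with the a priori gradient bounds obtained above, choosing the parameters $\theta$, $\delta$, $\beta$, $\lambda$ carefully so that all the negative powers of $\rr$ produced along the way are compensated by the positive power $\rr^{\theta\sigma}$ coming from \rif{cl.11}. First I would recall that, by \rif{cl.6} in the proof of Lemma~\ref{har}, one has
$$
\mint_{B_{\rr/4}}\mathcal V^2\dx \le c\nr{f}_{L^n(B_{\rr})}[\cacc(\rr)]^{1/p}+c\rr^{\vartheta-s}[\cacc(\rr)]^{1-1/\gamma+\vartheta/p}\,,
$$
where now $\vartheta=1$ since we are in the interior setting and $\gamma$ may be assumed $\le p$ or, if $\gamma>p$, the factor $\|u\|_{L^\infty}^{1-\vartheta}$ is harmless by Proposition~\ref{boundprop}. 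The point is to estimate $\cacc(\rr)=\cacc(u,B_\rr(x_0))$ directly: by its definition \rif{caccdef} together with \rif{launa}, \rif{snailetto} (with $\delta\in(s\gamma,p)$ as fixed in \rif{dp}) and the bound $\av_p(\rr)\le c\rr^\beta$ from Theorem~\ref{t2}, we get $\cacc(\rr)\le c\rr^{-p\lambda}$ for any prescribed $\lambda>0$, at the cost of $c\equiv c(\datah,\ttdd,\lambda)$. Here I would also use that $f\in L^d$ with $d>n$, so that $\nr{f}_{L^n(B_\rr)}\le c\rr^{n/n-n/d}\nr{f}_{L^d(B_\rr)}=c\rr^{n/d'}\|f\|_{L^d}$ gains an extra positive power of $\rr$ — this is precisely where the hypothesis $d>n$ enters.

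Next I would feed these into the proof of \rif{cl.11}: exactly as in \rif{uh1}--\rif{uh4}, using \rif{Vm} (and Hölder when $1<p<2$, together with \rif{g.2} to control $\mint(\snr{Du}^2+\mu^2)^{p/2}\dx\le c\rr^{-p\lambda}$), one arrives at
$$
\mint_{B_{\rr/4}}\snr{Du-Dh}^p\dx \le c\Bigl(\rr^{\sigma_0}+\rr^{n/d'}\Bigr)\rr^{-p\lambda'}
$$
for suitable exponents $\sigma_0>0$ (depending on $p,s,\gamma$ through the $\sigma$ of Lemma~\ref{har}) and $\lambda'>0$ that can be made as small as we like by choosing $\lambda$ small and $\beta$ close to $1$. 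Choosing $\lambda'$ so small that $\sigma_0-p\lambda'>0$ and $n/d'-p\lambda'>0$, one obtains \rif{cl.11f} with $\sigma_2:=\tfrac1p\min\{\sigma_0,n/d'\}-\lambda'>0$; since $\sigma_0$ and $n/d'$ depend only on $n,p,s,\gamma,d$, so does $\sigma_2$, while the constant $c$ absorbs the dependence on $\datah,\ttdd,\lambda$ through the excess quantities.

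The main obstacle I anticipate is purely bookkeeping: one must track the interplay of the three small parameters — $\theta$ (from the definition of $\rhs$), $\delta$ (restricted to $(s\gamma,p)$ and to be taken close to $p$), and $\lambda$ (equivalently $\beta$ close to $1$) — and verify that the exponent of $\rr$ remains strictly positive in \emph{every} term of the expansions \rif{uh1}--\rif{uh4} after the substitution $\cacc(\rr)\le c\rr^{-p\lambda}$ and after the gain $\nr{f}_{L^n(B_\rr)}\le c\rr^{n/d'}\|f\|_{L^d}$. In particular, in the case $1<p<2$ the Hölder splitting of $\snr{Du-Dh}^p$ produces a factor $[\caccs(\rr)]^{1-p/2}\le c\rr^{-p\lambda(1-p/2)}$, which must also be reabsorbed; this forces $\lambda$ to be chosen after $\sigma_0$ is known, but causes no circularity since $\sigma_0$ depends only on the fixed data $n,p,s,\gamma$. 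Once this is checked, \rif{cl.11f} follows, and the resulting $\sigma_2$ is the candidate Hölder exponent for $Du$ that will be propagated to a Campanato-type decay for $V_\mu(Du)$ in the remainder of Section~\ref{ghol}.
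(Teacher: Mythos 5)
The core of your plan is to substitute the crude bound $\cacc(\rr)\leq c\rr^{-p\lambda}$ directly into \eqref{cl.6} and argue that, for $\lambda$ small, every exponent of $\rr$ stays positive. That works when $\gamma\leq p$, where $\vartheta=1$ and the prefactor $\rr^{\vartheta-s}=\rr^{1-s}$ in the second term of \eqref{cl.6} supplies a genuine gain. It breaks down, however, when $\gamma>p$: there $\vartheta=s$ (cf.\ \eqref{t1t2}), so $\rr^{\vartheta-s}=1$, and the second term of \eqref{cl.6} becomes $c\,[\cacc(\rr)]^{1-1/\gamma+s/p}\leq c\,\rr^{-p\lambda(1-1/\gamma+s/p)}$ — a strictly negative power of $\rr$ with nothing left to absorb it. Your remark that the factor $\|u\|_{L^\infty}^{1-\vartheta}$ is harmless by Proposition \ref{boundprop} misses the actual obstruction, which is the exponent of $\rr$ and not the implicit constant; and $\vartheta$ is not equal to $1$ \emph{because} you are in the interior — it depends only on whether $\gamma\leq p$ or $\gamma>p$. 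So as written, the proposal does not establish the lemma in the regime $\gamma>p$.

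The paper avoids this by not invoking \eqref{cl.6} as a black box. It returns to the identity \eqref{daje} and re-estimates the terms (I), (II), (III) with two ingredients that were not used in the derivation of \eqref{cl.6}: by \eqref{launa}, the local fractional seminorm $\int_{B_{\rr/2}}\mint_{B_{\rr/2}}\snr{u(x)-u(y)}^{\gamma}\snr{x-y}^{-n-s\gamma}\dxy$ is $O(\rr^{(\beta-s)\gamma})$ (a positive power, not merely $O(\cacc(\rr))$); and, by the maximum principle together with the $C^{0,\beta}$ estimate from Theorem \ref{t2}, $\nr{u-h}_{L^\infty(B_{\rr/4})}\leq c\rr^{\beta}$ as in \eqref{g.6}. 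It is precisely the extra factor $\rr^{(\beta-s)(\gamma-1)}$ coming from \eqref{launa} that turns the estimate for (II) into \eqref{prendila2} and keeps the exponent positive when $\vartheta=s$, and likewise for \eqref{prendila3}. To close the case $\gamma>p$ you need to carry out this re-derivation rather than feed $\cacc(\rr)\leq c\rr^{-p\lambda}$ into \eqref{cl.6}; the rest of your outline (the gain $\nr{f}_{L^n(B_{\rr})}\leq c\rr^{1-n/d}\nr{f}_{L^d}$, the H\"older splitting for $1<p<2$, the choice of $\lambda$ after $\sigma_0$) is consistent with the paper once that is done.
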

\begin{proof} We go back to Lemma \ref{har}, estimate \rif{daje}, and, adopting the notation introduced there, we improve the estimates for the terms $\mbox{(I)}$-$\mbox{(III)}$. As in \rif{cl.3} and in Lemma \ref{cacclem}, we find
\begin{flalign}
 \snr{\mbox{(I)}}&\stackrel{\eqref{enes}}{\le}c\nr{f}_{L^{n}(B_{\rr/4})}\left(\mint_{B_{\rr/4}}(\snr{Du}^{2}+\mu^2)^{p/2}\dx\right)^{1/p}\stackrel{\eqref{g.2}}{\le}c\nr{f}_{L^{d}(B_{\rr/4})}\rr^{1-n/d-\lambda}\,,\label{prendila}
\end{flalign}
for every $\lambda>0$, where $c\equiv c(\datah,\ttdd,\lambda)$. 
In order to estimate terms (II) and (III), we recall that a basic consequence of the maximum principle is
\eqn{massimo}
$$
\osc_{B_{\rr/4}}h\le \osc_{ B_{\rr/4}}u\,.
$$
Recall also that $u$ is H\"older continuous; by the Maz'ya-Wiener boundary regularity theory, $h$ is continuous on $\bar B_{\rr/4}$ and therefore
\eqn{g.6}
$$
\nr{w}_{L^{\infty}(B_{\rr/4})}= \nr{u-h}_{L^{\infty}(B_{\rr/4})}\stackrel{\rif{massimo}}{\leq} 2\osc_{\partial B_{\rr/4}}u \leq 4 [u]_{0, \beta; B_{\rr/4}}\rr^{\beta}\leq c\rr^{\beta}\,,
$$
where $c\equiv c(\data,\ttdd,\beta)$. For (II), as in \rif{cl.4}, we have, with $w=u-h$ (defined and extended as in Lemma \ref{har}, so that $w\equiv 0$ outside $B_{\rr/4}$)
\begin{eqnarray}
\nonumber \snr{\mbox{(II)}}&\stackleq{launa} &  c\rr^{(\beta-s)(\gamma-1)}\left(\int_{B_{\rr/2}}\mint_{B_{\rr/2}}\frac{\snr{w(x)-w(y)}^{\gamma}}{\snr{x-y}^{n+s\gamma}}\dx\dy\right)^{1/\gamma}\\
&\stackrel{\eqref{bm111}}{\leq}& c\rr^{(\beta-s)(\gamma-1)+\vartheta-s} \nr{w}_{L^{\infty}(B_{\rr/4})}^{1-\vartheta}
\left(\mint_{B_{\rr/4}}\snr{Dw}^{p}\dx\right)^{\vartheta/p}\nonumber \\
&\stackrel{\eqref{enes},\eqref{g.6}}{\leq}& c\rr^{(\beta-s)(\gamma-1)+\vartheta-s+\beta(1-\vartheta)} 
\left(\mint_{B_{\rr/4}}(\snr{Du}^{2}+\mu^2)^{p/2}\dx\right)^{\vartheta/p}\nonumber \\
&\stackrel{ \eqref{g.2}}{\le}& c\rr^{(\beta-s)(\gamma-1)-\lambda} \label{prendila2}
\end{eqnarray}
whenever $s < \beta <1$ and $\lambda>0$, where $c\equiv c(\datah,\ttdd,\beta,\lambda)$. 
To estimate $\mbox{(III)}$ we restart from the fifth line of \rif{cl.5}, and using also \rif{snailetto} and \rif{g.6}, we easily find
\begin{eqnarray}
\snr{\mbox{(III)}}&\leq&c\left[\rr^{-s\gamma}[\av_{\gamma}(\rr)]^{\gamma-1}+\rr^{-s}\left(\rr^{-\delta}[\snail_{\delta}(\rr)]^{\gamma}\right)^{1-1/\gamma}\right]\nr{w}_{L^{\infty}(B_{\rr/4})}\nonumber\\
&\leq&c\rr^{(\beta-s)\gamma}+c\rr^{\beta-s}\leq c\rr^{\beta-s}\label{prendila3}\,.
\end{eqnarray}
In \rif{prendila}, \rif{prendila2} and \rif{prendila3}, the numbers $\beta, \lambda$ are arbitrary and such that $s < \beta<1$, $\lambda >0$ and the constants denoted by $c$ depend on $\datah,\ttdd,\beta,\lambda$. We then choose $\beta,   \lambda$ such that  
$$
\beta:= \frac{1+s}{2}, \qquad 0<\lambda \leq \min\left\{
\frac{(1-s)(\gamma-1)}{4}, \frac 12\left(1-\frac{n}{d}\right)\right\}$$
and plug \rif{prendila},\rif{prendila2} and \rif{prendila3} into \eqref{daje}, to obtain
\eqn{x5}
$$
\mint_{B_{\rr/4}}|V_{\mu}(Du)-V_{\mu}(Dh)|^{2}\dx\le c\rr^{\sigma_1p}, \quad \sigma_1 :=\frac{1}{p}\min\left\{
 \frac 12\left(1-\frac{n}{d}\right),\frac{(1-s)(\gamma-1)}{4}, \frac{1-s}{2}\right\}>0
$$
for $c\equiv c(\datah,\ttdd)$. Now, we want to prove that 
\eqn{provalo}
$$
\mint_{B_{\rr/4}}\snr{Du-Dh}^{p}\dx\leq c\rr^{\sigma_2p}\,,
$$
where $\sigma_2=\sigma_1$ if $p \geq 2$ and $\sigma_2:=\sigma_1p/4$ if $1<p<2$. Indeed, If $p\ge 2$, then \rif{provalo}
follows thanks to \rif{Vm} and \rif{x5}. 
When $p\in (1,2)$, as in \rif{uh2}, and using \rif{enes}, we have
\begin{eqnarray*}
\mint_{B_{\rr/4}}\snr{Du-Dh}^{p}\dx &\le&\left(\mint_{B_{\rr/4}}|V_{\mu}(Du)-V_{\mu}(Dh)|^{2}\dx\right)^{p/2}\left(\mint_{B_{\rr/4}}(\snr{Du}^{2}+\mu^2)^{p/2}\dx\right)^{1-p/2}\nonumber \\
&\stackrel{\eqref{g.2},\eqref{x5}}{\le}&c\rr^{[\sigma_1p/2-\lambda(1-p/2)]p}\,.
\end{eqnarray*}
By choosing $\lambda$ such that $\sigma_1p/2-\lambda(1-p/2) > \sigma_1p/4$, we finally conclude with \rif{provalo}. 
\end{proof}
Once \rif{provalo} is established, we can conclude with the local H\"older continuity of $Du$ by means of a by now classical comparison argument (see for instance \cite{manth2}). We briefly report it here for the sake of completeness. We recall the following classical decay estimate, which is satisfied by $h$
\eqn{x7}
$$
\osc_{B_{t}} Dh \leq c\left(\frac{t}{\rr}\right)^{\alpha_0} \left(\mint_{B_{\rr/4}}{(\snr{Dh}^2+\mu^2)^{p/2}}\right)^{1/p} \,,
$$
that holds whenever $0 < t \leq \rr/8 $, where $c \equiv c (n, p, \Lambda)\geq 1$ and $\alpha_0 \equiv \alpha_0(n,p,\Lambda)\in (0,1)$; see \cite{manth1, manth2}. We estimate, also using \rif{averages}
\begin{eqnarray}\label{x8}
\mint_{B_{t}}\snr{Du-(Du)_{B_t}}^{p}\dx&\le&c\, \left(\osc_{B_{t}} Dh\right)^{p}+c\left(\frac{\rr}{t}\right)^{n}\mint_{B_{\rr/4}}\snr{Du-Dh}^{p}\dx\nonumber \\
&\stackrel{\eqref{cl.11f},\eqref{x7}}{\le}&c\left(\frac{t}{\rr}\right)^{\alpha_{0}p}
\mint_{B_{\rr/4}}{(\snr{Dh}^2+\mu^2)^{p/2}}\dx +c\left(\frac{\rr}{t}\right)^{n}\rr^{\sigma_{2}p}\nonumber \\
&\stackrel{\eqref{enes},\eqref{g.2}}{\le}&c\left(\frac{t}{\rr}\right)^{\alpha_{0}p}\rr^{-\lambda p}+c\left(\frac{\rr}{t}\right)^{n}\rr^{\sigma_{2}p},
\end{eqnarray}
with $c\equiv c(\datah,\ttdd,\lambda)$. In the above inequality, we take $t=\rr^{1+\sigma_2p/(2n)}/8$ and choose $\lambda := \sigma_2p\alpha_0/(4n)$ in \rif{g.2}. We conclude with 
$$
\mint_{B_{t}}\snr{Du-(Du)_{B_t}}^{p}\dx \le ct^{\alpha p}\,, \qquad \alpha:=\frac{\sigma_2\alpha_0}{2\sigma_2 p + 4n}\,,
$$
where $c\equiv c(\datah,\ttdd,\lambda)$. This holds whenever $B_t\Subset \Omega$ is s ball centred in $\Omega_0$, with $t\leq  \ttdd^{1+\sigma_2p/(2n)}/c(n,p)$. As the $\Omega_0 \Subset \Omega_1 \Subset\Omega$ are arbitrary, this implies the local $C^{1,\alpha}$-regularity of $Du$ in $\Omega$, via the classical Campanato's integral characterization of $Du$ together with the estimate for $[Du]_{0,\alpha; \Omega_0}$, and the proof is complete.

\end{document}